\crefname{equation}{}{}
\crefname{algocf}{Algorithm}{Algorithms}
\crefname{equation}{}{} 
\colorlet{refkey}{orange!20}
\colorlet{labelkey}{blue!30}
\crefname{algocf}{Algorithm}{Algorithms}
\numberwithin{equation}{section}
\newtheorem{theorem}{Theorem}[section]
\newtheorem{proposition}[theorem]{Proposition}
\newtheorem{lemma}[theorem]{Lemma}
\crefname{claim}{Claim}{Claims}
\newtheorem{corollary}[theorem]{Corollary}
\newtheorem*{question*}{Question}
\theoremstyle{definition}
\newtheorem{definition}[theorem]{Definition}
\newtheorem*{definition*}{Definition}
\theoremstyle{remark}
\newtheorem*{remark}{Remark}
\newcommand{\snorm}[1]{\lVert#1\rVert}
\newcommand{\imod}[1]{(\mathrm{mod}~#1)}
\newcommand{\qbinom}{\genfrac{[}{]}{0pt}{}}
\newcommand{\mb}{\mathbb}
\newcommand{\mbf}{\mathbf}
\newcommand{\mbm}{\mathbbm}
\newcommand{\mc}{\mathcal}
\newcommand{\mf}{\mathfrak}
\newcommand{\mr}{\mathrm}
\newcommand{\ol}{\overline}
\newcommand{\Ups}{\Upsilon}
\newcommand{\eps}{\varepsilon}
\newcommand{\lL}{\lambda}
\title{The existence of subspace designs}
\author[A1]{Peter Keevash}
\address{Mathematical Institute, University of Oxford, UK.}
\email{keevash@maths.ox.ac.uk}
\author[A2]{Ashwin Sah}
\author[A3]{Mehtaab Sawhney}
\address{Department of Mathematics, Massachusetts Institute of Technology, Cambridge, MA 02139, USA}
\email{\{asah,msawhney\}@mit.edu}
\thanks{Keevash was supported by ERC Advanced Grant 883810. Sah was supported by the PD Soros Fellowship. Sawhney was supported by the Churchill foundation. Sah and Sawhney were supported by NSF Graduate Research Fellowship Program DGE-2141064. }
\begin{document}

\begin{abstract}
We prove the existence of subspace designs with any given parameters, provided that the dimension of the underlying space is sufficiently large in terms of the other parameters of the design and satisfies the obvious necessary divisibility conditions. This settles an open problem from the 1970s. Moreover, we also obtain an approximate formula for the number of such designs.
\end{abstract}
\maketitle

\section{Introduction}\label{sec:introduction}

A widely circulated problem in the 1970s asked for vector space analogues of combinatorial designs, whereby combinatorial designs could be considered as designs in vector spaces over the `field with one element'. This problem arose during an exciting time in the history of combinatorial designs, when Wilson \cite{Wil75} proved the graph case of the Existence Conjecture
(a problem posed by Steiner in the 19th century, eventually resolved by Keevash \cite{Kee14}). In an early article on the general algebraic problem, Cameron \cite{Cam74} gave his `commentary' on the combined efforts of many researchers, including Petrenjuk, Wilson, Ray-Chaudhuri \cite{BR06}, Noda, Bannai, Delsarte \cite{Del76}, Goethals, and Seidel. Cameron remarked that subspace 1-designs (spreads) are `common', but there were no known non-trivial subspace $t$-designs with $t>1$.

This problem has recently seen considerable progress, following a renewed interest due to its connections with Network Coding (see \cite{GPSV18, Etz13}) and advances in techniques, including computational methods for finding explicit examples and probabilistic methods for obtaining general results. To discuss progress on the problem to date we require the following definitions. Let $\mb{F}_q$ be a finite field of order $q$. Let $\mr{Gr}_q(n,r)$ denote the set of $r$-dimensional subspaces (`$r$-spaces') of the $n$-dimensional vector space $\mb{F}_q^n$. An $(n,s,r,\lL)_q$-design consists of a subset of $\mr{Gr}_q(n,s)$, called blocks, such that each $r$-space is contained in exactly $\lL$ blocks. This definition captures the established meaning of `subspace design' in Combinatorics and in Network Coding, although we remark that there is also a large literature in Theoretical Computer Science on a similar but weaker notion of `subspace design' (replace `exactly' by `at most') introduced by Guruswami and Xing \cite{GX13}.

There are some parallels between the histories of subspace designs and combinatorial designs. Indeed, for combinatorial designs it was a longstanding open problem, resolved by Teirlinck \cite{Tei87}, to show the existence of non-trivial $(n,s,r,\lL)$-designs for all $r$ and some $\lL$ (where `non-trivial' means that $s>r$ and not all $s$-sets are blocks). Similarly, the existence of non-trivial $(n,s,r,\lL)_q$-designs for all $r$ and some $\lL$ was a longstanding open problem, resolved much more recently by Fazeli, Lovett, and Vardy \cite{FLV14}. This general result was preceded by various explicit constructions; for details of these we refer to the survey by Braun, Kiermaier, and Wassermann \cite{BKW18}. While Teirlinck used an explicit construction, the construction in \cite{FLV14} is probabilistic (adapting a method of Kuperberg, Lovett, and Peled \cite{KLP17}), and requires $\lL \ge q^{Crn}$. 

The parallels continue for Steiner systems, where for many years after Teirlinck's result the existence of $(n,s,r)$-designs with $s>r \ge 3$ was only known in sporadic cases, and the existence of any examples for $r \ge 6$ was unknown until the general result of \cite{Kee14}. The situation for $(n,s,r)_q$-designs was even more dire, and was highlighted by Kalai \cite{Kal16} as one of the most important open problems remaining in Design Theory. It was conjectured by Metsch \cite{Met99} that no such designs with $s>r>1$ exist. This was recently disproved by Braun, Etzion, \"{O}sterg\aa rd, Vardy, and Wassermann \cite{BEOPV16}, who developed improved computational methods to find $(13,3,2)_2$-designs. However, there were no known examples for any other parameters, let alone any general results. 

In this paper we remedy this situation by completely answering the question: we show the existence of $(n,s,r)_q$-designs for any prime power $q$ and $s>r\ge 1$. Moreover, our result is analogous to Keevash's, in that we show the existence of $(n,s,r)_q$-designs for all sufficiently large $n$ satisfying the necessary `divisibility conditions'. Here recall the 
Gaussian $q$-binomial $\qbinom{n}{k}_q$, the number of $\mb{F}_q$-subspaces of $\mb{F}_q^n$ of dimension $k$, also given by the formula
\[\qbinom{n}{k}_q=\frac{(q^n-1)\cdots(q^1-1)}{(q^k-1)\cdots(q^1-1)(q^{n-k}-1)\cdots(q^1-1)}.\]

\begin{definition}\label{def:q-steiner}
Let $q$ be a prime power and let $\mb{F}_q^n$ be the $n$-dimensional vector space over $\mb{F}_q$. For $s > r$ and $\lambda\ge 1$, an $(n,s,r,\lambda)_q$-design is a multicollection $\mc{S}$ of $s$-dimensional subspaces such that every $r$-dimensional subspace is contained in exactly $1$ space in $\mc{S}$. We say it is \emph{simple} if there are no repeated $s$-spaces.
\end{definition}

\begin{theorem}\label{thm:main}
Fix $q,s,r$. For $n\ge n_{\ref{thm:main}}(q,s)$ such that $\qbinom{s-i}{r-i}_q\mid\qbinom{n-i}{r-i}_q$ for all $0\le i\le r-1$ there is an $(n,s,r)_q$-design.
\end{theorem}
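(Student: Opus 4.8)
\emph{Proof strategy.} I would realise an $(n,s,r)_q$-design as a perfect matching in the \emph{design hypergraph} $H=H_q(n,s,r)$, whose vertex set is $\mr{Gr}_q(n,r)$ and which has one edge $\pl S:=\{R\in\mr{Gr}_q(n,r):R\sub S\}$ for each $S\in\mr{Gr}_q(n,s)$. This $H$ is $\qbinom{s}{r}_q$-uniform and $\qbinom{n-r}{s-r}_q$-regular, its codegrees (the number of edges through two given $r$-spaces) are smaller than the degree by a factor of order $q^{-\TT(n)}$, and the divisibility hypotheses say precisely that the local count $\qbinom{n-i}{r-i}_q/\qbinom{s-i}{r-i}_q$---the forced number of blocks through a fixed $i$-space---is an integer for every $0\le i\le r-1$. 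The plan is Keevash's absorption strategy (the randomised algebraic construction), transported from the Boolean lattice to the subspace lattice of $\mb{F}_q^n$: reserve a small, algebraically structured family $\mc A$ of blocks to act as an \emph{absorber}; use the semi-random method to cover all but a sparse, quasirandom set of $r$-spaces by pairwise disjoint blocks disjoint from $\mc A$; and then show that $\mc A$ together with this leftover admits a perfect matching. (Iterative absorption along a vortex of nested subspaces would be an alternative skeleton.)

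For the middle step I would invoke a Pippenger--Spencer/R\"odl--Spencer type near-perfect matching theorem: since $H$ is regular with vanishing relative codegrees it has a matching $M_0$ covering a $(1-o(1))$-fraction of $\mr{Gr}_q(n,r)$. Running the nibble while tracking a bounded collection of test functions makes the leftover $L_0$ of uncovered $r$-spaces \emph{quasirandom}: each $r$-space lies in $L_0$ with roughly equal probability, and this persists after conditioning on the process restricted to any bounded local configuration. To carve out room for $\mc A$ without disturbing this, I would first pick $\mc A$ by an independent sparse random selection of blocks, remove the $r$-spaces it covers together with all blocks meeting them, and run the nibble on the remainder; the pseudorandomness of $\mc A$ keeps that remainder essentially regular, so the nibble behaves as advertised.

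The heart of the matter, and the only place the divisibility conditions are essential, is the absorber. For each $r$-space $R$ that might survive in the leftover one needs a bounded ``switch''---a reconfiguration of $O_{q,s,r}(1)$ blocks of $\mc A$ that incorporates $R$ into the cover while maintaining the exact-cover property everywhere else---and then, because $L_0$ is both sparse and quasirandom, such switches can be applied one after another without collisions so as to absorb all of $L_0$. The existence of a single switch reduces to finding an integer point in a bounded ``design polytope'' attached to the truncated parameters $(s-i,r-i)$, $0\le i\le r-1$, and it is exactly the hypotheses $\qbinom{s-i}{r-i}_q\mid\qbinom{n-i}{r-i}_q$---unpacked via the $q$-analogues of the standard identities among Gaussian binomials ($q$-Vandermonde, inclusion--exclusion over flags, counting subspaces in general position)---that make these systems solvable over $\mb{Z}$. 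Finally $\mc A$ must be untouched by the nibble: realising $\mc A$ as the images of a fixed bounded gadget under independent random elements of $\mr{GL}_n(\mb{F}_q)$, a union bound over the $O(1)$-sized configurations shows that with positive probability every required switch remains intact. Combining the three facts---sparse quasirandom leftover, intact absorber, absorption---yields a perfect matching of $H$, i.e.\ an $(n,s,r)_q$-design.

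I expect the main obstacle to be the absorber: it must be flexible enough to repair an arbitrary valid leftover---which forces the full strength of the divisibility hypotheses and a careful consistency check for all the truncated integral systems---yet rigid enough to survive the nibble, and these demands pull in opposite directions. The $q$-analogue adds genuine friction over the set case: two ``random'' subspaces are in general position only up to $q^{-\TT(n)}$ error terms, so every codegree, typicality and quasirandomness estimate carries extra powers of $q$ that must be tracked through the nibble and the absorption, and the subspace-lattice combinatorics (flags, Gaussian binomials, general-position counts) must systematically replace the hypergeometric bookkeeping of the combinatorial case.
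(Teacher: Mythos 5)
Your outline matches the paper's skeleton (algebraic template, nibble, cover-down, local switches), but two of the steps you describe would not work as stated, and they are precisely the two places where the real content of the proof lives. First, the absorber. You propose realising $\mc A$ as images of a fixed bounded gadget under independent random elements of $\mr{GL}_n(\mb{F}_q)$ and verifying by a union bound over $O(1)$-sized configurations that every required switch survives. The leftover $L_0$ from the nibble, however small its density, still consists of exponentially many $r$-spaces and is not known in advance; a sparse random union of bounded gadgets supports far too few switches to absorb an arbitrary such leftover, which is exactly the failure mode that forced Keevash to introduce the dense, rigid randomized algebraic construction in the first place. The paper's template (\cref{def:template}) is the global family $\{\mr{span}_{\mb{F}_q}(\iota_i(Nx))\colon x\in K^r\}$ for a generic matrix $N$ over a field extension $L=\mb{F}_{q^\ell}$, subsampled via per-$r$-space ``configurations'' to kill multiplicities; and since the required generic $N$ need not exist over $\mb{F}_q$ and $\ell$ need not divide $n$, one is forced to plant $n^2$ copies of the template inside incompatible $L$-structures on subspaces of bounded codimension. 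None of this is recoverable from the random-gadget picture, and the switches themselves (\cref{def:absorber}, \cref{prop:final-absorber}) are algebraic flips inside this structure, found via the Local Lemma rather than a union bound.

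Second, the step you describe as ``finding an integer point in a bounded design polytope'' by ``unpacking $q$-analogues of the standard identities among Gaussian binomials'' is where the subspace setting genuinely departs from the set-system case. By Ray-Chaudhuri and Singhi, the lattice $\partial_{s,r}\mb{Z}^{\mr{Gr}_q(n,s)}$ is \emph{not} cut out by the obvious divisibility conditions (those conditions suffice only for constant vectors, which is how the hypotheses of the theorem enter, via \cref{thm:integral-lattice}), and it admits no known bounded explicit generating set analogous to Keevash's octahedra. Consequently the existence of \emph{bounded} signed decompositions of the spillover — the paper's \cref{thm:bounded-inverse} — cannot be read off from flag identities; it requires a separate argument built from local decodability, a greedily constructed sparse approximate generating set, and many random rotations to cover the gaps, occupying all of \cref{sec:bounded-integral-decomposition}. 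Without supplying substitutes for these two components, the proposal does not close.
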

\begin{remark}
Additionally, one can prove an analogue for ``sufficiently pseudorandom'' collections of $r$-dimensional subspaces, similar in spirit to \cite[Theorem~1.10]{Kee14} (with certain $q$-analogues of pseudorandomness conditions, as we will see in \cref{sub:q-extensions}). However, we do not pursue this extension here.
\end{remark}

We also prove a counting version as a simple corollary of the proof.

\begin{corollary}\label{cor:counting}
Under the assumptions of \cref{thm:main}, for $n\ge n_{\ref{cor:counting}}(q,s)$ the number of $(n,s,r)_q$-designs is
\[\bigg((1\pm q^{-c_{\ref{cor:counting}}(r,s)n})\frac{\qbinom{n-r}{s-r}_q}{\exp(\qbinom{s}{r}_q-1)}\bigg)^{\qbinom{n}{r}_q/\qbinom{s}{r}_q}.\]
\end{corollary}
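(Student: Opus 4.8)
The plan is to deduce \cref{cor:counting} from the proof of \cref{thm:main} by establishing matching lower and upper bounds for the number $\mc D$ of $(n,s,r)_q$-designs. First reformulate: write $N=\qbinom{n}{r}_q$ for the number of $r$-spaces, $K=\qbinom{s}{r}_q$ for the number of $r$-subspaces of an $s$-space, and $D=\qbinom{n-r}{s-r}_q$ for the number of $s$-spaces through a fixed $r$-space. The \emph{design hypergraph} $\mc H$ has vertex set $\mr{Gr}_q(n,r)$ and, for each $s$-space $S$, an edge equal to the set of $r$-subspaces of $S$; it is $K$-uniform, $D$-regular, and an $(n,s,r)_q$-design is exactly a perfect matching of $\mc H$. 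Moreover the $q$-binomial formula makes $\mc H$ strongly pseudorandom: the number of $s$-spaces through two distinct $r$-spaces $R,R'$ is $\qbinom{n-\dim(R+R')}{s-\dim(R+R')}_q\le q^{-\Omega(n)}D$, since $\dim(R+R')>r$, with analogous bounds for the weighted ``still available'' hypergraphs met during the construction. Crucially, $(D/e^{K-1})^{N/K}$ is precisely the random-greedy (Rödl nibble) prediction for the number of perfect matchings of such an $\mc H$: when a fraction $\alpha$ of the vertices is uncovered, roughly $\frac{DN}{K}\alpha^{K}$ edges remain available, and $\prod_{0\le i<N/K}\big(\frac{DN}{K}(1-iK/N)^{K}\big)$, divided by the $(N/K)!$ orderings of a fixed perfect matching, equals $(1+o(1))(D/e^{K-1})^{N/K}$ by Stirling. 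So it remains to make this prediction rigorous with the claimed relative error $1\pm q^{-c_{\ref{cor:counting}}n}$.

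For the lower bound I would revisit the construction proving \cref{thm:main}, which (following the by-now-standard template) reserves an absorbing structure covering only a $q^{-\Omega(n)}$ fraction of the $r$-spaces, runs a semi-random nibble covering all remaining $r$-spaces outside a sparse quasirandom leftover, and then uses the absorber to finish. To count designs, rerun the nibble with fresh randomness and track the weighted hypergraph $\mc H_i$ of still-available edges after $i$ blocks; the usual martingale analysis (Freedman-type concentration), sharpened to relative error $q^{-\Omega(n)}$ by exploiting the self-correction of the process together with the exponentially strong $q$-binomial pseudorandomness, shows that $\mc H_i$ stays $(1\pm q^{-cn})$-regular with negligible codegrees until a $q^{-\Omega(n)}$ fraction is uncovered, so step $i$ offers $(1\pm q^{-cn})\frac{DN}{K}\alpha_i^{K}$ legal choices with $\alpha_i=1-iK/N$. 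Taking the product over $i$, dividing by the over-counting — which is $(N/K)!$ up to a $(1\pm q^{-cn})^{N/K}$ factor — and absorbing the remaining corrections (the tail of the full product once $\alpha_i$ is small, and the multiplicities of the leftover and of the absorber, which together involve only a $q^{-\Omega(n)}$ fraction of the $r$-spaces, hence only a $(1\pm q^{-cn})^{N/K}$ factor, provided the absorber is designed to cover a fraction with exponent safely exceeding $c_{\ref{cor:counting}}$) yields $\mc D\ge\big((1-q^{-cn})D/e^{K-1}\big)^{N/K}$.

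For the upper bound I would run the entropy argument of Linial and Luria. Let $\mc S$ be a uniformly random $(n,s,r)_q$-design, so $\log\mc D=H(\mc S)$; revealing its $N/K$ blocks in a uniformly random order $B_1,\dots,B_{N/K}$ gives $H(\mc S)=\sum_i H(B_i\mid B_{<i})-\log((N/K)!)$, and since the possible values of $B_i$ given $B_{<i}$ lie among the $s$-spaces disjoint from the already-revealed blocks, $H(B_i\mid B_{<i})\le\log\mb E[M_i]$ by Jensen, where $M_i$ counts $s$-spaces all of whose $r$-subspaces are still uncovered (a deterministic fraction $\alpha_i=1-(i-1)K/N$ being covered). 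A first-moment computation, $\mb E[M_i]=\sum_S\Pr[S\text{ available}]$ with $\Pr[S\in\mc S]=1/D$ exactly by $GL_n(\mb F_q)$-symmetry and a routine estimate of $\Pr[S\text{ available}\mid S\notin\mc S]$ (using pseudorandomness and that all but a $q^{-\Omega(n)}$ fraction of $s$-spaces meet every block of $\mc S$ in dimension $\le r$), gives $\mb E[M_i]\le(1+q^{-cn})\frac{DN}{K}\alpha_i^{K}$ as long as $\alpha_i\ge q^{-\Omega(n)}$, the final $q^{-\Omega(n)}N/K$ steps contributing negligibly via the trivial bound $M_i\le\qbinom{n}{s}_q$. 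Summing and applying Stirling to $\log((N/K)!)$ — the $\log(N/K)$ terms cancel exactly — leaves $H(\mc S)\le\frac{N}{K}\log\big((1+q^{-cn})D/e^{K-1}\big)$, which matches.

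The main obstacle throughout is the relative precision $q^{-\Omega(n)}$: the nibble runs for $\sim N/K$ steps, an astronomical number, so per-step errors any larger than exponentially small would compound catastrophically; one must exploit the exponentially strong pseudorandomness of the $q$-binomials together with the self-correcting (geometric-error-decay) behaviour of the random-greedy process — exactly the features already driving the proof of \cref{thm:main}. The rest is careful bookkeeping: pinning down the lower-bound over-count, checking the absorber and leftover never leak into the exponent, and, for the upper bound, securing the estimates on $\mb E[M_i]$ (bootstrapped if necessary from the two bounds applied to the residual design problem).
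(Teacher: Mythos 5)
Your proposal is correct and follows essentially the same route as the paper: the lower bound comes from counting the approximate matchings produced by the nibble (which the paper outsources to a theorem of Glock--Joos--Kim--K\"uhn--Lichev rather than re-running the martingale analysis) combined with the fact that almost all of them complete to a design with controlled overcounting, and the upper bound is the Linial--Luria entropy argument (which the paper cites as \cite[Theorem~3.1]{Lur17} rather than re-deriving). The only difference is that you sketch proofs of the two black-box ingredients instead of citing them.
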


The situation when $\lambda > 1$ is very similar, with a few added considerations regarding simplicity and the approximate covering step. We briefly sketch the necessary changes in \cref{sec:final-counting}, but the majority of focus and discussion everywhere else will be regarding \cref{thm:main}.
\begin{theorem}\label{thm:lambda}
Fix $q,s,r,\lambda$. For $n\ge n_{\ref{thm:lambda}}(q,s,\lambda)$ such that $\qbinom{s-i}{r-i}_q\mid\lambda\qbinom{n-i}{r-i}_q$ for all $0\le i\le r-1$ there is a simple $(n,s,r,\lambda)_q$-design.
\end{theorem}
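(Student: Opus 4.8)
The plan is to re-run the proof of \cref{thm:main} with the multiplicity $\lambda$ carried along as an additional fixed parameter. I would begin by recording that the stated divisibility conditions are necessary: for any $(n,s,r,\lambda)_q$-design, any $i$ with $0\le i\le r-1$, and any fixed $i$-space $U$, double counting the flags $(W,B)$ with $U\subseteq W\subseteq B$, $W$ an $r$-space and $B$ a block, gives
\[
N_U\cdot\qbinom{s-i}{r-i}_q=\lambda\qbinom{n-i}{r-i}_q,
\]
where $N_U$ is the number of blocks through $U$; since $N_U\in\mb{Z}_{\ge 0}$ this forces $\qbinom{s-i}{r-i}_q\mid\lambda\qbinom{n-i}{r-i}_q$. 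For sufficiency, essentially every component of the proof of \cref{thm:main} --- the algebraic template, the hierarchy of absorbers, the integrality/divisibility-fixing steps, and the iterative absorption itself --- is insensitive to $\lambda$: one works throughout with multicollections whose target multiplicity on each $r$-space is $\lambda$ rather than $1$, scales the relevant integer-programming relaxations by $\lambda$, and invokes the $\lambda$-divisibility hypothesis in place of its $\lambda=1$ special case. I would therefore concentrate the write-up on the two points flagged in the text: the approximate covering step and simplicity.

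For the approximate covering step I would run the same random greedy / nibble process, now targeting multiplicity $\lambda$ on each $r$-space (equivalently, a near-perfect matching in the hypergraph in which each $r$-space appears in $\lambda$ copies). The output is a multicollection $\mc{N}$ of $s$-spaces covering all but a $q^{-\Theta(n)}$-fraction of $r$-spaces exactly $\lambda$ times, and no $r$-space more than $\lambda$ times. The only genuinely new requirement is that $\mc{N}$ be \emph{simple}: because the nibble selects only a $q^{-\Theta(n)}$-fraction of all $\qbinom{n}{s}_q$ many $s$-spaces, while the relevant codegrees are a $q^{-\Theta(n)}$-factor smaller than the degrees, the expected number of repeated $s$-spaces in $\mc{N}$ is $o(\bsize{\mc{N}})$; one then either forbids in the nibble any $s$-space already chosen, or deletes the few repeats and re-covers the affected $r$-spaces locally. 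The concentration estimates are exactly those used for \cref{thm:main}, applied to the same pseudorandom link structure on $r$-spaces.

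Simplicity of the assembled design is where I expect the main obstacle. The final object is, as a multiset, a disjoint union of the nibble part $\mc{N}$, the absorbers, and the template-correction blocks; I would build each piece to be simple and then arrange that the three pieces are pairwise $s$-space-disjoint. The delicate part is that the absorption and divisibility-fixing operations locally add and delete $s$-spaces to repair the cover, and with $\lambda>1$ such a repair might naturally call for an $s$-space already present in the structure. To sidestep this I would, exactly as in \cite{Kee14}, exploit the overwhelming abundance of available $s$-spaces --- there are $\qbinom{n}{s}_q$ of them, vastly more than any structure we construct --- and make every such choice greedily subject to avoiding the $s$-spaces used so far; the codegree bounds guarantee a valid choice always exists, and a short alteration / inclusion--exclusion argument clears any residual coincidence while preserving exact $\lambda$-coverage. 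Finally, the counting statement of \cref{cor:counting} adapts along the same lines (with an extra factor recording multiplicities in the nibble), but since \cref{thm:lambda} asks only for existence I would not carry this out.
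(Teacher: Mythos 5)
Your proposal treats $\lambda$ as a parameter that can simply be ``carried along'' by scaling everything, but this misses the one place where the argument genuinely changes, which is precisely the interaction between the multiplicity and the template. The template $\mc{S}_\mr{tem}$ is a rigid algebraic object that by construction (\cref{def:template}, \cref{lem:template-basic}) covers each $r$-space of $G_\mr{tem}$ \emph{exactly once}, and the entire absorption machinery (\cref{prop:absorber-count}, \cref{prop:final-absorber}) depends on exchanging template $s$-spaces within this multiplicity-one structure; there is no ``$\lambda$-fold template.'' Consequently, after planting the template the residual demand vector is not constant: it is $\lambda-1$ on $G_\mr{tem}$ and $\lambda$ elsewhere, and one cannot just run a nibble targeting uniform multiplicity $\lambda$ on the complement. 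The paper's proof introduces a new step to fix this: for each $R\in\mr{supp}(\partial_{s,r}\mc{S}_\mr{tem})$ it greedily chooses $\lambda-1$ covering $s$-spaces whose \emph{other} $r$-subspaces all lie outside $G_\mr{tem}$ and are mutually disjoint (the collection $\mc{S}_\mr{tem}'$), reducing to a situation where the remaining demand is $\lambda$ on a large subset $J_1^\ast$ and $\lambda-1$ on the rest of $\mr{Gr}(V,r)\setminus G_\mr{tem}$, which is then handled by one approximate cover of $J_1^\ast$ plus $\lambda-1$ successive approximate covers of the complement of $G_\mr{tem}$, deleting used $s$-spaces between rounds. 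Your write-up contains no analogue of $\mc{S}_\mr{tem}'$ and no mechanism for reconciling the multiplicity-one template with the multiplicity-$\lambda$ target, so the subsequent steps as you describe them do not apply.

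A secondary but real weakness is your simplicity argument, which rests on a vague greedy/alteration appeal. The paper's argument is structural and deterministic: every $s$-space in the final design falls into one of a few groups distinguished by how many of its $\qbinom{s}{r}_q$ constituent $r$-spaces lie in $G_\mr{tem}$ (all of them for the retained template blocks and for $\Phi_5$; exactly one for $\mc{S}_\mr{tem}'$; all but one for the cover-down blocks $\Phi_2$; none for $\mc{S}_\mr{approx}$), and since $\qbinom{s}{r}_q\ge 3$ these counts are pairwise distinct, so distinct groups cannot share an $s$-space; within each group disjointness is built into the construction. Your ``delete the few repeats and re-cover locally'' step for the nibble is also not needed in the paper's scheme, because the $\lambda-1$ rounds are run on disjoint sets of available $s$-spaces by explicit removal, which simultaneously guarantees simplicity and preserves the regularity needed for \cref{prop:approximate-covering}. (The necessity of the divisibility conditions, which you prove, is not part of the statement.)
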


We now briefly discuss at a high level some of the new techniques involved in this result. A more detailed proof outline and guide to the structure of the paper is given in \cref{sec:strategy,sub:organization}.

\subsection{New techniques: absorption in rigid algebraic scenarios}\label{sub:introduction-techniques}
Classic methods such as the R\"odl nibble for hypergraph matchings or more recent results can easily be seen to give an ``approximate'' version, i.e., a collection of $s$-dimensional spaces which cover $1-o(1)$ fraction of the $r$-spaces exactly once, and the remainder is uncovered. Therefore, as is typical, the key issue is dealing with the remainder. The most general form of this is the idea of \emph{absorption}, often credited to fundamental work of Erd\H{o}s, Gy\'arf\'as, and Pyber~\cite{EGP91}, and extended by R\"odl, Ruci\'nski, and Szemer\'edi~\cite{RRS06}. One sets aside some structure before attempting to solve a decomposition problem. Then, after approximately decomposing everything else, the remainder is small enough so that it can be handled in conjunction with the absorbing structure (akin to a sponge absorbing water).

Several traditional methods of absorption involved setting aside essentially randomly found structures to work with, which often are robust enough for the desired situation. However, for the problem of constructing Steiner systems, such absorbers are not sufficient for the task due to the sparsity of usable ``local switches'' to work with in such structures (and a similar phenomenon holds in our setting, to a worse degree). The work of Keevash \cite{Kee14} introduced a powerful idea of \emph{randomized algebraic constructions} to use as templates to construct Steiner systems in general. The method of \emph{iterative absorption} (introduced by K\"uhn and Osthus~\cite{KO13} and Knox, K\"uhn, and Osthus~\cite{KKO15}) was also adapted by Glock, K\"uhn, Lo, and Osthus \cite{GKLO16} to construct Steiner systems; however, as we will briefly discuss later, this technique appears to be less suitable in our setting.

In our situation, algebraic structure is already inherently present. In fact, the rigidity of subspaces of a ground vector space compared to subsets of a ground set means that we are more restricted in various ways. One can still perform approximate decompositions via the R\"odl nibble (or the more modern technique of random removal processes seen in \cref{sec:approximate}), since this can be seen purely from a hypergraph matching perspective, and we develop a framework for working with notions of embedding, pseudorandomness, and ``typicality'' in $q$-analogues of hypergraphs, which we call $q$-systems (\cref{sub:q-extensions}). We note that even the interactions between pseudorandomness conditions and counts of various ``$q$-embeddings'' already highlight the inherently subtle linear-algebraic nature of the problem; see e.g.~\cref{lem:one-step-typical} which must account for certain ``twists'' when iteratively embedding $q$-systems. Furthermore, when it comes to the template and the absorption process, the fact that we are already forced to work with $\mb{F}_q$-subspaces of $\mb{F}_q^n$ poses substantial challenges.

The template, in this setting a special set of $r$-dimensional subspaces coming from a collection of $s$-spaces, is formed via a randomized algebraic process as in \cite{Kee14}, but we are focused on making entire vector spaces play nice with respect to each other. Furthermore, one must ensure the template is sufficiently generic (in the sense of \cref{def:generic}) to work with and the necessary algebraic constructions may not exist over $\mb{F}_q$ directly. Thus, we must pass to a field extension $L/\mb{F}_q$ and put an $L$-space structure on $\mb{F}_q^n$. In cases where $n$ is not divisible by any small number, say $n$ is prime, this is not directly possible and we ultimately embed multiple incompatible $L$-structures on vector spaces of finite codimension (\cref{sub:setup}).

For the absorption process itself, after creating an approximate decomposition, the remainder (or \emph{leave}) is covered in a way such that some $r$-dimensional subspaces in the template are covered exactly twice. Then we attempt to remove certain template $s$-spaces and reconfigure the rest in a way that removes the extra multiplicity from this \emph{spillover}. To do this, we find a ``signed integral decomposition'' of the spillover by understanding certain associated lattices, and we furthermore guarantee that it is appropriately \emph{bounded} (\cref{def:bounded}). Then we use a ``subspace exchange process'' to massage this integral decomposition into a form amenable to absorption using the template structure. The latter bears similarity to the ``clique exchange'' of \cite{Kee14}, although the $q$-analogue and multiple $L/\mb{F}_q$-structures pose various new technical difficulties. 

However, the integral decomposition is significantly hampered by the rigidity of the subspace setting. For Steiner systems, the key associated lattice is defined by relatively simple divisibility conditions (due to work of Graver and Jurkat \cite{GJ73} and of Wilson \cite{Wil73}) and it in fact has a particularly natural ``bounded'' generating set to work with, formed by certain ``octahedral'' structures (see e.g.~\cite[Section~5]{Kee14}). However, work of Ray-Chaudhuri and Singhi \cite{RCS89} shows that lattices associated to $(n,s,r)_q$-designs are not nearly so nice. As a result, we work with a greedily designed bounded approximate generating set, and introduce a way to boost this approximate behavior by using multiple copies to ``cover gaps''. Furthermore, many arguments using the symmetry of all vertices in $K_n$ are hampered by the less robust symmetries available in our setting, so here and elsewhere we often resort to delicate moment computations instead of more standard concentration of measure arguments which do not apply. See \cref{sub:outline-integral} for further discussion of the bounded integral decomposition and its role in the proof, and see \cref{sub:bounded-sparse} for further detail on this boosting.

Finally, we remark that despite the challenges posed by using a randomized algebraic template, the iterative absorption method seemed less amenable in this setting. For instance, at a high level, the ``cover-down'' procedure in \cite{GKLO16} is not clearly compatible with the algebraic setting, since an $r$-space will intersect a subspace of $\mb{F}_q^n$ (that we may be trying to ``zoom in towards'') in another subspace, which constrains the intersection types that may appear when treating this as a hypergraph problem; these intersection types do not appear to play well together as nicely as in the Steiner system scenario. Of course, there may be ways to use the vector space and dimensional structure to proceed, but the aforementioned complicated structure of the underlying associated lattices \cite{RCS89} in our setting suggest that at the very least, more work must be done if it is possible to accommodate such an approach. 

\subsection*{Acknowledgements}
We thank Zach Hunter for pointing out an error in the previous version of the proof of \cref{lem:rainbow-decomposition}.

\section{Proof strategy}\label{sec:strategy}
We now outline the proof strategy in more detail. At a basic level we wish to run a random subspace removal process to cover almost all $r$-spaces, and then cover the rest using an absorber. Of course, as in the case of Steiner systems, the key issue is precisely what sort of absorption strategy will suffice. As in the work of Keevash \cite{Kee14}, we will take an algebraic approach: we plant a well-structured template first, cover most of the remainder, and then absorb the rest into this template using a robust quantity of local switches. Throughout, we let $V=\mb{F}_q^n$.

\subsection{Algebraic template}\label{sub:outline-template}
The $s$-template $\mc{S}_\mr{tem}$, precisely constructed in \cref{def:template}, is a set of $s$-spaces coming from a linear algebraic construction. This yields an underlying set of $r$-spaces, $G_\mr{tem}$, which are covered by this collection.

This construction is obtained from the following observation: if $N\in\mb{F}_q^{s\times r}$ has the property that $\Pi N$ is invertible for every $\Pi\in\mb{F}_q^{r\times s}$ of rank $r$, then the multiset
\[\{\{\mr{span}_{\mb{F}_q}(Nx)\colon x\in V^r\text{ and }\dim\mr{span}_{\mb{F}_q}(Nx)=s\}\}\]
provides an $(n,s,r,\lambda)_q$-design (not necessarily simple) for some appropriate $\lambda$. Here the span of a column vector with coordinates in $V$ refers to taking the $\mb{F}_q$-span of its $s$ coordinates treated as a set. Indeed, for every $r$-space $R$, there are a fixed number of possible $\mb{F}_q$-bases $b\in R^r$ and for each $\Pi\in\mb{F}_q^{r\times s}$ of rank $r$ each possible such $b$ shows up as the value $\Pi Nx=b$ for precisely one $x$ by invertibility of $\Pi N$.

Therefore, if we sample the construction in a way that forces no $r$-space to be repeated, then we will have a partial design appearing as a dense subset of a rigid algebraic structure. In reality, we will further subsample this collection (at a rate depending on a parameter $\tau$ which ultimately will be chosen to be $q^{-cn}$) in order to ensure removing the template does not significantly limit our options. We can enforce that no $r$-space is repeated by having each space $R\in\mr{Gr}(V,r)$ choose ``how it wants to be included in the template'' so that only $1$ out of the $\lambda$ possible ``configurations'' can possibly appear. A similar concept of sampling the template (called \emph{activation} and used in a more general situation) and using configurations to reduce multiplicity (called \emph{compatibility}) appears in \cite[Definition~3.2]{Kee14}.

However, note that if $s,r$ are large and $q$ is small then the desired matrix $N$ may not exist. Thus, we must actually consider a field extension $L=\mb{F}_{q^\ell}$ of $\mb{F}_q$ for appropriate $\ell$ large enough that such $N\in L^{s\times r}$ will exist (only with respect to $\Pi$ with coefficients in $\mb{F}_q$). In particular, an \emph{algebraically generic} choice of $N$ works. If $\ell|n$ then it is possible to give $V=\mb{F}_q^n$ the structure of an $(n/\ell)$-dimensional $L$-space and make sense of the matrix product $Nx$. When $\ell\nmid n$, though, we can only do this on a subspace of $V$ of bounded codimension. For this reason, and in order to ensure the template is fully spread throughout $V$ and not concentrated in one location, we actually plant $z=n^2$ different copies of the template within $z$ different $L$-structures of randomly chosen bounded codimension subspaces of $V$. For clarity of notation, we fix a single $L$-vector space $K$ of $L$-dimension $\lfloor n/\ell\rfloor$ (and $\mb{F}_q$-dimension $\ell\lfloor n/\ell\rfloor$) and consider uniformly random injective linear maps $\iota_1,\ldots,\iota_z\colon K\to V$. The indices $i\in[z]$ for the different copies of the template will often be referred to as \emph{colors}, so that we can have a notion of \emph{monochromatic} and \emph{rainbow} $s$-spaces.

Finally, we explain in \cref{sub:outline-absorption} exactly why this rigid algebraic structure leads to robust switches which allow for absorption.

\subsection{Approximate design}\label{sub:outline-approximate}
Having set aside the $r$-spaces $G_\mr{tem}$, the next step is to construct an approximate $(n,s,r)_q$-design (avoiding the template). We do this by running a random process to create an approximate matching in the $\qbinom{s}{r}_q$-uniform hypergraph defined as follows: vertices are $r$-spaces other than $G_\mr{tem}$, and edges are labeled by $s$-spaces with the edge containing precisely vertices corresponding to the $r$-dimensional subspaces (so we must restrict to $s$-spaces whose $r$-dimensional subspaces are all not in $G_\mr{tem}$).

Naively, one might run the following process: uniformly at random select a hyperedge covering a set of yet-uncovered vertices and iterate until this is no longer possible. However, it is convenient to terminate the process the moment the remaining induced hypergraph is sufficiently irregular in an appropriate sense. Additionally, more importantly, we care greatly about the time for which we can control this process (and specifically the number of $r$-spaces in the remainder). With this basic process, the size of the remainder will be small, but dependent on the initial irregularity coming from removing $G_\mr{tem}$. Thus, the remainder will not be smaller than the size of the template, which poses a problem for our absorption strategy.

Hence, there is an additional step where we find a subset of hyperedges which is regularized to account for these minor irregularities, allowing us to control the process for longer. This is achieved in \cref{lem:template-boosted}, and the random process is run in \cref{prop:approximate-covering}. The content of \cref{lem:template-boosted} can be thought of as a kind of \emph{regularity boosting}; procedures similar to this occur in \cite[Lemma~4.1]{Kee14} and in \cite[Lemma~4.2]{BGKLMO20} in the setting of Steiner systems, though the general idea dates back further to approximate hypergraph matching results of Alon, Kim, and Spencer \cite{AKS97}. We note here that the required regularity boost is obtained by ``local rebalancing'', which is implemented using local decodability of the lattice associated to $(n,s,r)_q$-designs. In general the necessary local ``gadgets'' used to implement regularity boosting correspond to short kernel vectors of the associated boundary operator $\partial_{s,r}$ (see \cref{sub:lattices} for a precise definition of these lattices and operators).

Finally, we remark that the remainder is not only small with respect to the number of $r$-spaces left over, but it also not too concentrated in any location. Specifically, every $(r-1)$-space does not have too many extensions to an $r$-space in the remainder, which we encode via a condition called \emph{boundedness} (\cref{def:bounded}).

\subsection{Covering the leave, and the spillover}\label{sub:outline-spillover}
After removing the template and this approximate cover, the remainder is a \emph{leave} which is significantly smaller than the template. The next step is to take a collection of $s$-spaces, one covering each $r$-space in the leave, such that each $s$-space has all but $1$ of its $r$-spaces in $G_\mr{tem}$. If we add in these $s$-spaces, then the result will have almost all $r$-spaces covered exactly once, but some are covered twice. The ones covered twice form the \emph{spillover}. This is accomplished in \cref{lem:final-covering}.

For technical reasons that will become apparent in \cref{sub:outline-absorption}, we will further run the above process so that the spillover satisfies a certain disjointness condition: for every $r$-space in the spillover, it is contained in an $s$-space of $\mc{S}_\mr{tem}$, and we wish for said $s$-spaces to be distinct. In fact, we guarantee the slightly stronger property of field disjointness (\cref{def:field-disjoint}). Furthermore, we wish for boundedness of the spillover and in fact $r$-dimensional \emph{field boundedness} (\cref{def:field-bounded}) to ensure that the results are not overly concentrated with respect to the underlying $L$-structures as well.

\subsection{Bounded integral decomposition}\label{sub:outline-integral}
At this stage, we need to find a way to ``remove'' the extra copy of the spillover from our design. Ideally, we wish to find $\mc{S}_1\subseteq\mc{S}_\mr{tem}$, a set of template $s$-spaces, and a set $\mc{S}_2$ of $s$-spaces with the following properties: (a) all of the spillover is contained in $r$-spaces coming from $\mc{S}_1$, so that removing $\mc{S}_1$ yields a partial design with all multiplicities $1$ and $0$, instead of $2$ on the spillover, and (b) $\mc{S}_2$ precisely covers the ``hole'' of multiplicity $0$ $r$-spaces left by the removal of $\mc{S}_1$. We will ultimately find such a decomposition, but this requires multiple steps.

The first step is finding a reasonable integral decomposition of the spill. At this point, it is useful to consider signed multicollections of $s$-spaces and $r$-spaces, and to represent these as vectors in $\mb{Z}^{\mr{Gr}(V,s)}$ and $\mb{Z}^{\mr{Gr}(V,r)}$ with a natural boundary map $\partial_{s,r}$ mapping the first space to the second (see \cref{sub:lattices}). We show that the spillover, $J$, can be represented as the result of considering the (signed) $r$-spaces within a signed sum of $s$-spaces $\Phi$. Furthermore, we show that boundedness of $J$ guarantees we can obtain $\Phi$ which is bounded. This gets us a bit closer to the above, which is equivalent to $J=\partial_{s,r}(\mc{S}_1-\mc{S}_2)$ for $\mc{S}_1\subseteq\mc{S}_\mr{tem}$ and $\mc{S}_2\subseteq\mr{Gr}(V,s)$ (abusively identifying a set with its indicator vector).

This is a highly nontrivial argument, and we prove the abstract \cref{thm:bounded-inverse} that shows that in general bounded integral elements of an appropriate lattice have correspondingly bounded inverses. We save detailed discussion of the techniques for the proofs in \cref{sec:bounded-integral-decomposition}, but a crucial step involves robustly showing in a sense that appropriate collections of $r$-spaces within a random host $H\subseteq\mr{Gr}(V,r)$ can be decomposed by $s$-spaces whose $r$-subspaces are all in $H$. Beyond this, the fact that the key lattices defined in \cref{sub:lattices} have \emph{robust local decodability} plays an important role. See \cite[Section~1.4]{Kee14} for a discussion of the interplay of robust local decodability and ``bounded integral designs'' in the case of Steiner systems. We note however that the proof techniques in \cite{Kee14} are largely unavailable to use at this stage as the proof relies on an essentially explicit description of the corresponding lattice in terms of ``octahedra''. No such nice characterization appears to exist for subspace designs due to nontrivial conditions on the lattice (see \cite{RCS89}). Instead our proof essentially only uses local decodability and the existence of local ``subspace exchanges'' (\cref{prop:subspace-exchange}).

\subsection{Subspace exchange process}\label{sub:outline-exchange}
At this stage, the spillover is expressed as a signed integral decomposition of $s$-spaces that is appropriately bounded. We now massage its form to get closer (but not all the way) to the $\mc{S}_1,\mc{S}_2$ mentioned in \cref{sub:outline-integral}.

First, we rewrite the spillover $J$ as a signed integral decomposition $\partial_{s,r}\Phi$ where every $r$-space appears with multiplicity at most $1$ in the positive and negative portions (it could appear once in both and cancel out). We furthermore require that the $r$-spaces that appear are contained within $G_\mr{tem}$ and are not just bounded but field bounded, and that they continue to be field disjoint as in \cref{sub:outline-spillover}. Additionally, for technical reasons we require the $s$-spaces that appear to be \emph{rainbow} in the sense that all $\qbinom{s}{r}_q$ different $r$-subspaces are in different template copies. This is the content of \cref{lem:final-rainbow}, and it is accomplished in two steps of \emph{splitting} and \emph{elimination}. In \emph{splitting}, we randomly replace each $s$-space with a signed ``flipped configuration'' of $s$-spaces (with the same signed sum of $r$-spaces) disjointly randomly from the current support in sequence, so that all the high multiplicity cancellation occurs in a controllable way (being ``split'' into groups which do not interact). In \emph{elimination}, we take the high multiplicity cancellations and break them into pairs, and then randomly replace these pairs with a signed ``flipped configuration'' that reduces the multiplicity of the cancellation at a specified $r$-space (allowing us to ``eliminate'' all cancellations except of the form $(+1)+(-1)=0$).

Second, for a technical reason we slightly massage the resulting $s$-spaces to maintain the earlier properties, but also to ensure that every $s$-space $S$ coheres with all its $r$-subspaces $R$: if $R$ comes from template index $i$, then $S\leqslant K_i$, i.e., $S$ is within the vector space upon which the field structure for the $i$th template copy is placed. This is done in \cref{lem:final-color-consistent}.

Finally, we turn the positive $s$-spaces in our decomposition \emph{monochromatic}, which is a necessary precondition for having the form described in \cref{sub:outline-integral}. In fact, there is another less obvious precondition, which we call \emph{configuration compatibility} of the $s$-space (\cref{def:configuration-compatible}): essentially, notice that every $s$-space in $\mc{S}_\mr{tem}$ is composed of $r$-spaces that each have a different configuration of ``how it wants to be included in the template'' (recall the discussion in \cref{sub:outline-template}). Transforming the positive $s$-spaces in such a way is done in \cref{prop:final-disjoint-monochromatic}, again by running a random process of ``flips''. The key issue at this stage is guaranteeing that we can indeed find flips that have the resulting positive $s$-spaces be monochromatic (in fact, the negative $s$-spaces will not be monochromatic). Additionally, beyond guaranteeing $r$-dimensional field boundedness and field disjointness, for technical reasons at this stage we need to guarantee some slightly stronger properties defined over $s$-spaces.

\subsection{Counting extensions (with accessories) and disjoint random processes}\label{sub:outline-extension}
To prove \cref{lem:final-rainbow,lem:final-color-consistent,prop:final-disjoint-monochromatic}, and in fact also \cref{lem:final-covering}, we need two ingredients: (a) an understanding of how many ways there are to extend a fixed structure of $r$-spaces into a larger pattern of $r$-spaces such that the new $r$-spaces are all in certain prescribed copies of the template with certain configuration properties (etc.), and (b) an ability to run a disjoint random process, i.e., one where at each stage the next choices are taken not randomly from all extensions, but only those that are appropriately disjoint from the previous choices (etc.). It is not hard to see that (b) requires some form of boundedness as an input, but we also need the heuristics for (a,b) to work out that we can continue to maintain this boundedness throughout.

We achieve (a) in \cref{sec:extension-template}, in particular \cref{prop:template-extendable}. It is written in a high degree of generality, but basically shows that extensions into the template (with various conditions, as long as they are not over-constraining) act as one might expect from a dense random set of $r$-spaces, up to factors of the density of the template. The proof follows by various concentration techniques over the randomness of the template, including Azuma--Hoeffding (\cref{lem:azuma}) and the method of moments.

We encapsulate a general framework for (b) in \cref{lem:master-disjointness}; however, we note that it is adapted to processes that mainly consider properties defined via the underlying $r$-spaces, so it is not directly suitable for \cref{prop:final-disjoint-monochromatic} (in which we maintain some $s$-space related properties as well).

\subsection{Absorbing the spillover}\label{sub:outline-absorption}
The output of \cref{prop:final-disjoint-monochromatic} is a decomposition of the spillover $J$ as $\partial_{s,r}\Phi$ where $\Phi\in\{0,\pm1\}^{\mr{Gr}(V,s)}$ and the positive $s$-spaces are monochromatic, configuration compatible, and disjoint and bounded in various senses. Finally, in \cref{prop:final-absorber} we show that the positive $s$-spaces can be transformed into a collection of positive and negative $s$-spaces with the same image under $\partial_{s,r}$, such that the new positive $s$-spaces are all in $\mc{S}_\mr{tem}$. This provides us the form as claimed in \cref{sub:outline-integral}, and so will finish.

This is the stage where we strongly use the algebraic structure of the template. Up until this point we have mainly used extension counts coming from \cref{prop:template-extendable} which merely ensure that $G_\mr{tem}$ has many substructures as if random (with some basic accoutrements); then we performed ``flips'' replacing $s$-spaces with other $s$-spaces, but these did not need to be in $\mc{S}_\mr{tem}$ (we only needed that the $r$-subspaces were in $G_\mr{tem}$, plus whatever other conditions). On the other hand, in this step we specifically want to use ``flips'' such that in the result, all positive $s$-spaces are in $\mc{S}_\mr{tem}$, which were the highly structured algebraic $s$-spaces we set aside at the beginning. To highlight this difference, up to factors of the template density there are roughly order $q^{sn}$ many $s$-spaces whose $r$-subspaces are in $G_\mr{tem}$, whereas $\#\mc{S}_\mr{tem}\le q^{rn}$ is much smaller.

To prove \cref{prop:final-absorber}, we use the Lov\'asz Local Lemma (\cref{lem:lll}) to show we can simultaneously find disjoint ``algebraic flips'' that involve $\mc{S}_\mr{tem}$ in this special way. There are much fewer choices here than in \cref{sub:outline-exchange}. These special algebraic flips are given in \cref{def:absorber} (defined over $L$; we then translate to template copies via the various maps $\iota_1,\ldots,\iota_z$), and we analyze their basic properties such as their count in \cref{sec:absorber}, coming from the underlying structure of the template. These properties allow us to establish \cref{prop:final-absorber}, completing the final piece of the argument.

\subsection{Possible future directions}\label{sub:future-directions}
In light of our resolution of the existence problem for $(n,s,r)_q$-designs, a number of natural questions arise. The most natural relate to so-called resolvability-type variants. For instance, can one find $(n,3,2)_2$-designs that can be partitioned into a collection of $(n,3,1)_2$-designs (for all large appropriately divisible $n$)? More generally, one might ask for a Baranyai-type design: for given $s\ge 1$, we want to partition $\mr{Gr}_q(n,s)$ into $(n,s,s-1)_q$-designs, each of we further partition into $(n,s,s-2)_q$-designs, etc., down to $(n,s,1)_q$-designs. More broadly, we make the informal conjecture (the correct precise statement is not clear) that various extensions in the style of \cite{Kee18c} are all possible under the necessary divisibility conditions coming from associated lattices (which are not always fully obvious).

\subsection{Organization}\label{sub:organization}
We now briefly discuss the organization of the paper. In \cref{sec:preliminaries} we collect a number of basic notions and results which will be used throughout the paper: $q$-analogues of hypergraphs called $q$-systems (\cref{sub:q-extensions}), lattices and boundary operators for integral designs (\cref{sub:lattices}), concentration inequalities (\cref{sub:concentration}), a notion of a subset of field elements being generic with respect to algebraic equations (\cref{sub:genericity}), and finally the formal basic setup (\cref{sub:setup}). We then formally construct the subspace exchanges used which will be the ultimate source of all manner of ``local switches'' and ``flips'' in \cref{sec:exchanges}. Using this we then prove our bounded integral decomposition result in \cref{sec:bounded-integral-decomposition}. In \cref{sec:template} we formally construct the template. In \cref{sec:absorber,sec:absorption} we first prove that the template robustly contains many subspace exchanges of a special form (involving template $s$-spaces) and use these to prove \cref{prop:final-absorber} (the main absorption statement). In \cref{sec:approximate} we prove the necessary approximate covering results. In \cref{sec:extension-template} we count various extensions into the template (\cref{prop:template-extendable}) and these are then exploited in \cref{sec:cover} (alongside a disjoint random process framework \cref{lem:master-disjointness}) to both create the spillover and then massage the spillover into a form where \cref{prop:final-absorber} is applicable. Finally in \cref{sec:final-counting} we put all the pieces together to prove \cref{thm:main}, and also discuss \cref{cor:counting,thm:lambda}.

\section{Preliminaries and setup}\label{sec:preliminaries}
\subsection{\texorpdfstring{$q$}{q}-systems and \texorpdfstring{$q$}{q}-extensions}\label{sub:q-extensions}
We will often need to understand the number of ways we can embed certain configurations of subspaces in a (potentially random) host. Thus we define various related notions for future use, starting with a natural $q$-analogue of hypergraphs.
\begin{definition}\label{def:q-system}
An $r$-dimensional \emph{$q$-system} $H$ on $\mb{F}_q^n$ is a subset of $\mr{Gr}_q(n,r)$, or equivalently an element of $\{0,1\}^{\mr{Gr}_q(n,r)}$; multi-$q$-systems are elements of $\mb{Z}_{\ge 0}^{\mr{Gr}_q(n,r)}$ and signed multi-$q$-systems are elements of $\mb{Z}^{\mr{Gr}_q(n,r)}$. We will refer to the elements of these as \emph{$r$-spaces} and write $d(H):=|H|/\qbinom{n}{r}_q$ for the density of $H$. We write $\mr{Vec}(H)$ for the underlying vector space upon which $H$ lies. Given a subspace $F\leqslant\mr{Vec}(H)$, we write $H[F]$ for the $q$-system on $F$ composed of those $r$-spaces in $H$ fully contained within $F$.
\end{definition}
\begin{remark}
Similar to hypergraphs, the underlying vector space of $H$ may be larger than the span of all its $r$-spaces.
\end{remark}
Now we define $q$-embeddings and $q$-extensions (we may omit the parameter $q$ as it is generally obvious).
\begin{definition}\label{def:q-embedding}
A \emph{$q$-embedding} of a $q$-system $H$ in a multi-$q$-system $G$ is an injective linear map $\phi\colon\mr{Vec}(H)\to\mr{Vec}(G)$ such that $G_{\phi(R)} > 0$ for all $R\in H$. Given an $r$-dimensional $q$-system $H$, a subspace $F\leqslant\mr{Vec}(H)$, and an injective linear map $\phi\colon F\to\mb{F}_q^n$, we call $E = (\phi,F,H)$ a \emph{$q$-extension}. We write $e_E:=|H\setminus H[F]|$ and $v_E:=\dim\mr{Vec}(H)-\dim F$. Now if $G$ is an $r$-dimensional $q$-system on $\mb{F}_q^n$ we write $\mc{X}_E(G)$ for the set of embeddings of $H$ in $G+\phi(H[F])$ (not distinguishing potential multiple edges) that agree with $\phi$ on $F$. Let $X_E(G):=\#\mc{X}_E(G)$.
\end{definition}
\begin{definition}\label{def:q-typical}
We say that an $r$-dimensional $q$-system $G$ on $\mb{F}_q^n$ is \emph{(c,h)-typical} if for every $q$-extension $E=(\phi,F,H)$ with $v_E > 0$ and $\dim\mr{Vec}(H)\le h$ we have $X_E(G)=(1\pm c)d(G)^{e_E}q^{v_En}$.
\end{definition}
We finally define a notion of boundedness of a vector $J$ with respect to a potentially sparse $q$-system $L$.
\begin{definition}\label{def:embedding-boundedness}
Given a $q$-extension $E = (\phi,F,H\setminus\{R\})$, $L\subseteq\mr{Gr}_q(n,r)$, and $J\in\mb{Z}^{\mr{Gr}_q(n,r)}$, let
\[X_E^R(L,J):=\sum_{\phi^\ast\in\mc{X}_E(L)}|J_{\phi^\ast(R)}|.\]
We say that $J$ is \emph{$(\theta,h)$-bounded wrt $L$} if $X_E^R(L,J)\le\theta d(L)^{e_E}q^{v_En}$ for all $q$-extensions $E = (\phi,F,H\setminus\{R\})$ with $\dim\mr{Vec}(H)\le h$ and $R\in H\setminus H[F]$.
\end{definition}

Finally, it will be useful for us to have a slightly easier condition for verifying typicality. For hypergraphs, one can reduce the notion of typicality for $G$ coming from extension counts to a question of how many vertices simultaneously extend a collection of $(r-1)$-sets into a collection of edges of $G$ (see e.g.~\cite[Definition~1.3,~Lemma~2.16]{Kee14}). However, in $q$-systems there is a possibility that vector spaces are in some sense ``transverse'' but still dependent, so the required notion is subtler.
\begin{lemma}\label{lem:one-step-typical}
Suppose $c\in(0,1)$, $n$ is large with respect to $h$, and that the $r$-dimensional $q$-system $G$ on $\mb{F}_q^n$ satisfies the following property:
\begin{itemize}
    \item Consider any choice of $(r-1)$-spaces $Q_1,\ldots,Q_a\leqslant\mb{F}_q^n$ and vectors $v_1,\ldots,v_a\in\mb{F}_q^n$ where $a\le q^{rh}$ such that if $i\neq j$ and $Q_i=Q_j$ then $\mr{span}_{\mb{F}_q}(Q_i\cup\{v_i\})\neq\mr{span}_{\mb{F}_q}(Q_j\cup\{v_j\})$. Then there are $(1\pm c)d(G)^aq^n$ vectors $v\in\mb{F}_q^n$ with $\mr{span}_{\mb{F}_q}(Q_i\cup\{v+v_i\})\in G$ for all $i\in[a]$.
\end{itemize}
Then $G$ is $(2^hc,h)$-typical.
\end{lemma}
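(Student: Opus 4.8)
The plan is to induct on $\dim\mr{Vec}(H)$ (equivalently on $v_E$, since $\dim F$ can be reduced to the span of the vectors hit by $H[F]$-edges at the cost of trivial adjustments) to show that for every $q$-extension $E=(\phi,F,H)$ with $v_E>0$ and $\dim\mr{Vec}(H)\le h$ we have $X_E(G)=(1\pm 2^{v_E}c)d(G)^{e_E}q^{v_En}$; the claimed bound $(1\pm 2^hc)$ then follows since $v_E\le h$. The base case $v_E=1$ is essentially the hypothesis: pick a vector $w\in\mr{Vec}(H)\setminus F$, so $\mr{Vec}(H)=F\oplus\mr{span}(w)$, and each $R\in H\setminus H[F]$ has the form $\mr{span}_{\mb{F}_q}(Q_R\cup\{u_R+\alpha_R w\})$ for some $(r-1)$-space $Q_R\leqslant F$, some $u_R\in F$, and some $\alpha_R\in\mb{F}_q^\times$ (if $\alpha_R=0$ then $R\in H[F]$). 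An embedding extending $\phi$ is determined by the image $v\in\mb{F}_q^n$ of $w$, and the condition $\phi^\ast(R)\in G$ becomes $\mr{span}_{\mb{F}_q}(\phi(Q_R)\cup\{\phi(u_R)+\alpha_R v\})\in G$. After reindexing by $v\mapsto\alpha_R^{-1}v$ — here one should group the edges by the value of $\alpha_R$ and handle each scaling class, or equivalently absorb $\alpha_R$ by noting $\mr{span}(Q\cup\{u+\alpha v\})=\mr{span}(Q\cup\{\alpha^{-1}u+v\})$ — this is exactly the system $\mr{span}_{\mb{F}_q}(Q_i\cup\{v+v_i\})\in G$ from the hypothesis, with $Q_i=\phi(Q_R)$ and $v_i=\alpha_R^{-1}\phi(u_R)$. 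The non-coincidence condition ``$Q_i=Q_j$ implies $\mr{span}(Q_i\cup\{v_i\})\ne\mr{span}(Q_j\cup\{v_j\})$'' is precisely what distinctness of the $r$-spaces of $H$ gives us (two distinct edges of $H$ with the same $Q$ and the same $\alpha$ must have distinct spans), and $a=e_E\le|H|\le q^{rh}$ since $H$ has at most $\qbinom{h}{r}_q\le q^{rh}$ edges. So the hypothesis yields $X_E(G)=(1\pm c)d(G)^{e_E}q^n$, which is the base case with room to spare.

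For the inductive step with $v_E\ge 2$, I would peel off one dimension: choose $w\in\mr{Vec}(H)\setminus F$, set $F'=F\oplus\mr{span}(w)$, and for each choice of image $v\in\mb{F}_q^n$ of $w$ let $\phi_v\colon F'\to\mb{F}_q^n$ be the extension of $\phi$ sending $w\mapsto v$. Then
\[
X_E(G)=\sum_{v}X_{E_v}(G),\qquad E_v=(\phi_v,F',H),
\]
where the sum is over the $v$ for which $\phi_v$ is injective (all $q^n$ of them minus the $O(q^{n-1+\dim F})$ that fail injectivity, a negligible correction) \emph{and} for which every edge $R\in H[F']\setminus H[F]$ satisfies $\phi_v(R)\in G$. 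The edges $H[F']\setminus H[F]$ are exactly the $r$-spaces of $H$ lying in $F'$ but not $F$, and the constraint $\phi_v(R)\in G$ for these is again a system of the type appearing in the hypothesis (all involving $v$ linearly); by the base-case analysis there are $(1\pm c)d(G)^{b}q^n$ valid $v$, where $b=|H[F']|-|H[F]|$. For each such $v$, $E_v$ is a $q$-extension with $v_{E_v}=v_E-1\ge1$, $\dim\mr{Vec}(H)\le h$ unchanged, and $e_{E_v}=|H|-|H[F']|=e_E-b$, so by induction $X_{E_v}(G)=(1\pm 2^{v_E-1}c)d(G)^{e_E-b}q^{(v_E-1)n}$. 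Multiplying the count of valid $v$ by this uniform estimate gives
\[
X_E(G)=(1\pm c)(1\pm 2^{v_E-1}c)\,d(G)^{e_E}q^{v_En}=(1\pm 2^{v_E}c)\,d(G)^{e_E}q^{v_En},
\]
using $(1+c)(1+2^{v_E-1}c)\le 1+2^{v_E}c$ (and the matching lower bound) for $c\in(0,1)$, and absorbing the negligible injectivity correction into the constant since $d(G)^{e_E}q^{v_En}$ is super-polynomially larger than $q^{n-1+\dim F}\cdot q^{(v_E-1)n}$ when $d(G)$ is not too small — if $d(G)$ can be tiny one should instead note $e_{E_v}$ only decreases and carry the error multiplicatively, or simply observe the problem is trivial when $d(G)^{e_E}q^{v_En}$ is itself small.

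The main obstacle, and the point the lemma's own remark flags, is the ``transversality'' subtlety: when peeling off the coordinate $w$, two \emph{distinct} edges $R,R'$ of $H$ can project to the \emph{same} $r$-space after fixing $v$ (their $\mb{F}_q$-spans collide for special $v$), or an edge in $H\setminus H[F']$ can become dependent with $F'$-data in a way that makes $E_v$ not a genuine extension with the expected parameters. Concretely, one must check that for the $(1-O(c))q^n$ ``good'' values of $v$, the residual extension $E_v$ really does have $e_{E_v}=e_E-b$ edges genuinely outside $H[F']$ with the right $v_{E_v}$ — i.e., that no unexpected coincidences among the remaining edges are forced. Distinctness of the $r$-spaces of $H$ (plus $n$ large compared to $h$, so that generic $v$ avoids the bounded number of ``bad'' linear-algebraic coincidence loci, each cutting down the choices of $v$ by a factor $q^{-1}$ or more) is what rescues this; quantitatively, the number of bad $v$ is at most $\binom{|H|}{2}q^{n-1}=O(q^{2rh+n-1})$, which is absorbed into the $(1\pm c)$ provided $c\gg q^{-1}$, and otherwise the statement again becomes vacuous or one tightens the bookkeeping. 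Handling this coincidence analysis cleanly — rather than the arithmetic of combining error terms, which is routine — is where the real care is needed.
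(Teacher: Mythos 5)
Your proposal is correct and is essentially the paper's own argument: both proofs embed one new basis vector at a time, rewrite each newly completed $r$-space as $\mr{span}_{\mb{F}_q}(Q\cup\{u_i+w\})$ with $Q,w$ in the previously embedded space so that the hypothesis applies (with the non-coincidence condition supplied by distinctness of the $r$-spaces of $H$), and multiply the $(1\pm c)$ factors over the $v_E\le h$ steps. Your phrasing as an induction on $v_E$, and your extra worry about image coincidences, are cosmetic — once the linear map is injective, distinct subspaces of $\mr{Vec}(H)$ have distinct images, so no separate coincidence analysis is needed.
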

\begin{proof}
Consider some extension $E=(\phi,F,H)$ with $v_E > 0$ and $t = \dim\mr{Vec}(H)\le h$. Thus $v_E=t-\dim F$ and $t\ge\dim F+1$. Let $\{u_1,\ldots,u_t\}$ be a basis of $\mr{Vec}(H)$ where $\{u_1,\ldots,u_{\dim F}\}$ is a basis for $F$ and let $V_i = \mr{span}_{\mb{F}_q}\{u_1,\ldots,u_i\}$ for $0\le i\le t$. Let $e_i=|H[V_i]|-|H[V_{i-1}]|$ and note $e_E = e_{\dim F+1}+\cdots+e_t$.

We now count $\psi\in\mc{X}_E(G)$. Note that $\psi(u_i) = \phi(u_i)$ is fixed for $i\in[\dim F]$. For $i\in\{\dim F+1,\ldots,t\}$ in order, we count the number of ways to choose $\psi(u_i)$ given the prior choices. Specifically, a choice of $\psi(u_i)$ will determine the locations of the $r$-spaces in $H[V_i]\setminus H[V_{i-1}]$: each such space can be written as $\mr{span}_{\mb{F}_q}(Q\cup\{u_i+w\})$ for some $(r-1)$-space $Q\leqslant V_{i-1}$ and vector $w\in V_{i-1}$, hence its image under $\psi$ is $\mr{span}_{\mb{F}_q}(\psi(Q)\cup\{\psi(u_i)+\psi(w)\})$ where $\psi(Q)\leqslant\psi(V_{i-1})$ and $\psi(w)\in\psi(V_{i-1})$. By the given condition, there are $(1\pm c)d(G)^{e_i}q^n$ choices of $\psi(u_i)$ which ensure that simultaneously all these spaces land in $G$. We can apply the condition since clearly $|H[V_i]\setminus H[V_{i-1}]|\le q^{rh}$ (being $r$-spaces within a space of dimension at most $h$) and since the $r$-spaces of $H[V_i]\setminus H[V_{i-1}]$ with the same restriction to $V_{i-1}$ (which is called $Q$ here) will have associated vectors $w$ that are different $\imod{Q}$, otherwise they would represent the same $r$-space in $H$.

The total count can be read off by multiplying these numbers of choices, which gives $(1\pm c)^{v_E}d(G)^{e_E}q^{v_En} = (1\pm 2^hc)d(G)^{e_E}q^{v_En}$, as desired.
\end{proof}

\subsection{Lattices and boundary operators}\label{sub:lattices}
We can view a (signed multi-)$q$-system of dimension $s$ as an element of $\mb{Z}^{\mr{Gr}_q(n,s)}$. Define the boundary map $\partial_{s,r}\colon\mb{Z}^{\mr{Gr}_q(n,s)}\to\mb{Z}^{\mr{Gr}_q(n,r)}$ via
\[e_S\mapsto\sum_{\substack{R\leqslant S\\\dim R = r}}e_R\]
and linearity. The problem of constructing an $(n,s,r)_q$-design is the same as finding $\Phi\in\{0,1\}^{\mr{Gr}_q(n,s)}$, or equivalently just $\Phi\in\mb{Z}_{\ge 0}^{\mr{Gr}_q(n,s)}$, with
\[\partial_{s,r}\Phi = \sum_{R\in\mr{Gr}_q(n,r)}e_R.\]
(We will often abusively treat sets as their indicator vectors in the appropriate space, which in particular allows application of $\partial_{s,r}$.)

It is therefore useful to define the image lattice $\mc{L}=\mc{L}_{s,r}:=\partial_{s,r}\mb{Z}^{\mr{Gr}_q(n,s)}$, which defines the conditions for the image. In order for the desired element to be just in the image of $\partial_{s,r}$, this imposes natural divisibility constraints on $n$ which we will find to be identical to those in \cref{thm:main}.

This lattice $\mc{L}$ was characterized by Ray-Chaudhuri and Singhi \cite{RCS89}. We note that they showed the ``obvious'' necessary divisibility conditions are not sufficient for an element to be in the lattice (unlike the set system case studied by Wilson \cite{Wil73} and Graver and Jurkat \cite{GJ73}); however, for a vector in $\mb{Z}^{\mr{Gr}_q(n,r)}$ with all coefficients equal to some $\lambda\in\mb{Z}$, the conditions degenerate to precisely those natural conditions. In particular, it suffices to require:
\begin{equation}\label{eq:divisibility}
\qbinom{s-i}{r-i}_q\mid\lambda\qbinom{n-i}{r-i}_q\text{ for all }0\le i\le r-1.
\end{equation}
Note that a positive density of such values exist: the values $n$ satisfying $A!|n-r+1$ for $A$ large in terms of $q,r,s$ all satisfy \cref{eq:divisibility}. Alternatively, the values $n$ satisfying $s-i|n-i$ for all $0\le i\le r-1$ will satisfy \cref{eq:divisibility}: this is a compatible system of modular constraints by the Chinese remainder theorem, so this proves that the density of valid $n$ is in fact lower-bounded independent of $q$. Additionally, \cref{eq:divisibility} is equivalent to constraining $n$ to live in one of a finite list of bounded parameter modular congruences (depending on $q,s,r$).

We state the precise result for future reference.
\begin{theorem}[{\cite[Theorem~1.1]{RCS89}}]\label{thm:integral-lattice}
For $n\ge s > r\ge 1$ we have
\[\lambda\sum_{\substack{R\leqslant\mb{F}_q^n\\\dim R=r}}e_R\in\partial_{s,r}\mb{Z}^{\mr{Gr}_q(n,s)}\]
if and only if \cref{eq:divisibility} holds.
\end{theorem}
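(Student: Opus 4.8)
The plan is to prove necessity by a short link-averaging computation, and to prove sufficiency by first observing that rational solvability is free and then controlling the relevant torsion one prime at a time. For necessity, suppose $\lambda\sum_{\dim R=r}e_R=\partial_{s,r}\Phi$ for some $\Phi\in\mb{Z}^{\mr{Gr}_q(n,s)}$. Fix $0\le i\le r$ and an $i$-dimensional subspace $W\leqslant\mb{F}_q^n$, and add up, on both sides, the coordinates indexed by all $r$-spaces $R$ with $W\leqslant R$. The left-hand side becomes $\lambda\qbinom{n-i}{r-i}_q$. On the right-hand side, an $s$-space $S$ with $W\leqslant S$ contributes $\Phi_S$ times the number of $r$-spaces $R$ with $W\leqslant R\leqslant S$, namely $\qbinom{s-i}{r-i}_q$ (the number of $(r-i)$-spaces of $S/W$), while an $s$-space not containing $W$ contributes nothing. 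Hence $\qbinom{s-i}{r-i}_q$ divides $\lambda\qbinom{n-i}{r-i}_q$; the case $i=r$ is vacuous, so we recover exactly \cref{eq:divisibility}.

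For sufficiency, the first point is that rational solvability is immediate: every $r$-space lies in exactly $\qbinom{n-r}{s-r}_q>0$ of the $s$-spaces, so $\partial_{s,r}\bigl(\sum_S e_S\bigr)=\qbinom{n-r}{s-r}_q\sum_R e_R$, and therefore $\lambda\sum_R e_R$ always lies in $\partial_{s,r}\mb{Q}^{\mr{Gr}_q(n,s)}$. Consequently the class of $\sum_R e_R$ in the cokernel $\mb{Z}^{\mr{Gr}_q(n,r)}/\partial_{s,r}\mb{Z}^{\mr{Gr}_q(n,s)}$ is torsion of order dividing $\qbinom{n-r}{s-r}_q$, so $\lambda\sum_R e_R$ lies in the integral image if and only if it does after localizing at each of the finitely many primes $p$ dividing $\qbinom{n-r}{s-r}_q$. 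For each such $p$ the task is to identify the exact $p$-part of the order of $[\sum_R e_R]$, for which I would exploit $\mr{GL}_n(\mb{F}_q)$-equivariance: $\mb{Q}^{\mr{Gr}_q(n,r)}$ is a multiplicity-free $\mr{GL}_n(\mb{F}_q)$-module, decomposing as $\bigoplus_j V_j$ into explicit irreducibles, $\partial_{s,r}$ acts on each shared isotypic component by an explicitly computable scalar (a product of $q$-binomial factors, equal to $\qbinom{n-r}{s-r}_q$ on the trivial component $V_0$, which is spanned integrally by $\sum_R e_R$), and one then has to understand how the integral lattice $\mb{Z}^{\mr{Gr}_q(n,r)}$ sits relative to this rational decomposition — equivalently, to establish a Smith-normal-form / ``diagonal form'' description of the $q$-inclusion matrix $\bigl([R\leqslant S]\bigr)_{R,S}$.

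The hard part is exactly this last step: the eigenvalue data alone only pin down the $p$-ranks of $\partial_{s,r}$, not the torsion of the cokernel, and — unlike the set-system case of Wilson \cite{Wil73} and Graver--Jurkat \cite{GJ73}, where the image lattice has a transparent generating set of ``octahedral'' differences and the naive divisibility conditions suffice for \emph{every} lattice element — here the full lattice $\mc{L}_{s,r}$ genuinely needs extra conditions, so there is no such explicit generating set to lean on for a general vector. For the constant vector, however, \cite{RCS89} shows the naive conditions \emph{do} suffice, and I would expect to follow (or reconstruct) their mechanism: an induction on $r$ that peels off one step of a complete flag at a time, noting that the conditions of \cref{eq:divisibility} for the indices $1\le i\le r-1$ are precisely the conditions of the theorem for the parameters $(n-1,s-1,r-1)$, so an $(n-1,s-1,r-1,\lambda)_q$ signed design is available by induction; the work is to lift it to an $r$-dimensional signed design on $\mb{F}_q^n$ with correction terms chosen to force the boundary to be the constant vector $\lambda\sum_R e_R$ rather than merely some lattice vector with the same support pattern. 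Should the flag bookkeeping become unwieldy, the alternative is to extract the needed $p$-adic invariants directly from the known diagonal form of $q$-inclusion matrices. Since the statement is quoted verbatim from \cite[Theorem~1.1]{RCS89}, in the paper we simply cite it rather than reprove it.
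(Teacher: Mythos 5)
The paper does not prove this statement at all—it is quoted directly as \cite[Theorem~1.1]{RCS89}—and your proposal ultimately does the same, so you are taking essentially the same route. Your link-averaging argument for necessity is correct (summing coordinates over all $r$-spaces containing a fixed $i$-space does yield $\qbinom{s-i}{r-i}_q\mid\lambda\qbinom{n-i}{r-i}_q$ for $0\le i\le r-1$, which is exactly \cref{eq:divisibility}), and you are right that sufficiency is the genuinely hard direction requiring the Smith-normal-form/diagonal-form analysis of the $q$-inclusion matrix from \cite{RCS89}, which it is appropriate to cite rather than reprove.
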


\subsection{Concentration inequalities}\label{sub:concentration}
We will often need the Chernoff bound for binomial and hypergeometric distributions (see e.g.~\cite[Theorems~2.1~and~2.10]{JLR00}). Recall that the hypergeometric distribution $\mr{Hyp}(n,n_1,n_2)$ for $n\ge\max\{n_1,n_2\}$ is the size of the intersection of two independent uniformly random subsets of $[n]$ of sizes $n_1,n_2$.
\begin{lemma}[Chernoff bound]\label{lem:chernoff}
Let $X$ be either:
\begin{itemize}
    \item a sum of independent random variables, each $[0,1]$-valued, or
    \item hypergeometrically distributed (with any parameters).
\end{itemize}
Then for any $\delta>0$
\[\Pr[X\le(1-\delta)\mb{E}X]\le\exp(-\delta^2\mb{E}X/2),\qquad\Pr[X\ge (1+\delta)\mb{E}X]\le\exp(-\delta^2\mb{E}X/(2+\delta)).\]
\end{lemma}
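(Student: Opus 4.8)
The plan is to verify the Chernoff bound by reducing the hypergeometric case to the case of a sum of independent $[0,1]$-valued random variables, and then citing a standard reference for that case. For the statement as written, essentially nothing needs to be proved from scratch: both inequalities are completely classical. First I would recall that the bounds in the second bullet (hypergeometric $X$) follow from the bounds in the first bullet: indeed, a hypergeometric random variable $\mr{Hyp}(n,n_1,n_2)$ can be written as $\sum_{i=1}^{n_2} Y_i$ where, conditionally, each $Y_i \in \{0,1\}$, and a classical observation (going back to Hoeffding) is that such a sum is ``more concentrated'' than the corresponding sum of independent Bernoullis with the same mean, so that any Chernoff-type upper tail bound valid for independent $[0,1]$ summands also holds for the hypergeometric distribution. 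Thus it suffices to prove the two displayed inequalities when $X = \sum_i X_i$ with the $X_i$ independent and $[0,1]$-valued.

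For that case I would run the usual exponential moment (Bernstein/Chernoff) argument. Set $\mu = \mb{E}X$. For the lower tail, for any $u > 0$ we have $\Pr[X \le (1-\delta)\mu] \le e^{u(1-\delta)\mu}\mb{E}[e^{-uX}] = e^{u(1-\delta)\mu}\prod_i \mb{E}[e^{-uX_i}]$; using $\mb{E}[e^{-uX_i}] \le \exp((e^{-u}-1)\mb{E}X_i)$ for $[0,1]$-valued $X_i$ (convexity of $t \mapsto e^{-ut}$ on $[0,1]$), this is at most $\exp(u(1-\delta)\mu + (e^{-u}-1)\mu)$. Optimizing gives $\exp(-\mu(\delta + (1-\delta)\log(1-\delta)))$, and the elementary inequality $\delta + (1-\delta)\log(1-\delta) \ge \delta^2/2$ for $\delta \in (0,1)$ (and the bound is vacuous for $\delta \ge 1$ since then $(1-\delta)\mu \le 0$) yields $\Pr[X \le (1-\delta)\mu] \le \exp(-\delta^2\mu/2)$. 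For the upper tail, the same computation with $\mb{E}[e^{uX_i}] \le \exp((e^u-1)\mb{E}X_i)$ gives $\Pr[X \ge (1+\delta)\mu] \le \exp(-\mu((1+\delta)\log(1+\delta) - \delta))$, and then $(1+\delta)\log(1+\delta) - \delta \ge \delta^2/(2+\delta)$ for $\delta > 0$ finishes the bound.

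In practice, since these are entirely standard, the ``proof'' I would actually write is a one-line deferral: cite \cite[Theorems~2.1~and~2.10]{JLR00} (already referenced in the statement) for both the binomial/independent-sum case and the hypergeometric case, noting that Theorem~2.10 there establishes that the hypergeometric distribution satisfies the same tail bounds as the binomial. The only ``obstacle'' — and it is a minor bookkeeping point rather than a real difficulty — is making sure the cited inequalities are stated in exactly the $(1\pm\delta)\mb{E}X$ form with the constants $2$ and $2+\delta$ in the exponents, rather than some cosmetically different but equivalent formulation; if the reference uses a slightly different normalization one would insert the two elementary numerical inequalities $\delta + (1-\delta)\log(1-\delta) \ge \delta^2/2$ and $(1+\delta)\log(1+\delta) - \delta \ge \delta^2/(2+\delta)$ to convert. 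No step here is genuinely hard; the lemma is recorded only for convenient reference in later sections.
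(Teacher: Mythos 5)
Your proposal is correct and matches the paper's treatment: the paper gives no proof of this lemma, simply citing \cite[Theorems~2.1~and~2.10]{JLR00}, exactly as your one-line deferral does. The additional exponential-moment sketch and the hypergeometric-to-independent-sum reduction you include are the standard arguments and are accurate, but they go beyond what the paper records.
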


We will also frequently require the Azuma-Hoeffding inequality (see \cite[Theorem~2.25]{JLR00}).
\begin{lemma}[Azuma--Hoeffding inequality]\label{lem:azuma}
Let $X_0, \ldots, X_n$ form a martingale sequence such that $|X_k-X_{k-1}|\le c_k$ almost surely. Then 
\[\mb{P}[|X_0-X_n|\ge t]\le 2\exp\bigg(-\frac{t^2}{2\sum_{k=1}^nc_k^2}\bigg)\]
\end{lemma}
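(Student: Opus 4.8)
The plan is to run the classical exponential-moment (Chernoff-type) argument for martingales. First I would pass to the martingale differences $D_k := X_k - X_{k-1}$, so that $X_n - X_0 = \sum_{k=1}^n D_k$ telescopes; writing $\mc{F}_k$ for the associated filtration (one may take the natural one generated by $X_0,\dots,X_k$), the martingale property gives $\mb{E}[D_k \mid \mc{F}_{k-1}] = 0$, while by hypothesis $|D_k| \le c_k$ almost surely.

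The heart of the argument is Hoeffding's lemma: whenever $Y$ satisfies $\mb{E}[Y \mid \mc{G}] = 0$ and $|Y| \le c$ almost surely for a $\sigma$-algebra $\mc{G}$, then $\mb{E}[e^{\lambda Y} \mid \mc{G}] \le e^{\lambda^2 c^2/2}$ for every real $\lambda$. I would prove this by using convexity of $x \mapsto e^{\lambda x}$ on $[-c,c]$ to bound $e^{\lambda Y} \le \tfrac{c-Y}{2c}e^{-\lambda c} + \tfrac{c+Y}{2c}e^{\lambda c}$ pointwise, taking conditional expectation (the terms linear in $Y$ vanish by mean-zero), which leaves $\cosh(\lambda c)$, and finally using $\cosh(x) \le e^{x^2/2}$ by a term-by-term comparison of Taylor series, valid since $(2k)! \ge 2^k k!$.

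Next I would condition one step at a time: for $\lambda > 0$, since $e^{\lambda \sum_{k=1}^{n-1} D_k}$ is $\mc{F}_{n-1}$-measurable,
\[
\mb{E}\big[e^{\lambda(X_n - X_0)}\big] = \mb{E}\Big[e^{\lambda \sum_{k=1}^{n-1} D_k}\,\mb{E}\big[e^{\lambda D_n}\,\big|\,\mc{F}_{n-1}\big]\Big] \le e^{\lambda^2 c_n^2/2}\,\mb{E}\big[e^{\lambda\sum_{k=1}^{n-1} D_k}\big],
\]
and iterating down to $k=1$ gives $\mb{E}\big[e^{\lambda(X_n - X_0)}\big] \le \exp\!\big(\tfrac{\lambda^2}{2}\sum_{k=1}^n c_k^2\big)$. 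A Markov bound then yields $\mb{P}[X_n - X_0 \ge t] \le \exp\!\big(-\lambda t + \tfrac{\lambda^2}{2}\sum_k c_k^2\big)$, and choosing $\lambda = t/\sum_k c_k^2$ optimizes this to $\exp\!\big(-t^2/(2\sum_k c_k^2)\big)$. Since $(-X_k)_k$ is also a martingale with the same increment bounds, the identical estimate holds for the lower tail $\mb{P}[X_n - X_0 \le -t]$, and a union bound over the two tails produces the claimed factor of $2$.

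I do not expect a genuine obstacle here, since this is a textbook inequality; the only point requiring a little care is Hoeffding's lemma (the sub-Gaussian moment-generating-function bound for a bounded mean-zero random variable) together with the correct handling of the nested conditional expectations along the filtration, so that the telescoping in the displayed step is rigorously justified.
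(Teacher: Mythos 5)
Your proof is correct and is the standard argument (Hoeffding's lemma for the conditional moment generating function, telescoping along the filtration, Markov's inequality with the optimal choice of $\lambda$, and a union bound over the two tails). The paper does not prove this lemma itself but quotes it from the cited reference \cite[Theorem~2.25]{JLR00}, where essentially this same proof appears, so there is nothing to compare beyond noting that your write-up fills in the details the paper delegates to the citation.
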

\begin{remark}
We will refer to $\sum_{k=1}^nc_k^2$ as the \emph{variance proxy} in such a situation.
\end{remark}

We will also require the following useful binomial domination lemma. 
\begin{lemma}\label{lem:bernoulli-domination}
Let $X_i\in\{0,1\}$, $p_i\in[0,1]$, and $a_i\in\mb{R}^+$ for $i\in[n]$. Suppose that 
\[\mb{P}[X_i=1|X_1,\ldots,X_{i-1}]\le p_i\]
for all $i\in[n]$ and let $Y_i$ be independent random variables distributed as $\mr{Ber}(p_i)$ (i.e., it is $1$ with probability $p_i$ and $0$ otherwise). Then for any $t\ge 0$ we have 
\[\mb{P}\big[\sum_{i\in[n]}a_iY_i\ge t\big]\ge\mb{P}\big[\sum_{i\in[n]}a_iX_i\ge t\big].\]
\end{lemma}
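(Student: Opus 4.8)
The plan is to couple the dependent sequence $(X_i)$ to the independent Bernoulli sequence $(Y_i)$ on a common probability space so that $X_i \le Y_i$ pointwise for every $i$; once such a coupling exists the conclusion is immediate, since $\sum a_i X_i \le \sum a_i Y_i$ almost surely (all $a_i \ge 0$) and hence $\{\sum a_i X_i \ge t\} \subseteq \{\sum a_i Y_i \ge t\}$, giving the probability inequality. So the whole content is the construction of the monotone coupling.

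The natural way to do this is sequentially, using the quantile/inverse-CDF trick. First I would realize the given process: there is some probability space carrying $X_1, \dots, X_n$ together with the filtration $\mathcal{F}_i = \sigma(X_1, \dots, X_i)$, and by hypothesis $p_i^\ast := \mathbb{P}[X_i = 1 \mid \mathcal{F}_{i-1}] \le p_i$ almost surely (here $p_i^\ast$ is itself $\mathcal{F}_{i-1}$-measurable). Enlarge this space by adjoining an independent sequence $U_1, \dots, U_n$ of i.i.d.\ uniforms on $[0,1]$, independent of everything. Then I would re-present $X_i$ on the enlarged space as $X_i = \mathbbm{1}[U_i \le p_i^\ast]$: this has the correct conditional law given $(\mathcal{F}_{i-1}, U_1, \dots, U_{i-1})$ because $U_i$ is independent of that $\sigma$-algebra, so one can check inductively that $(X_1, \dots, X_n)$ constructed this way has exactly the original joint distribution. (More carefully, one builds the $X_i$ one at a time from the $U_i$, at step $i$ first forming $p_i^\ast$ as the prescribed $\mathcal{F}_{i-1}$-measurable function of $X_1,\dots,X_{i-1}$, then setting $X_i = \mathbbm{1}[U_i \le p_i^\ast]$.) Now define $Y_i := \mathbbm{1}[U_i \le p_i]$ on the same space. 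Since the $U_i$ are i.i.d.\ uniform, the $Y_i$ are independent with $Y_i \sim \mathrm{Ber}(p_i)$ as required, and because $p_i^\ast \le p_i$ we get $X_i = \mathbbm{1}[U_i \le p_i^\ast] \le \mathbbm{1}[U_i \le p_i] = Y_i$ pointwise. This yields the coupling and finishes the proof.

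The only genuinely delicate point — and the one I would write out most carefully — is verifying that the sequentially reconstructed $(X_i)$ on the enlarged space really does have the same joint law as the original, i.e.\ that the "inverse-CDF with the conditional success probability" construction is faithful. This is a standard fact but deserves an explicit inductive check on finite-dimensional distributions: assuming $(X_1, \dots, X_{i-1})$ matches the original law, the conditional probability that $X_i = 1$ given $(X_1, \dots, X_{i-1})$ equals $\mathbb{E}[\mathbbm{1}[U_i \le p_i^\ast] \mid X_1, \dots, X_{i-1}] = p_i^\ast$ by independence of $U_i$, which is exactly the hypothesized conditional law. Everything else is routine. An alternative, if one prefers to avoid reconstructing the process, is a purely measure-theoretic greedy coupling argument defining the joint law of $(X_i, Y_i)$ step by step so that $\mathbb{P}[X_i \le Y_i] = 1$ at each stage while preserving both marginals — but the uniform-variable presentation above is cleaner and I would go with that.
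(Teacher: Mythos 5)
Your proof is correct and complete: the sequential inverse-CDF coupling $X_i=\mbm{1}[U_i\le p_i^\ast]$, $Y_i=\mbm{1}[U_i\le p_i]$ with $p_i^\ast\le p_i$ gives a monotone coupling, and the conclusion follows from $\{\sum_i a_iX_i\ge t\}\subseteq\{\sum_i a_iY_i\ge t\}$ since the $a_i$ are nonnegative. The paper states this lemma without proof as a standard fact, and your argument is precisely the canonical one; the one point you flag as delicate (faithfulness of the reconstruction) is handled correctly, and is painless here because the $X_i$ are $\{0,1\}$-valued so the conditioning is on a finite $\sigma$-algebra.
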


Finally we will use the Lov\'asz Local Lemma \cite{EL75}. 
\begin{lemma}[{\cite[Lemma~5.1.1]{AS16}}]\label{lem:lll}
Let $\mc{B}_1,\ldots,\mc{B}_n$ be events in a probability space and let $D = ([n], E)$ be a directed graph which is a dependency graph for $(\mc{B}_i)_{i\in[n]}$, i.e., for each $i\in[n]$, $\mc{B}_i$ is mutually independent of all events $\{\mc{B}_j\colon(i,j)\notin E\}$. If $x_i\in[0,1]$ and $\mb{P}[\mc{B}_i]\le x_i\prod_{(i,j)\in E}(1-x_j)$ for all $i\in[n]$ then
\[\mb{P}\Big[\bigcap\ol{\mc{B}_i}\Big]\ge\prod_{i=1}^n(1-x_i).\]
\end{lemma}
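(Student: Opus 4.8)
The plan is to prove the asymmetric Lov\'asz Local Lemma by establishing the stronger inductive claim that for every index $i\in[n]$ and every subset $S\subseteq[n]\setminus\{i\}$,
\[\mb{P}\Big[\mc{B}_i\,\Big|\,\bigcap_{j\in S}\ol{\mc{B}_j}\Big]\le x_i,\]
with the induction running on $|S|$, while simultaneously tracking that every conditioning event $\bigcap_{j\in S}\ol{\mc{B}_j}$ encountered has positive probability (so these conditional probabilities make sense). The base case $|S|=0$ is immediate from the hypothesis $\mb{P}[\mc{B}_i]\le x_i\prod_{(i,j)\in E}(1-x_j)\le x_i$.

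For the inductive step I would partition $S=S_1\cup S_2$, where $S_1=\{j\in S\colon(i,j)\in E\}$ collects the out-neighbours of $i$ in the dependency graph and $S_2=S\setminus S_1$, and expand
\[\mb{P}\Big[\mc{B}_i\,\Big|\,\bigcap_{j\in S}\ol{\mc{B}_j}\Big]=\frac{\mb{P}\big[\mc{B}_i\cap\bigcap_{j\in S_1}\ol{\mc{B}_j}\,\big|\,\bigcap_{k\in S_2}\ol{\mc{B}_k}\big]}{\mb{P}\big[\bigcap_{j\in S_1}\ol{\mc{B}_j}\,\big|\,\bigcap_{k\in S_2}\ol{\mc{B}_k}\big]}.\]
The numerator is at most $\mb{P}[\mc{B}_i\mid\bigcap_{k\in S_2}\ol{\mc{B}_k}]=\mb{P}[\mc{B}_i]\le x_i\prod_{(i,j)\in E}(1-x_j)$, since $S_2$ avoids the out-neighbourhood of $i$ and hence $\mc{B}_i$ is mutually independent of $\{\mc{B}_k\colon k\in S_2\}$. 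For the denominator, writing $S_1=\{j_1,\dots,j_m\}$ and telescoping,
\[\mb{P}\Big[\bigcap_{l=1}^m\ol{\mc{B}_{j_l}}\,\Big|\,\bigcap_{k\in S_2}\ol{\mc{B}_k}\Big]=\prod_{l=1}^m\mb{P}\Big[\ol{\mc{B}_{j_l}}\,\Big|\,\bigcap_{p<l}\ol{\mc{B}_{j_p}}\cap\bigcap_{k\in S_2}\ol{\mc{B}_k}\Big]\ge\prod_{l=1}^m(1-x_{j_l})\ge\prod_{(i,j)\in E}(1-x_j),\]
where each factor is bounded below by applying the inductive hypothesis (the conditioning set has size at most $|S|-1$), and the final inequality uses $S_1\subseteq\{j\colon(i,j)\in E\}$ together with $1-x_j\le1$. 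Dividing, the ratio is at most $x_i$, closing the induction; the positivity of the conditioning events propagates since each factor above is at least $1-x_{j_l}>0$ provided $x_{j_l}<1$ (and the case $x_{j_l}=1$ is trivial as it forces the hypothesis to fail unless $\mb{P}[\mc{B}_{j_l}]=0$).

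Finally, chaining the claim over all indices gives
\[\mb{P}\Big[\bigcap_{i=1}^n\ol{\mc{B}_i}\Big]=\prod_{i=1}^n\mb{P}\Big[\ol{\mc{B}_i}\,\Big|\,\bigcap_{j<i}\ol{\mc{B}_j}\Big]\ge\prod_{i=1}^n(1-x_i),\]
as desired. I expect the main subtlety to be the bookkeeping in the inductive step: correctly isolating the neighbourhood part $S_1$ so that mutual independence of $\mc{B}_i$ applies to the numerator, while the inductive hypothesis is applied factor-by-factor to the telescoped denominator, and ensuring throughout that the conditioning events stay positive so that no conditional probability is ill-defined.
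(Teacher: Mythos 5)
The paper does not prove this lemma at all—it is quoted directly from Alon--Spencer—and your argument is precisely the standard inductive proof given there: induction on $|S|$ for the claim $\mb{P}[\mc{B}_i\mid\bigcap_{j\in S}\ol{\mc{B}_j}]\le x_i$, splitting $S$ into out-neighbours and non-neighbours of $i$, using mutual independence on the numerator and the inductive hypothesis factor-by-factor on the telescoped denominator. The proof is correct, including the treatment of the degenerate case $x_j=1$ (where the conclusion is vacuous) and the propagation of positivity of the conditioning events.
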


\subsection{Algebraic genericity}\label{sub:genericity}
It will be useful to introduce the following notion of algebraic genericity.
\begin{definition}\label{def:generic}
For any field $K$ and subsets $S,T\subseteq K$ we say that $S$ is \emph{$T$-generic of degree $d$} if there is no nonzero polynomial of degree at most $d$ in variables $\{x_s\colon s\in S\}$ with coefficients in $T$ that vanishes when we substitute $x_s=s$ for all $s\in S$. We also say that a vector or matrix is $T$-generic (and a collection of such is jointly $T$-generic) if the set of entries is $T$-generic.
\end{definition}

One useful fact is that if a $d\times d$ matrix is $\{0,\pm1\}$-generic of degree $d$ then it is invertible. We additionally record a master lemma encapsulating the fact that certain linear-algebraic conditions are algebraically generic outside of explicit degeneracies. This will be used to show various properties of the algebraic template, e.g.~it is well-defined and has good extendability properties.

\begin{lemma}\label{lem:generic}
Given $s>r\ge 1$ and prime power $q$, suppose that $d\ge d_{\ref{lem:generic}}(r)$ and $\ell\ge\ell_{\ref{lem:generic}}(d,s)$. Then any matrix $N\in\mb{F}_{q^\ell}^{s\times r}$ which is $\mb{F}_q$-generic of degree $d$ satisfies the following:
\begin{itemize}
    \item For every $\Pi\in\mb{F}_q^{r\times s}$ of rank $r$, we have that $\Pi N$ is invertible.
    \item For every $\Pi\in\mb{F}_q^{r\times s}$ of rank $r$ and $y\in\mb{F}_q^{1\times s}$ with $y\notin\mr{row}_{\mb{F}_q}(\Pi)$, the row space, we have that $yN(\Pi N)^{-1}z\in\mb{F}_{q^\ell}^{1\times r}$ has no coordinates in $\mb{F}_q$ for all $z\in\mb{F}_q^r\setminus\{0\}$.
    \item For every choice of $\Pi_i,\Pi_i^\ast\in\mb{F}_q^{r\times s}$ of rank $r$ for $i\in\{1,2\}$ such that (a) there do not simultaneously exist $M_i\in\mr{GL}(\mb{F}_q^r)$ with $\Pi_i=M_i\Pi_i^\ast$ for $i\in\{1,2\}$, and (b) there is not $M\in\mr{GL}(\mb{F}_q^r)$ with $M\Pi_1^\ast=\Pi_2^\ast$, we have that the vector $(\Pi_1N)(\Pi_1^\ast N)^{-1}z_1-(\Pi_2N)(\Pi_2^\ast N)^{-1}z_2$ is nonzero for every choice of $z_1,z_2\in\mb{F}_q^r\setminus\{0\}$.
    \end{itemize}
Furthermore, such a matrix exists.
\end{lemma}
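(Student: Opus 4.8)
Here is a plan. The lemma is a genericity statement, and the strategy is the usual one: for each of the three itemised properties I will show that its failure, for \emph{any} fixed choice of the (finitely many) auxiliary data involved --- the matrices $\Pi,\Pi_1,\Pi_1^\ast,\Pi_2,\Pi_2^\ast$, the vectors $y,z_1,z_2$, a coordinate index, an element of $\mb{F}_q$ --- forces $N$ to be a zero of some \emph{nonzero} polynomial with coefficients in $\mb{F}_q$ whose degree is bounded by a function of $r$ alone. Granting this, any $N$ that is $\mb{F}_q$-generic of degree $d\ge d_{\ref{lem:generic}}(r)$ (with $d_{\ref{lem:generic}}(r):=2r$, say) simultaneously satisfies all three properties, since there are only finitely many such polynomials and genericity forbids $N$ from being a zero of any of them; the existence of such an $N$ in $\mb{F}_{q^\ell}^{s\times r}$ for $\ell$ large is then a separate counting argument. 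Throughout the first part it is convenient to treat the entries of $N$ as indeterminates and work over the rational function field $\mb{F}_q(N)$, a property holding ``generically'' meaning the associated polynomial (after clearing denominators) is not identically zero. Note that for any fixed rank-$r$ matrix $\Pi\in\mb{F}_q^{r\times s}$ the polynomial $\det(\Pi N)$ has degree at most $r$ and is nonzero, since $\Pi$ admits a right inverse $N_0\in\mb{F}_q^{s\times r}$ with $\det(\Pi N_0)=1$; this already gives the first property and also lets us clear the denominator $\det(\Pi N)$ in the other two via the adjugate formula.

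For the second property, fix $\Pi$ and $y\notin\operatorname{row}_{\mb{F}_q}(\Pi)$; for $z\in\mb{F}_q^r\setminus\{0\}$ the scalar $yN(\Pi N)^{-1}z$ equals $y\cdot w$, where $w=N(\Pi N)^{-1}z$ is the unique vector in the column span of $N$ with $\Pi w=z$. As the column span of $N$ varies, $w$ ranges over the affine subspace $\{v:\Pi v=z\}$, whose direction space is $\ker\Pi$; since $y\notin\operatorname{row}_{\mb{F}_q}(\Pi)=(\ker\Pi)^{\perp}$, the functional $w\mapsto y\cdot w$ is non-constant on that affine subspace, so $yN(\Pi N)^{-1}z$ is a non-constant rational function and in particular cannot equal any fixed element of $\mb{F}_q$. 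Clearing denominators, the relation ``$yN(\Pi N)^{-1}z=c$'' is therefore a nonzero polynomial of degree at most $r$ in the entries of $N$.

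For the third property I would use that $(\Pi_i N)(\Pi_i^\ast N)^{-1}$ depends only on the column span $U$ of $N$ (it is invariant under $N\mapsto NT$, $T\in\mathrm{GL}_r$), and that $(\Pi_i N)(\Pi_i^\ast N)^{-1}z_i=\Pi_i v_i$, where $v_i\in U$ is the unique vector with $\Pi_i^\ast v_i=z_i$; moreover $N\mapsto\Pi_i v_i$ is constant exactly when $\ker\Pi_i^\ast\subseteq\ker\Pi_i$, i.e.\ when $\Pi_i=M_i\Pi_i^\ast$ for some $M_i\in\mathrm{GL}_r(\mb{F}_q)$. Write $g(N)=(\Pi_1 N)(\Pi_1^\ast N)^{-1}z_1-(\Pi_2 N)(\Pi_2^\ast N)^{-1}z_2$. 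Hypothesis (a) says at least one of the two maps $N\mapsto\Pi_i v_i$ is non-constant; if exactly one is, then $g$ is non-constant, hence not identically zero. In the remaining case both are non-constant, and the point is that $g\equiv0$ would then force $\ker\Pi_1^\ast=\ker\Pi_2^\ast$, contradicting (b): concretely, when $\ker\Pi_1^\ast\ne\ker\Pi_2^\ast$ the affine subspaces $\{\Pi_1^\ast v=z_1\}$ and $\{\Pi_2^\ast v=z_2\}$ are sufficiently transverse that one can choose $v_1,v_2$ in a common generic subspace $U$ (with $\Pi_i^\ast|_U$ invertible) having $\Pi_1 v_1\ne\Pi_2 v_2$, witnessing $g\not\equiv0$; the case $r=1$ is an exception, handled by a direct unique-factorisation argument in $\mb{F}_{q^\ell}(N)$ that again uses (b). Granting $g\not\equiv0$, clearing denominators shows that one of the $r$ coordinates of $g$ is a nonzero polynomial of degree at most $2r$ in the entries of $N$. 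This last non-vanishing in the third property is the step I expect to be the main obstacle: unlike the first two properties, one genuinely has to pin down precisely when the two rational maps $N\mapsto(\Pi_i N)(\Pi_i^\ast N)^{-1}z_i$ coincide identically, and verify that hypotheses (a) and (b) together are exactly what excludes it.

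Finally, for the existence clause: the set of matrices in $\mb{F}_{q^\ell}^{s\times r}$ that fail to be $\mb{F}_q$-generic of degree $d$ is the union, over the finitely many (at most $q^{\binom{sr+d}{d}}$) nonzero polynomials of degree at most $d$ in $sr$ variables over $\mb{F}_q$, of their zero sets, and by the Schwartz--Zippel bound each zero set has at most $d\,q^{\ell(sr-1)}$ points; hence once $q^{\ell}>d\,q^{\binom{sr+d}{d}}$, i.e.\ $\ell\ge\ell_{\ref{lem:generic}}(d,s)$ for a suitable such function, this union is a proper subset of $\mb{F}_{q^\ell}^{s\times r}$, and any matrix outside it is as required.
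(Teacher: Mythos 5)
Your overall strategy coincides with the paper's: for each fixed choice of auxiliary data, failure of the stated property forces $N$ to be a zero of a nonzero $\mb{F}_q$-polynomial of degree $O(r)$, and genericity of degree $d$ then rules out all finitely many such polynomials at once. Your treatment of the first two bullets is correct and arguably cleaner than the paper's change-of-basis/Cramer's-rule computation (reading $yN(\Pi N)^{-1}z$ as $y\cdot w$ with $w$ the unique point of the affine space $\{v\colon\Pi v=z\}$ lying in the column span of $N$, and noting $y\notin\mr{row}_{\mb{F}_q}(\Pi)=(\ker\Pi)^{\perp}$ makes $y$ non-constant on that affine space). Your Schwartz--Zippel union bound for the existence clause is a valid substitute for the paper's explicit construction using powers $\alpha^{(d+1)^j}$ of a primitive element.

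The genuine gap is the third bullet, which you have flagged yourself. Writing $v_i(U)$ for the unique point of $A_i=\{v\colon\Pi_i^\ast v=z_i\}$ in the column span $U$ of $N$, you must show $\Pi_1v_1(U)-\Pi_2v_2(U)$ is not identically zero under (a) and (b), and your justification in the hard case (both maps non-constant) is only that $A_1,A_2$ are ``sufficiently transverse'' that one can place freely chosen $v_1\in A_1$, $v_2\in A_2$ with $\Pi_1v_1\ne\Pi_2v_2$ into a common admissible $U$. This is exactly where the difficulty lives, and it is false as stated for some configurations: $v_1(U)$ and $v_2(U)$ are cut out of the \emph{same} $U$ and cannot in general be varied independently. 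For instance, if $\ker\Pi_1^\ast$ and $\ker\Pi_2^\ast$ share a hyperplane one can have $\Pi_1^\ast(A_2)\subseteq\mb{F}_qz_1$, and then for \emph{every} linearly independent pair $v_1\in A_1$, $v_2\in A_2$ the plane $\mr{span}_{\mb{F}_q}(\{v_1,v_2\})$ meets $\ker\Pi_1^\ast$ nontrivially, so no admissible $U$ contains both; this configuration is compatible with (a) and (b). The paper's proof spends most of its length on precisely this interaction between $\mr{row}_{\mb{F}_q}(\Pi_1^\ast)$ and $\mr{row}_{\mb{F}_q}(\Pi_2^\ast)$: it normalises everything relative to a basis adapted to the intersection of the two row spaces, splits into two cases according to where a witness to condition (a) sits, and in one subcase must invoke the conclusion of the second bullet to certify nontriviality. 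Until you carry out an analysis of this kind (together with your acknowledged $r=1$ exception), the third bullet -- and hence the lemma -- is not proved.
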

\begin{remark}
In the second bullet, if $y\in\mr{row}_{\mb{F}_q}(\Pi)$ then we can write $y=y'\Pi$ for some $y'\in\mb{F}_q^{1\times r}$ and thus $yN(\Pi N)^{-1}=y'\in\mb{F}_q^{1\times r}$. In the third bullet, if the condition (a) is not satisfied then $(\Pi_iN)(\Pi_i^\ast N)^{-1}=M_i$ and the vector of interest equals $M_1v_1-M_2v_1\in\mb{F}_q^r$, which can easily have zero entries.
\end{remark}
\begin{proof}
Let us treat $N=(x_{ij})_{i\in[s],j\in[r]}$ as a matrix of variables. We will show that failure of at least one of the bullet points implies at least one nontrivial polynomial relation over $\mb{F}_q$ of bounded degree (in terms of only $q,s$). This will immediately imply that any evaluation $N\in\mb{F}_{q^\ell}^{s\times r}$ which is $\mb{F}_q$-generic of degree $d$ satisfies all these properties. Furthermore, for $\ell>(d+1)^{rs}$ we can find $rs$ elements of $\mb{F}_{q^\ell}$ which together are $\mb{F}_q$-generic of degree $d$: simply consider $\{\alpha,\alpha^{(d+1)},\alpha^{(d+1)^2},\ldots,\alpha^{(d+1)^{rs-1}}\}$ where $\alpha$ generates $\mb{F}_{q^\ell}$ over $\mb{F}_q$ (which exists by the primitive element theorem). Indeed, when we plug in these values in any order into an $\mb{F}_q$-polynomial of degree at most $d$ in $rs$ variables, every monomial gives a different degree of $\alpha$ due to uniqueness of base $d+1$ expansion.

In this proof we will denote by $e_i\in\mb{F}_q^{1\times s}$ the row vector with $1$ in coordinate $i\in[s]$ and $0$ elsewhere (and sometimes abuse notation to refer to the restriction to $\mb{F}_q^{1\times r}$ when $i\in[r]$). We let $I_{r,s}\in\mb{F}_q^{r\times s}$ be the matrix with $e_i$ in its $i$th row for $i\in[r]$. Additionally, note that we may reduce to the cases $z=e_1$ by replacing $\Pi$ by $Q\Pi$ where $Q\in\mr{GL}(\mb{F}_q^r)$ is such that $Q^{-1}e_1=z$ in the second bullet point (this does not change the row space of $\Pi$), and similarly to $z_1=z_2=e_1$ for the third bullet point (the conditions (a) and (b) are unaffected). Now we study each bullet point in turn.

For the first bullet point, fix some $\Pi\in\mb{F}_q^{r\times s}$ of rank $r$ and note that we can write $\Pi=I_{r,s}M$ for some $M\in\mr{GL}(\mb{F}_q^s)$. Since $M$ is invertible, the elements of $N'=MN$ are linearly independent linear forms in the original variables composing $N$. Thus by shifting basis of the polynomial ring in question we may assume that the elements of $N'$ are our base variables. The condition fails if $\Pi N=I_{r,s}(MN)$ is noninvertible, i.e., $\det(I_{r,s}N')=0$. This is just the determinant of the first $r$ rows of $N'$, which gives a nontrivial polynomial relation of degree $r$.

For the second, again fix some appropriate $\Pi$ and now $y\notin\mr{row}_{\mb{F}_q}(\Pi)$. This implies that we can simultaneously write $\Pi=I_{r,s}M$ and $y=e_{r+1}M$ for some $M\in\mr{GL}(\mb{F}_q^s)$. We have $yN(\Pi N)^{-1}=e_{r+1}N'(I_{r,s}N')^{-1}$ where again $N'=MN$ has elements which are an invertible linear transformation of those of $N$ so can be treated as containing the base variables. Now the elements of $(I_{r,s}N')^{-1}$ can be obtained from the adjugate matrix via Cramer's rule while the elements of $e_{r+1}N'$ are completely disjoint from the defining variables of this. We thus easily see that for any $\alpha\in\mb{F}_q^{1\times r}$ each of the $r$ elements of $\det(I_{r,s}N')\cdot(e_{r+1}N'(I_{r,s}N')^{-1}-\alpha)$ is a nontrivial polynomial of degree $r$ (with degree $1$ in each of the variables of $e_{r+1}N'$).

The third is the most complex. Recall that we may focus on the situation that the first column is nonzero. Let $V_i=\mr{row}_{\mb{F}_q}(\Pi_i^\ast)$ for $i\in\{1,2\}$ and let $V=V_1\cap V_2$ and $t=\dim V$. Let $\{v_1,\ldots,v_t\}$ be an $\mb{F}_q$-basis for $V$ and extend this via $\{a_1,\ldots,a_{r-t}\}$ and $\{b_1,\ldots,b_{r-t}\}$ to respectively obtain bases for $V_1$ and $V_2$. Put together these vectors clearly generate the space $V_1+V_2$, and there are $2r-t=\dim V_1+\dim V_2-\dim(V_1\cap V_2)=\dim(V_1+V_2)$ of them, so they in fact form a basis of $V_1+V_2$ (this also implies $\max(0,2r-s)\le t\le r$). Now consider $M^\intercal\in\mr{GL}(\mb{F}_q^s)$ which maps $e_i$ to $v_i$ for $i\in[t]$, $e_{t+i}$ to $a_i$ for $i\in[r-t]$, and $e_{r+i}$ to $b_i$ for $i\in[r-t]$. We have
\[\Pi_1^\ast N = A_1J_1^\ast(MN),\quad\Pi_2^\ast N = A_2J_2^\ast(MN)\]
for some appropriate $A_i\in\mr{GL}(\mb{F}_q^r)$, where $J_1^\ast=I_{r,s}$ and $J_2^\ast$ has its $i$th row equal to $e_i$ for $i\in[t]$ and $e_{i+r-t}$ for $i\in\{t+1,\ldots,r\}$. This is since we can trivially deduce $\mr{row}_{\mb{F}_q}(J_i^\ast M)=\mr{row}_{\mb{F}_q}(\Pi_i^\ast)$ for $i\in\{1,2\}$, which implies these two rank $r$ matrices are related by left-multiplication of an invertible matrix. Furthermore write $\Pi_i=J_iM$ for $i\in\{1,2\}$, and again let $N'=MN$ and treat the elements of $N'$ as base variables for our polynomials. Note that
\begin{equation}\label{eq:inverse-difference}
(\Pi_1N)(\Pi_1^\ast N)^{-1}-(\Pi_2N)(\Pi_2^\ast N)^{-1}=(J_1N')(A_1J_1^\ast N')^{-1}-(J_2N')(A_2J_2^\ast N')^{-1}.
\end{equation}

By the first given condition and without loss of generality, we may assume that $\Pi_1$ and $\Pi_1^\ast$ are not related by left-multiplication of an invertible matrix. Equivalently, the same can be said of $J_1$ and $J_1^\ast=I_{r,s}$. This implies that $J_1\in\mb{F}_q^{r\times s}$ has a nonzero element outside the first $r$ columns. Suppose it is in the $(i^\ast,j^\ast)$ position (so $j^\ast\in\{r+1,\ldots,s\}$) and has value $\alpha\in\mb{F}_q^\times$ and let $z$ be the $j^\ast$th row of $N'$.

Now we split into two cases. In the first, we suppose $j^\ast\in\{r+1,\ldots,2r-t\}$. Let us focus on the element of \cref{eq:inverse-difference} which is in the $(i^\ast,1)$ position and specifically the influence of the variables from $z$. The first term in the difference \cref{eq:inverse-difference} contributes precisely the dot product of the $i^\ast$th row of $J_1N'$ and the first column of $(A_1J_1^\ast N')^{-1}$. Said row is equal to $\alpha z$ plus a vector composed only of linear combinations of variables from other rows of $N'$. Said column is a vector of nonzero rational functions only of the first $r$ rows of $N'$ with an explicit representation due to Cramer's rule. The second term contributes something analogous but with $J_2N'$ and $(A_2J_2^\ast N')^{-1}$. However, since $j^\ast\in\{r+1,\ldots,2r-t\}$, the $z$ influence looks qualitatively different. To be precise, the $i^\ast$th row of $J_2N'$ can be written as some (potentially zero) multiple of $z$ plus a linear combination $z'$ of other variables of $N'$. But we can find $y\in\mb{F}_q^{1\times r}$ with $yA_2=e_{j^\ast-(r-t)}\in\mb{F}_q^{1\times r}$. and hence from the definition of $J_2^\ast$ we deduce $z=yA_2J_2^\ast N'$. Then $z(A_2J_2^\ast N')^{-1}=y$. On the other hand, $z'(A_2J_2^\ast N')^{-1}$ only involves the variables of $z$ insofar as they are involved in the inverse matrix via Cramer's rule.

Ultimately, the condition that the $(i^\ast,1)$ element of \cref{eq:inverse-difference} equals $0$ leads to an equation of the form $(\alpha L(z)+\beta)-I(z)=0$ where (a) $L(z)$ is linear in $z$ with nonzero rational functions in the other variables as coefficients and $\beta$ is a potentially zero rational function serving as a constant term, and (b) $I(z)$ is a potentially zero rational function in $z$ with $z$-degree at most $0$ with denominator the nonzero polynomial $\det(A_2J_2^\ast N')=\det(A_2)\det(J_2^\ast N')$ of total degree $r$ and $z$-degree $1$. Multiplying over by $\det(A_1J_1^\ast N')\det(A_2J_2^\ast N')$ can be seen to therefore give a nontrivial polynomial relation due to our analysis of the $z$ variables, and the total degree is at most $2r$. We conclude consideration of this case.

The second case is when $j^\ast\in\{2r-t+1,\ldots,s\}$. We again consider the $(i^\ast,1)$ element of \cref{eq:inverse-difference} and its dependence on $z$. The first term contributes the same $L(z)$ as before, and now the second term is instead analogous to the first term and contributes something of the form $\alpha'L'(z)+\beta'$ with $L'(z),\beta'$ satisfying conditions analogous to $L(z),\beta$ and $\alpha'\in\mb{F}_q$ (the only difference is that unlike $\alpha$, the value $\alpha'$ can potentially be $0$). We thus derive an equation $\alpha L(z)+\beta=\alpha'L'(z)+\beta'$ of the described form. When we multiply over the determinants, this will become a polynomial relation of degree at most $2r$ and it remains to show nontriviality.

To do this, note that $\alpha L(z) = \alpha z(A_1J_1^\ast N)^{-1}e_1^\ast$ where $e_1^\ast\in\mb{F}_q^r$ is the obvious indicator vector and $\alpha'L'(z) = \alpha'z(A_2J_2^\ast N)^{-1}e_1^\ast$. If $\alpha'=0$ then clearly we have a nontrivial relation. Otherwise, in order to have a trivial relation we will need $\alpha(A_1J_1^\ast N)^{-1}e_1^\ast=\alpha'(A_2J_2^\ast N)^{-1}e_1^\ast$ so
\[e_1^\ast=\alpha^{-1}\alpha'(A_1J_1^\ast N)(A_2J_2^\ast N)^{-1}e_1^\ast.\]
But by the second given condition, $\Pi_1'=\alpha^{-1}A_1J_1^\ast$ and $\Pi_2'=A_2J_2^\ast$ are not equal up to left-multiplication by an invertible matrix. Therefore there is a row $y$ of $\Pi_1'$ with $y\notin\mr{row}_{\mb{F}_q}(\Pi_2')$, and we have
\[yN(\Pi_2'N)^{-1}e_1^\ast\in\{0,1\}.\]
This leads to a nontrivial polynomial relation of degree $r$ by our proof of the second bullet point.
\end{proof}

\subsection{Setup}\label{sub:setup}
Let $s > r\ge 1$ and let $V=\mb{F}_q^n$ be a vector space, with $q$ fixed and $n$ satisfying divisibility conditions as in \cref{thm:main}. Let $z = n^2$. Fix a positive integer $\ell$ to be chosen later, let $n-\ell m=\ell'\in\{0,\ldots,\ell-1\}$, and let $L=\mb{F}_{q^\ell}\hookrightarrow\mb{F}_{q^{\ell m}}=K$ be finite fields. Let $n$ be large with respect to $\ell,q,r,s$. Consider $z$ uniformly random injective $\mb{F}_q$-linear maps $\iota_1,\ldots,\iota_z\colon K\to V$. We define the embedding $\iota_i\colon K\hookrightarrow V$ by identifying $\mb{F}_q^{\ell m}$ arbitrarily as the additive structure of $K$ and then choosing a uniformly random full rank matrix $W_i\in\mb{F}_q^{n\times\ell m}$ to embed this into $V$. Explicitly, let $\alpha$ be a generator of $K^\times$ and define
\[\iota_i\bigg(\sum_{j=0}^{\ell m-1}v_j\alpha^j\bigg) = W_i\begin{bmatrix}v_0\\\vdots\\v_{\ell m-1}\end{bmatrix}\in\mb{F}_q^n=V.\]
Now let $K_i=\iota_i(K)$ and $L_i=\iota_i(L)$. (Note that there are $z=n^2$ different field structures present at once on various subsets of $V$, or equivalently a single field structure linearly embedded in various ways; it will always be obvious when we are, say, multiplying an element of $L_i$ and $K_i$ according to the field structure on $K_i$ given by $\iota_i$.) Finally, consider $d\ge 1$ and let $N_\mr{tem}\in L^{s\times r}$ and $x_\mr{abs}^\ast\in L^{s\times s}$ be jointly $\mb{F}_q$-generic of degree $d$.

We wish to find a subset of the $q$-system $\mr{Gr}(V,s)$ such that every $r$-space in the $q$-system $G:=\mr{Gr}(V,r)$ is contained in exactly one $s$-space. This can also be thought of as a hypergraph matching with vertex set $\mr{Gr}(V,r)$, where each $S\in\mr{Gr}(V,s)$ is thought of as a $\qbinom{s}{r}_q$-uniform edge consisting of its constituent $r$-subspaces.

\begin{remark}
There is a relation to ``special'' hypergraph decompositions as well, which will not be directly fruitful here but may lead to more general types of decomposition problems that may be of interest. The $q$-system $G=\mr{Gr}(V,r)$ can be thought of as the $(q^r-1)$-graph on $V\setminus\{0\}$ where the edges are precisely the sets of nonzero elements of $r$-dimensional subspaces. Let $M\in\mb{F}_q^{(q^s-1)\times s}$ be the matrix with every possible distinct nonzero row in $\mb{F}_q^s$ and let $H$ be a $(q^r-1)$-graph on $[q^s-1]$ whose edges are the $(q^r-1)$-tuples of rows corresponding to the nonzero elements of an $r$-dimensional subspace of $\mb{F}_q^s$.

An $M$-copy of $H$ in $G$ is a copy realized by some vector $Mx\in V^{q^s-1}$ with $x\in V^s$ (treating $M$ abusively as the base-changed operator $M\colon V^s\to V^{q^s-1}$). That is, for each $I\in E(H)$ the set of coordinates of $M_Ix\in V^{q^r-1}$ is an edge of $G$. An $(M,H)$-decomposition of $G$ is an $H$-decomposition of $G$ using $M$-copies of $H$. Given this setup, we see that this precisely corresponds to an $(n,s,r)_q$-design.

Beyond the hypergraph and lattice divisibility conditions which naturally arise for an $(M,H)$-decomposition in a more general setting, one also has conditions from linear algebra. For example, the $\mb{F}_q$-rank of any edge in $G$ must be at most the maximum $\mb{F}_q$-rank of any $M_I$ with $I\in E(H)$. Given our specific choices of $M,H$, and $G$, we see that the $M$-copies of $H$ in $G$ are those induced by $x\in V^s$ which are composed of $s$ linearly independent vectors.
\end{remark}

\subsection{Notation}\label{sub:notation}
We write $\mr{Gr}_q(n,m)$ for the Grassmannian, the set of $m$-dimensional $\mb{F}_q$-subspaces of $\mb{F}_q^n$. If $V$ is a vector space (the underlying field structure implicit) we alternatively write $\mr{Gr}(V,m)$ for the set of $m$-dimensional subspaces of $V$, and we also write $\mr{GL}(V)$ for the set of invertible linear maps $V\to V$. We write $V\leqslant W$ if $V$ is a vector subspace of $W$, typically over $\mb{F}_q$, and we write $V\leqslant_LW$ to signify that the subspace relation is treated with respect to an $L$-structure for the field $L$. For a field $\mf{K}$ we write $\dim_\mf{K}$ to give the $\mf{K}$-dimension of a vector space; if $\mf{K}=\mb{F}_q$ we typically drop the subscript. We write $\mr{span}_{\mf{K}}(S)$ where $S$ is a subset of a $\mf{K}$-vector space to be the $\mf{K}$-span, treated as a $\mf{K}$-vector space. In certain cases, $S$ may be a single vector or matrix with its elements coming from said $\mf{K}$-vector space, in which case we abusively treat this as taking the span of the elements (\emph{not} the column or row-span of the matrix). Let $\mr{row}_\mf{K}$ of a matrix be its row-space over $\mf{K}$.

Given a vector $\Phi$, we write $\Phi^+,\Phi^-$ for the portion of the vector with positive and negative coefficients, respectively, so that $\Phi=\Phi^++\Phi^-$. For example, $(1,-1)^+=(1,0)$ and $(1,-1)^-=(0,-1)$. Given an index $i$ in some index set $\mc{I}$, we write $e_i$ for the identity vector $e_i\in\mb{Z}^\mc{I}$ (mostly used for $r$-spaces and $s$-spaces in a Grassmannian).

We write $\mr{range},\mr{dom},\mr{supp}$ for the range of a function, its domain, and for the support of a vector, respectively. We write $|X|,\#X$ for the size of a set $X$, and $\mr{Sam}(X,p)$ for $p\in[0,1]$ denotes the distribution on subsets of $X$ created by independently including each element of $X$ with probability $p$. Given a hypergraph or $q$-system $H$ of uniformity $d$ on vertex set or vector space $V$, and given hypergraph or $q$-system $S$ of uniformity $d'\le d$, we write $H[S]$ for the resulting induced system: we keep a $d$-edge or $d$-space of $H$ precisely when all $d'$-subsets or $d'$-subspaces are in $S$. In the case $d'=1$, this corresponds to the traditional notion of an induced subgraph, and in this case as long as $S$ is a set or subspace, we further restrict the vertex set or underlying vector space to $S$.

Finally, we use asymptotic notation as follows: we write $f=O(g)$ or $f\lesssim g$ if $|f|\le Cg$ for some absolute constant $C>0$, and we write $f=\Omega(g)$ if $f\ge c|g|$ for some absolute constant $c>0$. We put subscripts such as $O_{q,s}$ to indicate that the absolute constant chosen depends on $q,s$ but nothing else. Furthermore, for parameters $\alpha,\beta$ we write $\alpha\ll\beta$ to mean $\beta$ is chosen to be at least some sufficiently large function of $\alpha$; this is read left-to-right.

\section{Subspace exchanges}\label{sec:exchanges}
We record the following \emph{subspace exchange} statement.
\begin{proposition}\label{prop:subspace-exchange}
Given $q$ and $s > r$, there exists $k = k_{\ref{prop:subspace-exchange}}(s)$ and two nonempty $s$-dimensional $q$-systems on $\mb{F}_q^k$ (i.e.~subsets of $\mr{Gr}_q(k,s)$), call them $\Ups$ and $\Ups'$, such that
\begin{itemize}
    \item For distinct $P,P'\in\Ups$ we have $\dim(P\cap P') < r$, and same for $P,P'\in\Ups'$;
    \item For $P\in\Ups$ and $P'\in\Ups'$ we have $\dim(P\cap P')\le r$;
    \item $\partial_{s,r}\Ups = \partial_{s,r}\Ups'$.
\end{itemize}
\end{proposition}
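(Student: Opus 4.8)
The plan is to construct $\Upsilon$ and $\Upsilon'$ explicitly via a linear-algebraic template, in the same spirit as the algebraic template of the main construction but on a bounded-dimensional space $\mb{F}_q^k$. First I would fix a sufficiently large field extension $L = \mb{F}_{q^\ell}$ (with $\ell$ large in terms of $s$) and choose an $\mb{F}_q$-generic matrix $N \in L^{s \times r}$ of suitably large degree, so that by \cref{lem:generic} the product $\Pi N$ is invertible for every rank-$r$ matrix $\Pi \in \mb{F}_q^{r \times s}$. The key observation carrying over from the template discussion in \cref{sub:outline-template} is that such an $N$ produces, for each $x \in (L^{s'})^r$ (where $s'$ is the $L$-dimension of an ambient $L$-space), an $\mb{F}_q$-subspace $\mr{span}_{\mb{F}_q}(Nx)$ of dimension $s$, and as $x$ ranges over all such tuples each $r$-space of the ambient space is covered the \emph{same} number of times. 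I would then take $\Upsilon$ to be the collection of these $s$-spaces for $x$ ranging over one natural source, and $\Upsilon'$ the collection for $x$ ranging over a second source arranged so that the two collections have equal $\partial_{s,r}$-image but are internally ``spread out''.

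Concretely, I would work inside $K = L^2$ viewed as a $2s$-dimensional $\mb{F}_q$-space (so $k = 2s\ell$, a function of $s$ only), and take $\Upsilon$ to consist of the $s$-spaces $\mr{span}_{\mb{F}_q}(N e_1 \cdot u)$ — i.e.\ suitable scalar/line images — while $\Upsilon'$ comes from a generic $\mr{GL}$-twisted copy; the point is that since $\partial_{s,r}$ applied to the full algebraic family equals $\lambda \sum_R e_R$ restricted to the relevant subspace for both families (by the covering-count argument: each $r$-space in the ambient space arises from exactly the same number of parameter values $x$, by invertibility of $\Pi N$), any two subfamilies obtained by restricting $x$ to parameter sets of the same ``shape'' will have equal boundary. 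I would arrange $\Upsilon, \Upsilon'$ as the images of two different but combinatorially-isomorphic index sets so that $\partial_{s,r}\Upsilon = \partial_{s,r}\Upsilon'$ holds term-by-term, while using genericity (the second and third bullets of \cref{lem:generic}) to force the pairwise intersection conditions.

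The intersection bounds are where genericity does the real work. For distinct $P, P' \in \Upsilon$, writing $P = \mr{span}_{\mb{F}_q}(Nx)$ and $P' = \mr{span}_{\mb{F}_q}(Nx')$, a vector in $P \cap P'$ of dimension $\ge r$ would force $r$ independent $\mb{F}_q$-linear relations expressing rows of $Nx'$ in terms of rows of $Nx$ (up to $\mb{F}_q$-combinations), which after clearing denominators via Cramer's rule contradicts $\mb{F}_q$-genericity of $N$ of bounded degree — this is exactly the mechanism of the second and third bullets of \cref{lem:generic}, where $y N (\Pi N)^{-1} z$ is shown to have no $\mb{F}_q$-coordinates unless $y \in \mr{row}_{\mb{F}_q}(\Pi)$. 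The cross-term bound $\dim(P \cap P') \le r$ for $P \in \Upsilon$, $P' \in \Upsilon'$ is the most delicate: here I would need the full strength of the third bullet, ensuring that two $r$-spaces living inside template $s$-spaces of different ``types'' (different $\Pi$-classes) cannot coincide, so that $P$ and $P'$ can share at most the single $r$-space that was deliberately arranged to be common.

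I expect the main obstacle to be the bookkeeping that simultaneously (i) makes $\partial_{s,r}\Upsilon = \partial_{s,r}\Upsilon'$ hold \emph{exactly} rather than approximately, and (ii) keeps all pairwise intersections below the thresholds. The cleanest route is probably to let $\Upsilon$ and $\Upsilon'$ be two images of a single abstract ``octahedron-like'' gadget under two generic embeddings: the gadget is a small bipartite-type structure of $s$-spaces whose positive and negative halves have identical $r$-shadows by design (an analogue of the octahedral relations of \cite[Section~5]{Kee14}), and genericity of the embedding guarantees the intersection conditions. Verifying that such a gadget of bounded size exists — and that applying $N$ and the generic maps preserves its boundary identity while generically separating everything else — is the crux; the rest is the now-routine invocation of \cref{lem:generic} for the degeneracy analysis.
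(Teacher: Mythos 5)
Your overall instinct—build both families from a generic matrix $N$ over a field extension and use genericity to control intersections—matches the paper's approach, and your intersection analysis is on roughly the right track. But there is a genuine gap at the heart of the argument: you never supply a working mechanism for the exact identity $\partial_{s,r}\Ups = \partial_{s,r}\Ups'$. Your first justification—that since the full algebraic family is a $\lambda$-design, "any two subfamilies obtained by restricting $x$ to parameter sets of the same shape will have equal boundary"—is false: two subfamilies of a $\lambda$-design with the same cardinality or combinatorial shape will in general cover completely different sets of $r$-spaces (a single $s$-space from each family already refutes it). A "generic $\mr{GL}$-twisted copy" is even worse, since a generic linear map moves the entire $r$-shadow. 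Your fallback, an "octahedron-like gadget whose positive and negative halves have identical $r$-shadows by design," assumes exactly what must be proved; indeed the paper stresses (citing Ray-Chaudhuri--Singhi) that no octahedral generating set exists for these lattices, which is precisely why this proposition requires a bespoke construction.

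The paper's actual mechanism is a \emph{shifted-parameter} family: fix two nested extensions $\mb{F}_q\hookrightarrow X\hookrightarrow Y$, jointly generic $N\in X^{s\times r}$ and $x^{(1)},x^{(2)}\in X^{s\times u}$ with $x^{(2)}-x^{(1)}$ generic, and $X$-generic $(w',w)\in Y^r\times Y^u$; then set $\mc{P}_j=\{\mr{span}_{\mb{F}_q}(Nw'+(Nx+x^{(j)})w)\colon x\in X^{r\times u}\}$ for $j\in\{1,2\}$. The boundary identity holds because for any $r$-subspace $\mr{span}_{\mb{F}_q}(\Pi'(Nw'+(Nx'+x^{(2)})w))$ of a space in $\mc{P}_2$, the explicit $\Pi'$-dependent correction $x=x'+(\Pi'N)^{-1}\Pi'(x^{(2)}-x^{(1)})$ produces a space in $\mc{P}_1$ containing the \emph{same} $r$-space (and symmetrically). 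This term-by-term matching, which depends on $\Pi'$ and not just on a global reindexing of the parameter set, is the idea your proposal is missing; without it the third bullet does not follow, and the cross-intersection bound $\dim(P\cap P')\le r$ also relies on the specific form of the shift via genericity of $x^{(2)}-x^{(1)}$.
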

\begin{remark}
The first and third bullets imply these give $s$-space decompositions of some set of $r$-spaces. The second implies that the two decompositions do not intersect more than is strictly necessary, and in particular are disjoint as sets of $s$-spaces.
\end{remark}
\begin{proof}
Fix $k_1|k_2=k$ to be chosen later so that $\mb{F}_q\hookrightarrow X=\mb{F}_{q^{k_1}}\hookrightarrow Y = \mb{F}_{q^{k_2}}$ as finite fields. Let $d = d(r)$ be large to be chosen later larger than the degree of various determinants that arise in rank considerations later, e.g.~via \cref{lem:generic}. Let $u\ge 1$. Fix distinct $x^{(1)},x^{(2)}\in X^{s\times u}$ and $N\in X^{s\times r}$ such that $x^{(2)}-x^{(1)},N$ are jointly $\mb{F}_q$-generic of degree $d$. Consider any $w'\in Y^r$ and any $w\in Y^u$ which are together $X$-generic of degree $1$. These can be found as long as $k_1$ is large in terms of $d,s$ and $k_2/k_1$ is large in terms of $d,s$ (see e.g.~the first paragraph of the proof of \cref{lem:generic} and consider the field extensions $\mb{F}_{q^{k_1}}/\mb{F}_q$ and then $\mb{F}_{q^{k_2}}/\mb{F}_{q^{k_1}}$ in succession).

For $j\in\{1,2\}$ let $\mc{P}_j^{(w)} = \{\mr{span}_{\mb{F}_q}(Nw'+(Nx+x^{(j)})w)\colon x\in X^{r\times u}\}$. For one of these to not correspond to an element of $\mr{Gr}_q(k,s)$, we need that for some nonzero $y\in\mb{F}_q^{1\times s}$,
\[y(Nw' + (Nx+x^{(j)})w) = 0.\]
That is, $(yN)w'+y(Nx+x^{(j)})w=0$. Since $yN\in X^{1\times r}$ and $y(Nx+x^{(j)})\in X^{1\times u}$ and $w',w$ are jointly $X$-generic, we have $yN=0$ and $y(Nx+x^{(j)})=0$. But $y\in\mb{F}_q^{1\times s}$ is nonzero and $N$ is $\mb{F}_q$-generic so we have a contradiction.

We now let $\Ups = \mc{P}_1^{(w)}$ and $\Ups' = \mc{P}_2^{(w)}$.

We claim that as $x\in X^{r\times u}$ varies within a single $\mc{P}_j^{(w)}$, we have distinct $s$-spaces and in fact their intersection has dimension less than $r$, which verifies the first bullet point. If not, then $x\neq x'$ produce $s$-spaces which share an $r$-dimensional subspace. Thus there is an $r$-space which is within both the span of the coordinates of $Nw' + (Nx+x^{(j)})w$ and of $Nw'+(Nx'+x^{(j)})w$. Thus there are $\Pi_1,\Pi_2\in\mb{F}_q^{r\times s}$ with rank $r$ such that
\[\Pi_1(Nw'+(Nx+x^{(j)})w) = \Pi_2(Nw'+(Nx'+x^{(j)})w).\]
Thus $(\Pi_1N)w'+\Pi_1(Nx+x^{(j)})w = (\Pi_2N)w'+\Pi_2(Nx'+x^{(j)})w$. We have $\Pi_1N,\Pi_2N\in X^{r\times r}$ and $\Pi_1(Nx+x^{(j)}),\Pi_2(Nx'+x^{(j)})\in X^{r\times u}$. Since $w',w$ are $X$-generic we deduce $\Pi_1N=\Pi_2N$ and $\Pi_1(Nx+x^{(j)}) = \Pi_2(Nx'+x^{(j)})$. Now $\Pi_1,\Pi_2\in\mb{F}_q^{r\times s}$ have rank $r$ and $N$ is $\mb{F}_q$-generic so $\Pi_1 = \Pi_2$ and additionally $\Pi_1N\in X^{r\times r}$ has rank $r$ (from the first bullet of \cref{lem:generic}). Thus we first find $\Pi_1N(x-x') = 0$ and then deduce $x-x'=0$, a contradiction!

We now check the second bullet point. If there is an $(r+1)$-space which is shared between some elements of $\Ups$ and $\Ups'$ then similarly we can find $\Pi_1,\Pi_2\in\mb{F}_q^{(r+1)\times s}$ with rank $r+1$ and $x,x'\in X^{r\times u}$ (not necessarily distinct) such that
\[\Pi_1(Nw'+(Nx+x^{(1)})w) = \Pi_2(Nw'+(Nx'+x^{(2)})w).\]
Thus $(\Pi_1N)w'+\Pi_1(Nx+x^{(1)})w = (\Pi_2N)w'+\Pi_2(Nx'+x^{(2)})w$. Similarly, we deduce $\Pi_1N=\Pi_2N$ and $\Pi_1(Nx+x^{(1)})=\Pi_2(Nx'+x^{(2)})$ and then $\Pi_1=\Pi_2$. Thus $(\Pi_1N)(x-x') = \Pi_1(x^{(2)}-x^{(1)})$. Since $N$ is $\mb{F}_q$-generic, $\Pi_1N\in X^{(r+1)\times r} = X^{(r+1)\times r}$ has rank $r$ (again use \cref{lem:generic}, this time on an appropriate subset of rows). Let $a\in X^{1\times(r+1)}$ be nonzero and in its kernel, expressed as a polynomial in the coefficients of $N$ (by Cramer's rule). We thus have $a\Pi_1(x^{(2)}-x^{(1)}) = 0$. Since $N,x^{(2)}-x^{(1)}$ are jointly $\mb{F}_q$-generic, this is a contradiction.

Now we check the third bullet point, which guarantees that $\Ups,\Ups'$ provide the same subspace decomposition. Due to what we know so far, it suffices to show that every $r$-subspace of an $s$-space of $\Ups'$ can be found in some $s$-space of $\Ups$, and vice versa. Without loss of generality let us start with an $r$-space coming from $\Ups'$. This can be identified via a choice of $x'\in X^{r\times u}$ and $\Pi'\in\mb{F}_q^{r\times s}$ of full rank, and then considering the span of the elements of $\Pi'(Nw'+(Nx'+x^{(2)})w)$. We seek to identify $\Pi\in\mb{F}_q^{r\times s}$ of rank $r$ and $x\in X^{r\times u}$ with
\[\Pi(Nw'+(Nx+x^{(1)})w) = \Pi'(Nw'+(Nx'+x^{(2)})w),\]
which will finish. To this end we let $\Pi = \Pi'$ and let
\[x = x' + (\Pi'N)^{-1}\Pi'(x^{(2)}-x^{(1)}).\]
Again recall that $\Pi'N\in X^{r\times r} = X^{r\times r}$ is invertible since $N$ is $\mb{F}_q$-generic and $\Pi'\in\mb{F}_q^{r\times s}$ has rank $r$. This immediately rearranges to the desired condition.
\end{proof}

\section{Bounded integral decomposition}\label{sec:bounded-integral-decomposition}
\subsection{Preliminaries}\label{sub:bounded-decomposition-preliminaries}
We now define the notion of a bounded complex. In this section we will not directly need the setup in \cref{sub:setup}, and in particular we will (unambiguously) use $L$ to denote certain sparse random $q$-systems as opposed to a finite field.
\begin{definition}\label{def:bounded}
Define an $r$-dimensional signed multi-$q$-system $\mc{R}\subseteq\mr{Gr}(V,r)$ with $\dim V=n$ to be \emph{$\theta$-bounded} if $\snorm{\partial_{r,r-1}\mc{R}^\pm}_{\infty}\le\theta q^n$, where $\mc{R}^+$ denotes the positive part and $\mc{R}^-$ the negative part when $\mc{R}$ is written as a vector, so that $\mc{R}=\mc{R}^++\mc{R}^-$.
\end{definition}

The goal of this section is to show that any $\theta$-bounded vector $\mc{S}\in\mc{L} = \partial_{s,r}\mb{Z}^{\mr{Gr}_q(n,s)}$ has a preimage under $\partial_{s,r}$ whose positive and negative parts have $O(\theta)$-bounded images. (We can think of this as roughly meaning just that the preimage itself is bounded in some sense.)

\begin{theorem}\label{thm:bounded-inverse}
There are $C = C_{\ref{thm:bounded-inverse}}(q,s)$ and $c = c_{\ref{thm:bounded-inverse}}(q,s)$ so that for $n$ large, if $\theta\ge q^{-cn}$ and $J\in\partial_{s,r}\mb{Z}^{\mr{Gr}_q(n,s)}$ is $\theta$-bounded then there is $\Phi\in\mb{Z}^{\mr{Gr}_q(n,s)}$ with $\partial_{s,r}\Phi = J$ such that $\partial_{s,r}\Phi^\pm$ are $C\theta$-bounded.
\end{theorem}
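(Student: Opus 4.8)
The plan is to build the preimage $\Phi$ greedily, one $s$-space at a time, while maintaining control on the boundedness of the running ``residual.'' Concretely, I would set up an iterative procedure: start with $J_0 = J$ and at each step, as long as $J_i \ne 0$, pick an $r$-space $R$ in the support of $J_i$ and try to ``peel off'' an $s$-space (or a short signed combination of $s$-spaces) that covers $R$ and keeps the residual bounded, replacing $J_i$ by $J_{i+1} = J_i - \partial_{s,r}(\text{that combination})$. Summing the peeled combinations gives $\Phi$. The key invariant I would carry is that each $J_i$ remains $\theta_i$-bounded (w.r.t.\ a suitable sparse host $q$-system $L$) with $\theta_i$ not growing too fast, and that the support of $J_i$ lives inside $L$, so that the peeling steps have enough room to operate. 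Because $J \in \mc{L}_{s,r} = \partial_{s,r}\mb{Z}^{\mr{Gr}_q(n,s)}$, we never get stuck: the residual always stays in the lattice, so local decodability of $\mc{L}_{s,r}$ guarantees that some covering combination exists; the issue is making it \emph{bounded}.

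The technical heart is the single peeling step, and here I would combine two ingredients highlighted in \cref{sub:outline-integral}. First, \emph{robust local decodability} of the lattice $\mc{L}_{s,r}$: for an $r$-space $R$ appearing with nonzero multiplicity, there is a short signed combination of $s$-spaces, each containing $R$ and otherwise using only $r$-spaces in a prescribed neighborhood, whose boundary is $e_R$ minus a controlled correction supported near $R$ — plus we get many such combinations to choose among, so we can randomize. Second, the \emph{subspace exchanges} of \cref{prop:subspace-exchange}: these furnish the local ``flips'' $\Ups \leftrightarrow \Ups'$ that let us trade one bounded $s$-space decomposition of a set of $r$-spaces for a disjoint one, which is what we need to avoid concentrating the residual on any $(r-1)$-space. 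I would run the peeling as a random process over the choices of decoding combination / exchange, and use a union bound (or the Lovász Local Lemma, \cref{lem:lll}) over $(r-1)$-spaces to show that with positive probability no $(r-1)$-space accumulates more than $O(\theta) q^n$ extensions into $\mr{supp}(J_{i+1})^\pm$. The quantitative inputs making this work are exactly the extension counts: one needs a sufficiently pseudorandom host $L$ (something like $L \sim \mr{Sam}(\mr{Gr}_q(n,r), p)$ with $p = q^{-cn}$ typical in the sense of \cref{def:q-typical}, or \cref{lem:one-step-typical}) so that the number of places a given local gadget can be placed behaves as expected, and one uses $\theta$-boundedness of the current residual (\cref{def:embedding-boundedness}) as the hypothesis feeding into the next round.

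A second key step is ensuring \emph{termination}, i.e.\ that $\Phi$ has finite (indeed controlled) support so that $\partial_{s,r}\Phi$ actually equals $J$ rather than merely approximating it. For this I would arrange the peeling to strictly decrease a potential such as $\|J_i\|_1$ — each step removes the chosen $R$ from the support and only adds $r$-spaces that were already ``charged'' nearby, so with the right bookkeeping $\|J_i\|_1$ does not increase and the number of supported $r$-spaces drops. Since the lattice membership is preserved exactly at every step (we only ever subtract genuine boundaries), once $J_i = 0$ we are done. One subtlety: the boundedness parameter $\theta_i$ might drift upward over many steps; I would control this by showing each peeling step increases $\theta_i$ by at most a multiplicative $(1 + q^{-c'n})$ factor and that the process terminates in at most $q^{O(n)}$ steps, so the total blow-up is $\exp(q^{O(n)} \cdot q^{-c'n})$ — which forces me to instead do the peeling \emph{in parallel batches} (decode many disjoint $r$-spaces simultaneously using disjointness of the gadgets, à la the disjoint random process of \cref{lem:master-disjointness}) so that the number of ``rounds'' is only $O(n)$ or $q^{O(1)}$ rather than $q^{O(n)}$, keeping the cumulative constant bounded by $C_{\ref{thm:bounded-inverse}}(q,s)$.

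The main obstacle I expect is precisely this interplay: getting a \emph{bounded} generating set for $\mc{L}_{s,r}$ at all. Unlike the Steiner case, where \cite{Kee14} has the explicit octahedral generators, here the Ray-Chaudhuri--Singhi description \cite{RCS89} is opaque, so I cannot write down an exact bounded generating family. The workaround — and I believe this is the paper's actual innovation here — is to use only an \emph{approximate} bounded generating set (greedily constructed local gadgets that decode $e_R$ up to a small, controlled error), and then ``cover the gaps'': use multiple copies / a second layer of gadgets to kill the accumulated error terms, at the cost of a bounded multiplicative loss. Making the error-cancellation converge (the ``boosting'' of \cref{sub:bounded-sparse}) while keeping everything bounded and supported on the sparse host $L$ is the delicate part; everything else is bookkeeping with Chernoff (\cref{lem:chernoff}), Azuma (\cref{lem:azuma}), and the typicality machinery of \cref{sub:q-extensions}.
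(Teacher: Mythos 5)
Your proposal correctly names several ingredients the paper actually uses (a sparse random host, a greedily constructed approximate generating set, ``covering the gaps'' with multiple copies, subspace exchanges, second-moment concentration), and your last paragraph is close to the true high-level strategy. However, the argument you actually propose to run --- iterative peeling of one $r$-space (or one batch) at a time, with $\snorm{J_i}_1$ as a decreasing potential --- has a genuine gap and is not how the paper proceeds. Subtracting $\partial_{s,r}e_S$ for an $s$-space $S$ covering a supported $r$-space $R$ lowers the coefficient at $R$ by one but perturbs the coefficients at the other $\qbinom{s}{r}_q-1$ $r$-subspaces of $S$, which generically are \emph{not} in the support of the residual; so each peeling step multiplies the support rather than shrinking it, and the claim that ``each step \ldots only adds $r$-spaces that were already charged nearby'' is exactly the hard point, not a bookkeeping matter. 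Your parallel-batch fix controls the drift of the boundedness parameter but not this growth of $\snorm{J_i}_1$, so termination is not established.

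The paper avoids iteration entirely and, crucially, works modulo the Kantor determinant $\Delta$ of the $r$-vs-$s$ inclusion matrix on an $(r+s)$-space, an ingredient absent from your proposal. Local decodability here only decodes $\Delta e_R$ (via Cramer's rule, \cref{prop:robust-local-decodability}), not $e_R$ itself --- consistent with the Ray-Chaudhuri--Singhi divisibility obstructions --- so one must first flatten $J$ to a vector with coefficients of size at most $\Delta$ (\cref{lem:J-flattening}). The flattened part is then covered in \emph{one} spread-out round into a union of rotated sparse hosts $\bigcup_i A_i\cdot L^\ast$ (\cref{lem:cover-into-random}), where the greedily built set $\mc{S}$ of \cref{prop:greedy-sparse-basis} generates the relevant sublattice over $\mb{Z}/\Delta\mb{Z}$; writing the residual there with coefficients in $\{0,\ldots,\Delta-1\}$ is automatically bounded because $\mc{S}$ is bounded by construction, with the ``gap'' $B'$ of saturated $(r-1)$-spaces handled by the multiple rotated copies and the subspace-exchange flips (\cref{prop:sparse-into-gadgets}). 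What remains is divisible by $\Delta$ and is finished by the explicit Cramer's-rule decoding. To repair your write-up you would need to (i) introduce $\Delta$ and the flattening step, since without it no local gadget decodes a single $e_R$, and (ii) replace the peeling loop by this two-stage (cover into sparse host, then decompose exactly there) structure, which is where termination actually comes from.
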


The first ingredient in our proof is robust local decodability of the lattice $\mc{L}$; this is analogous to \cite[Lemma~5.13]{Kee14}.

\begin{proposition}\label{prop:robust-local-decodability}
There are $C = C_{\ref{prop:robust-local-decodability}}(q,s)$ and $\Delta = \Delta_{\ref{prop:robust-local-decodability}}(q,s,r)$ so that for $n$ large, if $\theta\ge q^{-n/2}$ and $J\in\Delta\mb{Z}^{\mr{Gr}_q(n,r)}$ is $\theta$-bounded there is $\Phi\in\mb{Z}^{\mr{Gr}_q(n,s)}$ such that $\partial_{s,r}\Phi = J$ and $\partial_{s,r}\Phi^\pm$ are $C\theta$-bounded.
\end{proposition}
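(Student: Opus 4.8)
The plan is to reduce the decomposition of a $\theta$-bounded, $\Delta$-divisible vector $J$ to a sequence of ``local moves'' at the level of individual $r$-spaces, using a greedily constructed bounded approximate generating set for the lattice $\mc{L}_{s,r}$ together with the subspace exchanges of \cref{prop:subspace-exchange} to patch up the resulting error. First I would fix a large constant $\Delta$ (a common multiple of the ``torsion'' quantities arising from the Ray-Chaudhuri--Singhi description of $\mc{L}$ in \cref{thm:integral-lattice}, applied not on $\mb{F}_q^n$ but on bounded-dimensional subspaces) so that on any subspace $W\leqslant V$ of dimension $k = k_{\ref{prop:subspace-exchange}}(s) + O_{q,s}(1)$, the all-$\Delta$'s vector lies in $\partial_{s,r}\mb{Z}^{\mr{Gr}_q(W,s)}$. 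This gives a finite ``gadget'': for any $(r-1)$-space $Q$ and any constant-dimensional $W \geqslant Q$, there is a bounded signed combination of $s$-spaces inside $W$ whose boundary is $\Delta$ times (the sum of $r$-spaces through $Q$ in $W$), or more flexibly, whose boundary is $\Delta$ times (a single $r$-space minus another $r$-space sharing $Q$). The key point, which I expect to follow from local decodability arguments in the spirit of \cite[Lemma~5.13]{Kee14} (but using only the subspace exchange primitive, since no octahedral description is available here), is that $\Delta\cdot(\text{any element of } \partial_{r,r-1}^{-1} \text{-compatible local lattice})$ can be built from boundedly many such gadgets.

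The main step is then a greedy ``peeling'' procedure. Order the $r$-spaces of $V$ arbitrarily. Maintaining a current residual vector $J^{(t)}$ (with $J^{(0)} = J$), at each step pick the first $r$-space $R$ where $J^{(t)}_R \neq 0$; since $J^{(t)}$ stays in $\Delta\mb{Z}^{\mr{Gr}_q(n,r)} \cap \mc{L}$ and is bounded, I would use the local lattice structure at an $(r-1)$-subspace $Q \leqslant R$ to find another $r$-space $R' \geqslant Q$ and a bounded gadget $\Psi$ supported on $s$-spaces in a constant-dimensional $W$ with $\partial_{s,r}\Psi = \Delta(e_R - e_{R'})$ (choosing the sign and $W$ appropriately to push multiplicity from $R$ onto $R'$). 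Subtracting $\Delta\cdot\mr{sgn}\cdot\Psi$ and recording it in $\Phi$, one moves the ``mass'' along, and $\|\partial_{r,r-1}(J^{(t)})^{\pm}\|$ changes by a bounded amount per step. The total number of steps is $\|J\|_1 / \Delta \lesssim \theta q^{(r+1)n}/\Delta$, which is enormous, so the naive accumulation of error in $\partial_{s,r}\Phi^{\pm}$ is far too large — this is where the argument must be organized carefully.

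The hard part, and the reason for the ``robust''/bounded conclusion, is controlling $\partial_{s,r}\Phi^{\pm}$ \emph{rather than just $\Phi$ itself}: I would not run the greedy naively, but instead in ``rounds'' indexed by $(r-1)$-spaces (or by flats of intermediate dimension), handling all of $J$'s mass lying above a given $Q$ simultaneously by a single bounded signed decomposition inside a constant-dimensional superspace of $Q$, so that the $s$-spaces produced across different $Q$'s have their $r$-shadows spread out. Concretely, for each $(r-1)$-space $Q$ one inspects the restriction of $J$ to $r$-spaces through $Q$ — a bounded vector on a ``star'' — solves the decomposition problem for that star using gadgets contained in bounded-dimensional superspaces of $Q$, and then bounds, for each extension $E = (\phi, F, H\setminus\{R\})$, the contribution to $X_E^R(L, \partial_{s,r}\Phi^{\pm})$ by (number of relevant $Q$'s extending the pattern) $\times$ (per-$Q$ bounded contribution), which matches $\theta d(L)^{e_E} q^{v_E n}$ up to the constant $C$ because $d(L) = 1$ here and the $(r-1)$-level boundedness of $J$ exactly controls the per-$Q$ sizes. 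I expect the genuinely delicate point to be ensuring the gadgets chosen for different $Q$ can be taken ``generic'' enough (via \cref{lem:generic}-type genericity of the exchange $s$-spaces) that no single $s$-space, and more importantly no single extension pattern of $r$-spaces, is hit too many times — essentially a double-counting/pseudorandomness bookkeeping that converts the $(r-1)$-boundedness of the input into $r$-boundedness of the output, with the dimension bound $h$ and the threshold $\theta \geq q^{-n/2}$ providing the necessary slack.
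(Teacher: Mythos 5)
Your high-level shape is right — decompose $\Delta e_R$ for each $r$-space via a gadget supported in a bounded-dimensional superspace, then argue the union of the gadgets' $r$-shadows is spread out — but two essential ingredients are missing or wrong. First, the gadget itself: neither of your proposed sources produces $\Delta e_{R_0}$ as a boundary. The Ray--Chaudhuri--Singhi theorem (as in \cref{thm:integral-lattice}) only tells you when the \emph{constant} vector $\lambda\sum_R e_R$ lies in the lattice, not a single basis vector $e_{R_0}$; and the subspace exchanges of \cref{prop:subspace-exchange} give relations $\partial_{s,r}\Ups=\partial_{s,r}\Ups'$, i.e.\ kernel elements of the boundary map, which move mass around but cannot create or annihilate it. The paper's key input is Kantor's theorem that the inclusion matrix between $\mr{Gr}_q(r+s,r)$ and $\mr{Gr}_q(r+s,s)$ is nonsingular; Cramer's rule then gives $\Delta e_{R_0}=\sum_{S}a_S\partial_{s,r}e_S$ with $\Delta=|\det A|$ and bounded integer $a_S$, supported in a single $(r+s)$-space $T_0$. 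Without this, your peeling procedure can only shuffle mass between $r$-spaces, and since the proposition must prove that \emph{every} $\Delta$-divisible vector lies in $\mc{L}$ (membership is not a hypothesis), shuffling never terminates at zero.

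Second, the spreading argument. Your deterministic organization into $(r-1)$-stars does not control the degree of $\partial_{s,r}\Phi^\pm$ at $(r-1)$-spaces $Q'$ other than the chosen centers: a gadget placed in a bounded-dimensional $W$ contributes to every $(r-1)$-space inside $W$, and without a mechanism forcing the ambient spaces $W$ to be in ``general position'' relative to one another, many gadgets could pile up on the same $Q'$. The genericity you invoke is exactly the gap. The paper's fix is simpler and genuinely probabilistic: for each signed copy of $R$ in $J$ choose a uniformly random $B_R\in\mr{GL}(\mb{F}_q^n)$ with $B_RR_0=R$ and place the gadget in $B_RT_0$; then for fixed $Q'$ the degree is dominated by $Y_{Q'}=\sum_R\mbm{1}_{Q'\leqslant B_RT_0}$, a sum of independent indicators whose expectation is $O_{q,s}(\theta q^n)$ precisely because the $\theta$-boundedness of $J$ counts the $R$ with $\dim(Q'\cap R)=t$ and the random rotation makes each indicator have probability $O(q^{-(r-1-t)n})$. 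Chernoff and a union bound over the $q^{(r-1)n}$ choices of $Q'$ (here is where $\theta\ge q^{-n/2}$ is used) finish. Note also that only the $(r-1)$-degree boundedness of \cref{def:bounded} is required here, not the extension-counting boundedness $X_E^R(L,\cdot)$ of \cref{def:embedding-boundedness} that your last paragraph works with.
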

\begin{proof}
It is enough to prove the result when $J=J^+$ has nonnegative coefficients. Let $R_0 = \mr{span}_{\mb{F}_q}(e_1,\ldots,e_r)$ and $T_0 = \mr{span}_{\mb{F}_q}(e_1,\ldots,e_{r+s})$. Let $\mc{R}_0 = \mr{Gr}_q(r+s,r)$ be the set of $r$-dimensional spaces $R\leqslant T_0$ and $\mc{S}_0 = \mr{Gr}_q(r+s,s)$ be the set of $s$-dimensional spaces $S\leqslant T_0$.

Consider the matrix $A$ where rows are indexed by elements of $\mc{R}_0$ and columns are indexed by $\mc{S}_0$ with an entry being $1$ if the column index (as a subspace) contains the row index. This is a square matrix of dimension $\qbinom{r+s}{r}_q = \qbinom{r+s}{s}_q$. By a theorem of Kantor \cite{Kan72}, we have that $\Delta := |\det A|\neq 0$, i.e., $A$ is invertible. Thus by Cramer's rule, we can write $\Delta e_{R_0}$ as an integer combination of the columns. Extending these vectors to $\mb{Z}^{\mr{Gr}_q(n,r)}$ shows that there are explicit $a_S\in\mb{Z}$ (which are up to sign determinants of some submatrices of $A$) such that
\begin{equation}\label{eq:local-decodability}
\Delta e_{R_0} = \sum_{S\in\mc{S}_0}a_S\partial_{r,s}e_S.
\end{equation}

Now write
\[J = \Delta\sum_{R\in\mc{J}}e_R\]
where $\mc{J}$ is an appropriately defined multiset. For each $R\in\mc{J}$ we choose a uniformly random $B_R\in\mr{GL}(\mb{F}_q^n)$ mapping $R_0\mapsto R$ (here we abusively treat multiple copies of an $r$-space $R$ as different). Then by \cref{eq:local-decodability},
\[J = \sum_{R\in\mc{J}}\Delta e_R = \sum_{R\in\mc{J}}\sum_{S\in B_R\mc{S}_0}a_{B_R^{-1}S}\partial_{r,s}e_S = \partial_{r,s}\sum_{S\in\mr{Gr}_q(n,s)}\sum_{R\in\mc{J}}\bigg(\sum_{S'\in\mc{S}_0}\mbm{1}_{S=B_RS'}a_{S'}\bigg)e_S =: \partial_{r,s}\Phi,\]
where $\Phi$ is an appropriately defined random integer vector. Now
\begin{align*}
\snorm{\partial_{r,r-1}\partial_{s,r}\Phi^\pm}_\infty\lesssim_{q,s}\sup_{Q\in\mr{Gr}_q(n,r-1)}\sum_{R\in\mc{J}}\mbm{1}_{Q\leqslant B_RT_0} =: \sup_{Q\in\mr{Gr}_q(n,r-1)}Y_Q
\end{align*}
since the weight on an $(r-1)$-space $Q$ is bounded (up to a factor of $\max_{S\in\mc{S}_0}a_S = O_{q,s}(1)$) by the number of times it occurs within one of the random spaces $B_RT_0$. For any $Q$, $Y_Q$ is a sum of independent indicator random variables. When $\dim(Q\cap R) = t\in\{0,\ldots,r-1\}$, the indicator has probability bounded by $O_{q,s}(q^{-(r-1-t)n})$ and there are at most $O_{q,s}(\theta q^{(r-t)n})$ such $r$-spaces $R$ since $\mc{J}$ is $\theta$-bounded. Thus $\mb{E}Y_Q = O_{q,s}(\theta q^n)$, and Chernoff (applied to each value of $t$ separately) thus shows $\mb{P}[Y_Q\ge C\theta q^n]\le\exp(-\Omega(\theta q^n))$. Taking a union bound over at most $q^{(r-1)n}$ many $(r-1)$-spaces $Q$ and using $\theta\ge q^{-n/2}$ finishes.
\end{proof}

We next record a technical ``flattening lemma'' which will let us assume that our lattice element $J$ is not too focused, which will be useful later. It follows from the previous lemma (in particular ``flattening'' is in general a trivial consequence of robust local decodability). 
\begin{lemma}\label{lem:J-flattening}
Let $\Delta = \Delta_{\ref{prop:robust-local-decodability}}(q,s,r)$ as in \cref{prop:robust-local-decodability}. There is $C = C_{\ref{lem:J-flattening}}(q,s)$ so that for $n$ large, if $\theta\ge q^{-n/2}$ and $J\in\mb{Z}^{\mr{Gr}_q(n,r)}$ is $\theta$-bounded then there is $\Phi\in\mb{Z}^{\mr{Gr}_q(n,s)}$ with $\partial_{s,r}\Phi = J-J'$ such that $J',\partial_{s,r}\Phi^\pm$ are $C\theta$-bounded and $|J_R'|\le\Delta$ for all $R\in\mr{Gr}_q(n,r)$.
\end{lemma}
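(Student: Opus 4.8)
The goal of \cref{lem:J-flattening} is a ``flattening'' statement: given a $\theta$-bounded $J\in\mb{Z}^{\mr{Gr}_q(n,r)}$, we want to peel off from $J$ (using $\partial_{s,r}$-images of integer vectors) a residual $J'$ that is both still $O(\theta)$-bounded and moreover \emph{pointwise} bounded by $\Delta$, while the peeled-off part $\partial_{s,r}\Phi=J-J'$ has $O(\theta)$-bounded positive and negative parts. The plan is to reduce $J$ modulo $\Delta$ in each coordinate and apply \cref{prop:robust-local-decodability} to the part of $J$ that is divisible by $\Delta$.

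\textbf{The plan.} First I would write, for each $R\in\mr{Gr}_q(n,r)$, $J_R = \Delta\,\lfloor J_R/\Delta\rfloor + r_R$ where $r_R\in\{0,1,\ldots,\Delta-1\}$ is the residue (or, to keep things symmetric, take the representative of least absolute value in $(-\Delta/2,\Delta/2]$). Set $J' := \sum_R r_R e_R$ and $J'' := J - J' = \Delta\sum_R \lfloor J_R/\Delta\rfloor e_R \in \Delta\mb{Z}^{\mr{Gr}_q(n,r)}$. Then $|J'_R|\le\Delta$ for all $R$ by construction, which is the pointwise bound we want. Next I need to check that $J'$ is $O(\theta)$-bounded: since $\snorm{\partial_{r,r-1}(J')^\pm}_\infty$ counts, over each $(r-1)$-space $Q$, a sum of $|r_R|\le\Delta = O_{q,s}(1)$ over the at-most-$O(q^n)$ $r$-spaces $R\geqslant Q$ — but actually I should be more careful: $\theta$-boundedness of $J$ says $\snorm{\partial_{r,r-1}J^\pm}_\infty\le\theta q^n$, and I want $\snorm{\partial_{r,r-1}(J')^\pm}_\infty = O(\theta q^n)$. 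The cleanest route is to observe that the map $J_R\mapsto r_R$ does not increase absolute values beyond a bounded additive term per coordinate, but the real point is that where $J$ itself is small there is nothing to control, and where $J$ is large the residue $r_R$ is bounded while $|J_R|\ge 1$, so $|r_R|\le\Delta|J_R|$ pointwise whenever $J_R\ne0$ and $r_R=0=J_R$ otherwise; hence $|r_R|\le\Delta|J_R|$ always, giving $\snorm{\partial_{r,r-1}(J')^\pm}_\infty \le \Delta\snorm{\partial_{r,r-1}J^\pm}_\infty \le \Delta\theta q^n$. Wait — that inequality for the $\pm$ parts needs a tiny bit of care since signs of $r_R$ and $J_R$ agree when using the centered representative (or the nonnegative representative forces $r_R\ge 0$, in which case bound $(J')^+$ and $(J')^-$ separately using $0\le r_R\le\Delta|J_R|$), but either way it goes through with the constant $\Delta$, which is $O_{q,s}(1)$.

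\textbf{Applying robust local decodability.} Now $J''\in\Delta\mb{Z}^{\mr{Gr}_q(n,r)}$, and it is $O(\theta)$-bounded: indeed $J'' = J - J'$, both of which are $O(\theta)$-bounded (using $\snorm{\partial_{r,r-1}(J'')^\pm}_\infty \le \snorm{\partial_{r,r-1}J^\pm}_\infty + \snorm{\partial_{r,r-1}(J')^\pm}_\infty$, up to the usual harmless doubling when splitting a difference into positive and negative parts — more precisely $(A-B)^\pm$ is dominated coordinatewise by $A^\pm + B^\mp$ in absolute value, so one picks up a factor $O(1)$). So $J''$ is $(C_0\theta)$-bounded for some $C_0 = C_0(q,s)$ and divisible by $\Delta$. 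Provided $\theta\ge q^{-n/2}$ (which is hypothesized) — and after possibly shrinking the threshold by the constant $C_0$, which is fine since we may assume $n$ large and $\theta\ge q^{-n/2}\gg q^{-n/2}/C_0$ still lets us invoke the lemma with $C_0\theta$ in place of $\theta$ — \cref{prop:robust-local-decodability} yields $\Phi\in\mb{Z}^{\mr{Gr}_q(n,s)}$ with $\partial_{s,r}\Phi = J''= J-J'$ and $\partial_{s,r}\Phi^\pm$ being $C_1\theta$-bounded, with $C_1 = C_0\cdot C_{\ref{prop:robust-local-decodability}}$. Taking $C_{\ref{lem:J-flattening}} := \max(C_1, \Delta\cdot(\text{the constant from }J'))$ gives all three conclusions: $\partial_{s,r}\Phi = J-J'$, both $J'$ and $\partial_{s,r}\Phi^\pm$ are $C_{\ref{lem:J-flattening}}\theta$-bounded, and $|J'_R|\le\Delta$ everywhere.

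\textbf{Main obstacle.} There is no deep obstacle here — as the excerpt itself notes, ``flattening is in general a trivial consequence of robust local decodability.'' The only thing demanding attention is bookkeeping with the boundedness constants across the two sign-splitting steps (forming $J'$, and forming $J'' = J - J'$), and making sure the hypothesis $\theta\ge q^{-n/2}$ of \cref{prop:robust-local-decodability} survives the replacement $\theta\rightsquigarrow C_0\theta$; both are routine. I would also double-check that $\Delta$ in the statement is literally $\Delta_{\ref{prop:robust-local-decodability}}(q,s,r)$, so that the residues are taken modulo exactly that integer, which is what makes $J''$ land in $\Delta\mb{Z}^{\mr{Gr}_q(n,r)} = \Delta_{\ref{prop:robust-local-decodability}}\mb{Z}^{\mr{Gr}_q(n,r)}$, the precise domain on which \cref{prop:robust-local-decodability} is stated.
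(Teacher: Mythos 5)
Your proposal is correct and follows essentially the same route as the paper: reduce $J$ coordinatewise modulo $\Delta$, keep the residue as $J'$, and apply \cref{prop:robust-local-decodability} to the part divisible by $\Delta$. The paper's only (cosmetic) difference is that it rounds towards zero, i.e.\ takes $J_0=\Delta\lfloor J^+/\Delta\rfloor+\Delta\lceil J^-/\Delta\rceil$, so that $J_0$ and $J'=J-J_0$ have coordinates of the same sign as $J$ and absolute value at most $|J_R|$, which makes both $\theta$-bounded with no constant loss and sidesteps the sign-flipping bookkeeping you correctly flag and handle.
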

\begin{proof}
Write $J = J^++J^-$ and let $J_0 = \Delta\lfloor J^+/\Delta\rfloor+\Delta\lceil J^-/\Delta\rceil$, where the floor and ceiling are applied coordinate-wise in the standard basis. That is, $J_0$ is the result of finding a multiple of $\Delta$ by ``rounding towards zero''. Clearly $J_0$ is also $\theta$-bounded, and by \cref{prop:robust-local-decodability} we can write $J_0 = \partial_{s,r}\Phi$ with $\partial_{s,r}\Phi^\pm$ being $O_{q,s}(\theta)$-bounded. We have that $J'=J-J_0$ is also $\theta$-bounded and $|J_R'|\le\Delta$ for all $R\in\mr{Gr}_q(n,r)$, as desired.
\end{proof}
We also record a technical ``boundedness lemma'' which shows that a reasonable lattice element $J$ has at most the expected number of extensions (of a specific type) from $J$ into a random host.
\begin{lemma}\label{lem:J-boundedness}
Let $h\ge 1$. There is $c = c_{\ref{lem:J-boundedness}}(h,q,r)$ so that for $n$ large and $q^{-cn}\le p\le 1/2$, the following holds. First, whp $L\sim\mr{Sam}(\mr{Gr}_q(n,r),p)$ is $(q^{-n/20},h)$-typical. Second, if $\theta\ge q^{-n/20}$ and $J\in\mb{Z}^{\mr{Gr}_q(n,r)}$ is $\theta$-bounded with $|J_R|\le q^{n/10}$ for all $R\in\mr{Gr}_q(n,r)$ then whp $J$ is $(2\theta, h)$-bounded wrt $L$.
\end{lemma}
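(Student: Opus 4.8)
The plan is to handle the two assertions separately, both via first-moment computations combined with Chernoff bounds and a union bound over the (bounded) number of relevant $q$-extensions. For the first assertion, I would invoke \cref{lem:one-step-typical} to reduce typicality to the one-step condition: fixing $(r-1)$-spaces $Q_1,\ldots,Q_a$ and vectors $v_1,\ldots,v_a$ with the stated non-degeneracy (where $a \le q^{rh}$), I need that the number of $v \in \mb{F}_q^n$ with $\mr{span}_{\mb{F}_q}(Q_i \cup \{v+v_i\}) \in L$ for all $i$ is $(1 \pm q^{-n/20}) p^a q^n$. For each fixed $v$, the events $\{\mr{span}_{\mb{F}_q}(Q_i \cup \{v + v_i\}) \in L\}$ are independent over $i$ (these are distinct $r$-spaces by the non-degeneracy hypothesis, and $L$ is an independent sample), so the probability all hold is exactly $p^a$; hence the expected count is $p^a q^n$. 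Then I would sum over $v$ — but here the indicators for different $v$ are not independent, since different $v$ can produce overlapping $r$-spaces. The fix is the standard one: the number of $r$-spaces $\mr{span}_{\mb{F}_q}(Q_i \cup \{v + v_i\})$ that can be shared between two different values $v, v'$ is small, so one can bound the variance (or the number of ``collisions'') and apply a concentration inequality — the cleanest route is to note $p^a q^n \ge q^{n/2}$ (using $p \ge q^{-cn}$ with $c$ small relative to $1/(2rh)$), group the $v$ into $O(q^{rh})$ classes by which $r$-spaces coincide, apply Chernoff within the generic class, and absorb the lower-order collision terms; then union bound over the $\le q^{O(rh \cdot n)}$ choices of $(Q_i, v_i)$. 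Choosing $c = c_{\ref{lem:J-boundedness}}(h,q,r)$ small makes the error probability $\exp(-\Omega(q^{n/2}))$ beat the union bound.

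For the second assertion, I would fix a $q$-extension $E = (\phi, F, H \setminus \{R\})$ with $\dim \mr{Vec}(H) \le h$ and $R \in H \setminus H[F]$, and estimate
\[
X_E^R(L, J) = \sum_{\phi^\ast \in \mc{X}_E(L)} |J_{\phi^\ast(R)}|.
\]
The key observation is that $\phi^\ast$ ranges over embeddings of $H \setminus \{R\}$ extending $\phi$, i.e., over embeddings witnessing the extension $E' = (\phi, F, H \setminus \{R\})$ into $L$; each such $\phi^\ast$ determines a specific $r$-space $\phi^\ast(R)$ in $G$, and we weight by $|J_{\phi^\ast(R)}|$. I would stratify $\mc{X}_E(L)$ by the value $R' = \phi^\ast(R)$: for a fixed $r$-space $R'$, the number of $\phi^\ast \in \mc{X}_E(L)$ with $\phi^\ast(R) = R'$ is (by typicality of $L$, using that $L$ is $(q^{-n/20},h)$-typical from the first part, applied to the extension obtained by also pinning down where $R$ goes) at most $(1 + q^{-n/20}) p^{e_E} q^{(v_E - 1)n}$ times a combinatorial factor counting how the remaining $r$-spaces of $H \setminus \{R\}$ get placed. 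Summing $|J_{R'}|$ over all $R'$ compatible with the partial embedding, and using that $J$ is $\theta$-bounded wrt the full Grassmannian (\cref{def:embedding-boundedness} with $L = \mr{Gr}_q(n,r)$, i.e.\ the bound $X_{E'}^R(\mr{Gr}_q(n,r), J) \le \theta q^{v_{E'} n}$) to control $\sum_{R'} |J_{R'}|$ over the relevant family, gives the bound $X_E^R(L,J) \le 2\theta\, d(L)^{e_E} q^{v_E n}$ with $d(L) = (1 \pm o(1))p$ whp. The hypothesis $|J_R| \le q^{n/10}$ is what prevents a few pathological coordinates of $J$ from dominating the sum over the sparse host, and $\theta \ge q^{-n/20}$ plus $p \ge q^{-cn}$ keeps the relevant expectations $\ge q^{\Omega(n)}$ so that Chernoff plus a union bound over the $\le q^{O(hn)}$ extensions $E$ closes the argument.

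The main obstacle I anticipate is the second part: getting the constant to be exactly $2\theta$ rather than $C\theta$ requires being careful that the typicality error ($q^{-n/20}$) and the ``collision'' corrections (from $\phi^\ast$ that are not injective in the expected generic way, or from $r$-spaces $\phi^\ast(R)$ that coincide for different $\phi^\ast$) are genuinely lower-order — this is where the interaction between the sparsity $p$, the boundedness parameter $\theta$, and the coordinate bound $q^{n/10}$ must be balanced precisely, and where the ``twists'' alluded to after \cref{lem:one-step-typical} (vector spaces being transverse but dependent) could force extra bookkeeping. A secondary subtlety is that $J$ is a \emph{signed} vector, so $|J| = J^+ - J^-$ is not itself in the lattice and one cannot directly apply lattice structure; but \cref{def:embedding-boundedness} is phrased in terms of $|J_{\phi^\ast(R)}|$ precisely to sidestep this, so one only needs the definitional boundedness of $J$ as input, not any lattice membership.
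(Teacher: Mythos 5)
Your overall plan — bound the first moment using the $\theta$-boundedness of $J$, concentrate, and union bound over the $q^{O_h(n)}$ extensions — is the paper's strategy, and routing the first part through \cref{lem:one-step-typical} is a legitimate alternative to the paper's direct argument. But there are two concrete gaps. First, Chernoff is not an applicable concentration tool here: $X_E^R(L,J)$ (and likewise the one-step count $\sum_v\prod_i\mbm{1}[\mr{span}_{\mb{F}_q}(Q_i\cup\{v+v_i\})\in L]$) is a degree-$e_E$ polynomial in the independent indicators $\mbm{1}[R'\in L]$, not a sum of independent terms, and your ``group the $v$ into classes'' patch does not transfer to the second part, where distinct embeddings $\phi^\ast$ overlap in edges massively. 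The paper instead applies Azuma--Hoeffding (\cref{lem:azuma}) to the martingale exposing membership of each $r$-space $R'$ in $L$: flipping one $R'$ meeting the base in dimension $k$ changes $X_E^R(L,J)$ by at most $q^{n/10}\cdot O(q^{(v_E-(r-k))n})$, and summing over the $O(q^{(r-k)n})$ such $R'$ gives variance proxy at most $q^{(2v_E-3/4)n}$. This is where the hypothesis $|J_R|\le q^{n/10}$ is actually used — it caps the martingale increments — not merely to keep pathological coordinates from dominating the mean.

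Second, your first-moment computation for part two, via stratification over $R'=\phi^\ast(R)$ plus typicality of $L$, is both lossy and potentially circular. If $\mr{span}_{\mb{F}_q}(F\cup R)$ contains $r$-spaces of $H\setminus\{R\}$ beyond those in $H[F]$, then pinning $\phi^\ast(R)=R'$ yields an extension $E''$ with $e_{E''}<e_E$, and the typicality bound $d(L)^{e_{E''}}q^{v_{E''}n}$ overshoots the needed $d(L)^{e_E}q^{(v_E-(r-u))n}$ by a factor $d(L)^{-(e_E-e_{E''})}$, which can be as large as $q^{cn(e_E-e_{E''})}$; recovering this loss requires controlling how many of the contributing $R'$ have their extra edges landing in $L$, weighted by $|J_{R'}|$ — i.e., boundedness of $J$ wrt $L$ for a sub-extension, which is the statement being proved. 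Even generically the stratification constants would give $C(h,q,r)\theta$ rather than $2\theta$. The paper avoids all of this: by linearity of expectation over the independent sampling, $\mb{E}X_E^R(L,J)=p^{e_E}\sum_{\phi^\ast\in\mc{X}_E(\mr{Gr}_q(n,r))}|J_{\phi^\ast(R)}|$ exactly, and this deterministic sum is at most $(1+o(1))\theta q^{v_En}$ directly from $\snorm{\partial_{r,r-1}J^\pm}_\infty\le\theta q^n$; typicality of $L$ never enters part two. Replacing your stratification with this exact expectation identity, and Chernoff with Azuma, repairs the argument.
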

\begin{proof}
Fix an $r$-dimensional $q$-extension $E = (\phi,F,H\setminus\{R\})$ to $\mb{F}_q^n$ with $\dim\mr{Vec}(H)\le h$ and $R\in H\setminus H[F]$; there are $O_{h,q,r}(q^{(\dim F)n})$ possible choices of $E$. We will apply \cref{lem:azuma} to $X_E^R(L,J)$. The first part of the lemma concerns $J=\mbf{1}$, i.e., it is $1$ in all coordinates.

Consider a given $r$-space $R'$ and whether it is included in $L$. If $R'\leqslant F$, it cannot affect $X_E^R(L,J)$. Now for any $k\in\{0,\ldots,\min(\dim F,r-1)\}$, there are at most $O_{q,r,h}(q^{(r-k)n})$ $r$-spaces $R'$ which intersect $F$ in dimension $k$. Changing whether or not $R'$ is included in $L$ can change the count $X_E^R(L,J)$ by at most $q^{n/10}\cdot O_{q,r,h}(q^{(v_E-(r-k))n}) = O_{q,r,h}(q^{(v_E-r+k+1/10)n})$. Therefore the variance proxy for $X_E^R(L,J)$ when applying \cref{lem:azuma} is, up to constants, bounded by $\sum_{0\le k\le\min(\dim F,r-1)}(q^{(v_E-r+k+1/10)n})^2q^{(r-k)n}$. Thus it is at most $q^{(2v_E-3/4)n}$ for $n$ sufficiently large. Furthermore $d(L) = (1\pm q^{-n/10})p$ with high probability by Chernoff.

Finally, we have
\[\mb{E}X_E^{R}(L,J) = \mb{E}\bigg[\sum_{\phi^\ast\in\mc{X}_E(L)}|J_{\phi^\ast(R)}|\bigg] = p^{e_{E}}\sum_{\phi^\ast\in\mc{X}_E(\mr{Gr}_q(n,r))}|J_{\phi^\ast(R)}|\le\theta(1+2q^{-n/10})^{e_E}d(L)^{e_E}q^{v_E n},\]
since $\dim(R\cap F)\le r-1$. Therefore by \cref{lem:azuma} and taking a union bound of size $O_{h,q,r}(q^{(\dim F)n})$, the second part of the result follows. The first part follows by noting that $\mb{E}X_E^R(L,J)$ for $J=\mbf{1}$ evaluates to $(1\pm q^{-n/3})p^{e_E}q^{v_En}$.
\end{proof}

\subsection{Lattices and sparse subsets}\label{sub:bounded-sparse}
We now construct the crucial object for our proof: we take a random sparse subset of $\mr{Gr}_q(n,r)$ and prove that the associated lattice in some sense is ``mostly generated'' by a sparse basis set. The strategy for \cref{thm:bounded-inverse} at a high level would be to find a representation of $J\pmod{\Delta}$ (where $\Delta = \Delta_{\ref{prop:robust-local-decodability}}(q,s,r)$) by starting with an arbitrary representation, covering the parts outside our sparse subset in a ``spread out'' way, and then decomposing the parts inside with the sparse basis set. Since we are working $\imod{\Delta}$, the coefficients are small and the representation is bounded by design. If we take the least residues $\imod{\Delta}$ and then consider the image over $\mb{Z}$, we obtain some bounded $J^\ast$ such that $J-J^\ast\in\Delta\mb{Z}^{\mr{Gr}_q(n,r)}$, at which point we can use \cref{prop:robust-local-decodability}. However, the fact that we can only obtain a sparse ``almost basis'' leads to additional difficulties; at a high level we take multiple independent copies of this sparse object and show that together they can ``cover the gaps'' with high probability.

\begin{proposition}\label{prop:greedy-sparse-basis}
Let $C = C_{\ref{prop:greedy-sparse-basis}}(q,s)$, $c = c_{\ref{prop:greedy-sparse-basis}}(q,s)$, $q^{-cn}\le p\le c$, and $n$ be sufficiently large. Given $\Delta\ge 1$, let $L\sim\mr{Sam}(\mr{Gr}_q(n,r),p)$. Whp, there is a set $\mc{S}\subseteq\mr{Gr}_q(n,s)[L]$ (i.e., $s$-spaces with all $r$-subspaces in $L$) which is such that (a) each $Q\in\mr{Gr}_q(n,r-1)$ has at most $p^{1/2}q^n$ extensions to an $s$-space in $\mc{S}$ and (b) there is $B\subseteq\mr{Gr}_q(n,r-1)$ with $|\mr{Gr}_q(n,r-1)\setminus B|\le C\Delta p^{1/2}q^{(r-1)n}$ and
\[\partial_{s,r}(\mb{Z}/\Delta\mb{Z})^\mc{S}\supseteq\partial_{s,r}(\mb{Z}/\Delta\mb{Z})^{\mr{Gr}_q(n,s)[L[B]]}.\]
\end{proposition}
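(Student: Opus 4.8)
The plan is to build $\mc{S}$ and $B$ simultaneously by one greedy pass over the $s$-spaces; the construction is deterministic given $L$, and the only role of ``whp'' is to control $|L|$. The crucial point is that reducing modulo $\Delta$ makes all relevant groups finite, which forces the pass to be short.

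\emph{Construction.} Fix an arbitrary ordering of $\mr{Gr}_q(n,s)[L]$ and set $m=\lfloor p^{1/2}q^n\rfloor$. Initialize $\mc{S}=\emptyset$ and $B=\mr{Gr}_q(n,r-1)$. Since $\partial_{s,r}$ has integer matrix it descends to a map of $\mb{Z}/\Delta\mb{Z}$-modules, and we process each $S\in\mr{Gr}_q(n,s)[L]$ in turn as follows. If $\partial_{s,r}e_S$ already lies in the subgroup $\partial_{s,r}(\mb{Z}/\Delta\mb{Z})^\mc{S}$ for the current $\mc{S}$, do nothing. Otherwise, if every $(r-1)$-subspace of $S$ currently lies in at most $m-1$ members of $\mc{S}$, add $S$ to $\mc{S}$; and if instead some $(r-1)$-subspace of $S$ lies in exactly $m$ members of $\mc{S}$, delete from $B$ every $(r-1)$-subspace of $S$ lying in $m$ members of $\mc{S}$ and leave $\mc{S}$ unchanged. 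Throughout, each $(r-1)$-space lies in at most $m\le p^{1/2}q^n$ members of $\mc{S}$ and $\mc{S}\subseteq\mr{Gr}_q(n,s)[L]$, so (a) holds.

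\emph{Lattice containment.} Let $B$ be the final set. Unwinding the $[\cdot]$ notation, $\mr{Gr}_q(n,s)[L[B]]$ is exactly the set of $s$-spaces all of whose $r$-subspaces lie in $L$ and all of whose $(r-1)$-subspaces lie in $B$, and the right-hand side of (b) is the subgroup generated by $\{\partial_{s,r}e_S\}$ over these $S$. Fix such an $S$: its $r$-subspaces lie in $L$, so $S$ was processed, and none of its $(r-1)$-subspaces was ever deleted from $B$, so at the moment $S$ was processed we did not take the deletion branch. Hence we either did nothing (so $\partial_{s,r}e_S$ was already in $\partial_{s,r}(\mb{Z}/\Delta\mb{Z})^\mc{S}$) or we added $S$ (so $\partial_{s,r}e_S$ entered $\partial_{s,r}(\mb{Z}/\Delta\mb{Z})^\mc{S}$ then); since this subgroup only grows, $\partial_{s,r}e_S$ lies in the final $\partial_{s,r}(\mb{Z}/\Delta\mb{Z})^\mc{S}$. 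Thus every generator of the right-hand side lies in $\partial_{s,r}(\mb{Z}/\Delta\mb{Z})^\mc{S}$, giving (b).

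\emph{Size bound.} Every $\partial_{s,r}e_S$ with $S\in\mr{Gr}_q(n,s)[L]$ is supported on $L$, so $\partial_{s,r}(\mb{Z}/\Delta\mb{Z})^\mc{S}$ is always a subgroup of $(\mb{Z}/\Delta\mb{Z})^L$, which has order $\Delta^{|L|}$. Each addition of an $s$-space strictly enlarges this subgroup, hence at least doubles its order, so $2^{|\mc{S}|}\le\Delta^{|L|}$, i.e. $|\mc{S}|\le|L|\log_2\Delta$. By \cref{lem:chernoff}, whp $|L|\le 2p\qbinom{n}{r}_q\le 2pq^{rn}$ (valid as $p\ge q^{-cn}$ and $n$ is large), so whp $|\mc{S}|\le 2pq^{rn}\log_2\Delta$. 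Double-counting incidences $(Q,S)$ with $S\in\mc{S}$ and $Q$ an $(r-1)$-subspace of $S$ gives $|\mc{S}|\qbinom{s}{r-1}_q$ on one hand, and at least $\tfrac12 p^{1/2}q^n\cdot|\mr{Gr}_q(n,r-1)\setminus B|$ on the other (each deleted $Q$ lies in exactly $m\ge\tfrac12 p^{1/2}q^n$ members of $\mc{S}$, for $n$ large). Hence whp
\[|\mr{Gr}_q(n,r-1)\setminus B|\le\frac{2|\mc{S}|\qbinom{s}{r-1}_q}{p^{1/2}q^n}\le\frac{4pq^{rn}\log_2\Delta\cdot\qbinom{s}{r-1}_q}{p^{1/2}q^n}=4\qbinom{s}{r-1}_q\,p^{1/2}q^{(r-1)n}\log_2\Delta\le C\Delta p^{1/2}q^{(r-1)n},\]
with $C=C_{\ref{prop:greedy-sparse-basis}}(q,s):=4\qbinom{s}{r-1}_q$ (a function of $q,s$ only, as $r<s$), using $\log_2\Delta\le\Delta$; and the case $\Delta=1$ is trivial.

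I expect the only genuinely non-obvious ingredient to be the observation that reducing modulo $\Delta$ caps the greedy after $|L|\log_2\Delta$ steps rather than after roughly $|\mr{Gr}_q(n,r)|$ steps, which is exactly what makes the sparsity of $\mc{S}$ outweigh $|\mr{Gr}_q(n,r-1)\setminus B|$; everything else is the routine double-count and Chernoff estimate above. One should keep in mind that the threshold is $m=\lfloor p^{1/2}q^n\rfloor$, so that (a) reads literally as ``at most $p^{1/2}q^n$''.
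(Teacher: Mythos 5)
Your proposal is correct and follows essentially the same route as the paper: a greedy selection of $s$-spaces from $\mr{Gr}_q(n,s)[L]$ subject to a degree cap $p^{1/2}q^n$ on $(r-1)$-spaces, with $B$ obtained by discarding the saturated $(r-1)$-spaces, the size of $\mc{S}$ controlled by the length of a chain of subgroups of $(\mb{Z}/\Delta\mb{Z})^{L}$, and the bound on $|\mr{Gr}_q(n,r-1)\setminus B|$ by double-counting incidences. Your chain bound $|\mc{S}|\le|L|\log_2\Delta$ (via index-doubling) is marginally sharper than the paper's $\Delta|L|$, but this is a cosmetic difference, not a different argument.
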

\begin{proof}
Consider the following greedy process: start with $\mc{S}$ empty, and continually add in an arbitrary $S\in\mr{Gr}_q(n,s)[L]$ such that (i) $\partial_{s,r}e_S\notin\partial_{s,r}(\mb{Z}/\Delta\mb{Z})^\mc{S}$ and (ii) adding it to $\mc{S}$ does not make any $(r-1)$-space $Q$ have more than $p^{1/2}q^n$ extensions to an element in $\mc{S}$.

At the termination of this process, either (i) fails for all remaining subspaces so we have an appropriate (taking $B = \mr{Gr}_q(n,r-1)$) subset with $\partial_{s,r}(\mb{Z}/\Delta\mb{Z})^\mc{S}\supseteq\partial_{s,r}(\mb{Z}/\Delta\mb{Z})^{\mr{Gr}_q(n,s)[L]}$ and we are done, or (ii) fails. In the latter case, there is a subset $B'\subseteq\mr{Gr}_q(n,r-1)$ of ``saturated'' $(r-1)$-spaces $Q$ such that every $S\in\mr{Gr}_q(n,s)[L]$ with $\partial_{s,r}e_S\notin\partial(\mb{Z}/\Delta\mb{Z})^\mc{S}$ contains an element of $B'$, and every space in $B'$ is contained within $\lfloor p^{1/2}q^n\rfloor$ many $s$-spaces in $\mc{S}$. Let $B = \mr{Gr}_q(n,r-1)\setminus B'$.

Now we have the bound $\lfloor p^{1/2}q^n\rfloor|B'|\le\qbinom{s}{r-1}_q|\mc{S}|$ by counting pairs of $s$-spaces of $\mc{S}$ and $(r-1)$-spaces, one containing the other. We also have $|\mc{S}|\le\Delta\cdot|L|$ since each newly added element of $\mc{S}$ strictly extends the subgroup spanned by the $\partial_{s,r}e_S\in(\mb{Z}/\Delta\mb{Z})^L$, and the longest chain of such subgroups is bounded in size by $\Delta|L|$. Since $|L|\le 2pq^{rn}$ whp, the result follows.
\end{proof}

We now create a ``regularized'' subset $L^\ast\subseteq L[B]$ such that every $r$-space in $L^\ast$ has around the same number of extensions to an $s$-space all of whose $r$-subspaces are in $L[B]$.

\begin{lemma}\label{lem:regularized-linear}
Let $c = c_{\ref{lem:regularized-linear}}(q,s)$, $q^{-cn}\le p\le c$, and $n$ be sufficiently large. Given $\Delta\ge 1$, sample $L$ and create $\mc{S}$ and $B$ as in \cref{prop:greedy-sparse-basis} (existing whp). Whp there exists a collection $L^\ast\subseteq L[B]$ with $|L^\ast| = (1\pm\Delta p^{1/5})p\qbinom{n}{r}_q$ such that every $R\in L^\ast$ has $(1\pm p^{1/5})p^{\qbinom{s}{r}_q-1}\qbinom{n-r}{s-r}_q$ extensions to an $s$-space in $\mr{Gr}_q(n,s)[L[B]]$.
\end{lemma}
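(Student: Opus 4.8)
The plan is to extract a regularized subset $L^\ast$ from $L[B]$ by a simple random subsampling argument, after first showing that most $r$-spaces in $L[B]$ already have close to the expected extension count into $\mr{Gr}_q(n,s)[L[B]]$. First I would record the baseline count: by \cref{lem:J-boundedness} (first part), whp $L\sim\mr{Sam}(\mr{Gr}_q(n,r),p)$ is $(q^{-n/20},h)$-typical for $h$ a suitable constant in terms of $s$; in particular, for a fixed $R\in\mr{Gr}_q(n,r)$, applying the typicality estimate to the $q$-extension that adjoins the remaining $s-r$ ``directions'' needed to build an $s$-space over $R$ together with its $\qbinom{s}{r}_q-1$ other $r$-subspaces, we get that the number of $S\in\mr{Gr}_q(n,s)[L]$ with $R\leqslant S$ is $(1\pm q^{-n/20})p^{\qbinom{s}{r}_q-1}\qbinom{n-r}{s-r}_q$ whenever $R\in L$. (One must be slightly careful that the $r$-subspaces of $S$ containing a fixed basis vector of $R$ may be dependent in the sense flagged before \cref{lem:one-step-typical}, but that is exactly what the $q$-extension formalism of \cref{def:q-typical} already accounts for.)

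Next I would pass from $L$ to $L[B]$. Since $|\mr{Gr}_q(n,r-1)\setminus B|\le C\Delta p^{1/2}q^{(r-1)n}$ from \cref{prop:greedy-sparse-basis}(b), the $r$-spaces of $L$ that fail to lie in $L[B]$ are exactly those containing an $(r-1)$-space outside $B$; there are at most $O_{q,s}(\Delta p^{1/2}q^{rn})$ such $r$-spaces, i.e. a $O_{q,s}(\Delta p^{1/2})$-fraction (using $|L|=(1\pm o(1))pq^{rn}$ whp). More importantly, for a fixed $R\in L[B]$, the number of $S$ with $R\leqslant S$, $S\in\mr{Gr}_q(n,s)[L]$ but $S\notin\mr{Gr}_q(n,s)[L[B]]$ is bounded by the number of $s$-spaces over $R$ containing one of the few $(r-1)$-spaces outside $B$: for each bad $(r-1)$-space $Q$ with $Q\not\leqslant R$, the count of $s$-spaces $S\supseteq R$ with $Q\leqslant S$ is $O_{q,s}(q^{(s-r)n-n})$ (one loses a factor $q^n$ because $Q\cup R$ already spans dimension $\geq r+1$), and if $Q\leqslant R$ the space $Q$ isn't really an obstruction since then $R\notin L[B]$ anyway. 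So the deficit between the $L$-extension count and the $L[B]$-extension count at a fixed $R\in L[B]$ is at most $O_{q,s}(\Delta p^{1/2}q^{(r-1)n}\cdot q^{(s-r-1)n}) = O_{q,s}(\Delta p^{1/2}q^{(s-2)n})$, which must be compared to the main term of order $p^{\qbinom{s}{r}_q-1}q^{(s-r)n}$. This is where I would be most careful: the sparsity $p\ge q^{-cn}$ means $p^{\qbinom{s}{r}_q-1}q^{(s-r)n}$ could be of size roughly $q^{(s-r-O(c))n}$, so the comparison only works with a generous choice of the constant $c=c_{\ref{lem:regularized-linear}}(q,s)$ small enough that $\Delta p^{1/2}q^{(s-2)n}\le p^{1/5}\cdot p^{\qbinom{s}{r}_q-1}q^{(s-r)n}$; this is the main obstacle, and it forces the power $p^{1/5}$ (there is slack, and one could optimize the exponent, but $p^{1/5}$ suffices). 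Hence whp \emph{every} $R\in L[B]$ has $(1\pm p^{1/6})p^{\qbinom{s}{r}_q-1}\qbinom{n-r}{s-r}_q$ extensions into $\mr{Gr}_q(n,s)[L[B]]$, absorbing the $q^{-n/20}$ typicality error and the deficit into $p^{1/6}$.

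Finally, to obtain the upper-tail control in the statement (``$(1\pm p^{1/5})$'' on the extension count for \emph{every} $R\in L^\ast$), I would simply take $L^\ast := L[B]$ directly: the previous paragraph already gives the two-sided bound with error $p^{1/6}\le p^{1/5}$ for all $R\in L[B]$, and the size bound $|L^\ast|=(1\pm\Delta p^{1/5})p\qbinom{n}{r}_q$ follows from $|L|=(1\pm q^{-n/20})p\qbinom{n}{r}_q$ (Chernoff) together with the $O_{q,s}(\Delta p^{1/2})$-fraction of $r$-spaces lost in passing from $L$ to $L[B]$, which is $\le\Delta p^{1/5}$ after adjusting constants. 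All failure events (typicality of $L$, the size of $L$, existence of $\mc{S},B$) hold whp, so their conjunction does too, completing the proof.
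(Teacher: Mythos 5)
Your overall architecture (reduce to typicality of $L$, then control the loss in passing from $L$ to $L[B]$) points in the right direction, but the core estimate fails, and the failure is not just quantitative slack: it reveals that your pointwise strategy cannot work.

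First, the arithmetic. When you bound the deficit at a fixed $R\in L[B]$ you count \emph{all} $s$-spaces $S\geqslant R$ containing a bad $(r-1)$-space, dropping the requirement $S\in\mr{Gr}_q(n,s)[L]$. This costs you roughly a factor $p^{\qbinom{s}{r}_q-1}$, and the comparison you then need, $\Delta p^{1/2}q^{(s-2)n}\le p^{1/5}\cdot p^{\qbinom{s}{r}_q-1}q^{(s-r)n}$, rearranges to $\Delta p^{13/10-\qbinom{s}{r}_q}\le q^{-(r-2)n}$. Since $\qbinom{s}{r}_q\ge 3$ and $p<1$, the left side is at least $\Delta\ge 1$, while the right side is at most $1$ for $r\ge 2$; moreover shrinking $c$ (hence allowing smaller $p$) makes the left side \emph{larger}, so the knob you propose to turn goes the wrong way. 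A similar sparsity slip occurs earlier: the $r$-spaces of $L$ meeting a bad $(r-1)$-space number $O(\Delta p^{1/2}q^{rn})$ only if you count all of $\mr{Gr}_q(n,r)$; divided by $|L|\approx pq^{rn}$ that is a fraction $O(\Delta p^{-1/2})$, not $O(\Delta p^{1/2})$. (The correct fraction follows instead from the degree bound of $L$ at $(r-1)$-spaces, $\approx pq^n$ per bad $Q$.)

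Second, and more fundamentally, even after restoring the $[L]$ constraint your plan of proving the extension count for \emph{every} $R\in L[B]$ is not salvageable. \cref{prop:greedy-sparse-basis} controls only the total number of $(r-1)$-spaces outside $B$, not their spatial distribution, and nothing prevents them from clustering around one particular $R$ — say, all $Q\notin B$ meeting $R$ in an $(r-2)$-space. For $r\ge 3$ the budget $C\Delta p^{1/2}q^{(r-1)n}$ comfortably accommodates all $\approx q^n$ such $Q$, and each one kills $\approx p^{\qbinom{s}{r}_q-1}q^{(s-r-1)n}$ extensions of $R$ into $\mr{Gr}_q(n,s)[L]$, for a total loss of the same order as the main term. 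This is exactly why the statement only asserts the existence of a large subset $L^\ast$, and why the paper's proof goes the other way: it \emph{defines} $L^\ast$ to be the set of $R$ with the correct extension count into $\mr{Gr}_q(n,s)[L[B]]$ (so the extension property holds by fiat) and then bounds $|L\setminus L^\ast|$ by a global double count of pairs $(Q,S)$ with $Q\in\mr{Gr}_q(n,r-1)\setminus B$ and $Q\leqslant S\in\mr{Gr}_q(n,s)[L]$: each exceptional $R$ contributes at least $(p^{1/5}-q^{-n/20})p^{\qbinom{s}{r}_q-1}\qbinom{n-r}{s-r}_q$ such pairs, while the total number of pairs is at most $C\Delta p^{1/2}q^{(r-1)n}\cdot 2p^{\qbinom{s}{r}_q}\qbinom{n-r+1}{s-r+1}_q$, yielding $|L\setminus L^\ast|\lesssim_{q,s}\Delta p^{3/10}\cdot p\qbinom{n}{r}_q$. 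You would need to replace your pointwise argument with an averaging argument of this type.
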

\begin{proof}
First, by \cref{lem:J-boundedness} whp $|L| = (1\pm q^{-n/20})p\qbinom{n}{r}_q$, every $Q\in\mr{Gr}_q(n,r-1)$ has $(1\pm q^{-n/20})p^{\qbinom{s}{r}_q}\qbinom{n-(r-1)}{s-(r-1)}_q$ extensions to some $S\in\mr{Gr}_q(n,s)[L]$, every $r$-space $R\in L$ has $(1\pm q^{-n/20})p^{\qbinom{s}{r}_q-1}\qbinom{n-r}{s-r}_q$ extensions to some $S\in\mr{Gr}_q(n,s)[L]$, and finally $|\mr{Gr}_q(n,s)[L]|=(1\pm q^{-n/20})p^{\qbinom{s}{r}_q}\qbinom{n}{s}_q$. In particular every $Q\in\mr{Gr}_q(n,r-1)$ has at most $2p^{\qbinom{s}{r}_q}\qbinom{n-(r-1)}{s-(r-1)}_q$ extensions into an $s$-space in $\mr{Gr}_q(n,s)[L]$. Let $L^\ast\subseteq L$ be the $r$-spaces with $(1\pm p^{1/5})p^{\qbinom{s}{r}_q-1}\qbinom{n-r}{s-r}_q$ extensions to an $s$-space in $\mr{Gr}_q(n,s)[L[B]]$. Clearly $L^\ast\subseteq L[B]$.

We now count $X$, the number of pairs $(Q,S)$ of $(r-1)$-spaces $Q\in\mr{Gr}_q(n,r-1)\setminus B$ and $s$-spaces $S\geqslant Q$ with $S\in\mr{Gr}_q(n,s)[L]$. We have by \cref{prop:greedy-sparse-basis} that
\[|L\setminus L^\ast|\cdot(p^{1/5}-q^{-n/20})p^{\qbinom{s}{r}_q-1}\qbinom{n-r}{s-r}_q\le\qbinom{s}{r}_qX\le\qbinom{s}{r}_q\cdot C_{\ref{prop:greedy-sparse-basis}}\Delta p^{1/2}q^{(r-1)n}\cdot 2p^{\qbinom{s}{r}_q}\qbinom{n-(r-1)}{s-(r-1)}_q.\]
The first inequality is deduced by considering any element $R\in L\setminus L^\ast$ and using the definition of $L^\ast$ and initial bounds on extensions within $L$ to find an extension to an $s$-space containing an $(r-1)$-space within $\mr{Gr}_q(n,r-1)\setminus B$ (this process can overcount by at most a factor of $\qbinom{s}{r}_q$ corresponding to the choice of $R$ given $(Q,S)$). The second inequality follows by starting with the bound on $\mr{Gr}_q(n,r-1)\setminus B$ from \cref{prop:greedy-sparse-basis} and then using initial bounds on extensions within $L$.

Dividing both sides and using the bounds on $p$ shows that
\[|L\setminus L^\ast|\lesssim_{q,s}\Delta p^{3/10}\cdot p\qbinom{n}{r}_q.\]
Combining with the initial bound $|L| = (1\pm q^{-n/20})p\qbinom{n}{r}_q$ finishes.
\end{proof}

We now fix $p = q^{-\xi n}$, $\xi > 0$ to be chosen small later, $\Delta = \Delta_{\ref{prop:robust-local-decodability}}(q,s,r)$ and fix a given realization of $\mc{S}$, $B$, $L[B]$, and $L^{\ast}[B]$ satisfying the conditions of \cref{prop:greedy-sparse-basis,lem:regularized-linear} with these values of $\Delta$, $p$, and $n$. We will consider a set of ``rotated'' collections defined by applying the action of $A_1,\ldots,A_u\in\mr{GL}(\mb{F}_q^n)$. Thinking of the indices $[u]$ as colors, we first show that certain colored configurations can be generated by the images of monochromatic $s$-spaces.

\begin{proposition}\label{prop:color-decomposition}
Let $0 < \xi\le\xi_{\ref{prop:color-decomposition}}(q,s)$, fix some positive integers $u\ge u_{\ref{prop:color-decomposition}}(\xi,q,s)$ and $\Delta\ge 1$, and suppose $n$ is large with respect to these parameters. Let $p = q^{-\xi n}$, and then let $L,\mc{S},B$ be as in \cref{prop:greedy-sparse-basis} and $L^\ast$ be as in \cref{lem:regularized-linear} (all existing whp). Sample $A_1,\ldots,A_u\in\mr{GL}(\mb{F}_q^n)$ uniformly at random. Whp, the following two properties hold.
\begin{itemize}
    \item Suppose $S\in\mr{Gr}_q(n,s)$ and there exists $\psi\colon\mr{Gr}(S,r)\to[u]$ so that $R\in A_{\psi(R)}\cdot L^\ast$ for all $R\in\mr{Gr}(S,r)$. Then there exists a representation
    \[\partial_{s,r}e_S = \sum_{S'\in\mc{I}}a_{S'}\partial_{s,r}e_{S'}\]
    such that for all $S'\in\mc{I}$, $a_{S'}\in\mb{Z}$ and $\mr{Gr}(S',r)\subseteq A_i\cdot L[B]$ for some $i\in[u]$.
    \item Suppose $S,S^\ast\in\mr{Gr}_q(n,s)$ with $\dim(S\cap S^\ast) = r$ and there exists $\psi\colon(\mr{Gr}(S,r)\cup\mr{Gr}(S^\ast,r))\setminus\{S\cap S^\ast\}\to[u]$ so that $R\in A_{\psi(R)}\cdot L^\ast$ for all $R$ in the domain. Then there exists a representation
    \[\partial_{s,r}(e_S-e_{S^\ast}) = \sum_{S'\in\mc{I}}a_{S'}\partial_{s,r}e_{S'}\]
    such that for all $S'\in\mc{I}$, $a_{S'}\in\mb{Z}$ and $\mr{Gr}(S',r)\subseteq A_i\cdot L[B]$ for some $i\in[u]$.
\end{itemize}
\end{proposition}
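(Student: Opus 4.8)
The plan is to prove both bullets by realizing $S$ (resp.~the pair $S, S^\ast$) via the "subspace exchange" gadgets of \cref{prop:subspace-exchange} embedded in the rotated copies $A_i \cdot L[B]$, using the regularity of $L^\ast$ to guarantee that the gadgets fit. Recall from \cref{prop:subspace-exchange} that for the relevant $q$ and $s > r$ there is $k = k_{\ref{prop:subspace-exchange}}(s)$ and two $q$-systems $\Ups, \Ups'$ on $\mb{F}_q^k$ with $\partial_{s,r}\Ups = \partial_{s,r}\Ups'$, where distinct $s$-spaces within $\Ups$ (or within $\Ups'$) meet in dimension $< r$ and across the two families meet in dimension $\le r$. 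The idea for the first bullet: given $S$ with a coloring $\psi\colon\mr{Gr}(S,r)\to[u]$ placing each $r$-subspace $R$ of $S$ into $A_{\psi(R)}\cdot L^\ast$, I want to rewrite $e_S$ as $\pm$ a telescoping chain of exchanges, each of which swaps one $s$-space for a bounded collection of others, so that at every step the $r$-subspaces involved lie in some single $A_i \cdot L[B]$. Concretely, one views $S$ itself as (a scaled copy of) one "side" of a gadget, applies the exchange to replace it by the other side $\Ups'$, and then iterates: each new $s$-space $S'$ appearing has all but one of its $r$-subspaces coinciding with old $r$-subspaces (which already lie in the appropriate $A_i\cdot L^\ast \subseteq A_i \cdot L[B]$), and the one genuinely new $r$-subspace must be placed into some $A_j \cdot L[B]$. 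This is exactly where I need a counting/extension input: because $L^\ast$ is regularized (every $R \in L^\ast$ has $(1\pm p^{1/5}) p^{\binom{s}{r}_q - 1}\qbinom{n-r}{s-r}_q$ extensions to an $s$-space in $\mr{Gr}_q(n,s)[L[B]]$), and because the $A_i$ are independent uniform elements of $\mr{GL}(\mb{F}_q^n)$, with high probability one can always find, for each required "completion", a choice of new $r$-space landing in the correct rotated copy $L[B]$. The termination is guaranteed because the exchange gadgets $\Ups, \Ups'$ are fixed finite objects on $\mb{F}_q^k$: the whole construction is a bounded-depth tree of exchanges of size $O_{q,s}(1)$.

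More carefully, here is the order of the argument. First, I fix a single embedding $\mb{F}_q^k \hookrightarrow \mb{F}_q^n$ and transport $\Ups, \Ups'$ to $q$-systems on $\mb{F}_q^n$; then apply $\mr{GL}(\mb{F}_q^n)$ to move these copies around and to the various rotated hosts. Second, I set up the exchange as a rewriting rule: each time we have an $s$-space $T$ all of whose $r$-subspaces except possibly one lie in a fixed $A_i\cdot L[B]$, I want to "push" the offending structure along by one step using a gadget, introducing a constant number of new $s$-spaces and reducing some complexity measure. Third — and this is the technical heart — I must show that the needed gadget copies exist inside the rotated hosts. For a gadget to embed, I need: (a) all its $r$-spaces that are supposed to be "old" already live where claimed, which is built into $\psi$ and the definition of $L^\ast$, and (b) the finitely many "new" $r$-spaces of the gadget can be routed into the correct $A_j \cdot L[B]$. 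Point (b) is a probabilistic statement over the randomness of the $A_j$: for a fixed finite gadget shape and a fixed partial placement of its old $r$-spaces, the set of completions into $A_j \cdot L[B]$ has the expected density up to a $(1 + o(1))$ factor (this is where regularity of $L^\ast$ and typicality of $L$, from \cref{lem:J-boundedness} / \cref{lem:regularized-linear}, feed in), and then a union bound over the $O_{q,s}(q^{O(sn)})$ potential configurations $S$ (and gadget placements) — killed by the positive-exponent savings $p^{\Omega(1)}q^{\Omega(n)} \ge q^{\Omega(n)}$ when $\xi$ is small enough and $u$ large enough — gives the "whp" conclusion simultaneously for all $S$. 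The second bullet is essentially identical: instead of expressing $e_S$ alone, I express $e_S - e_{S^\ast}$, treating the common $r$-space $S \cap S^\ast$ as already "cancelled" (so it need not lie in any $A_i \cdot L^\ast$), and I begin the chain of exchanges from the formal difference; the gadget $(\Ups,\Ups')$ is precisely designed so that $\partial_{s,r}(\Ups - \Ups') = 0$, which is what lets a difference of two $s$-spaces be decomposed rather than a single one.

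I expect the main obstacle to be the bookkeeping that ensures the rewriting process \emph{terminates} with all final $s$-spaces monochromatic-in-the-rotated-sense (i.e.~$\mr{Gr}(S',r) \subseteq A_i \cdot L[B]$ for a \emph{single} $i$), rather than merely having each $r$-subspace in \emph{some} rotated copy. The subtlety is that an exchange step replacing $S$ by $\Ups'$ produces $s$-spaces whose $r$-subspaces are a mix of old ones (scattered across possibly different colors $A_{\psi(R)}$) and new ones; to get a single color I will need to iterate exchanges color-by-color, at each stage "homogenizing" one $s$-space to live inside one $A_i\cdot L[B]$, and I must check that this nested induction closes — i.e.~that the complexity measure (something like: number of $s$-spaces in the current representation that are not yet monochromatic, ordered lexicographically with the number of "wrong-colored" $r$-subspaces) strictly decreases and is bounded by $O_{q,s}(1)$ throughout. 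Getting a clean such measure, and verifying that the finitely many gadgets from \cref{prop:subspace-exchange} suffice to decrease it regardless of the color pattern, is where the real work lies; the probabilistic extension-counting, while notationally heavy, is routine given \cref{lem:J-boundedness} and the independence of the $A_i$.
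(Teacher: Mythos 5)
Your overall strategy --- embed subspace-exchange gadgets into the rotated hosts $A_i\cdot L[B]$, using the regularity of $L^\ast$ and the randomness of the $A_i$ to find completions --- is the right one, but there are two genuine gaps, and you have correctly located one of them yourself.

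First, the monochromatization step that you defer (``where the real work lies'') is not a bookkeeping exercise that an iterative complexity measure will close; it is the heart of the proof, and it is achieved structurally rather than iteratively. The paper first proves a \emph{rainbow} version (\cref{lem:rainbow-decomposition}), in which $\psi$ is injective, using a single two-level gadget $H$: one copy of the exchange from \cref{prop:subspace-exchange} with $F=S\in\Ups_0$, onto which $b=\qbinom{s}{r}_q$ further copies are glued linearly disjointly along the $s$-spaces $F_j\in\Ups_0'$ meeting $F$ in an $r$-space. The design guarantees that for each $r$-subspace $R$ of $S$ the unique second-level $s$-space $F_j'$ containing $R$ can be routed entirely into $A_{\psi(R)}\cdot L[B]$ (this is exactly what the regularization of $L^\ast$ in \cref{lem:regularized-linear} buys), while every other $s$-space of the gadget receives a fresh, pairwise distinct color. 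Monochromaticity is thus obtained in one shot from the shape of the gadget, not by repeatedly ``homogenizing'' wrong-colored $r$-subspaces. The non-injective case is then handled by a further gluing that first rewrites $e_S$ (or $e_S-e_{S^\ast}$) in terms of \emph{rainbow} $s$-spaces and only then invokes the rainbow lemma; this reduction is unavoidable, because repeated colors destroy the (conditional) independence on which the embedding-count computation rests.

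Second, your union bound does not close as described. A single second-moment/Chebyshev computation for the number $Y$ of good gadget embeddings yields failure probability only about $p^{1/7}=q^{-\xi n/7}$ per configuration, which cannot beat a union bound of size $q^{\Theta(sn)}$ when $\xi$ is small --- and the statement requires $\xi$ small, so making $\xi$ smaller hurts rather than helps. Your claimed $(1+o(1))$-density statement ``simultaneously for all placements'' is far stronger than anything the second moment gives, and no exponential concentration is available since a single $A_i$ influences every configuration. This is precisely why $u$ must be large in terms of $\xi$: the paper takes $u'=\lceil 8s/\xi\rceil$ pairwise disjoint tuples of fresh colors $T_1,\dots,T_{u'}$, notes that the counts $Y_{T_1},\dots,Y_{T_{u'}}$ are conditionally independent given the data determined by $\psi$, and multiplies the failure probabilities to obtain $p^{u'/7}\le q^{-8sn/7}$, which does beat the union bound. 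Without this amplification your argument cannot conclude ``whp for all $S$ and $\psi$ simultaneously''.
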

\begin{remark}
A matrix in $\mr{GL}(\mb{F}_q^n)$ is applied to a set of subspaces of $\mb{F}_q^n$ by extending the action and applying the operation element-wise in the natural way.
\end{remark}

To prove this, we first show a ``rainbow'' version of the first bullet point.
\begin{lemma}\label{lem:rainbow-decomposition}
Let $0 < \xi\le\xi_{\ref{lem:rainbow-decomposition}}(q,s)$, fix some positive integers $u\ge u_{\ref{lem:rainbow-decomposition}}(\xi,q,s)$ and $\Delta\ge 1$, and suppose $n$ is large with respect to these parameters. Let $p = q^{-\xi n}$, and then let $L,\mc{S},B$ be as in \cref{prop:greedy-sparse-basis} and $L^\ast$ be as in \cref{lem:regularized-linear} (all existing whp). Sample $A_1,\ldots,A_u\in\mr{GL}(\mb{F}_q^n)$ uniformly at random. Whp, the following property holds:
\begin{itemize}
    \item Suppose $S\in\mr{Gr}_q(n,s)$ and there exists an injective $\psi\colon\mr{Gr}(S,r)\to[u]$ so that $R\in A_{\psi(R)}\cdot L^\ast$ for all $R\in\mr{Gr}(S,r)$. Then there exists a representation
    \[\partial_{s,r}e_S = \sum_{S'\in\mc{I}}a_{S'}\partial_{s,r}e_{S'}\]
    such that for all $S'\in\mc{I}$, $a_{S'}\in\mb{Z}$ and $\mr{Gr}(S',r)\subseteq A_i\cdot L[B]$ for some $i\in[u]$.
\end{itemize}
\end{lemma}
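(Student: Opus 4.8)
The plan is that a single subspace exchange already suffices. Fix a rainbow $S\in\mr{Gr}_q(n,s)$ with $r$-subspaces $R_1,\dots,R_m$, $m=\qbinom{s}{r}_q$, carrying the distinct colors $c_j=\psi(R_j)$, so that $R_j\in A_{c_j}\cdot L^\ast\subseteq A_{c_j}\cdot L[B]$ by \cref{lem:regularized-linear}. Let $\Ups,\Ups'$ on $\mb{F}_q^k$ be the subspace exchange from \cref{prop:subspace-exchange}, fix a member $S_0\in\Ups$, and (after replacing $\mb{F}_q^k$ by the span of $\Ups\cup\Ups'$) consider linear injections $\phi\colon\mb{F}_q^k\hookrightarrow\mb{F}_q^n$ with $\phi(S_0)=S$. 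Since $\phi$ commutes with passing to $r$-subspaces and $\partial_{s,r}\Ups=\partial_{s,r}\Ups'$, for any such $\phi$ we get
\[\partial_{s,r}e_S=\sum_{P\in\Ups'}\partial_{s,r}e_{\phi(P)}-\sum_{P\in\Ups\setminus\{S_0\}}\partial_{s,r}e_{\phi(P)},\]
and all the $s$-spaces $\phi(P)$ appearing are distinct (the members of $\Ups$ are distinct, those of $\Ups'$ are distinct, and $\Ups,\Ups'$ are disjoint by the second bullet of \cref{prop:subspace-exchange}). So it is enough to choose $\phi$ making each $\phi(P)$ on the right-hand side \emph{monochromatic}, meaning all its $r$-subspaces lie in a single rotated copy $A_i\cdot L[B]$; then $\mc{I}$ is this set of $s$-spaces with coefficients $a_{S'}=\pm1$.

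By the first bullet of \cref{prop:subspace-exchange}, $P\in\Ups\setminus\{S_0\}$ has $\dim(S_0\cap P)<r$, so $\phi(P)$ shares no $r$-subspace with $S$; by the second bullet, $P\in\Ups'$ has $\dim(S_0\cap P)\le r$, so $\phi(P)$ contains at most one $R_j$, and since distinct members of $\Ups'$ share no $r$-subspace each $R_j$ lies in exactly one member of $\Ups'$ (it lies in at least one since $R_j$ is an $r$-subspace of $S_0\in\Ups$ and $\partial_{s,r}\Ups=\partial_{s,r}\Ups'$). Choose a coloring $\chi$ of $(\Ups\setminus\{S_0\})\cup\Ups'$ into $[u]$ assigning color $c_j$ to the member of $\Ups'$ through $R_j$ and distinct fresh colors to everything else; this uses only $O_{q,s}(1)$ colors. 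A ``free'' $r$-space---an $r$-subspace of a member of $\Ups\setminus\{S_0\}$, equivalently an $r$-subspace of a member of $\Ups'$ that is not one of the $R_j$---lies in exactly one member $P$ of $\Ups\setminus\{S_0\}$ and exactly one member $P'$ of $\Ups'$, so to make both $\phi(P),\phi(P')$ monochromatic in their assigned colors we must place $\phi(\text{free }r\text{-space})$ in $A_{\chi(P)}\cdot L[B]\cap A_{\chi(P')}\cdot L[B]$ (the $R_j$-subspaces of the forced members are already correctly placed). Thus the lemma reduces to: \emph{for every rainbow $S$ there is a $\phi$ extending the chosen isomorphism $S_0\to S$ with each of the $M=O_{q,s}(1)$ free $r$-spaces landing in its prescribed intersection of two rotated copies.}

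This last step is a ``colored $q$-extension count''. By \cref{lem:J-boundedness} we may assume $L$ is $(q^{-n/20},h)$-typical for a suitable constant $h$; since the greedy process of \cref{prop:greedy-sparse-basis} omits only $O(\Delta p^{1/2}q^{(r-1)n})$ of the $(r-1)$-spaces from $B$, a routine transfer shows $L[B]$ is $(c_0,h)$-typical for a small absolute $c_0$, and hence so is each rotation $A_i\cdot L[B]$, since applying $A_i^{-1}$ turns any $q$-extension count for $A_i\cdot L[B]$ into one for $L[B]$. For a single rotated copy this immediately gives many embeddings; the subtlety is that the free $r$-spaces must land in \emph{intersections} of two distinct rotated copies, so we need $A_1,\dots,A_u$ to be ``jointly typical'': whp over the random $A_i$, for every bounded-complexity $q$-extension $E$ and every coloring of its new $r$-spaces by $[u]$, the number of extending embeddings sending each new $r$-space into its designated rotated copy is $(1\pm o(1))\prod(\text{densities})\,q^{v_En}$. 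Granting this, the count of valid $\phi$ for our $S$ is $(1\pm o(1))\,d(L[B])^{2M}q^{(k-s)n}\ge\tfrac12 p^{2M}q^{(k-s)n}>0$, provided $\xi\le\xi_{\ref{lem:rainbow-decomposition}}(q,s)$ is small enough that $2M\xi<k-s$; since there are only $q^{O_{q,s}(n)}$ rainbow $s$-spaces (with colorings) a union bound over them finishes.

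The remaining ingredient, joint typicality of the $A_i\cdot L[B]$, is where the work lies and I expect it to be the main obstacle. The expectation is easy: $A_i^{-1}$ sends a fixed $r$-space to a uniform one, new $r$-spaces of distinct colors involve independent matrices, and same-colored new $r$-spaces involve a single $A_i$ applied to a tuple in relative general position, so typicality of $L[B]$ yields the stated mean. Concentration is the real issue: a single matrix $A_i$ influences roughly $q^{v_En}$ of the embeddings, so Azuma--Hoeffding is far too weak; instead one computes a high moment of the count over the $A_i$ (as with the moment-method arguments used elsewhere in the paper) to obtain a failure probability of the form $q^{-\Omega(n)}$ with an arbitrarily large implied constant, which beats the union bound over $q$-extensions and colorings once $u$ is taken large in terms of $\xi,q,s$.
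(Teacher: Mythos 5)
Your reduction is set up correctly, and you have located the right obstacle, but the obstacle is not overcome and the route you sketch for it does not work; the paper's proof is structured specifically to avoid it. The critical issue is not only concentration but already the \emph{mean} of your count. In a single exchange $\Ups,\Ups'$, the $b=\qbinom{s}{r}_q$ ``forced'' members of $\Ups'$ through the prescribed $R_j$ must be made monochromatic in the colors $c_j=\psi(R_j)$, and the matrices $A_{c_j}$ are conditioned (adversarially fixed, subject only to $R_j\in A_{c_j}\cdot L^\ast$). These $b$ forced members are \emph{not} linearly disjoint over $S$ inside the gadget of \cref{prop:subspace-exchange} --- they are entangled through the shared new coordinates of the embedding $\phi$. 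The only information available about the fixed sets $A_{c_j}\cdot L[B]$ is their density and the single-root extension count from \cref{lem:regularized-linear} (extensions of one $R\in L^\ast$ to an $s$-space of $\mr{Gr}_q(n,s)[L[B]]$); this controls each forced condition marginally but says nothing about how many embeddings satisfy all $b$ of them \emph{simultaneously} on shared coordinates, so $\mb{E}Y$ cannot be lower-bounded. (Your claim that $L[B]$ is typical ``by routine transfer'' is also unjustified --- the paper only ever uses upper bounds inherited from $L\supseteq L[B]$ and lower bounds via the $L^\ast$ regularization, and even full typicality of each rotated copy individually would not control the joint count over several adversarial copies.) The paper's proof of \cref{lem:rainbow-decomposition} resolves exactly this by a two-level gluing: after the first exchange, each forced member $F_j$ is exchanged \emph{again} into a fresh copy of the gadget glued linearly disjointly along $F_j$, producing forced members $F_j'$ that use pairwise disjoint blocks of new coordinates, so the count $|\mc{Y}|$ of embeddings placing all of them correctly factors as $(1\pm p^{1/6})(p^{b-1})^bq^{(d-s)n}$.

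The concentration step is also not as you describe. The paper's second-moment/Chebyshev bound already yields only $\mb{P}[Y_T=0]\le p^{1/7}=q^{-\xi n/7}$, which with $\xi$ small is hopeless against the union bound over $q^{sn}$ choices of $(S,\psi)$; a higher-moment computation over the same entangled indicators would face the same correlation structure and is nowhere near ``routine''. The paper instead exploits the gadget design --- every non-forced member is monochromatic in a \emph{distinct fresh} color --- to form $u'=\lceil 8s/\xi\rceil$ counts $Y_{T_1},\ldots,Y_{T_{u'}}$ on disjoint fresh color tuples, which are conditionally independent given the $A_{\psi(R)}$, giving $\mb{P}[\mc{E}_{S,\psi}]\le p^{u'/7}\le q^{-8sn/7}$. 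To import that trick you would still need each individual $\mb{E}Y_{T_i}$ under the adversarial conditioning, which returns you to the gap above.
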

\begin{proof}
Let $H$ be a $s$-dimensional $q$-system defined as follows. Let $b=\qbinom{s}{r}_q$. Take one copy of the construction in \cref{prop:subspace-exchange}, with underlying $k$-dimensional vector space $W$ and with collections of $s$-spaces $\Ups_0,\Ups_0'$. Choose some $F\in\Ups_0$, and let $F_j$ for $1\le j\le b$ be the $s$-spaces in $\Ups_0'$ intersecting $F$ in $r$ dimensions. Then glue $b$ additional copies of the construction from \cref{prop:subspace-exchange}, call them $W_j$ for $j\in[b]$, linearly disjointly along each $F_j$, with collections of $s$-spaces $\Ups_j,\Ups_j'$ labeled such that $F_j\in\Ups_j$. Let $F_j'$ be the unique $s$-space in $\Ups_j'$ which contains $F\cap F_j$. Thus if $k=k_{\ref{prop:subspace-exchange}}(s)$, we see that $\dim\mr{Vec}(H)=k+b(k-s)$. Finally, let the $s$-spaces of $H$ be
\[\Ups_0\cup(\Ups_0'\setminus\{F_1,\ldots,F_b\})\cup\bigcup_{j=1}^b\Big((\Upsilon_j\setminus\{F_j\})\cup\Upsilon_j'\Big).\]
The key point here is that applying \cref{prop:subspace-exchange} shows $\partial_{s,r}\Ups_0=\partial_{s,r}\Ups_0'$ and $\partial_{s,r}\Ups_j=\partial_{s,r}\Ups_j'$ for all $j\in[b]$. Therefore, we can first express $\partial_{s,r}e_F$ in terms of $\partial_{s,r}e_S$ for the $s$-spaces in $\Ups_0'$ and $\Ups_0\setminus\{F\}$. Then we can express $\partial_{s,r}e_{F_j}$ in terms of $\partial_{s,r}e_S$ for the $s$-spaces in $\Ups_j'$ and $\Ups_j\setminus\{F_j\}$ for each $j\in[b]$. Overall this provides an expression for $\partial_{s,r}e_F$ in terms of $\partial_{s,r}e_S$ for $S\in H\setminus H[F]=H\setminus\{F\}$.

Finally, let $S_0=F$ and fix a basis $\{v_1,\ldots,v_s\}$ for $S_0$ (within $\mr{Vec}(H)$). For $R\in\mr{Gr}(S_0,r)$ if $j\in[b]$ is the unique index so that $F_j\cap F=R$, define $S_R=F_j'$. We can see this is the unique $s$-space of $H$ satisfying $S_0\cap S_R=R$. Let the remaining $s$-spaces in $H\setminus(\{S_0\}\cup\{S_R\colon R\in\mr{Gr}(S_0,r)\})$ be labeled as $S_1,\ldots,S_t$, where clearly $t\le\qbinom{k+b(k-s)}{s}_q$.

Consider any outcomes of $L,\mc{S},B,L^\ast$ satisfying the conditions of \cref{prop:greedy-sparse-basis,lem:regularized-linear} for the given parameters as well as \cref{lem:J-boundedness} for $J=\mbf{1}$, and now only consider the randomness of $A_1,\ldots,A_u$. For any $S\in\mr{Gr}_q(n,s)$ and injective $\psi\colon\mr{Gr}(S,r)\to[u]$ let $\mc{E}_{S,\psi}$ be the event that (a) $R\in A_{\psi(R)}\cdot L^\ast$ for all $R\in\mr{Gr}(S,r)$ and (b) a desired representation as in the lemma statement does not exist. It suffices to give a sufficiently strong bound on $\mb{P}[\mc{E}_{S,\psi}]$ so that we can take a union bound.

To this end, let us further condition on each $A_{\psi(R)}^{-1}$ for all $R\in\mr{Gr}(S,r)$. In order to contribute to $\mc{E}_{S,\psi}$ we may assume that $R\in A_{\psi(R)}\cdot L^\ast$ occurs for all $R\in\mr{Gr}(S,r)$ (and this property can be deduced given this information). From now on, everything other than the remaining conditional randomness will be taken to be fixed. Thus $A_i$ for $i\notin\psi(\mr{Gr}(S,r))$ are uniformly random elements of $\mr{GL}(\mb{F}_q^n)$ while $A_{\psi(R)}^{-1}$ are fixed.

Now fix an $\mb{F}_q$-basis $\{v_1',\ldots,v_s'\}$ for $S$ (within $\mb{F}_q^n$). Consider some sequence $T = (u_1,\ldots,u_t)\in[u]^t$ of distinct colors that are not in $\psi(\mr{Gr}(S,r))$, which exists as long as $u$ is large enough. Consider $Y_T$, the number of $q$-embeddings $\phi\colon\mr{Vec}(H)\to\mb{F}_q^n$ of $H$ in $\mr{Gr}_q(n,s)$ such that:
\begin{itemize}
    \item $\phi(S_0) = S$ and in fact $\phi(v_i) = v_i'$ for all $i\in[s]$.
    \item For each $R\in\mr{Gr}(S_0,r)$, we have $\phi(S_R)\in\mr{Gr}_q(n,s)[A_{\psi(\phi(R))}\cdot L[B]]$.
    \item For each $i\in[t]$ we have $\phi(S_i)\in\mr{Gr}_q(n,s)[A_{u_i}\cdot L[B]]$.
\end{itemize}
We now compute the mean and variance of $Y_T$.

First note that by \cref{lem:regularized-linear} there are at least
\[\frac{1}{\qbinom{s}{r}_q}(1-\Delta p^{1/5})p\qbinom{n}{r}_q\cdot(1-p^{1/5})p^{\qbinom{s}{r}_q-1}\qbinom{n-r}{s-r}_q\ge(1-2\Delta p^{1/5})p^{\qbinom{s}{r}_q}\qbinom{n}{s}_q\]
many $s$-spaces in $\mr{Gr}_q(n,s)[L[B]]$, counting by starting with an element of $L^\ast$ and extending it to an $s$-space all of whose $r$-subspaces are in $L[B]$. On the other hand \cref{lem:J-boundedness} shows $|\mr{Gr}_q(n,s)[L]|=(1\pm q^{-n/20})p^b\qbinom{n}{s}_q$, which provides a matching upper bound. So $|\mr{Gr}_q(n,s)[L[B]]|=(1\pm2\Delta p^{1/5})p^b\qbinom{n}{s}_q$. Additionally, every $R\in L^\ast$ has $(1\pm p^{1/5})p^{b-1}\qbinom{n-r}{s-r}_q$ extensions to an $s$-space in $\mr{Gr}_q(n,s)[L[B]]$.

Extend $\{v_1,\ldots,v_s\}$ to an $\mb{F}_q$-basis $\{v_1,\ldots,v_d\}$ for $\mr{Vec}(H)$ (note $d = O_{q,s}(1)$) with the additional property that for $j\in[b]$, we have $\mr{Vec}(W_j)=\mr{span}_{\mb{F}_q}\{F_j,v_{s+(k-s)(j-1)+1},\ldots,v_{s+(k-s)j}\}$. This is possible since in the definition of $H$, we glued the $b$ additional copies of the construction from \cref{prop:subspace-exchange} (namely, the $W_j$) linearly disjointly along each $F_j$, and the $W_j$ therefore can be seen to satisfy $\dim(\mr{Vec}(W_j)\cap U_j)=r$ where
\[U_j:=\mr{span}_{\mb{F}_q}\bigg(W\cup\bigcup_{j'\neq j}\mr{Vec}(W_{j'})\bigg).\]
We in fact guarantee the further additional property that
\[F_j'=\mr{span}_{\mb{F}_q}(F_j\cap F,v_{s+(k-s)(j-1)+1},\ldots,v_{s+(k-s)(j-1)+s-r}\}\]
for $j\in[b]$, which is evidently possible. Here we are using $F_j'\cap U_j=F_j'\cap F_j=F_j\cap F$ and hence also $\dim(F_j'\cap F_j)=r$ due to definition (specifically, this dimension is not bigger due to the second bullet point of \cref{prop:subspace-exchange}).

We can specify $\phi$ satisfying the first bullet above by additionally specifying where $\{v_{s+1},\ldots,v_d\}$ map into vectors of $\mb{F}_q^n$ which are jointly linearly independent with $\{v_1',\ldots,v_s'\}$. We do this in stages, starting with $\{v_{s+(k-s)(j-1)+1},\ldots,v_{s+(k-s)j}\}$ in order for $j\in[b]$ and then ending with $\{v_{s+(k-s)b+1},\ldots,v_d\}$. Let $\mc{Y}$ be the set of such maps with the additional property that for every $R\in\mr{Gr}(S_0,r)$, the unique $j\in[b]$ such that $F_j'=S_R$ satisfies $\mr{Gr}(\phi(F_j'),r)\subseteq A_{\psi(\phi(R))}\cdot L^\ast$ (which guarantees the second bullet point above). This is a deterministic set given the revealed information, and we see $|\mc{Y}| = (1\pm p^{1/6})(p^{b-1})^bq^{(d-s)n}$ for $n$ large, using that $R\in A_{\psi(R)}\cdot L^\ast$ for all $R\in\mr{Gr}(S,r)$ as well as the above extension property of $L^\ast$. Finally, for each map $\phi\in\mc{Y}$ we let $\mbm{1}_\phi$ be $1$ if the third bullet point holds for $\phi$ and $0$ otherwise, which is now purely a function of the randomness of $A_{u_i}$ for $i\in[t]$.

We have
\[\mb{E}Y_T = \sum_{\phi\in\mc{Y}}\mb{E}\mbm{1}_\phi = \big((1\pm2\Delta p^{1/5})p^b\big)^{|H|-b-1}|\mc{Y}|\]
because of the earlier estimates on the size of $\mr{Gr}_q(n,s)[L[B]]$ and because $T$ is composed of distinct colors not in $\psi(\mr{Gr}(S,r))$, because $\psi$ is injective, and because the definition of $\mc{Y}$ already guarantees the first two bullet points in the definition of $Y_T$.

Now to compute $\mb{E}Y_T^2$, note that $Y_T^2$ counts pairs $(\phi_1,\phi_2)\in\mc{Y}^2$ satisfying the third bullet point above. Let us first consider such pairs where additionally $\{v_1',\ldots,v_s'\}$ and $\{\phi_1(v_{s+1}),\ldots,\phi_1(v_d)\}$ and $\{\phi_2(v_{s+1}),\ldots,\phi_2(v_d)\}$ together have some linear dependency, call this set $\mc{Z}$. By definition of $\mc{Y}$, this implies we have some linear combination where a vector from the second set and a vector from the third set both have a nonzero coefficient. This easily implies there are $|\mc{Z}|\le(q^n)^{2(d-s)-1/2}$ such terms, each of which is counted by $Y_T^2$ with probability at most $1$. For the other possibilities of $(\phi_1,\phi_2)\in\mc{Y}^2\setminus\mc{Z}$ we find
\[\mb{E}[\mbm{1}_{\phi_1}\mbm{1}_{\phi_2}]\le\big((1+q^{-n/25})p^{2b}\big)^{|H|-b-1}.\]
This is computed by using the fact that for $i\in[t]$ the images of $A_{u_i}^{-1}\phi_1(S_i)$ and $A_{u_i}^{-1}\phi_2(S_i)$ are jointly uniform over pairs of subspaces whose intersection is $r_i$-dimensional for some $0\le r_i\le r-1$ defined via $r_i = \dim(S_i\cap S_0)$, using the definition of $\mc{Z}$. Then typicality of $L$ coming from \cref{lem:J-boundedness} gives the desired upper bound on the counts of such configurations in $L[B]\subseteq L$.

Finally this means
\[\mb{E}Y_T^2\le\big((1+q^{-n/25})p^{2b}\big)^{|H|-b-1}|\mc{Y}|^2+q^{(2(d-s)-1/2)n}\]
and we deduce for $\xi$ small and $n$ sufficiently large with respect to our parameters that
\[\mr{Var}[Y_T]\le p^{1/6}\big(p^b\big)^{2(|H|-b-1)}|\mc{Y}|^2.\]

Now if $Y_T > 0$ then some $\phi\in\mc{Y}$ with $\mbm{1}_\phi=1$ exists. Considering $\phi(H)$, we see that iterative use of \cref{prop:subspace-exchange} as described earlier provides a representation of $\partial_{s,r}e_S$ as a signed sum of $\partial_{s,r}e_{S'}$ for $S'\in\phi(H)\setminus\{S\}$. Furthermore, the second and third bullets imply that each $S'$ satisfies $\mr{Gr}(S',r)\subseteq A_i\cdot L[B]$ for some $i$, providing a desired representation and showing that $\mc{E}_{S,\psi}$ fails. Thus $\mc{E}_{S,\psi}\subseteq\{Y_T = 0\}$.

We estimate by Chebyshev's inequality that
\[\mb{P}[Y_T=0]\le p^{1/7}\]
as long as $\xi$ is sufficiently small. Unfortunately, this probability is not small enough to take a union bound over $S,\psi$. However, if we take $u_{\ref{lem:rainbow-decomposition}}(\xi,q,s)$ sufficiently large then we can find $T_1,\ldots,T_{u'}$ with $u'=\lceil 8s/\xi\rceil$ which are disjoint, and then we see by (conditional) independence that
\[\mb{P}[\mc{E}_{S,\psi}]\le\prod_{i=1}^{u'}\mb{P}[Y_{T_i}=0]\le p^{u'/7}\le q^{-8sn/7}.\]
We have a union bound of size at most $q^{sn}\cdot u^{|H|}$, so for $n$ large we have that whp, none of the $\mc{E}_{S,\psi}$ hold. This directly implies the desired conclusion.
\end{proof}

Now we prove \cref{prop:color-decomposition}. The strategy is somewhat similar to the proof of \cref{lem:rainbow-decomposition}, considering the probability a random extension to a configuration as in \cref{prop:subspace-exchange} gives something with good color properties. In \cref{lem:rainbow-decomposition} we were careful to ensure that each color in some sense only appears once in the random variable $Y_T$, and the most difficult property we needed was that $r$-spaces of $L^\ast$ have a predictable number of extensions to $s$-spaces of $\mr{Gr}_q(n,s)[L[B]]$. Here, the argument is substantially simpler since \cref{lem:rainbow-decomposition} tells us it suffices to find a representation where each new clique is rainbow, rather than monochromatic.

\begin{proof}[Proof of \cref{prop:color-decomposition}]
We will only prove the second bullet point; the first is similar and simpler. Let $H$ be the $s$-dimensional $q$-system on $\mb{F}_q^k$ for $k = k_{\ref{prop:subspace-exchange}}(s)$ which is $\Ups\cup\Ups'$ from \cref{prop:subspace-exchange}. As in the proof of \cref{lem:rainbow-decomposition} let $S_0$ be a specific $s$-space of $H$. Let $R_0\leqslant S_0$ be a specific $r$-subspace. Let $S_1$ be the unique $s$-space of $H$ other than $S_0$ containing $R_0$. Label the $r$-spaces in $(\bigcup_{S\in H}\mr{Gr}(S,r))\setminus\mr{Gr}(S_1,r)$ as $R_1,\ldots,R_t$, where here clearly $t\le\qbinom{k}{r}_q$. Finally, fix a basis $\{v_1,\ldots,v_s\}$ for $S_1$ (within $\mb{F}_q^k$), suppose $R_0 = \mr{span}_{\mb{F}_q}\{v_{s-r+1},\ldots,v_s\}$, and then extend this to a basis $\{v_{s-r+1},\ldots,v_{2s-r}\}$ for $S_0$. Extend this to a basis $\{v_1,\ldots,v_d\}$ of $\mr{span}_{\mb{F}_q}(\bigcup_{S'\in H}S')$.

Let us again consider any outcomes of $L,\mc{S},B,L^\ast$ satisfying the conditions of \cref{prop:greedy-sparse-basis,lem:regularized-linear} for the given parameters as well as \cref{lem:J-boundedness} for $J=\mbf{1}$, and now only consider the randomness of $A_1,\ldots,A_u$. For $S,S^\ast\in\mr{Gr}_q(n,s)$ with $\dim(S\cap S^\ast) = r$ define $\mc{E}_{S,S^\ast,\psi}$ for $\psi\colon(\mr{Gr}(S,r)\cup\mr{Gr}(S^\ast,r))\setminus\{S\cap S^\ast\}\to[u]$ to be the event that (a) $R\in A_{\psi(R)}\cdot L^\ast$ for all $R$ in the domain of $\psi$ and (b) such a desired representation for the second bullet point does not exist. Again it suffices to give a sufficiently strong bound on $\mb{P}[\mc{E}_{S,S^\ast,\psi}]$ so that we can take a union bound.

Again let us further condition on $A_{\psi(R)}^{-1}v$ for all choices of $v\in R\in(\mr{Gr}(S,r)\cup\mr{Gr}(S^\ast,r))\setminus\{S\cap S^\ast\}$. In order to contribute to $\mc{E}_{S,S^\ast,\psi}$ we may again assume that $R\in A_{\psi(R)}\cdot L^\ast$ for all $R\in\mr{dom}(\psi)$. From now on $A_i$ for $i\notin\mr{range}(\psi)$ are uniformly random elements of $\mr{GL}(\mb{F}_q^n)$ while $A_{\psi(R)}^{-1}$ for $R\in\mr{dom}(\psi)$ are uniformly random in $\mr{GL}(\mb{F}_q^n)$ conditional on knowing the images of all $v\in R$.

Since $\dim(S\cap S^\ast) = r$, we can fix an $\mb{F}_q$-basis $\{v_1',\ldots,v_{2s-r}'\}$ with the property that $\{v_1',\ldots,v_s'\}$ is a basis for $S$ and $\{v_{s-r+1}',\ldots,v_{2s-r}'\}$ is a basis for $S^\ast$ (within $\mb{F}_q^n$). Consider some sequences $T = (u_1,\ldots,u_t)\in[u]^t$ and $T^\ast = (u_1^\ast,\ldots,u_t^\ast)\in[u]^t$ of mutually distinct colors that are not in $\mr{range}(\psi)$, which exists as long as $u$ is large enough. Finally consider $Y_{T,T^\ast}$, the number of pairs of $q$-embeddings $\phi,\phi^\ast\colon\mb{F}_q^k\to\mb{F}_q^n$ of $H$ in $\mr{Gr}_q(n,s)$ such that:
\begin{itemize}
    \item $\phi(S_1) = S$ and $\phi^\ast(S_1) = S^\ast$ and in fact $\phi(v_i) = v_i'$ and $\phi^\ast(v_i) = v_{2s-r+1-i}'$ for $i\in[s]$. In particular $\phi(R_0) = \mr{span}_{\mb{F}_q}\{v_{s-r+1}',\ldots,v_s'\} = \phi^\ast(R_0)$.
    \item $\phi(S_0) = \phi^\ast(S_0)$.
    \item $\{v_1',\ldots,v_{2s-r}'\}$ and $\{\phi(v_{s+1}),\ldots,\phi(v_{2s-r})\}$ and $\{\phi(v_i)\colon 2s-r<i\le d\}$ and $\{\phi^\ast(v_i)\colon 2s-r<i\le d\}$ are jointly $\mb{F}_q$-linearly independent.
    \item For each $i\in[t]$ we have $\phi(R_i)\in\mr{Gr}_q(n,s)[A_{u_i}\cdot L^\ast]$ and $\phi^\ast(R_i)\in\mr{Gr}_q(n,s)[A_{u_i^\ast}\cdot L^\ast]$.
\end{itemize}
This can be thought of as gluing ``linearly disjoint'' two copies of $H$ along $S_0$ and then making their respective images of $S_1$ map to $S$ and $S^\ast$ (which share an $r$-subspace). If $Y_{T,T^\ast} > 0$ then, similar to the proof of \cref{lem:rainbow-decomposition}, we can use \cref{prop:subspace-exchange} to rewrite $\partial_{s,r}(e_S-e_{S^\ast})$ as a signed sum of $\partial_{s,r}e_{S'}$ for $S'\in\phi(H\setminus\{S_0,S_1\})$ and $S'\in\phi^\ast(H\setminus\{S_0,S_1\})$, and each such $S'$ satisfies the conditions of \cref{lem:rainbow-decomposition}: every $S'\in H\setminus\{S_0,S_1\}$ has images under $\phi,\phi^\ast$ which are ``rainbow'' in the correct way, and $\phi(S_0) = \phi^\ast(S_0)$ (which are the only $s$-spaces other than $S,S^\ast$ in this configuration that contain the $r$-space $S\cap S^\ast$ for which we have no guarantees) cancel each other out in this representation. Applying \cref{lem:rainbow-decomposition} would therefore show that $S,S^\ast$ do have a valid representation. That is, if we let $\mc{E}_{\ref{lem:rainbow-decomposition}}$ be the event that the conclusion of \cref{lem:rainbow-decomposition} holds for these parameters, then
\[\mc{E}_{S,S^\ast,\psi}\cap\mc{E}_{\ref{lem:rainbow-decomposition}}\subseteq\{Y_{T,T^\ast} = 0\}.\]

Now we perform an analogous second-moment computation for $Y_{T,T^\ast}$. Defining $\mc{Y}$ as the number of pairs of maps $\phi,\phi^\ast\colon\mb{F}_q^k\to\mb{F}_q^n$ satisfying the first three bullets above, we have $|\mc{Y}| = \Theta_{q,s}(q^{(2d-3s+r)n})$. Thus from \cref{lem:J-boundedness,lem:regularized-linear} we find
\begin{align*}
\mb{E}Y_{T,T^\ast}&=((1\pm\Delta p^{1/5})p)^{2t}|\mc{Y}|,\\
\mb{E}Y_{T,T^\ast}^2&\le((1+q^{-n/4})p)^{2t}|\mc{Y}|^2+q^{(2(2d-3s+r)-1/2)n}
\end{align*}
in a manner similar to the proof of \cref{lem:rainbow-decomposition}. We similarly deduce $\mb{P}[Y_{T,T^\ast}=0]\le p^{1/7}$ by Chebyshev's inequality, and now if $u$ is sufficiently large in terms of $\xi$ then taking $u'=\lceil 16s/\xi\rceil$ and appropriate disjoint $T_i,T_i^\ast$ for $i\in[u']$ yields
\[\mb{P}[\mc{E}_{S,S^\ast,\psi}\cap\mc{E}_{\ref{lem:rainbow-decomposition}}]\le\prod_{i=1}^{u'}\mb{P}[Y_{T_i,T_i^\ast}=0]\le p^{u'/7}\le q^{-16sn/7}\]
Taking a union bound of size at most $q^{2sn}\cdot u^{2\qbinom{s}{r}_q}$ and combining with \cref{lem:rainbow-decomposition} finishes.
\end{proof}

Next we wish to show that an arbitrary ``flat'' $J$ can be split up in some sense as the image of a bounded element plus something whose support is purely within $\bigcup_{i=1}^u(A_i\cdot L^\ast)$. The proof goes by covering the parts outside in a spread-out manner using \cref{lem:J-boundedness}. We will first need the following estimate for extending $r$-spaces into some $A_i\cdot L^\ast$.
\begin{lemma}\label{lem:color-extension-count}
Let $0 < \xi\le\xi_{\ref{lem:color-extension-count}}(q,s)$, fix some positive integers $u\ge u_{\ref{lem:color-extension-count}}(\xi,q,s)$ and $\Delta\ge 1$, and suppose $n$ is large with respect to these parameters. Let $p = q^{-\xi n}$, and then let $L,\mc{S},B$ be as in \cref{prop:greedy-sparse-basis} and $L^\ast$ be as in \cref{lem:regularized-linear} (all existing whp). Sample $A_1,\ldots,A_u\in\mr{GL}(\mb{F}_q^n)$ uniformly at random. Whp, for every $R\in\mr{Gr}_q(n,r)$ there is an index $i_R\in[u]$ so that the number of $s$-spaces $S\geqslant R$ with $\mr{Gr}(S,r)\setminus\{R\}\subseteq A_{i_R}\cdot L^\ast$ is $(1\pm p^{1/6})p^{\qbinom{s}{r}_q-1}\qbinom{n-r}{s-r}_q$.
\end{lemma}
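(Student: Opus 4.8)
The plan is to prove this via a union bound over $R \in \mr{Gr}_q(n,r)$, where for each fixed $R$ we show that with very high probability (at least $1 - q^{-2rn}$, say) there exists some good index $i_R \in [u]$. For a single fixed $R$, fix an $\mb{F}_q$-basis of $R$ and consider, for each $i \in [u]$, the quantity
\[
N_i(R) := \#\{S \in \mr{Gr}_q(n,s) : R \leqslant S,\ \mr{Gr}(S,r)\setminus\{R\} \subseteq A_i \cdot L^\ast\}.
\]
Equivalently, applying $A_i^{-1}$, this counts $s$-spaces $S' \geqslant A_i^{-1}R$ with all $r$-subspaces of $S'$ other than $A_i^{-1}R$ lying in $L^\ast$. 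Since each $A_i$ is a uniformly random element of $\mr{GL}(\mb{F}_q^n)$, the space $A_i^{-1}R$ is a uniformly random $r$-space, and by \cref{lem:regularized-linear} we know a $(1 - \Delta p^{1/5})$-fraction of $r$-spaces lie in $L^\ast$ and each such has $(1\pm p^{1/5})p^{\qbinom{s}{r}_q-1}\qbinom{n-r}{s-r}_q$ extensions to an $s$-space in $\mr{Gr}_q(n,s)[L[B]]$ — but I actually want the slightly different count of $s$-spaces with all \emph{other} $r$-subspaces in $L^\ast$. So the first step is to record (by the same typicality input from \cref{lem:J-boundedness} that underlies \cref{lem:regularized-linear}, or directly from the extension-count machinery) that for every $R' \in L^\ast$ the number of $s$-spaces $S' \geqslant R'$ with $\mr{Gr}(S',r)\setminus\{R'\} \subseteq L^\ast$ is also $(1\pm p^{1/6})p^{\qbinom{s}{r}_q-1}\qbinom{n-r}{s-r}_q$; this follows because $|L^\ast|$ is within a $(1 \pm \Delta p^{1/5})$ factor of $|L|$ and $L$ is $(q^{-n/20},h)$-typical, so removing the non-$L^\ast$ $r$-spaces changes the extension count of any fixed $R'$ by at most $O(\qbinom{s}{r}_q \cdot \Delta p^{1/5} \cdot p^{\qbinom{s}{r}_q - 2} \cdot q^{(s-r)n}) = O(\Delta p^{1/5}) \cdot p^{\qbinom{s}{r}_q-1}\qbinom{n-r}{s-r}_q$, which is absorbed into the $p^{1/6}$ slack for $\xi$ small.

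Next, for the fixed $R$, I would reveal $A_1, \dots, A_u$ and define the event $\mc{E}_i$ that $A_i^{-1}R \in L^\ast$ \emph{and} that $N_i(R)$ lands in the target window $(1\pm p^{1/6})p^{\qbinom{s}{r}_q-1}\qbinom{n-r}{s-r}_q$. By the previous paragraph, once we condition on $A_i^{-1}R \in L^\ast$, the count $N_i(R)$ is determined (it equals the number of $s$-space extensions of the fixed $r$-space $A_i^{-1}R$ within $L^\ast$, which by the recorded estimate is automatically in the target window). So $\mc{E}_i$ is \emph{exactly} the event $\{A_i^{-1}R \in L^\ast\}$, which has probability $|L^\ast|/\qbinom{n}{r}_q = (1 \pm \Delta p^{1/5}) p \geq p/2$. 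Crucially, the events $\{A_i^{-1}R \in L^\ast\}$ for $i \in [u]$ are mutually independent since the $A_i$ are independent. Therefore
\[
\mb{P}\Big[\bigcap_{i\in[u]} \ol{\mc{E}_i}\Big] \leq (1 - p/2)^u \leq \exp(-up/2).
\]
Choosing $u = u_{\ref{lem:color-extension-count}}(\xi,q,s)$ large enough that $up/2 = \tfrac12 u q^{-\xi n} \geq 2rn \ln q + sn \ln q$ — wait, $p = q^{-\xi n}$ shrinks with $n$, so instead I need $u$ to depend on $n$; but the lemma states $u$ is a fixed integer. The resolution (as in the proofs of \cref{lem:rainbow-decomposition,prop:color-decomposition}) is that a fixed $u \geq \lceil C(q,s)/\xi \rceil$ suffices because we can pick $\lceil C/\xi \rceil$ \emph{disjoint} groups and use independence: more simply here, with $u$ a large fixed constant and $\xi$ small, $(1-p/2)^u$ alone is not enough, so I would instead note $\mb{P}[\bigcap \ol{\mc{E}_i}] \le (1 - p/2)^u$ and observe this is at most $p^{u/2}$ (since $1 - p/2 \le p^{1/2}$ fails for small $p$) — no: the correct standard trick is exactly that $(1-p/2)^u \le e^{-up/2}$, and we want $u \cdot p \gtrsim n$; since $p^{-1} = q^{\xi n}$ is exponential and we only need a union bound of size $q^{rn}$, actually $u = \lceil 4r/\xi \rceil \cdot \log_q(\text{stuff})$... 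Let me just say: take $u$ large in terms of $\xi, q, s$; then $(1-p/2)^u \le (1-p/2)^{C/\xi}$, and more carefully mimic the grouping argument of \cref{lem:rainbow-decomposition} — partition $[u]$ into $\lceil 8r/\xi\rceil$ blocks each of size $\ge u\xi/(16r)$, within each block the failure probability is $\le (1-p/2)^{u\xi/(16r)} \le p^{1/7}$ for $\xi$ small, and across blocks these are independent, giving total failure probability $\le p^{8r/(7\xi) \cdot \xi} \le q^{-8rn/7}$. Union bounding over $q^{rn}$ choices of $R$ then finishes.

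The main obstacle is the bookkeeping reconciliation between "extensions to $s$-spaces in $\mr{Gr}_q(n,s)[L[B]]$" (which is the quantity controlled by \cref{lem:regularized-linear}) and "extensions to $s$-spaces with all $r$-subspaces \emph{except one designated one} in $L^\ast$" — the designated $r$-space $R$ here plays a different role than in \cref{lem:regularized-linear}, and $L^\ast \subseteq L[B]$ but the containment is strict, so one must verify the counts still match up to the claimed $(1\pm p^{1/6})$ slack. This is handled by the typicality of $L$ from \cref{lem:J-boundedness} together with the density bounds $|L^\ast| = (1 \pm \Delta p^{1/5})|L|$ and $|\mr{Gr}_q(n,s)[L[B]]| = (1\pm 2\Delta p^{1/5})p^{\qbinom{s}{r}_q}\qbinom{n}{s}_q$, exactly as in the opening paragraphs of the proof of \cref{lem:rainbow-decomposition}; the point is that replacing $L[B]$ by $L^\ast$ in the extension count, or restricting which subspace is "designated," perturbs the count by at most a $p^{1/5}$-relative amount, which is swallowed by choosing $\xi$ small enough. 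Everything else is a routine union bound and a direct product-independence estimate, with no second moment needed (unlike \cref{lem:rainbow-decomposition,prop:color-decomposition}), because here the count $N_i(R)$ becomes \emph{deterministic} once $A_i^{-1}R$ is revealed to lie in $L^\ast$.
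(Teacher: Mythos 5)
There is a genuine gap at the heart of your argument: the claim that once $A_i^{-1}R\in L^\ast$ is revealed, the count $N_i(R)$ is automatically in the target window. The count is indeed a deterministic function of $R'=A_i^{-1}R$ and $L^\ast$, but nothing available forces it into the window for \emph{every} $R'\in L^\ast$. Your proposed fix — that passing from ``all $r$-subspaces in $L[B]$'' to ``all other $r$-subspaces in $L^\ast$'' perturbs the count by $O(\Delta p^{1/5})$ relatively — only uses the \emph{global} bound $|L\setminus L^\ast|\lesssim\Delta p^{3/10}\cdot p\qbinom{n}{r}_q$. But the loss in extensions of a fixed $R'$ is governed by the \emph{local} density of $L\setminus L^\ast$ near $R'$: an $r$-space $R''\in L\setminus L^\ast$ with $\dim(R'\cap R'')=r-1$ kills $\approx p^{\qbinom{s}{r}_q-2}q^{(s-r-1)n}$ common extensions, and there could be as many as $\Theta(pq^n)$ such $R''$ through a single $(r-1)$-subspace of $R'$ (this is consistent with the size bound on $L\setminus L^\ast$, which is much larger than $pq^n$). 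That worst case wipes out a constant fraction of the main term, not a $p^{1/5}$-fraction, so the deterministic claim does not follow and may simply be false for some $R'\in L^\ast$. This is exactly why the paper does \emph{not} argue deterministically: it defines $Y_{R,i}$ as the number of $s$-spaces through $R$ with all other $r$-subspaces in $A_i\cdot L$ but at least one in $A_i\cdot(L\setminus L^\ast)$, computes $\mb{E}Y_{R,i}$ over the randomness of the rotation $A_i$ (which spreads $L\setminus L^\ast$ out uniformly, so only the global size bound is needed), applies Markov to get $Y_{R,i}$ small with probability $1-p^{1/64}$ for each $i$, and then uses independence over $i\in[u]$ plus a union bound over $R$.

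Two smaller points. First, your conditioning on $A_i^{-1}R\in L^\ast$ is an unnecessary restriction: the lemma counts $s$-spaces whose $r$-subspaces \emph{other than} $R$ lie in $A_{i_R}\cdot L^\ast$, and it must hold for every $R\in\mr{Gr}_q(n,r)$ (it is later applied to $r$-spaces of $J$ that have no reason to lie in any $A_i\cdot L^\ast$); the paper's baseline count of $s$-spaces through $R$ with the \emph{other} subspaces in $A_i\cdot L$ comes from typicality of $L$ and needs no condition on $R$ itself. Second, your final probability bookkeeping (the grouping of indices to beat the union bound over $q^{rn}$ choices of $R$) is muddled but lands on the right idea, which matches the paper's $1-p^{u/64}$ computation; that part is repairable. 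The irreparable step, as written, is the deterministic window claim.
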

\begin{proof}
By the first part of \cref{lem:J-boundedness} whp for all $R\in\mr{Gr}_q(n,r)$ and all $i\in[u]$ the number of $s$-spaces $S\geqslant R$ with $\mr{Gr}(S,r)\setminus\{R\}\subseteq A_i\cdot L$ is $(1\pm q^{-n/20})p^{\qbinom{s}{r}_q-1}\qbinom{n-r}{s-r}_q$. We also deduce that for every $i\in[u]$ and distinct $R_1,R_2\in\mr{Gr}_q(n,r)$ with $d=\dim\mr{span}_{\mb{F}_q}\{R_1,R_2\}\le s$ that there are $(1\pm q^{-n/20})p^{\qbinom{s}{r}_q-2}\qbinom{n-d}{s-d}_q$ extensions to an $s$-space $S$ containing $R_1,R_2$ with $\mr{Gr}(S,r)\setminus\{R_1,R_2\}\subseteq A_i\cdot L$.

Now for fixed $i$ and for some $R\in\mr{Gr}_q(n,r)$ let us consider $Y_{R,i}$, the number of $s$-spaces $S\geqslant R$ with $\mr{Gr}(S,r)\setminus\{R\}\subseteq A_i\cdot L$ and such that at least one element of $\mr{Gr}(S,r)$ is in $A_i\cdot(L\setminus L^\ast)$. We consider only the randomness of $A_i$ and see that $\mb{E}Y_{R,i}$ is independent of $R$, and satisfies the bound
\[\mb{E}Y_{R,i}\le\qbinom{s}{r}_q\frac{|L\setminus L^\ast|}{\qbinom{n}{r}_q}\cdot(1+q^{-n/20})p^{\qbinom{s}{r}_q-2}\qbinom{n-r}{s-r}_q\lesssim_{q,s}(\Delta p^{1/5}+q^{-n/3})\cdot p^{\qbinom{s}{r}_q-1}\qbinom{n-r}{s-r}_q,\]
by using \cref{lem:regularized-linear} to bound $|L\setminus L^\ast|$ and using the above fact about extension counts within $L$ (and summing appropriately). By Markov's inequality, we have $Y_{R,i}\ge p^{2/11}p^{\qbinom{s}{r}_q-1}\qbinom{n-r}{s-r}_q$ with probability at most $p^{1/64}$ as long as $n$ is large. Since the various values of $i$ are independent, we see that $Y_{R,i}\le p^{2/11}p^{\qbinom{s}{r}_q-1}\qbinom{n-r}{s-r}_q$ for some $i\in[u]$ with probability at least $1-p^{u/64}$. For $u$ large in terms of $\xi$, we can take a union bound over all values of $R$ and find that whp, every $R$ has some index $i_R\in[u]$ where $Y_{R,i_R}\le p^{2/11}p^{\qbinom{s}{r}_q-1}\qbinom{n-r}{s-r}_q$.

Finally, combining with the observations in the first paragraph we see that for every $R$ the number of $s$-spaces $S\geqslant R$ with $\mr{Gr}(S,r)\setminus\{R\}\subseteq A_{i_R}\cdot L^\ast$ is at most $(1+q^{-n/3})p^{\qbinom{s}{r}_q-1}\qbinom{n-r}{s-r}_q$ and at least
\[(1-q^{-n/20}-p^{2/11})p^{\qbinom{s}{r}_q-1}\qbinom{n-r}{s-r}_q\ge(1-p^{1/6})p^{\qbinom{s}{r}_q-1}\qbinom{n-r}{s-r}_q.\qedhere\]
\end{proof}

Now we show that we can cover into $\bigcup_{i=1}^u(A_i\cdot L^\ast)$ in a bounded manner.

\begin{lemma}\label{lem:cover-into-random}
Let $J\in\mb{Z}^{\mr{Gr}_q(n,r)}$ be $\theta$-bounded for some $\theta\ge q^{-n/20}$ with $|J_R|\le q^{0.1n}$ for all $R\in\mr{Gr}_q(n,r)$. Let $0 < \xi\le\xi_{\ref{lem:cover-into-random}}(q,s)$, fix some positive integers $u\ge u_{\ref{lem:cover-into-random}}(\xi,q,s)$ and $\Delta\ge 1$, let $C = C_{\ref{lem:cover-into-random}}(u,\xi,q,s)$, and suppose $n$ is large with respect to these parameters. Let $p = q^{-\xi n}$, and then let $L,\mc{S},B$ be as in \cref{prop:greedy-sparse-basis} and $L^\ast$ be as in \cref{lem:regularized-linear} (all existing whp). Sample $A_1,\ldots,A_u\in\mr{GL}(\mb{F}_q^n)$ uniformly at random. Whp, there is $\Phi\in\mb{Z}^{\mr{Gr}_q(n,s)}$ with $\partial_{s,r}\Phi = J-J'$ such that $J',\partial_{s,r}\Phi^\pm$ are $C\theta$-bounded and $\mr{supp}(J')\subseteq\bigcup_{i=1}^u(A_i\cdot L^\ast)$.
\end{lemma}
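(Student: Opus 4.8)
The plan is to define $\Phi$ greedily, one $s$-space per uncovered $r$-space, and then to control the boundedness of $\partial_{s,r}\Phi^\pm$ by a moment argument exploiting the sparsity of $\bigcup_i(A_i\cdot L^\ast)$. Concretely, for every $r$-space $R\notin\bigcup_{i\in[u]}(A_i\cdot L^\ast)$ with $J_R\neq 0$, use \cref{lem:color-extension-count} to fix an index $i_R\in[u]$ admitting $(1\pm p^{1/6})p^{\qbinom sr_q-1}\qbinom{n-r}{s-r}_q$ many $s$-spaces $S\geqslant R$ with $\mr{Gr}(S,r)\setminus\{R\}\subseteq A_{i_R}\cdot L^\ast$, and let $S_R$ be one such $s$-space chosen uniformly at random, independently over all such $R$. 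Put $\Phi:=\sum_R J_Re_{S_R}$ and $J':=J-\partial_{s,r}\Phi$. If $R\neq R'$ are both outside $\bigcup_i(A_i\cdot L^\ast)$ then $S_R\neq S_{R'}$ (any $r$-subspace of $S_R$ other than $R$ lies in $A_{i_R}\cdot L^\ast$, so cannot equal $R'$), hence $\Phi^\pm=\sum_{\pm J_R>0}J_Re_{S_R}$; and for an outside $R_0$ only the $R=R_0$ term of $\partial_{s,r}\Phi$ is supported at $R_0$, so $J'_{R_0}=0$ and $\mr{supp}(J')\subseteq\bigcup_i(A_i\cdot L^\ast)$. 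Also $\Phi\in\mb{Z}^{\mr{Gr}_q(n,s)}$ and $\partial_{s,r}\Phi=J-J'$. Finally, once $\partial_{s,r}\Phi^\pm$ are shown to be $C\theta$-bounded it follows that $J'$ is $O(C\theta)$-bounded, because $J'=J-\partial_{s,r}\Phi^{+}+(-\partial_{s,r}\Phi^-)$ with $-\partial_{s,r}\Phi^-\geqslant0$ and $\partial_{r,r-1}$ nonnegative, so $\partial_{r,r-1}(J')^{+}\leqslant\partial_{r,r-1}J^{+}+\partial_{r,r-1}(-\partial_{s,r}\Phi^-)$ has $\infty$-norm at most $(1+C)\theta q^n$, and symmetrically for $(J')^-$.

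It remains to bound $\partial_{s,r}\Phi^{+}$ (the case of $\partial_{s,r}\Phi^-$ being identical after replacing $J$ by $-J$). Fix an $(r-1)$-space $Q$. Since each $r$-subspace of an $s$-space $S$ through a fixed $(r-1)$-subspace of $S$ comes in $\qbinom{s-r+1}1_q$ varieties, $(\partial_{r,r-1}\partial_{s,r}\Phi^{+})_Q=\qbinom{s-r+1}1_q\sum_{R:\,J_R>0,\,Q\leqslant S_R}J_R$; the part with $Q\leqslant R$ is at most $(\partial_{r,r-1}J^{+})_Q\leqslant\theta q^n$ by $\theta$-boundedness of $J$, and I write $\tilde B_Q$ for the part with $Q\not\leqslant R$, so that $\dim(Q+R)=r+m$ for some $1\leqslant m\leqslant\min(r-1,s-r)$. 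To estimate $\tilde B_Q$ in mean, condition on the intersection $\mc E$ of the whp events of \cref{prop:greedy-sparse-basis,lem:regularized-linear,lem:J-boundedness} (typicality of $L$, inherited by each $A_i\cdot L$, and the accompanying extension counts) and \cref{lem:color-extension-count} (the choices $i_R$); $\mc E$ holds whp, so $\mb{P}[\mc E]\geqslant1/2$ for $n$ large. Fix outside $R$ with $Q\not\leqslant R$ and $\dim(Q+R)=r+m$. If $Q\leqslant S_R$ then all $\qbinom{r+m}r_q-1$ $r$-subspaces of $Q+R$ other than $R$ lie in $A_{i_R}\cdot L^\ast$ — call this event $\mc G_R$. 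Given $\mc G_R$ and $\mc E$, the random $S_R$ (uniform in a family of size $\gtrsim p^{\qbinom sr_q-1}q^{(s-r)n}$) contains $Q$ with probability $\lesssim_{q,s}p^{1-\qbinom{r+m}r_q}q^{-mn}$, since by typicality of $A_{i_R}\cdot L$ there are $\lesssim_{q,s}p^{\qbinom sr_q-\qbinom{r+m}r_q}q^{(s-r-m)n}$ many $s$-spaces above $Q+R$ whose $r$-subspaces not lying in $Q+R$ all lie in $A_{i_R}\cdot L\supseteq A_{i_R}\cdot L^\ast$. On the other hand $\mb{P}[\mc G_R]\leqslant\sum_{i}\mb{P}[\mr{Gr}(Q+R,r)\setminus\{R\}\subseteq A_i\cdot L^\ast]\leqslant\sum_i\mb{P}[\mr{Gr}(Q+R,r)\setminus\{R\}\subseteq A_i\cdot L]=u\,p^{\qbinom{r+m}r_q-1}$, using $L^\ast\subseteq L$, independence of $L\sim\mr{Sam}(\mr{Gr}_q(n,r),p)$ from the $A_i$, and injectivity of $A_i$. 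Hence $\mb{P}[Q\leqslant S_R\mid\mc E]\lesssim_{q,s}u\,q^{-mn}$. Since the outside $R$ with $Q\not\leqslant R$ and $\dim(Q+R)=r+m$ have $Q\cap R$ among the $O_{q,r}(1)$ subspaces $P\leqslant Q$ of dimension $r-1-m$, and iterating $\theta$-boundedness of $J$ from codimension $1$ up to codimension $m+1$ gives $\sum_{R\supseteq P,\,\dim R=r}|J_R|\lesssim_{q,r}\theta q^{(m+1)n}$ for such $P$, summing over $m$ and over $P$ yields $\mb{E}[\tilde B_Q\mid\mc E]\lesssim_{u,q,s}\theta q^n$.

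To upgrade this to a high-probability bound, note that conditioned on $\mc E$ the quantity $\tilde B_Q$ is a linear combination, with coefficients $J_R\leqslant q^{0.1n}$, of the indicators $\mathbf 1[Q\leqslant S_R]$. A moment computation over all of the randomness — following the second-moment argument in the proof of \cref{lem:rainbow-decomposition}, with repeated or overlapping choices of $R$ contributing a lower-order term and with the same cancellation between $\mb{P}[\mc G_R]$ and the conditional probability above — together with $\theta\geqslant q^{-n/20}$, gives $\mb{P}[\tilde B_Q\geqslant C\theta q^n\mid\mc E]\leqslant q^{-rn}$ for $C=C(u,\xi,q,s)$ large. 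Taking a union bound over the at most $q^{(r-1)n}$ choices of $Q$ shows that whp $(\partial_{r,r-1}\partial_{s,r}\Phi^{+})_Q\leqslant C'\theta q^n$ for all $Q$, i.e.\ $\partial_{s,r}\Phi^{+}$ is $C'\theta$-bounded; the same holds for $\partial_{s,r}\Phi^-$, and hence for $J'$, finishing the proof.

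The main obstacle is the estimate for $\tilde B_Q$ — the contribution to $\partial_{r,r-1}\partial_{s,r}\Phi^\pm$ from $(r-1)$-spaces $Q$ sitting inside a chosen $s$-space $S_R$ transversally to $R$. Naively discarding the constraint $\mc G_R$ that all intermediate $r$-subspaces of $Q+R$ lie in the sparse host $A_{i_R}\cdot L^\ast$ makes the bound diverge as $\dim(Q+R)$ grows; the resolution is that $\mc G_R$ has probability only $\approx p^{\#}$ over the random $A_i$, which exactly cancels the divergence, so the randomness of the $A_i$ (not only that of the $S_R$) must be retained throughout — in particular in the concentration step, which is why a direct Bernstein-type argument is insufficient and a joint moment computation is required.
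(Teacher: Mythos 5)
Your construction is the same as the paper's (random cover-down of each signed element of $J$ into an $s$-space whose other $r$-subspaces lie in $A_{i_R}\cdot L^\ast$, with $i_R$ from \cref{lem:color-extension-count}), and your support/sign bookkeeping and the deduction of boundedness of $J'$ from boundedness of $\partial_{s,r}\Phi^\pm$ are fine. The mean estimate $\mb{E}[\tilde B_Q]\lesssim\theta q^n$ is also correct, though you obtain the crucial factor $p^{\qbinom{r+m}{r}_q-1}$ from the probability of the event $\mc{G}_R$ over the randomness of the $A_i$, whereas the paper obtains it \emph{deterministically} (after conditioning) from the second conclusion of \cref{lem:J-boundedness}, namely that $J$ is $(2\theta,h)$-bounded \emph{with respect to} $L$ in the sense of \cref{def:embedding-boundedness} (applied to $A_i^{-1}J$ for each of the $u=O(1)$ indices $i$). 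That lemma says precisely that the number of $R$, weighted by $|J_R|$, whose ``gadget'' $\mr{Gr}(\mr{span}(R\cup R'),r)\setminus\{R\}$ lands entirely in $A_i\cdot L$ is at most $2\theta p^{\qbinom{2r-k}{r}_q-2}q^{(r-k)n}$ for every fixed $R'$ — i.e.\ the cancellation you engineer in expectation is already available pointwise on a whp event.

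The genuine gap is the concentration step, and it is a direct consequence of not using that boundedness-wrt-$L$ input. Because you keep the $A_i$ (and $L^\ast$) random, $\tilde B_Q$ is a sum of \emph{dependent} terms $J_R\mbm{1}[Q\leqslant S_R]$ (the events $\mc{G}_R$ are all coupled through $L$ and the $A_i$), so no Chernoff/Azuma bound over independent choices is available, and the ``second-moment argument in the proof of \cref{lem:rainbow-decomposition}'' you invoke yields only a Chebyshev bound of the form $\mr{Var}[\tilde B_Q]/(\theta q^n)^2$. Since $\mr{Var}[\tilde B_Q]$ is at best of order $(\max\text{ increment})\cdot\mb{E}[\tilde B_Q]\gtrsim\theta q^n$, this gives failure probability no better than $q^{-(1-o(1))n}$, which cannot survive the union bound over the $\Theta(q^{(r-1)n})$ choices of $Q$ once $r\ge 2$ (and the independence-boosting trick of \cref{lem:rainbow-decomposition}, running the argument over disjoint color tuples, has no analogue here since there is only one family $\{S_R\}$). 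To close the gap you should condition on the whp event that each $A_i^{-1}J$ is $(2\theta,h)$-bounded wrt $L$ and that $L$ is typical; then the count of relevant $R$ for each $R'\geqslant Q$ is bounded deterministically with the factor $p^{\qbinom{2r-k}{r}_q-2}$ already in place, the only remaining randomness is the mutually independent choices $S^R$, and (after splitting each $J_R$ into $|J_R|$ unit copies with independent extensions, so each term is $O_{q,s}(1)$) \cref{lem:chernoff} gives failure probability $\exp(-\Omega(\theta q^n))\le\exp(-\Omega(q^{0.95n}))$, which comfortably beats the union bound. This is exactly the paper's argument.
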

\begin{proof}
Whp $L$ satisfies \cref{lem:J-boundedness} for $J$, which we now assume. Next, for each $R\in\mr{Gr}_q(n,r)$, let $i_R$ be as in \cref{lem:color-extension-count} (existing whp). For each signed element $R$ of $J$, counted with multiplicity, we consider a uniformly random extension $S^R\in\mr{Gr}_q(n,s)$ with the property that $\mr{Gr}(S^R,r)\setminus\{R\}\subseteq A_{i_R}\cdot L^\ast$. Let $\Phi$ be the sum of said elements with the corresponding signs, and write $J' = J-\partial\Phi$. Note that there are $(1\pm p^{1/6})p^{\qbinom{s}{r}_q-1}\qbinom{n-r}{s-r}_q$ choices of each $S^R$ by \cref{lem:color-extension-count} and that clearly $\mr{supp}(J')\subseteq\bigcup_{i=1}^u(A_i\cdot L^\ast)$. It suffices to show the necessary boundedness.

For any $R'\in\bigcup_{i=1}^u(A_i\cdot L^\ast)$ we consider the expected number of $S^R$ that it is contained in. If we consider the contribution from $R$ with $\dim(R\cap R') = k\in\{\max(0,2r-s),\ldots,r-1\}$ and $i_R = i$, there are at most $2\theta p^{\qbinom{2r-k}{r}_q-2}q^{(r-k)n}$ choices of $R$ (counting multiplicity) by \cref{lem:J-boundedness}, each of which has at least $(1/2)p^{\qbinom{s}{r}_q-1}\qbinom{n-r}{s-r}_q$ choices of $S^R$. On the other hand, for given $R$ we know $R'$ is contained in at most $2p^{\qbinom{s}{r}_q-\qbinom{2r-k}{r}_q}q^{(s-(2r-k))n}$ many $S^R$ by \cref{lem:J-boundedness}. Thus the expected number of $S^R$ that $R'$ is contained in is bounded by
\[u\cdot\sum_{k=\max(0,2r-s)}^{r-1}2\theta p^{\qbinom{2r-k}{r}_q-2}q^{(r-k)n}\cdot 2p^{\qbinom{s}{r}_q-\qbinom{2r-k}{r}_q}q^{(s-(2r-k))n}\cdot\frac{1}{(1/2)p^{\qbinom{s}{r}_q-1}\qbinom{n-r}{s-r}_q}\le O_{u,q,s}(\theta p^{-1}).\]

Finally, for any $Q\in\mr{Gr}_q(n,r-1)$ by \cref{lem:J-boundedness} it is in at most $2upq^n$ many $r$-spaces $R'\in L$, so say $(\partial\Phi^+)_Q$ has mean bounded by $O_{u,q,s}(\theta q^n)$. It is therefore easy to see that we can apply Chernoff (to each value of $k$ separately, similar to the proof of \cref{prop:robust-local-decodability}) to show that $\partial\Phi^\pm$ are $O_{u,q,s}(\theta)$-bounded.
\end{proof}

Next, we show that something in the lattice whose support is within $\{S\in\mr{Gr}_q(n,s)\colon\mr{Gr}(S,r)\subseteq\bigcup_{i=1}^u(A_i\cdot L[B])\}$ can be generated purely by $\partial_{s,r}e_S$ for $S$ a monochromatic $s$-space. This is done by taking an arbitrary representation as $\partial_{s,r}\Phi$ and using a ``subspace exchange'' process to reduce to $s$-spaces for which we can apply \cref{lem:rainbow-decomposition}.

\begin{proposition}\label{prop:sparse-into-gadgets}
Let $0 < \xi\le\xi_{\ref{prop:sparse-into-gadgets}}(q,s)$, fix some positive integers $u\ge u_{\ref{prop:sparse-into-gadgets}}(\xi,q,s)$ and $\Delta\ge 1$, and suppose $n$ is large with respect to these parameters. Let $p = q^{-\xi n}$, and then let $L,\mc{S},B$ be as in \cref{prop:greedy-sparse-basis} and $L^\ast$ be as in \cref{lem:regularized-linear} (all existing whp). Sample $A_1,\ldots,A_u\in\mr{GL}(\mb{F}_q^n)$ uniformly at random. Whp, for any $J\in\partial_{s,r}\mb{Z}^{\mr{Gr}_q(n,s)}$ with $\mr{supp}(J)\subseteq\bigcup_{i=1}^u(A_i\cdot L^\ast)$ there is a representation
\[J' = \sum_{S\in\mc{I}}a_S\partial_{s,r}e_S\]
such that for all $S\in\mc{I}$, $a_S\in\mb{Z}$ and there is $i\in[u]$ such that $\mr{Gr}(S,r)\subseteq A_i\cdot L[B]$.
\end{proposition}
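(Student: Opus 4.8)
The plan is to begin from an arbitrary representation $J=\partial_{s,r}\Phi_0$ with $\Phi_0\in\mb{Z}^{\mr{Gr}_q(n,s)}$ and to run a subspace exchange process that repeatedly modifies $\Phi_0$ while preserving its boundary, until every $s$-space in its support has all of its $r$-subspaces inside $\bigcup_{i=1}^u(A_i\cdot L^\ast)$; once that holds, the first bullet of \cref{prop:color-decomposition} writes $\partial_{s,r}e_S$ as an integer combination of $\partial_{s,r}e_{S'}$ over $s$-spaces $S'$ with $\mr{Gr}(S',r)\subseteq A_i\cdot L[B]$ for some $i$, for each $S$ in the final support, and summing these yields the required representation of $J$. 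Call an $r$-space \emph{bad} if it does not lie in $\bigcup_{i=1}^u(A_i\cdot L^\ast)$, and, relative to a current $\Phi$, \emph{active} if additionally it is contained in some $S\in\mr{supp}(\Phi)$; since $\mr{supp}(J)\subseteq\bigcup_i(A_i\cdot L^\ast)$, every bad $r$-space has coefficient $0$ in $J=\partial_{s,r}\Phi$ at every stage. The process will strictly decrease the nonnegative integer given by the number of active bad $r$-spaces.

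After a preliminary round of generic subspace exchanges via \cref{prop:subspace-exchange} — whose embeddings are chosen, by the extension count described below, so that all newly introduced $r$-spaces remain in $\bigcup_i(A_i\cdot L^\ast)$ — we may assume $\mr{supp}(\Phi)$ is in general position, so that any two of its $s$-spaces through a common $r$-space meet in exactly that $r$-space; this is preserved since every exchange below uses a generic embedding. Now pick an active bad $r$-space $R^\ast$. The $s$-spaces $S\in\mr{supp}(\Phi)$ with $R^\ast\leqslant S$ have coefficients summing to $(\partial_{s,r}\Phi)_{R^\ast}=0$, so we repeatedly pick $S_a$ with positive and $S_b$ with negative current coefficient (both still containing $R^\ast$, hence $S_a\cap S_b=R^\ast$ by general position) and perform a flip: take the gadget $H=\Ups\cup\Ups'$ of \cref{prop:subspace-exchange} with distinguished $s$-spaces $S_0\in\Ups$, $S_1\in\Ups'$ sharing an $r$-space $R_0$ (as in the proof of \cref{prop:color-decomposition}), so that $\Xi_0:=(e_{S_0}-e_{S_1})-\sum_{P\in\Ups'\setminus\{S_1\}}e_P+\sum_{P\in\Ups\setminus\{S_0\}}e_P\in\ker\partial_{s,r}$ by $\partial_{s,r}\Ups=\partial_{s,r}\Ups'$; choose an injective linear $\iota$ with $\iota(S_0)=S_a$, $\iota(S_1)=S_b$, $\iota(R_0)=R^\ast$, and such that every $r$-space of $H$ not contained in $S_0\cup S_1$ is mapped into $\bigcup_i(A_i\cdot L^\ast)$; then replace $\Phi$ by $\Phi-\iota(\Xi_0)$. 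This has the same boundary, lowers $\Phi_{S_a}$ by $1$ and raises $\Phi_{S_b}$ by $1$, and introduces only the $s$-spaces $\iota(\Ups'\setminus\{S_1\})$ and $\iota(\Ups\setminus\{S_0\})$, none of which contains $R^\ast=\iota(R_0)$ because in $H$ the space $R_0$ lies only in $S_0$ and $S_1$. Iterating over such pairs clears $R^\ast$ from the support. Each $r$-subspace of a newly introduced $s$-space is either the image of an $r$-subspace of $S_a$ or $S_b$ — already contained in a support $s$-space, hence activating no new bad $r$-space — or one of the good $r$-spaces just chosen; the same observation shows no previously cleared bad $r$-space can reappear. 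Hence the number of active bad $r$-spaces strictly decreases, so the process terminates, and \cref{prop:color-decomposition} applies to the final support.

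It remains to justify the choice of $\iota$, and this is where the randomness of $A_1,\dots,A_u$ enters. Condition on the whp events that $L$ satisfies \cref{prop:greedy-sparse-basis,lem:regularized-linear} and that \cref{prop:color-decomposition} holds. For a flip we must extend the fixed data $(S_a,S_b,R^\ast)$ — which pins $\iota$ on $S_0+S_1$, of dimension $2s-r$ — to an embedding of $H$ (of constant total dimension $k=k_{\ref{prop:subspace-exchange}}(s)$) whose $O_{q,s}(1)$ new $r$-spaces all land in $\bigcup_i(A_i\cdot L^\ast)$; the expected number of such $\iota$ is $\asymp (up)^{O_{q,s}(1)}q^{(k-2s+r)n}$, which is positive for $\xi$ small and $n$ large since $k-2s+r\geq 1$. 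One therefore needs that whp $\bigcup_i(A_i\cdot L^\ast)$ is typical enough (at density $\asymp up$) for this count to hold uniformly over all such fixed data, which follows from the typicality of $L$ given by \cref{lem:J-boundedness}, the bound from \cref{lem:regularized-linear} that $L^\ast$ omits only a $\Delta p^{1/5}$-fraction of $L$, and the fact that random elements of $\mr{GL}(\mb{F}_q^n)$ perturb such extension counts only at lower order — the requisite first- and second-moment estimates being as in the proofs of \cref{lem:rainbow-decomposition,lem:color-extension-count} — followed by a union bound over the polynomially many possible $(S_a,S_b,R^\ast)$. The preliminary genericization uses the same count with $H$ replaced by the single-exchange gadget of \cref{prop:subspace-exchange}.

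I expect the main obstacle to be organizing the subspace exchange process so that termination is transparent: one must choose the right potential (active bad $r$-spaces, rather than, say, $\|\Phi\|_1$, which may grow), and check carefully that each flip at $R^\ast$ neither re-activates $R^\ast$ nor activates any previously cleared or genuinely new bad $r$-space, while handling the intersection-dimension bookkeeping that permits applying the $\Ups,\Ups'$ gadget to the pair $(S_a,S_b)$. The secondary difficulty is the typicality input: the union of $u$ independent random rotations of the sparse, only approximately regular set $L^\ast$ must behave like a genuinely random set of density $\asymp up$ for counting gadget embeddings into it, uniformly over all relevant fixed data.
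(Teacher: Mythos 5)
Your overall route matches the paper's: start from an arbitrary preimage $\Phi_0$, use subspace exchanges to push everything into configurations where \cref{prop:color-decomposition} applies, with the second-moment-plus-disjoint-color-tuples machinery supplying the whp extension counts. The genuine gap is the ``general position'' step on which your entire flip mechanism rests. Your gadget flip requires a pair $S_a,S_b$ in the current support, both containing the bad $r$-space $R^\ast$, with $\dim(S_a\cap S_b)=r$ \emph{exactly} (otherwise there is no injective linear $\iota$ with $\iota(S_0)=S_a$, $\iota(S_1)=S_b$, $\iota(R_0)=R^\ast$, since $\dim(S_0\cap S_1)=r$ in the gadget). You propose to guarantee this by a preliminary genericization and by choosing each subsequent embedding generically relative to the current support. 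But $J$ is an arbitrary lattice element whose support can have size $\Theta(pq^{rn})$ with unbounded coefficients, so $|\mr{supp}(\Phi)|$ can be $q^{\Omega(n)}$ (indeed up to $q^{sn}$), and the number of $s$-spaces of the support through a single $R^\ast$ can be $q^{\Omega(n)}$. A newly embedded $s$-space through $R^\ast$ meets a $q^{-n+O_{q,s}(1)}$-fraction of all $s$-spaces through $R^\ast$ in dimension $>r$, so avoiding a $>r$-dimensional intersection with $q^{(s-r)n}$ prescribed such spaces excludes on the order of $q^{(s-r-1)n+O(1)}\cdot q^{fn-O(1)}$ of the roughly $q^{fn}$ available embeddings, which is not $o(q^{fn})$ once $s\ge r+1$. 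So general position can neither be installed by the preliminary round nor maintained, and for an adversarial $\Phi_0$ every positive/negative pair through $R^\ast$ may intersect in $>r$ dimensions, leaving no legal flip. (A secondary symptom of the same problem: the genericity you need at each step is relative to the process-dependent accumulated support, so it cannot be folded into the advance union bound over fixed data $(S_a,S_b,R^\ast)$; it has to be an abundance-minus-exceptions count whose exceptional term scales with the support size.)

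The paper avoids this entirely by making the decomposition static rather than iterative: each distinct $S\in\mr{supp}(\Phi_0)$ is flipped out once (coefficients just multiply the identity), leaving $s$-spaces with at most one bad $r$-subspace each; the $\pm1$ contributions at each bad $R$ are then paired into differences $\partial_{s,r}(e_S-e_{S'})$, and — crucially — when $\dim(S\cap S')>r$ one does \emph{not} ask for general position but inserts an intermediate $s$-space $S^\ast\geqslant R$ with $\mr{Gr}(S^\ast,r)\setminus\{R\}\subseteq A_{i_R}\cdot L^\ast$ and $\dim(S\cap S^\ast)=\dim(S^\ast\cap S')=r$, writing $\partial_{s,r}(e_S-e_{S'})=\partial_{s,r}(e_S-e_{S^\ast})+\partial_{s,r}(e_{S^\ast}-e_{S'})$. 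This works because \cref{lem:color-extension-count} gives $\ge q^{(s-r-1/2)n}$ admissible $S^\ast$ for the single fixed pair under consideration, against only $O_{q,s}(q^{(s-r-1)n})$ degenerate ones. Your argument can be repaired by adopting exactly this intermediate-space trick in place of the general-position assumption (and by working per distinct $s$-space so that coefficients and support size never enter the embedding counts), but as written the termination-by-flips scheme does not go through.
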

\begin{proof}
Assume \cref{prop:color-decomposition,lem:color-extension-count} hold (whp). Write
\[J = \sum_{S\in\mc{I}_0}a_S\partial_{s,r}e_S\]
for $\mc{I}_0\subseteq\mr{Gr}_q(n,s)$ and $a_S\in\mb{Z}$. For each $S\in\mc{I}_0$, we can find a representation of $\partial_{s,r}e_S$ as a signed sum of $\partial_{s,r}e_{S'}$ where each $S'$ is such that all of $\mr{Gr}(S',r)$ except perhaps one $r$-space is in $\bigcup_{i=1}^u(A_i\cdot L^\ast)$. This can be proved similar to \cref{prop:color-decomposition,lem:rainbow-decomposition}: we use a subspace exchange from \cref{prop:subspace-exchange} to flip out into a bunch of ``rainbow'' $s$-spaces. The $s$-spaces which share an $r$-space with $S$ are the only ones that may have an exceptional $r$-space. We forgo the details of the proof as it is essentially the same. Explicitly, we have
\[J = \sum_{S\in\mc{I}_1}b_S\partial_{s,r}e_S+\sum_{S\in\mc{I}_2}b_S\partial_{s,r}e_S\]
for $\mc{I}_1\cup\mc{I}_2\subseteq\mr{Gr}_q(n,s)$ with the property that $b_S\in\mb{Z}$ and for all $S\in\mc{I}_1$, $\mr{Gr}(S,r)\subseteq\bigcup_{i=1}^u(A_i\cdot L^\ast)$ and for all $S\in\mc{I}_2$, there is $R_S\in\mr{Gr}(S,r)$ such that $\mr{Gr}(S,r)\setminus\{R_S\}\subseteq\bigcup_{i=1}^u(A_i\cdot L^\ast)$ and $R_S\notin\bigcup_{i=1}^u(A_i\cdot L^\ast)$.

Now we can write $J$ as an integer linear combination of vectors of the form $\partial_{s,r}e_S$ for $\mr{Gr}(S,r)\subseteq\bigcup_{i=1}^u(A_i\cdot L^\ast)$ and $\partial_{s,r}(e_S-e_{S'})$ where there is some $r$-space $R\subseteq S\cap S'$ such that $(\mr{Gr}(S,r)\cup\mr{Gr}(S',r))\setminus\{R\}\subseteq\bigcup_{i=1}^r(A_i\cdot L^\ast)$. Indeed, just note that for each $r$-space $R\notin\bigcup_{i=1}^r(A_i\cdot L^\ast)$ the $s$-spaces containing $R$ must have weights summing to $0$ by the support condition, and two such $R,R'$ cannot have the same $s$-space containing both by the above condition on $\mc{I}_2$.

By the first bullet of \cref{prop:color-decomposition}, the vectors $\partial_{s,r}e_S$ of the first type can be reduced to a sum of the desired form. The vectors $\partial_{s,r}(e_S-e_{S'})$ almost can via the second bullet, except that we do not necessarily know that $\dim(S\cap S') = r$. However, given such a term with corresponding $r$-space $R$ note that by \cref{lem:color-extension-count} there is $i_R\in[u]$ so that the number of $s$-spaces $S^\ast\geqslant R$ with $\mr{Gr}(S^\ast,r)\setminus\{R\}\subseteq A_{i_R}\cdot L^\ast$ is at least $(1/2)p^{\qbinom{s}{r}_q-1}\qbinom{n-r}{s-r}_q$. This is greater than $q^{(s-r-1/2)n}$ for $\xi$ small and $n$ sufficiently large. On the other hand, the number of choices of $S^\ast$ where $\dim(S\cap S^\ast) > r$ or $\dim(S^\ast\cap S') > r$ is at most $O_{q,s}(q^{(s-r-1)n})$. Therefore there is a choice of $S^\ast$ so that, writing
\[\partial_{s,r}(e_S-e_{S'})=\partial_{s,r}(e_S-e_{S^\ast})+\partial_{s,r}(e_{S^\ast}-e_{S'}),\]
we can apply \cref{prop:color-decomposition} to both terms. The result follows.
\end{proof}

Finally, we are ready to prove \cref{thm:bounded-inverse}.

\begin{proof}[Proof of \cref{thm:bounded-inverse}]
Let $\Delta = \Delta_{\ref{prop:robust-local-decodability}}(q,s,r)$. Choose $\xi$ very small in terms of $q,s$ and then $u$ very large so that, taking $p = q^{-\xi n}$, we can apply all of the prior lemmas for these parameters. We will choose $c_{\ref{thm:bounded-inverse}},C_{\ref{thm:bounded-inverse}}^{-1}$ very small with respect to these at the end to satisfy various inequalities. In particular, let us now let $L,\mc{S},B$ be as in \cref{prop:greedy-sparse-basis} and $L^\ast$ as in \cref{lem:regularized-linear}, existing whp and suppose \cref{prop:sparse-into-gadgets} hold, which can be done whp.

We are given $J$ which is $\theta$-bounded such that $J\in\partial_{s,r}\mb{Z}^{\mr{Gr}_q(n,s)}$. By \cref{lem:J-flattening} we can write $J = \partial_{s,r}\Phi_1+J^{(1)}$ with $J^{(1)},\partial_{s,r}\Phi_1^\pm$ being $O_{q,s}(\theta)$-bounded and $|J_R^{(1)}|\le q^{0.1n}$ for all $R\in\mr{Gr}_q(n,r)$ (in a way that does not depend on our conditioned randomness). Now by \cref{lem:cover-into-random}, whp we can write $J^{(1)} = \partial_{s,r}\Phi_2+J^{(2)}$ with $J^{(2)},\partial_{s,r}\Phi_2^\pm$ being $O_{q,s}(\theta)$-bounded and such that $\mr{supp}(J^{(2)})\subseteq\bigcup_{i=1}^u(A_i\cdot L^\ast)$. By \cref{prop:sparse-into-gadgets}, since $J^{(2)}$ is in the lattice, there is a representation
\[J^{(2)} = \sum_{S\in\mc{I}}a_S\partial_{s,r}e_S\]
where for all $S\in\mc{I}$, $a_S\in\mb{Z}$ and $\mr{Gr}(S,r)\subseteq A_i\cdot L[B]$ for some $i$.

Now for each $S\in\mc{I}$, consider $i$ so that $\mr{Gr}(S,r)\subseteq A_i\cdot L[B]$. By the conclusion of \cref{prop:greedy-sparse-basis} and applying the invertible linear map $A_i$, we have $\partial_{s,r}e_S\in\partial_{s,r}(\mb{Z}/\Delta\mb{Z})^{A_i\cdot\mc{S}}$ since $S\in\mr{Gr}_q(n,s)[A_i\cdot L[B]]$. Thus the above implies that
\[J^{(2)}\in\partial_{s,r}(\mb{Z}/\Delta\mb{Z})^{\bigcup_{i=1}^u(A_i\cdot\mc{S})},\]
so we can write $J^{(2)} = \partial_{s,r}\Phi_3+J^{(3)}$ where $J^{(3)}\in\Delta\mb{Z}^{\mr{Gr}_q(n,r)}$ and $\Phi_3$ is a nonnegative sum of $e_S$ for $S\in\bigcup_{i=1}^u(A_i\cdot\mc{S})$ with coefficients in $\{0,\ldots,\Delta-1\}$. (We are treating $\Phi_3$ as an integral vector, not $\imod{\Delta}$.) By \cref{prop:greedy-sparse-basis}, $\partial_{s,r}\Phi_3^\pm$ are $u\Delta p^{1/2}$-bounded hence $\theta$-bounded as long as $c_{\ref{thm:bounded-inverse}}$ is much smaller than $\xi$. Thus $J^{(3)}$ is $O_{q,s}(\theta)$-bounded.

Finally, by \cref{prop:robust-local-decodability} we can write $J^{(3)} = \partial_{s,r}\Phi_4$ with $\partial_{s,r}\Phi_4^\pm$ being $O_{q,s}(\theta)$-bounded. Now $J = \partial_{s,r}(\Phi_1+\Phi_2+\Phi_3+\Phi_4)$ satisfies the desired by the triangle inequality.
\end{proof}

\section{Template}\label{sec:template}
We define the template via an algebraic construction similar to that appearing in the proof of \cref{prop:subspace-exchange}. Recall from \cref{sub:setup} that $V = \mb{F}_q^n, K=\mb{F}_{q^{\ell m}}\hookleftarrow\mb{F}_{q^\ell}=L$, and $\iota_1,\ldots,\iota_z\colon K\to V$ are uniformly random embeddings of our field into the vector space $V$. Recall $K_i=\iota_i(K)$ and $L_i=\iota_i(L)$. Our template will consist of (the $r$-subspaces of) certain $s$-spaces realized by the vectors $\iota_i(Nx)\in K_i^s$ for some fixed $N\in L^{s\times r}$ (applying $\iota_i$ element-wise in the obvious way) and where $x\in K^r$ is a varying parameter. However, as discussed in \cref{sub:outline-template}, in order to use each $r$-space at most once while performing this we must in fact only take a dense subset, and also guarantee that there are no overlaps between the different values $i\in[z]$. The most natural way to do this is to specify for each $r$-space a ``configuration'' defining how it is allowed to be used in the template as well as which template it can be used in, and only including those $s$-spaces coming from some $\iota_i(Nx)$ all of whose $r$-subspaces agree with the given configuration. After this, we further subsample these $s$-spaces for later use.

To define the template, we first define the possible space of configurations.
\begin{definition}\label{def:rref}
Let $\mr{Red}_q^{r\times s}$ be the set of matrices $\Pi\in\mb{F}_q^{r\times s}$ of rank $r$ that are in reduced row echelon form. That is, there are $1\le j_1<\cdots<j_r\le s$ so that $\Pi_{i,j_i}=1$ and $\Pi_{i,j}=0$ for $j<j_i$ and $\Pi_{k,j_i}=0$ for $k<i$.
\end{definition}
By basic facts about Gaussian elimination, we see that for every full rank $\Pi\in\mb{F}_q^{r\times s}$ there is a unique $\Pi'\in\mr{GL}(\mb{F}_q^r)$ so that $\Pi'\Pi\in\mr{Red}_q^{r\times s}$, and hence we can compute $|\mr{Red}_q^{r\times s}|=\qbinom{s}{r}_q$. In particular, given an $s$-space $S\leqslant V=\mb{F}_q^n$ with basis given by the elements of $b\in V^r$, the distinct $r$-subspaces are given by $\mr{span}_{\mb{F}_q}(\Pi b)$ for $\Pi\in\mr{Red}_q^{r\times s}$. (For notational convenience, given a vector $b$ composed of $r$ or $s$ elements of $\mb{F}_q^n$ or a similar vector space, we will abuse notation and write $\mr{span}_{\mb{F}_q}(b)$ for the result of taking the $\mb{F}_q$-span of the elements of $b$, and we will allow ourselves to apply linear maps coordinate-wise on the elements.)

\begin{definition}\label{def:template}
Given the setup in \cref{sub:setup}, choose some $\tau\in(0,1]$. Recall $N=N_\mr{abs}\in L^{s\times r}$ is an $\mb{F}_q$-generic matrix of degree $d$. For each $r$-space $R$ within $V$, sample independent $y_R\sim\mr{Ber}(\tau)$ and $i_R\sim\mr{Unif}([z])$. For each $r$-space $R$ in $V$, let $b_R\in R^r$ be a uniformly random basis of $R$ arranged as a column vector and let $\Pi_R$ be a uniformly random element of $\mr{Red}_q^{r\times s}$. The \emph{$s$-template} $\mc{S}_\mr{tem}$ is the set of $s$-spaces in $\mb{F}_q^n$ that are of the form $\mr{span}_{\mb{F}_q}(\iota_i(Nx))$ with $i\in[z]$ and $x\in K^r$ such that $\dim_L\mr{span}_L(x)=r$ and:
\begin{itemize}
    \item For each $r$-space $R$ within $\mr{span}_{\mb{F}_q}(\iota_i(Nx))$, we have $i_R = i$.
    \item For each $\Pi\in\mr{Red}_q^{r\times s}$, we have for $R = \mr{span}_{\mb{F}_q}(\iota_i(\Pi Nx))$ that $\iota_i(\Pi_RNx) = b_R$;
    \item For each $r$-space $R$ within $\mr{span}_{\mb{F}_q}(\iota_i(Nx))$, we have $y_R = 1$.
\end{itemize}
The template, $G_\mr{tem}\subseteq G$, is the $r$-dimensional multi-$q$-system formed by taking the multiset of $r$-subspaces of $s$-spaces in the $s$-template. We let $G_{\mr{tem},i}$ be the portion of $G_\mr{tem}$ arising from $\iota_i$ for index $i\in[z]$ and define $\mc{S}_{\mr{tem},i}$ similarly. We will often think of the indices $i\in[z]$ as ``colors''.
\end{definition}
\begin{remark}
Since the coordinates of $x\in K^r$ are $L$-linearly independent, $\mr{span}_{\mb{F}_q}(\iota_i(Nx))$ will always form an $s$-space over $\mb{F}_q$ (otherwise the coefficients of $N$ satisfy a nontrivial $\mb{F}_q$-linear relation, which is of degree $1\le d$).
\end{remark}

We record some basic facts about the template, in particular that it is well-defined and $G_\mr{tem}$ is actually a $q$-system and not a multi-$q$-system.
\begin{lemma}\label{lem:template-basic}
Given \cref{sub:setup,def:template}, we have the following as long as $c=c_{\ref{lem:template-basic}}(q,s) > 0$, $q^{-cn}\le\tau\le c$, $d\ge d_{\ref{lem:template-basic}}(r)$, $\ell\ge\ell_{\ref{lem:template-basic}}(d,s)$, and $n$ is large.
\begin{itemize}
    \item The template is well-defined.
    \item Every $r$-space $R\leqslant V$ appears at most once in an $s$-space in $\mc{S}_\mr{tem}$;
    \item For $t\in\{s,r+s\}$ call a $t$-space \emph{obstructed} by the template if any of its $r$-subspaces is contained in $G_\mr{tem}$. Whp, for each $r$-space $R\notin G_\mr{tem}$ at most a $\tau^{1/2}$-fraction of $\{T\in\mr{Gr}(V,t)\colon R\leqslant T\}$ is obstructed by the template.
    \item Every $R\in G_{\mr{tem},i}$ satisfies $\dim_L\mr{span}_L(\iota_i^{-1}(R))=r$.
\end{itemize}
\end{lemma}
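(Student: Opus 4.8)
The plan is to dispatch the four bullets in turn: the first, second, and fourth are short deductions from the $\mb{F}_q$-genericity of $N$ — chiefly the first item of \cref{lem:generic}, that $\Pi N$ is invertible whenever $\Pi\in\mb{F}_q^{r\times s}$ has rank $r$ — while the third is a routine Chernoff-and-union-bound estimate. I do not expect any genuine obstacle.

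For well-definedness I would first record that each $\mr{span}_{\mb{F}_q}(\iota_i(Nx))$ with $\dim_L\mr{span}_L(x)=r$ is genuinely an $s$-space (this is the Remark after \cref{def:template}: an $\mb{F}_q$-dependence among the coordinates of $Nx$ would give a nonzero $c\in\mb{F}_q^s$ with $c^\intercal N=0$ by $L$-independence of the coordinates of $x$, contradicting genericity), and that for $\Pi\in\mr{Red}_q^{r\times s}$ the coordinates of $\Pi Nx$ are the image of $x$ under the invertible matrix $\Pi N$ and hence still $L$-independent, so that $\mr{span}_{\mb{F}_q}(\iota_i(\Pi Nx))$ is an $r$-space and, as $\Pi$ ranges over $\mr{Red}_q^{r\times s}$, these are exactly the $r$-subspaces of $S=\mr{span}_{\mb{F}_q}(\iota_i(Nx))$. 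The substantive point is that the witness $(i,x)$ of an $s$-space $S\in\mc{S}_\mr{tem}$ is unique, so that $\mc{S}_{\mr{tem},i}$ and the parameter of $S$ are unambiguous: given two witnesses $(i,x),(i',x')$, pick any $r$-subspace $R$ of $S$; the first template condition applied to both forces $i_R=i=i'$, and writing $R=\mr{span}_{\mb{F}_q}(\iota_i(\Pi Nx))=\mr{span}_{\mb{F}_q}(\iota_i(\Pi' Nx'))$ for suitable $\Pi,\Pi'\in\mr{Red}_q^{r\times s}$, the second condition gives $\iota_i(\Pi_RNx)=b_R=\iota_i(\Pi_RNx')$, hence $\Pi_RNx=\Pi_RNx'$ by injectivity of $\iota_i$, hence $x=x'$ by invertibility of $\Pi_RN$. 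The identical computation gives the second bullet: if $R$ lies in $S_1,S_2\in\mc{S}_\mr{tem}$ then the first condition forces their colors to agree (both equal $i_R$) and the second condition together with invertibility of $\Pi_RN$ forces their parameters to agree, so $S_1=S_2$.

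For the fourth bullet: if $R\in G_{\mr{tem},i}$ then $R=\mr{span}_{\mb{F}_q}(\iota_i(\Pi Nx))$ for some $\Pi\in\mr{Red}_q^{r\times s}$ and some $x$ with $\dim_L\mr{span}_L(x)=r$, and since $R\leqslant K_i$ we get $\iota_i^{-1}(R)=\mr{span}_{\mb{F}_q}(\Pi Nx)$, so $\mr{span}_L(\iota_i^{-1}(R))=\mr{span}_L(\Pi Nx)=\mr{span}_L(x)$ (again $\Pi N$ invertible over $L$), which has $L$-dimension $r$.

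For the third bullet, fix $t\in\{s,r+s\}$ and an $r$-space $R\notin G_\mr{tem}$. Any obstructed $T\in\mr{Gr}(V,t)$ with $R\leqslant T$ contains some $R'\in G_\mr{tem}$ with $R'\neq R$, and then $T\supseteq R+R'$; grouping by $j:=\dim(R+R')-r\in\{1,\dots,r\}$, the number of such $T$ is at most $\sum_{j\geq 1}M_j(R)\qbinom{n-r-j}{t-r-j}_q$, where $M_j(R):=\#\{R'\in G_\mr{tem}\colon\dim(R\cap R')=r-j\}$ and only $j\leq t-r$ contribute. Since $R'\in G_\mr{tem}$ forces $y_{R'}=1$ (third template condition), $M_j(R)\leq\sum_{R'\colon\dim(R\cap R')=r-j}\mbm{1}[y_{R'}=1]$ is a sum of independent $\mr{Ber}(\tau)$ variables over at most $C_{q,r}q^{j(n-r)}$ spaces $R'$, so by \cref{lem:chernoff} it exceeds $2C_{q,r}\tau q^{j(n-r)}$ with probability at most $\exp(-\Omega_{q,r}(\tau q^{j(n-r)}))\leq\exp(-q^{n/2})$ (using $j\geq 1$, $\tau\geq q^{-cn}$ and $c$ small), and a union bound over the at most $rq^{rn}$ pairs $(R,j)$ gives that whp $M_j(R)\leq 2C_{q,r}\tau q^{j(n-r)}$ for all $R,j$ simultaneously. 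Using $\qbinom{n-r-j}{t-r-j}_q/\qbinom{n-r}{t-r}_q=\prod_{i=0}^{j-1}\frac{q^{t-r-i}-1}{q^{n-r-i}-1}=O_{q,s}(q^{-j(n-t)})$, on this event the obstructed fraction of $\{T\colon R\leqslant T\}$ is at most $\sum_{j=1}^{r}2C_{q,r}\tau q^{j(n-r)}\cdot O_{q,s}(q^{-j(n-t)})=O_{q,s}(\tau)$ for $t\in\{s,r+s\}$, which is at most $\tau^{1/2}$ once $\tau\leq c_{\ref{lem:template-basic}}(q,s)$ is small enough. The only points needing care — the exact Gaussian-binomial ratio and ensuring the Chernoff tail beats the $q^{rn}$-sized union bound — are routine given $\tau\geq q^{-cn}$ with $c$ chosen small.
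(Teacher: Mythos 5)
Your proof is correct, and for the first, second, and fourth bullets it is essentially identical to the paper's: well-definedness and the $L$-dimension claim both reduce to the invertibility of $\Pi N$ from the first bullet of \cref{lem:generic}, and uniqueness of the covering $s$-space follows from $i_R$ forcing the color and $b_R=\iota_i(\Pi_RNx)$ forcing $x$ via invertibility of $\Pi_RN$. The only divergence is in the third bullet: the paper concentrates the obstructed fraction directly via Azuma--Hoeffding (\cref{lem:azuma}) with a variance-proxy computation over the exposure of the template variables, whereas you instead bound $M_j(R)$, the number of template $r$-spaces meeting $R$ in codimension $j\ge 1$, by a sum of the independent indicators $\mbm{1}[y_{R'}=1]$ and apply Chernoff, then convert to a fraction of obstructed $t$-spaces deterministically via the Gaussian-binomial ratio. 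Both routes give tails of the form $\exp(-q^{\Omega(n)})$, comfortably beating the $q^{O(rn)}$-sized union bound; your version is arguably a little cleaner since it only ever invokes independence of the $y_{R'}$, at the cost of the (harmless) loss of a constant factor $q^{j(t-r)}=O_{q,s}(1)$ in the final sum over $j$.
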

\begin{proof}
First, the template is well-defined as long as we can choose $N$ with the desired genericity property and as long as $\mr{span}_{\mb{F}_q}(\iota_i(\Pi Nx))$ is an $r$-space always for possible choices of $i,x,\Pi$. The former holds as long as $\ell\ge\ell_{\ref{lem:template-basic}}(d,s)$ as in the beginning of the proof of \cref{lem:generic}. For the remainder of the argument we will specify $d_{\ref{lem:template-basic}}(r)$ sufficiently large so that $\mb{F}_q$-genericity guarantees that for all $\Pi\in\mb{F}_q^{r\times s}$ of rank $r$, $\Pi N\in L^{r\times r}$ is invertible, which follows from \cref{lem:generic}. This guarantees the latter property, since now the $r$ elements of $\Pi Nx$ span $r$ dimensions over $\mb{F}_q$: $\dim_L\mr{span}_L(\Pi Nx)=\dim_L\mr{span}_L(x)=r$ hence $\dim\mr{span}_{\mb{F}_q}(\Pi Nx)\ge r$.

Second, suppose some $r$-space $R$ appears twice, associated to $\iota_i$ and $x$ as well as $\iota_j$ and $x'$. The first condition in \cref{def:template} ensures that $i=j$. Next note from the discussion following \cref{def:rref} that every $r$-subspace of $S=\mr{span}_{\mb{F}_q}(\iota_i(Nx))$ will show up as $\mr{span}_{\mb{F}_q}(\Pi\iota_i(Nx))=\mr{span}_{\mb{F}_q}(\iota_i(\Pi Nx))$ for some $\Pi\in\mr{Red}_q^{r\times s}$ (and similar for $x'$). Thus the second condition forces $\iota_i(\Pi_RNx)=\iota_i(\Pi_RNx') = b_R$ and hence $\Pi_RNx=\Pi_RNx'$ since $\iota_i$ is an injective map. By the condition on $N$ above, we have that $\Pi_RN$ is invertible so $x = x'$.

The third result follows from concentration. For $t\in\{s,r+s\}$ call a $t$-space \emph{trivially unobstructed} if each constituent $r$-subspace $R'$ satisfies $y_{R'} = 0$. Given $R\in\mr{Gr}(V,r)$, the expected fraction of $t$-spaces extending $R$ (of which there are $\qbinom{n-r}{t-r}_q = \Theta_{q,s}(q^{(t-r)n})$ total) that are trivially unobstructed other than possibly due to $R$ is $1-O_s(\tau)$. Furthermore there are at most $q^{(r-u)n}$ many $r$-spaces $R'$ such that $\dim(R\cap R') = u$ for $\max(0,2r-s)\le u\le r-1$ and each such $r$-space can cause at most $q^{(t-(2r-u))n}$ many $t$-spaces extending $R$ to become obstructed. The result then follows from \cref{lem:azuma} with variance proxy bounded by $O_{q,s}(\max_{u\le r-1}q^{2(t-2r+u)n}\cdot q^{(r-u)n}) = O_{q,s}(q^{(2t-2r-1)n})$ and taking a union bound: whp every $R\in\mr{Gr}(V,r)$ has at most a $\tau^{1/2}$ fraction of extending $t$-spaces obstructed by the template via an $r$-space other than $R$, which implies the desired condition for those $r$-spaces satisfying $R\notin G_\mr{tem}$.

Finally, any possible $R$ has a basis of the form $\iota_i(\Pi Nx)$ where $x\in K^r$ is $L$-linearly independent. Thus it suffices to show $\dim_L\mr{span}_L(\Pi Nx)=r$. But $\Pi N$ is invertible if $d\ge r$ by genericity, and $x\in K^r$ being $L$-linearly independent thus shows the desired.
\end{proof}

\section{Absorber analysis}\label{sec:absorber}
We first prove that the templates are robust in configurations that are useful for absorption. However, there are some natural constraints that are necessary to impose in order for an $s$-space to be absorbable into the template.

The absorber configuration we use is essentially the same as constructed in the proof of \cref{prop:subspace-exchange} with the field inclusion $X\hookrightarrow Y$ replaced by $L\hookrightarrow K$, except we specialize $x^{(1)}=0$ to ensure one of the two $s$-space decompositions is fully compatible with the definition of the template.

\begin{definition}\label{def:absorber}
Given the setup in \cref{sub:setup,def:template}, consider $u\ge 1$ and let $x^\ast\in L^{s\times u}$ be such that $N=N_\mr{tem}$ and $x^\ast$ are jointly $\mb{F}_q$-generic of degree $d$. For $w'\in K^r$ and $w\in K^u$ such that the elements of $(w',w)$ are $L$-linearly independent, define $\mc{P}_\mr{out}^{(w',w)}=\{\mr{span}_{\mb{F}_q}(Nw'+(Nx+x^\ast)w)\colon x\in L^{r\times u}\}$ and $\mc{P}_\mr{in}^{(w',w)}=\{\mr{span}_{\mb{F}_q}(Nw'+Nxw)\colon x\in L^{r\times u}\}$. An \emph{absorber with parameters} $(w',w)$ is $\mc{P}_\mr{out}^{(w',w)},\mc{P}_\mr{in}^{(w',w)}$, which we respectively call the \emph{out- and in-flips}. We say that the \emph{root} of the absorber is $\mr{span}_{\mb{F}_q}(Nw'+x^\ast w)\in\mc{P}_\mr{out}^{(w',w)}$. The absorber is \emph{valid} for template index $i\in[z]$ if $\mc{P}_\mr{in}^{(w',w)}\subseteq\iota_i^{-1}(\mc{S}_{\mr{tem},i})$.
\end{definition}

We will always count the absorber by the number of choices of parameters; the next lemma shows that this will not make a big difference (and also shows that the absorber behaves as expected, e.g., we have two families of $s$-spaces with no degeneracies that each contain the same overall collection of distinct $r$-spaces).

\begin{lemma}\label{lem:absorber-admissible}
Given the setup of \cref{def:absorber} and assuming $d\ge d_{\ref{lem:absorber-admissible}}(r)$, for $w'\in K^r$ and $w\in K^u$ whose elements are jointly $L$-linearly independent, we have the following:
\begin{itemize}
    \item $w'+xw$ is $L$-linearly independent for all $x\in L^{r\times u}$;
    \item The spaces in $\mc{P}_\mr{out}^{(w',w)}$ generated by $x\in L^{r\times u}$ are distinct and $s$-dimensional;
    \item For distinct $P,P'\in\mc{P}_\mr{out}^{(w',w)}$ we have $\dim(P\cap P') < r$, and same for $P,P'\in\mc{P}_\mr{in}^{(w',w)}$;
    \item For $P\in\mc{P}_\mr{out}^{(w',w)}$ and $P'\in\mc{P}_\mr{in}^{(w',w)}$ we have $\dim(P\cap P')\le r$;
    \item $\partial_{s,r}\mc{P}_\mr{out}^{(w',w)} = \partial_{s,r}\mc{P}_\mr{in}^{(w',w)}$.
\end{itemize}
Additionally, if $d\ge r$, given the identities of all the $s$-spaces in $\mc{P}_\mr{in}^{(w',w)}$ then there are at most $O_{u,q,s}(1)$ choices of $L$-linearly independent $(w',w)$ producing them.
\end{lemma}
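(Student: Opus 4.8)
The plan is to obtain the first five bullets by transcribing the proof of \cref{prop:subspace-exchange} under the substitutions that replace the field tower $\mb{F}_q\hookrightarrow X\hookrightarrow Y$ there by $\mb{F}_q\hookrightarrow L\hookrightarrow K$, the generic matrix $N$ by $N_\mr{tem}$, and the jointly generic pair $(N,x^{(2)}-x^{(1)})$ by $(N_\mr{tem},x^\ast)$, specializing $x^{(1)}=0$ and $x^{(2)}=x^\ast$. The one formal difference is that \cref{prop:subspace-exchange} assumes $(w',w)$ is $X$-generic of degree $1$, whereas here we only assume the $r+u$ coordinates of $(w',w)$ are $L$-linearly independent; inspecting that proof, however, the genericity of $(w',w)$ is used only to pass from a homogeneous $L$-linear relation among the coordinates of $w'$ and $w$ (read coordinatewise, as in ``$(\Pi_1N_\mr{tem})w'+\Pi_1(N_\mr{tem}x+x^\ast)w=(\Pi_2N_\mr{tem})w'+\Pi_2(N_\mr{tem}x'+x^\ast)w$'') to the vanishing of each coefficient matrix/vector, and $L$-linear independence of the $r+u$ coordinates is exactly what is needed for that. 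Everything else carries over unchanged: $\mb{F}_q$-genericity of $N_\mr{tem}$ together with \cref{lem:generic} yields $\Pi_1=\Pi_2$ and invertibility of the relevant products $\Pi N_\mr{tem}$ (so we may take $d_{\ref{lem:absorber-admissible}}(r)\ge d_{\ref{lem:generic}}(r)$), and joint $\mb{F}_q$-genericity of $(N_\mr{tem},x^\ast)$ handles the bullet on shared $(r+1)$-dimensional subspaces via the Cramer's-rule kernel-vector argument. The first bullet needs no genericity: a nonzero $L$-relation $\sum_j c_j(w'+xw)_j=0$ has the coefficient of $w'_j$ equal to $c_j$, so it is a nontrivial $L$-relation among the coordinates of $(w',w)$, a contradiction. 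Likewise both $\mc{P}_\mr{out}^{(w',w)}$ and $\mc{P}_\mr{in}^{(w',w)}$ consist of honest $s$-spaces (if $y(N_\mr{tem}w'+\cdots)=0$ for nonzero $y\in\mb{F}_q^{1\times s}$ then $yN_\mr{tem}=0$, so $y=0$) and within each family the intersection of two distinct members drops below dimension $r$ by the same computation, with the $\mc{P}_\mr{in}$ case being the $x^\ast=0$ specialization of the $\mc{P}_\mr{out}$ case; the equality $\partial_{s,r}\mc{P}_\mr{out}^{(w',w)}=\partial_{s,r}\mc{P}_\mr{in}^{(w',w)}$ is the explicit-bijection argument of \cref{prop:subspace-exchange} verbatim. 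I would remark that the point of specializing $x^{(1)}=0$ is that $\mc{P}_\mr{in}^{(w',w)}$ then consists precisely of the spaces $\mr{span}_{\mb{F}_q}(N_\mr{tem}(w'+xw))$ with $w'+xw$ an $L$-linearly independent vector of $K^r$ (by the first bullet), i.e.\ exactly the shape of the template $s$-spaces of \cref{def:template} pulled back along some $\iota_i$; this is what makes ``valid for template index $i$'' sensible.

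For the final statement, the key observation is that $\mc{P}_\mr{in}^{(w',w)}$ is a collection of at most $|L|^{ru}=q^{\ell ru}$ many $s$-dimensional $\mb{F}_q$-subspaces of $K$, and since $\ell$ (hence $|L|$) and $u$ are bounded in terms of $q,r,s$, the $\mb{F}_q$-span $U:=\mr{span}_{\mb{F}_q}\big(\bigcup_{P\in\mc{P}_\mr{in}^{(w',w)}}P\big)$ has $\dim_{\mb{F}_q}U=O_{u,q,s}(1)$. I would recover $U$ from the given data and then confine both $w'$ and $w$ to explicit sets of size $O_{u,q,s}(1)$. First, as $d\ge r$, \cref{lem:generic} forces $N_\mr{tem}$ to have rank $r$, in particular with no zero column; considering the members of $\mc{P}_\mr{in}^{(w',w)}$ arising from an $x$ supported on a single entry (which ranges over all of $L$) and subtracting off the $x=0$ member $\mr{span}_{\mb{F}_q}(N_\mr{tem}w')$ inside $U$, one gets $w_kL\subseteq U$ for every $k$, hence $\mr{span}_L(w)\subseteq U$ and so $w\in U^u$, a set of size $O_{u,q,s}(1)$. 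Second, the $x=0$ member $\mr{span}_{\mb{F}_q}(N_\mr{tem}w')\in\mc{P}_\mr{in}^{(w',w)}$ shows that the tuple $N_\mr{tem}w'$ lies in $U^s$; as $d\ge r$, the top $r\times r$ block of $N_\mr{tem}$ is invertible over $L$ (its determinant is a nonzero polynomial of degree $r$ in the entries of $N_\mr{tem}$), so $w'$ is the image of some tuple in $U^r$ under a fixed $L$-linear isomorphism, again a set of size $O_{u,q,s}(1)$. Multiplying the two counts gives at most $O_{u,q,s}(1)$ choices of $(w',w)$.

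I expect the first five bullets to be entirely routine: the only thing to verify carefully is that the substitution $X\to L$, $Y\to K$, $x^{(1)}\to 0$, $x^{(2)}\to x^\ast$ together with the weakening from ``$X$-generic of degree $1$'' to ``$L$-linearly independent'' is faithful to the original argument. The genuinely new ingredient is the last statement, and its only real content is the uniform bound $\dim_{\mb{F}_q}U=O_{u,q,s}(1)$, which in turn just uses that $\ell$ and $u$ are absolute constants in the regime of \cref{sub:setup}; given that, confining $(w',w)$ to $U$ (after a fixed invertible $L$-change of coordinates coming from a minor of $N_\mr{tem}$) is immediate.
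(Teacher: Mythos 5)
Your proposal is correct and, for the first five bullets, follows the paper's own route exactly: reduce to the four degeneracy conditions extracted from the proof of \cref{prop:subspace-exchange} with $x^{(1)}=0$, $x^{(2)}=x^\ast$, and observe that since those conditions are homogeneous $L$-linear in the coordinates of $(w',w)$, the hypothesis of $L$-linear independence replaces the degree-$1$ genericity used there. For the final counting claim your mechanism differs slightly but harmlessly from the paper's: the paper pins down $(w',w)$ by assigning the given $s$-spaces to the parameter values $x=0$ and $x$ with a single nonzero entry and solving the resulting equations, whereas you confine $(w',w)$ to the $\mb{F}_q$-span $U$ of all the given $s$-spaces, which has bounded dimension; both correctly use the invertibility of the top $r\times r$ block of $N_\mr{tem}$ (available since $d\ge r$), and your observation that $\lambda$ ranging over all of $L$ absorbs the nonzero factor $N_{ij}$ makes the step $Lw_k\subseteq U$ sound.
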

\begin{proof}
For the first bullet point, it is trivial that $w'+xw\in K^r$ is $L$-linearly independent for all $x\in L^{r\times u}$: any nontrivial $L$-dependence of these coordinates would give a nontrivial $L$-dependence of $(w',w)$ since it will involve at least one coordinate of $w'$.

For the other four bullets, inspection of the proof of \cref{prop:subspace-exchange}, which has the same subspace setup, reveals that it suffices for the following to hold where we let $x^{(1)}=0$ and $x^{(2)}=x^\ast$:
\begin{itemize}
    \item For all $y\in\mb{F}_q^{1\times s}$ and $j\in\{1,2\}$, we have $y(Nw'+(Nx+x^{(j)})w)\neq 0$.
    \item For all distinct $x,x'\in L^{r\times u}$, $\Pi_1,\Pi_2\in\mb{F}_q^{r\times s}$ of rank $r$ and $j\in\{1,2\}$, we have $\Pi_1(Nw'+(Nx+x^{(j)})w)\neq\Pi_2(Nw'+(Nx'+x^{(j)})w)$.
    \item For all $x,x'\in L^{r\times u}$ (not necessarily distinct) and $\Pi_1,\Pi_2\in\mb{F}_q^{(r+1)\times s}$ of rank $r+1$, we have $\Pi_1(Nw'+(Nx+x^{(1)})w)\neq\Pi_2(Nw'+(Nx'+x^{(2)})w)$.
    \item For all $x'\in L^{r\times u}$ and $\Pi'\in\mb{F}_q^{r\times s}$ of rank $r$ and $j\in\{1,2\}$, there are $x\in L^{r\times u}$ and $\Pi\in\mb{F}_q^{r\times s}$ of rank $r$ such that $\Pi(Nw'+(Nx+x^{(j)})w)=\Pi'(Nw'+(Nx'+x^{(3-j)})w)$.
\end{itemize}
The first of these guarantees every space in the in-flip and out-flip is actually $s$-dimensional, the second guarantees that each flip is composed of distinct spaces (whose intersections have dimension strictly less than $r$), and the third guarantees that $s$-spaces of the out-flip and in-flip share at most a single $r$-space each. The fourth shows that every $r$-space contained within an $s$-space of one flip will show up in the other, and combining these facts does show $\partial_{s,r}\mc{P}_\mr{out}^{(w',w)}=\partial_{s,r}\mc{P}_\mr{in}^{(w',w)}$.

The fourth bullet here is trivial: take $\Pi=\Pi'$ and $x=x'+(\Pi'N)^{-1}(x^{(j)}-x^{(3-j)})$ (similar to the last part of the proof of \cref{prop:subspace-exchange}). For the first three, the proof in \cref{prop:subspace-exchange} shows that these hold for $(w',w)$ which are $L$-linearly independent (since these are linear equations in $w',w$ with coefficients in $L$ and constant term $0$, the only failure can be due to an $L$-degeneracy of the defining equations, which is ruled out in the proof of \cref{prop:subspace-exchange}).

For the final part of this lemma, suppose we are given the identities of the $s$-spaces in $\mc{P}_\mr{in}^{(w',w)}$. Just choose which spaces correspond to which values $Nw'+Nxw$ for $x=0$ and for values of $x$ with a single nonzero coordinate; this gives a comprehensive set of equations for $w',w$ since the first $r$ rows of $N$ form an invertible matrix.
\end{proof}

We wish to show that there exist many valid absorbers rooted at $s$-spaces. To do this, we first codify a (necessary) condition under which this will hold.
\begin{definition}\label{def:configuration-compatible}
Given the setup of \cref{sub:setup,def:template} and $i\in[z]$, we say that $S\in\mr{Gr}(V,s)$ is \emph{configuration compatible for $i$} if:
\begin{itemize}
    \item $S\in\mr{Gr}_q(K_i,s)$;
    \item $\dim_L\mr{span}_L(\iota_i^{-1}(S))=s$;
    \item There is an $\mb{F}_q$-basis $b\in K_i^s$ of $S$ so that for every $\Pi\in\mr{Red}_q^{r\times s}$, we have for $R=\mr{span}_{\mb{F}_q}(\Pi b)$ that $\Pi_R=\Pi$, $b_R=\Pi b$, and $R\in G_{\mr{tem},i}$.
\end{itemize}
\end{definition}

We now provide the desired lower bound on absorbers involving $s$-spaces. We also bound the number of potential absorbers involving certain $r$-spaces and $s$-spaces, which will be useful in understanding the total influence that certain interactions will have on random quantities for purposes of concentration.

\begin{proposition}\label{prop:absorber-count}
Given the setup of \cref{def:absorber}, we have the following as long as $u\ge s$, $c=c_{\ref{prop:absorber-count}}(u,q)>0$, $C=C_{\ref{prop:absorber-count}}(u,q)$, $q^{-cn}\le\tau\le c$, $d\ge d_{\ref{prop:absorber-count}}(s)$, $\ell\ge\ell_{\ref{prop:absorber-count}}(d,u)$, $C'=C_{\ref{prop:absorber-count}}'(u,\ell,q)$, and $n$ is large:
\begin{itemize}
    \item Given $r$-space $R\leqslant K$ and $s$-space $S\leqslant K$ with $\dim_L\mr{span}_L(R)=r$ and $\dim_L\mr{span}_L(S)=s$ as well as $\dim_L(\mr{span}_L(R)\cap\mr{span}_L(S))=t$, there are at most $C'q^{(u-s+t)n}$ many absorbers with root $S$ such that the in-flip contains an $s$-space $S'\geqslant R$.
    \item Whp over the randomness of the template, for every $i\in[z]$ and every $S\in\mr{Gr}(V,s)$ which is configuration compatible for $i$ there are at least $(\tau/z)^Cq^{(u-s+r)n}$ many absorbers valid for $i$ with root $\iota_i^{-1}(S)$.
\end{itemize}
\end{proposition}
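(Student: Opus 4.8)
The plan is to translate everything into linear algebra over $K$ relative to the fixed matrix $M:=[N_{\mr{tem}}\mid x^\ast]\in L^{s\times(r+u)}$, so that the absorber with parameters $(w',w)$ has root $\mr{span}_{\mb{F}_q}(Mp)$ where $p:=\binom{w'}{w}$, and its in-flip $s$-space with parameter $x\in L^{r\times u}$ is $\mr{span}_{\mb{F}_q}(N_{\mr{tem}}[I_r\mid x]p)$. For $d\ge d_{\ref{prop:absorber-count}}(s)$ and $\ell$ large, \cref{lem:generic} gives $\mr{rank}_L M=s$ (using $u\ge s$) together with all the invertibility behind \cref{lem:absorber-admissible}. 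The one observation driving both bullets: if the entries of $p$ are $L$-linearly independent, the $L$-isomorphism $L^{r+u}\to\mr{span}_L(p)\leqslant K$, $e_j\mapsto p_j$, carries $\mr{span}_L(\mr{span}_{\mb{F}_q}(Ap))$ onto $\mr{row}_L(A)$ for every $L$-matrix $A$; hence, writing $S=\mr{span}_{\mb{F}_q}(Mp)$ for the root and $R'=\mr{span}_{\mb{F}_q}(\Pi N_{\mr{tem}}[I_r\mid x]p)$ for an in-flip $r$-space (rank-$r$ $\Pi$), one gets $\dim_L(\mr{span}_L R'\cap\mr{span}_L S)=\dim_L(\mr{row}_L[I_r\mid x]\cap\mr{row}_L M)=:t(x)$, depending only on $x$, and if $\wt{M}_x$ denotes the matrix with $M$ in its top $s$ rows and $[I_r\mid x]$ in its bottom $r$ rows then $\mr{rank}_L\wt{M}_x=s+r-t(x)$, so any system $\wt{M}_xp=(\cdot)$ with fixed right-hand side has solution set empty or a coset of a $K$-subspace of dimension $u-s+t(x)$, hence of size $\le q^{(u-s+t(x))n}$. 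For the first bullet: given an absorber with root $S$ whose in-flip contains $S'\geqslant R$, pick a witness $x\in L^{r\times u}$, rank-$r$ $\Pi$, and $\mb{F}_q$-bases $b_R$ of $R$, $v$ of $S$ with $Mp=v$ and $[I_r\mid x]p=(\Pi N_{\mr{tem}})^{-1}b_R$; then $t(x)=t$ by the displayed identity. Since there are only $O_{u,\ell,q,s}(1)$ choices of $(x,\Pi,b_R,v)$ and a system $\wt{M}_xp=(\cdot)$ with $t(x)\neq t$ has no $L$-independent solution, summing the bound $q^{(u-s+t)n}$ over the relevant systems gives the first bullet with $C'=C_{\ref{prop:absorber-count}}'(u,\ell,q)$.

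For the second bullet fix $i\in[z]$ and $S$ configuration compatible for $i$ with witness basis $b\in K_i^s$ as in \cref{def:configuration-compatible}; set $\bar S=\iota_i^{-1}(S)$ and $v=\iota_i^{-1}(b)$, an $\mb{F}_q$-basis of $\bar S$ with $\dim_L\mr{span}_L\bar S=s$. I would take $\mc{C}$ to be the set of all $(w',w)\in K^{r+u}$ with $L$-linearly independent entries and $Mp=v$; since $v$ is $L$-independent, no proper $L$-hyperplane contains the coset $\{Mp=v\}$, so $|\mc{C}|=(1-o(1))|K|^{r+u-s}=\Theta_{\ell,q,s}(q^{(r+u-s)n})$, and by \cref{lem:absorber-admissible} every element of $\mc{C}$ is an honest absorber with root $\bar S$. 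By \cref{def:template}, such a candidate is valid for $i$ exactly when every $r$-space $R$ contained in some in-flip $s$-space has $i_R=i$, $y_R=1$, $\Pi_R=\widehat\Pi_R$ and $b_R=\widehat b_R$, where $(\widehat\Pi_R,\widehat b_R)$ is a deterministic target read off from the in-flip $s$-space containing $R$; by \cref{lem:absorber-admissible} that $s$-space is unique and there are only $|L|^{ru}\qbinom{s}{r}_q=O_{u,\ell,q,s}(1)$ such $r$-spaces, pairwise distinct. Configuration compatibility enters here: the $r$-subspaces of the root $\bar S$ reappear as in-flip $r$-spaces (since $\partial_{s,r}\mc{P}_\mr{out}=\partial_{s,r}\mc{P}_\mr{in}$), and via the explicit matching $x=(\Pi N_{\mr{tem}})^{-1}\Pi x^\ast$ from \cref{lem:absorber-admissible} their targets are $(\widehat\Pi_R,\widehat b_R)=(\Pi,\Pi v)=(\Pi,\Pi b)$ for the relevant reduced $\Pi$, which is exactly the prescribed data $(\Pi_R,b_R)$ of \cref{def:configuration-compatible}, with $i_R=i$, $y_R=1$ since $R\in G_{\mr{tem},i}$; so the root $r$-subspaces impose no constraint (and conversely, were $S$ not configuration compatible, no candidate could be valid).

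Next I would isolate the in-flip $r$-spaces coming from parameters $x$ with $t(x)=r$, i.e.\ $\mr{row}_L[I_r\mid x]\subseteq\mr{row}_L M$ — a set of $O_{u,\ell,q,s}(1)$ parameters — for which $[I_r\mid x]p=C'v$ with $C'$ independent of $(w',w)$, so these ``bad'' $r$-spaces and their targets are literally the \emph{same} for all candidates. Let $\mc{B}$ collect the bad in-flip $r$-spaces that are not root $r$-subspaces and let $\mc{E}_{\mc{B}}$ be the (deterministically prescribed) event that all of $\mc{B}$ carries its required data; then every remaining in-flip $r$-space of a candidate comes from a parameter with $t(x)<r$, genuinely varies with the candidate, and is disjoint from $\mc{B}$ and from the root $r$-subspaces (the intrinsic invariant $\dim_L(\mr{span}_L R\cap\mr{span}_L\bar S)$ distinguishes them). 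Conditioning on $\mc{E}_{\mc{B}}$, the number $X$ of valid-for-$i$ absorbers rooted at $\bar S$ equals $\sum_{(w',w)\in\mc{C}}\mbm{1}[\text{all its }t(x)<r\text{ in-flip }r\text{-spaces carry the required data}]$, each indicator having probability $p_1^{m}$ with $p_1=\tau/(z\qbinom{s}{r}_q|\mr{GL}(\mb{F}_q^r)|)$ and $m=O_{u,\ell,q,s}(1)$, by independence of the template data across distinct $r$-spaces; so $\mb{E}[X\mid\mc{E}_{\mc{B}}]=|\mc{C}|\,p_1^{m}$, which is $\Theta(q^{(r+u-s)n})$ times $(\tau/z)^{m}$ and hence $\ge(\tau/z)^{C}q^{(u-s+r)n}$ for a suitable constant $C$ (folding the $\ell,s$-dependence in).

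Finally I would upgrade this to a high-probability lower bound $X\ge\tfrac12\mb{E}[X\mid\mc{E}_{\mc{B}}]$ (conditionally on $\mc{E}_{\mc{B}}$) with failure probability $q^{-\omega(n)}$, enough to union bound over the $q^{O(n)}$ pairs $(i,S)$. By the first bullet a fixed candidate shares a $t(x)<r$ in-flip $r$-space with at most $O(q^{(r+u-s-1)n})$ other candidates (a shared $r$-space has intrinsic $t<r$), whence the conditional second moment of $X$ is $(1+o(1))(\mb{E}[X\mid\mc{E}_{\mc{B}}])^2$ once $c=c_{\ref{prop:absorber-count}}(u,q)$ is small enough (recall $z=n^2$, $\tau\ge q^{-cn}$); for the union bound one runs the same computation on the $2k$th centered moment for $k$ polynomial in $n$, organizing the $2k$-tuples of candidates by their overlap hypergraph — each overlap costing a factor $q^{-n+O(cn)}$ in count while gaining at most $p_1^{-m}$ — to get $\mb{E}[(X-\mb{E}[X\mid\mc{E}_{\mc{B}}])^{2k}\mid\mc{E}_{\mc{B}}]\le(\mb{E}[X\mid\mc{E}_{\mc{B}}])^{2k}k^{O(k)}q^{(-n+O(cn))k}$ and hence, with $k=\lfloor n\rfloor$ and Markov, failure probability $q^{-\Omega(n^2)}$. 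I expect this last moment estimate to be the main obstacle: the parameters with $t(x)=r$ would otherwise create positive correlations that destroy concentration, and it is precisely the structural fact that their $r$-spaces are common to all candidates — so removable into the conditioning $\mc{E}_{\mc{B}}$ — that renders the remaining overlaps lower-order; carefully bounding the $2k$-tuple overlap hypergraph via the first bullet is the delicate part.
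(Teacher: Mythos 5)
Your first bullet is essentially the paper's argument and is fine: both reduce to the fact that the whole absorber lives in the $(r+u)$-dimensional $L$-span of the parameters, so once the root and an in-flip $r$-space are pinned down the parameter count is $O_{u,\ell,q,s}(1)\cdot q^{(u-s+t)n}$.

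The second bullet has a genuine gap at the conditioning step. The proposition is an unconditional ``whp over the template'' statement that must survive a union bound over $q^{O(n)}$ pairs $(i,S)$, but you only establish concentration of $X$ \emph{conditionally on} $\mc{E}_{\mc{B}}$. The event $\mc{E}_{\mc{B}}$ asks that a fixed, candidate-independent collection of $s$-spaces lie in the template, so $\mb{P}[\mc{E}_{\mc{B}}]$ is of order $(\tau/z)^{O(1)}$ (with $\tau=q^{-\zeta n}$ in the application this is $q^{-\Theta(n)}$), and on $\mc{E}_{\mc{B}}^{c}$ every candidate fails, i.e.\ $X=0$. Hence unconditionally $\mb{P}[X>0]\le\mb{P}[\mc{E}_{\mc{B}}]$, which is nowhere near the $1-q^{-\omega(n)}$ you need; what you prove is ``either $\mc{E}_{\mc{B}}$ fails and $X=0$, or $X$ is large'', which is not the statement. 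The only way to close this is to show that $\mc{E}_{\mc{B}}$ is \emph{implied by} configuration compatibility of $S$, so that it is absorbed into the hypothesis rather than conditioned on. The paper does exactly this, but only for the subfamily $\mc{P}_{\mr{adj}}=\{\mr{span}_{\mb{F}_q}(N(w'+x_\Pi w))\colon\Pi\in\mr{Red}_q^{r\times s}\}$ with $x_\Pi=(\Pi N)^{-1}\Pi x^\ast$: membership of $R_\Pi=\mr{span}_{\mb{F}_q}(\Pi b)$ in $G_{\mr{tem},i}$ with the prescribed $(\Pi_{R_\Pi},b_{R_\Pi})=(\Pi,\Pi b)$ forces the unique template $s$-space through $R_\Pi$ to be precisely that in-flip member, so those members come for free; the paper then defines $Y$ as the count of absorbers valid off $\mc{P}_{\mr{adj}}$, notes $Y\le X$ under configuration compatibility, and concentrates $Y$ unconditionally by Azuma--Hoeffding using the first bullet as the Lipschitz bound. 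Note that your $\mc{B}$ is strictly larger than the $r$-spaces of $\mc{P}_{\mr{adj}}$: the parameters with $t(x)=r$ correspond to \emph{all} left inverses $A$ of $N$ over $L$ (an affine space of $L$-dimension $r(s-r)$), whereas only the $\qbinom{s}{r}_q$ of the form $(\Pi N)^{-1}\Pi$ with $\Pi\in\mb{F}_q^{r\times s}$ of rank $r$ are forced by \cref{def:configuration-compatible}. You must therefore either prove that the remaining fixed in-flip $s$-spaces are also forced by the hypothesis (you give no such argument, and I do not see one), or reorganize the random variable you concentrate so that these members are not part of the validity requirement you are averaging over, and then explain separately why they are present. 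As written, the proof does not go through; the high-moment concentration you flag as the main obstacle is not actually the problem.
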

\begin{proof}
For the first bullet point, we claim $\dim_L\mr{span}_L(\mc{P}_\mr{out}^{(w',w)}\cup\mc{P}_\mr{in}^{(w',w)})\le r+u$. Indeed, note that every coordinate of $Nw'+Nxw,Nw'+(Nx+x^\ast)w\in K^s$ can be formed as an $L$-linear combination of coordinates of $w',w$, of which there are $r+u$. Thus given $R,S$ which clearly satisfy $\dim_L\mr{span}_L(R\cup S)=r+s-t$ by the given, there are at most $(r+u)|K|^{(r+u)-(r+s-t)}=q^{(u-s+t)n}$ ways to extend to a space of at most the required dimension. There are then $O_{u,\ell,q,s}(1)$ potential collections within this that form an absorber and by the last part of \cref{lem:absorber-admissible} there are $O_{u,q,s}(1)$ ways to then choose the absorber parameters (note $u\ge s$). Notice this argument used no randomness.

For the second bullet point, we first condition on any revelation of $\iota_1,\ldots,\iota_z$. Now let us consider any $i\in[z]$, any $S\in\mr{Gr}_q(K_i,s)$ satisfying $\dim_L\mr{span}_L(\iota_i^{-1}(S))=s$, and any $b\in K_i^s$ an $\mb{F}_q$-basis of $S$. Consider the event $\mc{E}$ that (a) for all $\Pi\in\mr{Red}_q^{r\times s}$, the $r$-space $R=\mr{span}_{\mb{F}_q}(\Pi b)$ satisfies $\Pi_R=\Pi$, $b_R=\Pi b$, and $R\in G_{\mr{tem},i}$ but also (b) the number of absorbers valid for $i$ with root $\iota_i^{-1}(S)$, call it $X$, satisfies $X<(\tau/z)^Cq^{(r+u-s)n}$. (One of these events occurs if the second bullet fails, by \cref{def:configuration-compatible}.)

In such a situation, let us examine what possible absorbers rooted at $S'=\iota_i^{-1}(S)$ look like. Writing $b'=\iota_i^{-1}(b)$, let us look in particular at such absorbers with $Nw'+x^\ast w=b'$. As we vary $\Pi\in\mr{Red}_q^{r\times s}$, we see that $(\Pi N)w'+\Pi x^\ast w=\Pi b'$ spans the $r$-subspaces of $S'$, call them $R_\Pi'$. Since $\Pi N$ is invertible from \cref{lem:generic}, this satisfies
\[\Pi b' = (\Pi N)(w'+(\Pi N)^{-1}\Pi x^\ast w)\]
so for $x_\Pi = (\Pi N)^{-1}\Pi x^\ast$ we see that $R_\Pi'$ is within the $s$-space generated by $x_\Pi$ in $\mc{P}_\mr{in}^{(w',w)}=\{\mr{span}_{\mb{F}_q}(Nw'+Nxw)\colon x\in L^{r\times u}\}$. Let $\mc{P}_\mr{adj}^{(w',w)}=\{\mr{span}_{\mb{F}_q}(Nw'+Nx_\Pi w)\colon\Pi\in\mr{Red}_q^{r\times s}\}$ be the set of these $s$-spaces (note these $x_\Pi$ are independent of $S,b$, and any randomness). Now let $Y$ be the number of ($L$-linearly independent) choices of parameters so that $S'$ is still the root, but now we only require the absorber to be almost valid in the sense that $\mc{P}_\mr{in}^{(w',w)}\setminus\mc{P}_\mr{adj}^{(w',w)}\subseteq\iota_i^{-1}(\mc{S}_{\mr{tem},i})$.

We claim that under $\mc{E}$, we have $Y<(\tau/z)^Cq^{(r+u-s)n}$. In fact, any absorber counted by $Y$ will actually be counted by $X$ in such a circumstance, implying $Y\le X<(\tau/z)^Cq^{(r+u-s)n}$. The argument is as follows. For each $s$-space in $\mc{P}_\mr{adj}^{(w',w)}$, say $S_\Pi'=\mr{span}_{\mb{F}_q}(Nw'+Nx_\Pi w)$, we have from the above analysis that it shares $R_\Pi'$ with $S'$. Now fix some $\Pi\in\mr{Red}_q^{r\times s}$ and note that $R=R_\Pi=\iota_i(R_\Pi')$ has basis $\Pi b$. By part (a) of $\mc{E}$, we have that $\Pi_R=\Pi$, $b_R=\Pi b$, and $R\in G_{\mr{tem},i}$. But for $R$ to be included in template $i$ given that $\Pi_R=\Pi$ and $b_R=\Pi b$, we must have $i_R=i$ and $\iota_i(\Pi Nx')=\Pi b$ for the defining value $x'\in K^r$ which gives rise to the inclusion of $R$ in this outcome of the template. This gives $x'=(\Pi N)^{-1}\iota_i^{-1}(\Pi b) = (\Pi N)^{-1}\Pi b'$. Recalling $b'=Nw'+x^\ast w$ for the choice of absorber parameters, we have $x'=w'+(\Pi N)^{-1}\Pi x^\ast w=w'+x_\Pi w$. That is, the $r$-space corresponding so $\mr{span}_{\mb{F}_q}(N(w'+x_\Pi w))$ must be in $G_{\mr{tem},i}$. Varying over $\Pi\in\mr{Red}_q^{r\times s}$, this means that $\mc{P}_\mr{adj}^{(w',w)}\subseteq\iota_i^{-1}(\mc{S}_{\mr{tem},i})$. This along with the conditions for $(w',w)$ to be counted for $Y$ mean that it is counted for $X$, and the claim is indeed true.

So now we consider some fixed as above $S,b,i$ (so $\dim_L\mr{span}_L(\iota_i^{-1}(S))=s$ and $b$ is an $\mb{F}_q$-basis of $S$) and wish to study $\mb{P}[Y<(\tau/z)^Cq^{(r+u-s)n}]$ where recall we have conditioned on $\iota_1,\ldots,\iota_z$. We thus are only using the randomness of $i_R,y_R,b_R,\Pi_R$ for all $R\in\mr{Gr}_q(n,r)$. We first show that $Y$ is concentrated around its mean. Let us consider what $Y$ depends on. Note that from the fourth bullet of \cref{lem:template-basic}, we see that the only possible $R$ that $Y$ depends on satisfy $R\leqslant K_i$ and $\dim_L\mr{span}_L(\iota_i^{-1}(R))=r$ (this collection is deterministic given the revealed information).

Let $S'=\iota_i^{-1}(S)$ and consider the collection of such $r$-spaces $R$ satisfying $\dim_L(\mr{span}_L(R')\cap\mr{span}_L(S'))=t$ where $R'=\iota_i^{-1}(R)$ (so we have $\dim_L\mr{span}_L(R')=r$). There are clearly $O_{\ell,q,s}(q^{(r-t)n})$ such spaces: choose a dimension $t$ subspace of $\mr{span}_L(S')$ over $L$ and then choose ways to extend this to an $r$-dimensional space over $L$, and then choose an $\mb{F}_q$-subspace of dimension $r$. Furthermore, changing $i_R,y_R,b_R,\Pi_R$ for one of these can affect $Y$ by at most an amount depending on the number of total absorbers with root $S'$ and containing $R'$. But note that $S',R'$ satisfy the conditions of the first bullet point (which holds deterministically): this count is bounded by $C'q^{(u-s+t)n}\le zq^{(u-s+t)n}$, say. Furthermore, it suffices to consider $0\le t\le r-1$ since $Y$ is not affected by $R\leqslant S$ (due to the exclusion of $\mc{P}_\mr{adj}^{(w',w)}$, and noting that the last four bullet points of \cref{lem:absorber-admissible} imply that there are no other possible interactions of such $R$ with the condition). Thus using \cref{lem:azuma} with variance proxy $O_{\ell,q,s}(\max_{0\le t\le r-1}q^{(r-t)n}\cdot z^2q^{2(u-s+t)n})\le q^{2u-2s+2r-1/2}/2$ yields
\[\mb{P}[|Y-\mb{E}Y|\ge q^{(u-s+r-1/8)n}]\le\exp(-q^{n/4}).\]
This is enough to take a union bound over all $S,b,i$.

Thus it remains to show $\mb{E}Y\ge 2(\tau/z)^Cq^{(u-s+r)n}$. For any $L$-linearly independent parameters $(w',w)$ where $\iota_i^{-1}(S)$ is the corresponding root, we consider the probability that $\mc{P}_\mr{in}^{(w',w)}\subseteq\iota_i^{-1}(\mc{S}_{\mr{tem},i})$. For every $s$-space in $\mc{P}_\mr{in}^{(w',w)}$ of the form $\mr{span}_{\mb{F}_q}(Nw'+Nxw)$ for $x\in L^{r\times u}$, we merely require that it is included in the template. Note that by the first bullet of \cref{lem:absorber-admissible}, $w'+xw\in K^r$ has $L$-linearly independent coordinates so this is possible, and the probability of these events occurring simultaneously is at least $((\tau/z)^A)^{|L|^{ru}}$ for some appropriately chosen $A=A(q,s)$: we make every constituent subspace $R$ have the right set of parameters so that the corresponding $s$-spaces are chosen, all simultaneously (there are no overlaps in conditions due to the third bullet point of \cref{lem:absorber-admissible}).

Therefore it merely remains to give a sufficient lower bound for $Z$, the number of parameters $(w',w)$ with $L$-linearly independent coordinates whose associated absorber has root $S'=\iota_i^{-1}(S)$ (given $\iota_i$, or even just $S'$, $Z$ is non-random). Recall $b'\in K^s$ is an $\mb{F}_q$-basis for $S'$. Equivalently, we need to count $(w',w)\in K^r\times K^u$ with $L$-linearly independent coordinates satisfying $Nw'+x^\ast w=b'$. Let $Z_0$ be the number of solutions to $Nw'+x^\ast w=b'$ for $(w',w)\in K^r\times K^u$ with no conditions, and for each $(v_1,v_2)\in(L^{1\times r}\times L^{1\times u})\setminus\{(0,0)\}$ let $Z_{(v_1,v_2)}$ be the number of solutions to $Nw'+x^\ast w=b'$ satisfying $v_1w'+v_2w=0$. First, since $N,x^\ast$ are jointly $\mb{F}_q$-generic of degree $d\ge d_{\ref{prop:absorber-count}}(s)\ge s$, and since $u\ge s$, we know that the matrix $N'$ obtained by augmenting $N$ by $x^\ast$ on the right has rank $s$ (e.g.~since the initial $s\times s$ block has nonzero determinant). Therefore $Z_0=(q^n)^{r+u-s}$. For $(v_1,v_2)$ that is not in the $L$-span of the rows of $N'$, we see that adding in the equation $v_1w'+v_2w=0$ drops the total number of solutions to $Z_{(v_1,v_2)}=(q^n)^{r+u-s-1}$. Finally, for $(v_1,v_2)\neq(0,0)$ in the span of the rows of $N'$, we can find $y\in L^{1\times s}$ with $y\neq 0$ so that
\[yN=v_1,\quad yx^\ast=v_2\]
and then $0=v_1w'+v_2w=y(Nw'+x^\ast w)=yb'$. Thus if there is a solution then $b'$ is $L$-linearly dependent. But recall again that $\dim_L\mr{span}_L(S')=\dim_L\mr{span}_L(\iota_i^{-1}(S))=s$, so this cannot happen. Thus for these values, $Z_{(v_1,v_2)}=0$.

We are done: this analysis implies that for large $n$ we have
\[Z\ge Z_0-\sum_{(v_1,v_2)\neq(0,0)}Z_{(v_1,v_2)}\ge(q^n)^{u-s+r}/2.\qedhere\]
\end{proof}

\section{Using the absorber}\label{sec:absorption}
The goal of this section is to provide a general setup for using the abundance of absorbers in the template, via a random process, to complete a decomposition. At a high level, we will show that a collection of $s$-spaces satisfying certain properties with respect to the template can be changed instead to a sum of distinct template $s$-spaces minus other $s$-spaces. This will be used in the final step of our absorption algorithm; in \cref{sec:approximate} we will create an approximate decomposition and then in \cref{sec:cover} we will show how we can arrive at the desired situation from it.

Since the availability of template absorbers involves various conditions defined over $L$ as seen in \cref{prop:absorber-count}, beyond the notion of boundedness from \cref{def:bounded} we will require a notion of boundedness of a collection of $r$-spaces with respect the field structure $L$ imposed.

\begin{definition}\label{def:field-bounded}
Given the setup of \cref{def:template}, and for $t\in\{r,s\}$, we define $\iota_i^\ast\colon\mb{Z}^{\mr{Gr}_q(n,t)}\to\mb{Z}^{\mr{Gr}(K,t)}$ to map $e_T\mapsto e_{\iota_i^{-1}(T)}$ if $T\leqslant K_i$ and $\dim_L\mr{span}_L(\iota_i^{-1}(T))=t$, and $e_T\mapsto 0$ otherwise. We define $\partial_{t,r-1}^L\colon\mb{Z}^{\mr{Gr}_{\mb{F}_q}(K,t)}\to\mb{Z}^{\mr{Gr}_L(K,r-1)}$ via
\[e_T\mapsto\sum_{Q^\ast\leqslant\mr{span}_L(T)}e_{Q^\ast}.\]
We say a $t$-dimensional signed multi-$q$-system $\Phi$ on $V=\mb{F}_q^n$ is \emph{$(\theta,L)$-field bounded with respect to the template} if for all $i\in[z]$ we have $\snorm{\partial_{t,r-1}^L\iota_i^\ast(\Phi^\pm)}_\infty\le\theta q^n$.
\end{definition}
\begin{remark}
Here $\mr{Gr}_L(K,r-1)$ denotes the $L$-subspaces of $K$ with $L$-dimension $r-1$. Note that this definition gives no bound on the parts of $\mc{T}$ which are not contained in some $K_i$ or fail the $L$-dimension condition, so it will generally be applied only to $\mc{T}$ that are already supported on such $t$-spaces. In the case $t=r$, note that all $R\in G_\mr{tem}$ will appear in these bounds by the fourth bullet of \cref{lem:template-basic}.
\end{remark}

We briefly note that in relevant situations, boundedness in the sense of \cref{def:bounded} is weaker than field boundedness, which will simplify matters later.
\begin{lemma}\label{lem:strength-bounded}
Given the setup of \cref{def:template} and as long as $c=c_{\ref{lem:strength-bounded}}(q,s) > 0$, $q^{-cn}\le\tau\le c$, $d\ge d_{\ref{lem:strength-bounded}}(r)$, $\ell\ge\ell_{\ref{lem:strength-bounded}}(d,s)$, any $\mc{R}\in\mb{Z}^{G_\mr{tem}}$ which is $(\theta,L)$-field bounded is also $z\theta$-bounded.
\end{lemma}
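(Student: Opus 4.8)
The plan is to split $\mc{R}$ according to the colours of the $r$-spaces in its support and to bound the $\partial_{r,r-1}$-boundary contribution of each colour separately, each being controlled by the corresponding term in the definition of $(\theta,L)$-field boundedness. First I would record that, by the second bullet of \cref{lem:template-basic}, every $r$-space lies in at most one $s$-space of $\mc{S}_\mr{tem}$, so $G_\mr{tem}$ is an honest $q$-system and decomposes as a disjoint union $\bigcup_{i=1}^z G_{\mr{tem},i}$; since $\mr{supp}(\mc{R})\subseteq G_\mr{tem}$ we may write $\mc{R}^\pm=\sum_{i=1}^z\mc{R}_i^\pm$, where $\mc{R}_i^\pm$ is the restriction of $\mc{R}^\pm$ to $G_{\mr{tem},i}$, and hence $(\partial_{r,r-1}\mc{R}^\pm)_Q=\sum_{i=1}^z(\partial_{r,r-1}\mc{R}_i^\pm)_Q$ for every $Q\in\mr{Gr}_q(n,r-1)$. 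It therefore suffices to prove $|(\partial_{r,r-1}\mc{R}_i^\pm)_Q|\le\theta q^n$ for each fixed colour $i$ and each $Q$; I will do the $+$ case, the $-$ case being identical up to signs since $\iota_i^\ast$ preserves the sign of every coordinate and $\partial_{r,r-1}^L$ has nonnegative matrix. If no $R\in G_{\mr{tem},i}$ contains $Q$ the bound is trivial; otherwise any such $R$ lies in $K_i$, so $Q\leqslant K_i$ as well, and I set $Q'=\iota_i^{-1}(Q)\leqslant K$ and $R'=\iota_i^{-1}(R)$.

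The crux — and the only place where there is genuine content, as opposed to bookkeeping — is the dimension count showing that $P^\ast:=\mr{span}_L(Q')$ has $L$-dimension exactly $r-1$. Indeed, by the fourth bullet of \cref{lem:template-basic} we have $\dim_L\mr{span}_L(R')=r$; writing $R'=Q'+\mb{F}_q v'$ for some $v'\in R'\setminus Q'$ gives $\mr{span}_L(R')=\mr{span}_L(Q')+Lv'$, whence
\[r=\dim_L\mr{span}_L(R')\le\dim_L\mr{span}_L(Q')+1\le(r-1)+1=r,\]
forcing equality throughout. This rules out the a priori troublesome possibility that $Q'$ sits ``diagonally'' in the $L$-structure, lying in no $(r-1)$-dimensional $L$-subspace of $\mr{span}_L(R')$; it is precisely the point at which the rigidity of the linear-algebraic setting works in our favour. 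Given this, $P^\ast\in\mr{Gr}_L(K,r-1)$ and $P^\ast\leqslant_L\mr{span}_L(R')$ for every $R\in G_{\mr{tem},i}$ with $R\geqslant Q$.

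Finally I would read the bound off from field boundedness evaluated at $P^\ast$. Since $\iota_i^\ast(\mc{R}^+)$ has nonnegative coordinates and, by the fourth bullet of \cref{lem:template-basic} once more, its coordinate at $\iota_i^{-1}(R)$ equals $\mc{R}^+_R$ for each $R\in G_{\mr{tem},i}$ (injectivity of $\iota_i$ prevents collisions), and since $\partial_{r,r-1}^L$ has nonnegative matrix, I get
\[(\partial_{r,r-1}^L\iota_i^\ast(\mc{R}^+))_{P^\ast}\ \ge\ \sum_{\substack{R\in G_{\mr{tem},i}\\ R\geqslant Q}}\mc{R}^+_R\ =\ (\partial_{r,r-1}\mc{R}_i^+)_Q.\]
By $(\theta,L)$-field boundedness the left-hand side is at most $\theta q^n$, so $(\partial_{r,r-1}\mc{R}_i^+)_Q\le\theta q^n$; summing over $i\in[z]$ gives $\snorm{\partial_{r,r-1}\mc{R}^\pm}_\infty\le z\theta q^n$, i.e.\ $\mc{R}$ is $z\theta$-bounded. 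I would also remark that all hypotheses are used solely to invoke \cref{lem:template-basic}, so the statement is in fact deterministic once the template is well-defined.
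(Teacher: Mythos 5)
Your proof is correct and follows essentially the same route as the paper's: decompose by template colour, observe that $\dim_L\mr{span}_L(\iota_i^{-1}(R))=r$ (fourth bullet of \cref{lem:template-basic}) forces $\dim_L\mr{span}_L(\iota_i^{-1}(Q))=r-1$ whenever some $R\in G_{\mr{tem},i}$ contains $Q$, then apply field boundedness at $\mr{span}_L(\iota_i^{-1}(Q))$ and sum over the $z$ colours. The paper phrases the dimension step contrapositively (showing $\dim_L\mr{span}_L(\iota_i^{-1}(Q))<r-1$ precludes any such $R$), but the content is identical.
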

\begin{proof}
Note that $\mc{R}$ is supported on $G_\mr{tem}\subseteq\mr{Gr}(V,r)$. For any $R\in\mc{R}$ there is $i\in[z]$ with $R\in G_{\mr{tem},i}$ and by the fourth bullet of \cref{lem:template-basic} this satisfies $\dim_L\mr{span}_L(\iota_i^{-1}(R))=r$.

Now consider $Q\in\mr{Gr}(V,r-1)$. We wish to bound the number of $r$-spaces in $\mc{R}^+$, say, that contain $Q$ (with multiplicity). For each $i\in[z]$ consider the number of $r$-spaces in $\mc{R}^+$ and $G_{\mr{tem},i}$ containing $Q$. If we do not have $Q\leqslant K_i$ then there are $0$ such spaces, and otherwise we may assume $Q\leqslant K_i$. If $\dim_L\mr{span}_L(\iota_i^{-1}(Q))=r-1$ then let $Q_i^\ast=\mr{span}_L(\iota_i^{-1}(Q))$ and apply \cref{def:field-bounded} to $Q_i^\ast$ and index $i$. If $R\geqslant Q$ and $R\in G_{\mr{tem},i}$ then by the above consideration $\dim_L\mr{span}_L(\iota_i^{-1}(R))=r$ and this means the number of occurrences of $R$ in $\mc{R}$ will be counted by \cref{def:field-bounded} for $Q_i^\ast$. Thus we obtain a bound of $\theta q^n$ in this case. Now suppose that $\dim_L\mr{span}_L(\iota_i^{-1}(Q))<r-1$. We claim no $R\in G_{\mr{tem},i}$ contains $Q$. Indeed, if this did occur then $\iota_i^{-1}(Q)\leqslant\iota_i^{-1}(R)=:R'$ would occur. Let $\{v_1,\ldots,v_{r-1}\}\subseteq K$ be a basis for $\iota_i^{-1}(Q)$ and $v_r\in K$ be chosen to extend it to a basis of $R'$. Again \cref{lem:template-basic} tells us that $R\in G_{\mr{tem},i}$ implies $\dim_L\mr{span}_L(R')=r$, hence $\{v_1,\ldots,v_r\}$ is $L$-independent. Thus so is $\{v_1,\ldots,v_{r-1}\}$, so in fact $\iota_i^{-1}(Q)$ spans $r-1$ dimensions over $L$, contradicting our assumption!

Putting everything together, the contribution of $r$-spaces in $\mc{R}^+$ containing $Q$ coming from $G_{\mr{tem},i}$ is bounded by $\theta q^n$. Adding up over all $i\in[z]$ yields a bound of at most $z\theta q^n$, so $\mc{R}$ is $z\theta$-bounded as desired.
\end{proof}

We now prove the main absorption statement.

\begin{proposition}\label{prop:final-absorber}
Given the setup of \cref{def:template}, if we have $c=c_{\ref{prop:final-absorber}}(q,s)>0$, $q^{-cn}\le\tau\le c$, $d\ge d_{\ref{prop:final-absorber}}(s)$, $\ell\ge\ell_{\ref{prop:final-absorber}}(d,s)$, and $n$ is large then whp (over the randomness of the template) the following holds for $\theta\le(\tau/z)^{1/c}$. Consider any $\Phi\in\{0,1\}^{\mr{Gr}_q(n,s)}$ such that $\snorm{\partial_{s,r}\Phi}_{\infty}\le 1$ and $\Phi$ is $(\theta,L)$-field bounded with respect to the template. Suppose that for each $S\in\mr{supp}(\Phi)$ there is $i\in[z]$ such that (a) $\dim_L\mr{span}_L(\iota_i^{-1}(S))=s$ and (b) there is an $\mb{F}_q$-basis $b\in K_i^s$ of $S$ so that for every $\Pi\in\mr{Red}_q^{r\times s}$, we have for $R=\mr{span}_{\mb{F}_q}(\Pi b)$ that $\Pi_R=\Pi$, $b_R=\Pi b$, and $R\in G_{\mr{tem},i}$. Suppose additionally that (c) for all $i\in[z]$ and distinct $S_1,S_2\in\mr{supp}(\Phi)\cap\mr{Gr}_q(n,s)[G_{\mr{tem},i}]$ we have $\dim_L(\mr{span}_L(\iota_i^{-1}(S_1)\cap\iota_i^{-1}(S_2)))<r$.

Then there exist $\Phi_1$, $\Phi_2$ such that $\Phi_1,\Phi_2\in\{0,1\}^{\mr{Gr}_q(n,s)}$, $\mr{supp}(\Phi_2)\subseteq\mc{S}_\mr{tem}$, and
\[\partial_{s,r}\Phi=\partial_{s,r}(\Phi_2-\Phi_1).\]
\end{proposition}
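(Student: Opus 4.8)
The plan is to apply the Lovász Local Lemma (\cref{lem:lll}) to a random process in which each $s$-space $S\in\mr{supp}(\Phi)$ independently selects one valid absorber rooted at $S$, and to show that whp these selections can be made disjointly so that the in-flips all lie in $\mc{S}_\mr{tem}$. First I would fix, for each $S\in\mr{supp}(\Phi)$, the index $i=i(S)\in[z]$ and the basis $b\in K_i^s$ furnished by hypotheses (a),(b); note that hypothesis (b) says precisely that $S$ (pulled back via $\iota_i$) is \emph{configuration compatible for $i$} in the sense of \cref{def:configuration-compatible}. Hence \cref{prop:absorber-count}(second bullet) applies: whp there are at least $(\tau/z)^Cq^{(u-s+r)n}$ absorbers valid for $i$ with root $\iota_i^{-1}(S)$, where $u\ge s$ is any fixed parameter (I would fix $u=s$). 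For each such $S$ I select one of these valid absorbers uniformly at random, independently over $S$. If the selection can be done so that the out-flips (other than the roots, which are the spaces in $\mr{supp}(\Phi)$ themselves) of distinct $S$ are pairwise ``independent'' — specifically, no $r$-space is covered with excess multiplicity once we swap each root $S$ for the corresponding in-flip minus the rest of its out-flip — then setting $\Phi_2$ to be the multiset union (which I must check is a genuine $\{0,1\}$-set) of all in-flip $s$-spaces across all $S$, and $\Phi_1$ to be the union of all out-flip $s$-spaces \emph{except} the roots, gives $\partial_{s,r}\Phi=\partial_{s,r}(\Phi_2-\Phi_1)$ by the telescoping identity $\partial_{s,r}\mc{P}_\mr{out}^{(w',w)}=\partial_{s,r}\mc{P}_\mr{in}^{(w',w)}$ from \cref{lem:absorber-admissible}, summed over $S$.

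The bad events to avoid are of two kinds: (i) some $r$-space or $s$-space lies in the out-flips or in-flips of two distinct chosen absorbers (rooted at $S_1\ne S_2$) in a way that breaks disjointness or creates excess multiplicity in $\Phi_1$ or $\Phi_2$; and (ii) for a single $S$, its chosen out-flip accidentally intersects $\mr{supp}(\Phi)$ or $G_\mr{tem}$ in an uncontrolled way. To bound the probability of a collision between the absorbers at $S_1$ and $S_2$, I would use the \emph{first} bullet of \cref{prop:absorber-count}: given an $r$-space $R$ (or the $r$-subspaces of an $s$-space), the number of absorbers with a fixed root $S$ whose in-flip contains an $s$-space through $R$ is at most $C'q^{(u-s+t)n}$ where $t=\dim_L(\mr{span}_L(R)\cap\mr{span}_L(S))\le r-1$ when $R\not\le S$ (using hypothesis (c) to rule out $t=r$ for two roots both in a common $G_{\mr{tem},i}$, and the field-boundedness of $\Phi$ to limit how many roots $S_2$ can be ``close'' to a given configuration). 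Dividing by the lower bound $(\tau/z)^Cq^{(u-s+r)n}$ on the total number of valid absorbers at each root, and summing over the $O(1)$-bounded number of ways two absorber configurations (each spanning $\le r+u$ dimensions over $L$) can share a subspace, the probability that the absorbers at $S_1$ and $S_2$ collide is at most $(\tau/z)^{-O(1)}q^{-\Omega(n)}$, which for $\theta\le(\tau/z)^{1/c}$ and $c$ small is $\le q^{-\Omega(n)}$. The dependency graph has each $S$ adjacent only to those $S'$ whose absorbers could collide with $S$'s; field-boundedness of $\Phi$ (in the sense of \cref{def:field-bounded}, with $\snorm{\partial^L_{t,r-1}\iota_i^\ast\Phi^\pm}_\infty\le\theta q^n$ for $t\in\{r,s\}$, the $t=s$ case crucial here) bounds the degree of this graph by $(\tau/z)^{-O(1)}\theta q^{O(n)}\cdot$(something), which after the LLL computation with uniform $x_i=q^{-\Omega(n)}$ gives $\mb{P}[\mc{B}_i]\le x_i\prod(1-x_j)$ as required, since the per-event probability beats the degree times $x_j$.

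The main obstacle will be the bookkeeping in the dependency/collision analysis: unlike the ``clique exchange'' in the Steiner setting, here the absorber for $S$ involves an entire $(r+u)$-dimensional $L$-structure, the roots themselves already sit inside the template copies $G_{\mr{tem},i}$, and the in-flips we want to add to $\Phi_2$ are themselves template $s$-spaces $\mc{S}_{\mr{tem},i}$, so I must be careful that (1) distinct chosen in-flips are genuinely distinct template $s$-spaces and that none coincides with a root or another out-flip $s$-space (this follows from the dimension bounds in \cref{lem:absorber-admissible} plus a union bound over the $O(1)$ internal pairs of each absorber, using that a template $s$-space determines its $r$-subspaces' configurations uniquely via \cref{def:template}); and (2) the final $\Phi_1,\Phi_2$ are $\{0,1\}$-valued, i.e.\ the disjointness we enforce via the LLL is exactly strong enough to prevent any $s$-space appearing twice and any $r$-space of $\partial_{s,r}\Phi_2$ or $\partial_{s,r}\Phi_1$ appearing with multiplicity $>1$ beyond what $\partial_{s,r}\Phi$ already dictates. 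Once the LLL succeeds, the identity $\partial_{s,r}\Phi=\sum_{S}\partial_{s,r}(\text{in-flip}_S-(\text{out-flip}_S\setminus\{S\}))=\partial_{s,r}(\Phi_2-\Phi_1)$ follows termwise, and $\mr{supp}(\Phi_2)\subseteq\mc{S}_\mr{tem}$ by validity of each chosen absorber. I would carry this out in the order: (1) fix $i(S),b(S)$ and invoke \cref{prop:absorber-count}; (2) define the random selection and the candidate $\Phi_1,\Phi_2$; (3) enumerate the bad events and bound each using the first bullet of \cref{prop:absorber-count} together with the field-boundedness hypotheses; (4) set up the dependency graph and verify the LLL inequality with $x_i=q^{-cn/2}$, say; (5) on the good event, verify $\Phi_1,\Phi_2\in\{0,1\}^{\mr{Gr}_q(n,s)}$ and $\partial_{s,r}\Phi=\partial_{s,r}(\Phi_2-\Phi_1)$.
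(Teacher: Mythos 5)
Your proposal is essentially the paper's proof: fix $u=s$, use condition (b) to see each root is configuration compatible, invoke the second bullet of \cref{prop:absorber-count} for a lower bound on valid absorbers, sample one absorber per root independently, apply the Lov\'asz Local Lemma with collision probabilities bounded via the first bullet of \cref{prop:absorber-count} together with condition (c), and conclude with the telescoping identity from \cref{lem:absorber-admissible}. Two remarks.

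First, a simplification you can make: the only bad events you need are ``the in-flips of two chosen absorbers share a template $s$-space.'' Once the in-flips are pairwise disjoint, $\Phi_2\subseteq\mc{S}_\mr{tem}$ automatically satisfies $\snorm{\partial_{s,r}\Phi_2}_\infty\le 1$ (distinct template $s$-spaces have disjoint $r$-subspaces by \cref{lem:template-basic}), and then $\partial_{s,r}\Phi_1=\partial_{s,r}\Phi_2-\partial_{s,r}\Phi$ with $\Phi_1\ge 0$ and $\snorm{\partial_{s,r}\Phi}_\infty\le 1$ forces $\snorm{\partial_{s,r}\Phi_1}_\infty\le 1$, so there is no need to separately police multiplicities in $\Phi_1$ or intersections of out-flips with $\mr{supp}(\Phi)$.

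Second, a step that would fail as written: the uniform LLL weights $x_i=q^{-cn/2}$ do not work. The collision probability for a pair of roots $S_{t_1},S_{t_2}$ scales like $(z/\tau)^{O(1)}q^{(d-r)n}$ where $d=\dim_L(\mr{span}_L(\iota_i^{-1}(S_{t_1}))\cap\mr{span}_L(\iota_i^{-1}(S_{t_2})))$, and for $d=r-1$ this is roughly $q^{-n}$, while the number of roots at ``$L$-distance'' $d$ from a fixed root is of order $z\theta q^{(r-d)n}$ by field boundedness; in particular a single event has up to $\Theta(z\theta q^{rn})$ neighbours in the dependency graph, so with a uniform weight $q^{-cn/2}$ the product $\prod_j(1-x_j)$ is $\exp(-\Omega(z\theta q^{(r-c/2)n}))$, which is far smaller than the $d=r-1$ collision probability. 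You must take the weight of each pair event to depend on its intersection dimension, e.g.\ $x_{t_1,t_2}=q^{-(r-d_{t_1,t_2})n}(z/\tau)^{O(1)}$, so that the sum of weights over neighbours is $O(z\theta(z/\tau)^{O(1)})=O(1)$ by field boundedness; with this asymmetric choice the LLL inequality goes through exactly as needed (and this is where the hypothesis $\theta\le(\tau/z)^{1/c}$ enters).
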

\begin{remark}
Note that (b) above implies that $S\leqslant K_i$ and hence $\iota_i^{-1}(S)$ in (a) is well-defined. Also, the condition (c) is required in order to rule out the case that $S_1,S_2$ involve two different $r$-spaces participating in the same template $s$-space $S\in\mc{S}_\mr{tem}$, which would destroy any hopes of using template absorbers for $S_1,S_2$ simultaneously (both would require use of $S$).
\end{remark}
\begin{proof}
Fix $u=s$ and note that $N=N_\mr{tem},x^\ast=x_\mr{abs}^\ast$ as defined in \cref{sub:setup} satisfy the setup of \cref{def:absorber}. (One could substitute any value of $u$ which is at least $s$; for the sake of clarifying the forthcoming calculations we will differentiate $u$ and $s$.)  Then note that whp over the randomness of the template \cref{prop:absorber-count} holds, and let $C=C_{\ref{prop:absorber-count}}(u,q,s)$. Now condition on any revelation of these random outcomes satisfying this. Let $\{S_1,\ldots,S_{|\Phi|}\}$ be a labeling of the $s$-spaces in $\Phi$. By the given condition (b), $S_t$ has all its $r$-spaces in $G_{\mr{tem},i_t}$ for some $i_t\in[z]$. Consider the collection $\mc{W}_t$ of parameters $(w',w)$ for absorbers with root $S_t$ that are valid for template index $i_t$. For each $t\in[|\Phi|]$ we sample a uniformly random choice of parameters $(w_t',w_t)\sim\mr{Unif}(\mc{W}_t)$. Call an outcome successful if $\mc{P}_\mr{in}^{(w_t',w_t)}$ for $t\in[|\Phi|]$ are simultaneously disjoint.

By the fifth bullet of \cref{lem:absorber-admissible} we can write
\[\partial_{s,r}e_{S_t}=\partial_{s,r}\mc{P}_\mr{in}^{(w_t',w_t)}-\partial_{s,r}(\mc{P}_\mr{out}^{(w_t',w_t)}\setminus\{S_t\})\]
for all $t\in[|\Phi|]$, and then summing this up shows that for $\Phi_2=\bigcup_{t=1}^{|\Phi|}\mc{P}_\mr{in}^{(w_t',w_t)}$ and $\Phi_1=\bigcup_{t=1}^{|\Phi|}(\mc{P}_\mr{out}^{(w_t',w_t)}\setminus\{S_t\})$ as multisets, we have $\partial_{s,r}\Phi=\partial_{s,r}(\Phi_2-\Phi_1)$. We see that $\Phi_2$ has no repetitions due to the definition and success of the process. Then we claim $\Phi_1$ is therefore forced to have no repetitions. Indeed, $\partial_{s,r}\Phi$ has no repetitions by $\snorm{\partial_{s,r}\Phi}_\infty\le 1$, and we showed $\Phi_2$ has no repetitions but it is within $\mc{S}_\mr{tem}$ hence $\partial_{s,r}\Phi_2$ has no repetitions as well (since the $s$-spaces defining the template have disjoint constituent $r$-spaces). Since $\partial_{s,r}\Phi=\partial_{s,r}(\Phi_2-\Phi_1)$ and $\Phi_1$ has nonnegative coefficients, we deduce that $\snorm{\partial_{s,r}\Phi_1}_\infty\le 1$. Therefore $\Phi_1,\Phi_2$ satisfy the necessary conditions.

Now it suffices to show that a successful outcome occurs with positive probability. For this we use the Lov\'asz Local Lemma (\cref{lem:lll}). For $1\le t_1,t_2\le|\Phi|$ with $t_1\neq t_2$ let $\mc{B}_{t_1,t_2}$ be the event that $\mc{P}_\mr{in}^{(w_{t_1}',w_{t_1})},\mc{P}_\mr{in}^{(w_{t_2}',w_{t_2})}$ share an $s$-space, or equivalently that $i_{t_1} = i_{t_2}$ and the randomly chosen absorbers with roots $S_{t_1}$ and $S_{t_2}$ have in-decompositions sharing a template $s$-space. Furthermore let $S_{t_j}^\ast=\mr{span}_L(\iota_{i_{t_j}}^{-1}(S_{t_j}))$ for $j\in\{1,2\}$ and $d_{t_1,t_2} = \dim_L(S_1^\ast\cap S_2^\ast)$. By the given condition (c), we have $d_{t_1,t_2}<r$.

Assuming $\mc{B}_{t_1,t_2}$ holds, we must have $i_{t_1}=i_{t_2}=i$ for some $i\in[z]$. Call the shared $s$-space in the in-decompositions $S'$, let $S=\iota_i^{-1}(S')$, and let $R\leqslant S$ be an arbitrary $r$-subspace. Suppose $u_j=\dim_L(\mr{span}_L(R)\cap S_j^\ast)$ for $j\in\{1,2\}$ and let $u_0=\dim_L\mr{span}_L(R\cup S_1^\ast\cup S_2^\ast)-\dim_L\mr{span}_L(S_1^\ast\cup S_2^\ast)$. By the first bullet point of \cref{prop:absorber-count}, for $C'=C_{\ref{prop:absorber-count}}'(u,\ell,q)$ there are at most
\[O_{\ell,q,r}(q^{u_0n})\cdot C'q^{(u-s+u_1)n}\cdot C'q^{(u-s+u_2)n}\]
choices of absorber parameters that yield such a situation: first choose $R$ by picking $R\cap\mr{span}_L(S_1^\ast\cup S_2^\ast)$ in $O_{\ell,q,r}(1)$ ways and then extending to $u_0$ further dimensions, and second choose the absorbers containing $S_{t_1},R$ and $S_{t_2},R$ respectively. But then using the second bullet of \cref{prop:absorber-count} and the given conditions on all $S\in\mr{supp}(\Phi)$, we see for $C=C_{\ref{prop:absorber-count}}(u,q,s)$ there are at least $(\tau/z)^Cq^{(u-s+r)n}$ choices of absorber for $S_{t_1}$ and $S_{t_2}$ each. Finally, we have
\begin{align*}
r-u_0&=\dim_L\mr{span}_L(R)-\dim_L\mr{span}_L(R\cup S_1^\ast\cup S_2^\ast)+\dim_L\mr{span}_L(S_1^\ast\cup S_2^\ast)\\
&=\dim_L(\mr{span}_L(R)\cap\mr{span}_L(S_1^\ast\cup S_2^\ast))\ge u_1+u_2-d_{t_1,t_2}
\end{align*}
where the first equality uses the fourth bullet of \cref{lem:template-basic} and where the inequality uses
\[\dim(V_1\cap(V_2+V_3))\ge\dim((V_1\cap V_2)+(V_1\cap V_3))=\dim(V_1\cap V_2)+\dim(V_1\cap V_3)-\dim(V_1\cap V_2\cap V_3)\]
for finite-dimensional vector spaces $V_1,V_2,V_3$ within some host vector space (note this is not an equality in general). Therefore a union bound over possible choices of $u_0,u_1,u_2$ yields
\begin{align*}
\mb{P}[\mc{B}_{t_1,t_2}]&\le O_{\ell,q,r}(1)\cdot(z/\tau)^{2C}q^{-2(u-s+r)n}\max_{r-u_0\ge u_1+u_2-d_{t_1,t_2}}q^{u_0n}q^{(2(u-s)+u_1+u_2)n}\\
&\le O_{\ell,q,r}((z/\tau)^{2C}q^{(d_{t_1,t_2}-r)n}).
\end{align*}

Now we construct a dependency graph for the defined events: connect $\mc{B}_{t_1,t_2}$ and $\mc{B}_{t_3,t_4}$ if $|\{t_1,t_2\}\cap \{t_3,t_4\}|\ge 1$. Furthermore let $x_{t_1,t_2} = q^{-(r-d_{t_1,t_2})n}(z/\tau)^{2C+1}$. Notice that for fixed $(t_1,t_2)$,
\begin{align*}
x_{t_1,t_2}\prod_{t\neq t_1}(1-x_{t_1,t})\prod_{t\neq t_2}(1-x_{t,t_2})&\ge q^{-(r-d_{t_1,t_2})n}(z/\tau)^{2C+1}\prod_{d=0}^{r-1}(1-q^{-(r-d)n}(z/\tau)^{2C+1})^{O_{q,s,\ell}(z\theta q^{(r-d)n})}\\
&\ge q^{-(r-d_{t_1,t_2})n}(z/\tau)^{2C+1}\exp(-O_{q,s,\ell}(z\theta (z/\tau)^{2C+1}))\\
&\ge\mb{P}[\mc{B}_{t_1,t_2}]
\end{align*}
as long as $c=c_{\ref{prop:final-absorber}}(q,s)$ is small enough in terms of $C$ so that $1/c\ge 2C+2$ and hence $z\theta(z/\tau)^{2C+1}\le 1$. We have used $d_{t_1,t_2}<r$ and that for any $S^\ast$ which is an $s$-dimensional space over $L$ and for any $0\le d\le r-1$,
\[\#\{t\in[|\Phi|]\colon\dim_L(S^\ast\cap\mr{span}_L(\iota_{i_t}^{-1}(S_t)))=d\}\le z\cdot O_{q,s,\ell}(\theta q^{(r-d)n})\]
due to the $(\theta,L)$-field boundedness of $\Phi$. Indeed, we argue as follows. First we reduce to the case $d=r-1$ by considering at most $q^{(r-d-1)n}$ ways to augment $S^\ast$ to a slightly larger space $T^\ast$ such that the intersection with $\mr{span}_L(\iota_{i_t}^{-1}(S_t))$ has $L$-dimension $r-1$. Then there are $O_{q,s,\ell}(1)$ ways to choose the $(r-1)$-dimensional $L$-space $Q^\ast=T^\ast\cap\mr{span}_L(\iota_{i_t}^{-1}(S_t))$. Finally, $(\theta,L)$-field boundedness shows that for each $i\in[z]$, $\Phi$ contains in its support at most $\theta q^n$ many $s$-spaces $S_t$ with $i_t=i$ so that $\mr{span}_L(\iota_i^{-1}(S_t))$ contains $Q^\ast$ (the condition \ref{prop:final-absorber}(a) shows that the bound applies).

Finally, \cref{lem:lll} demonstrates that with positive probability none of the $\mc{B}_{t_1,t_2}$ hold, and we are done by the earlier discussion.
\end{proof}

\section{Approximate covering}\label{sec:approximate}
In this section we will use a randomized process to cover almost all of the $r$-spaces in $\mr{Gr}_q(n,r)$ outside the template using $s$-spaces, leaving a small portion. To do so we will use well-studied general purpose tools regarding greedy random matching processes; we will use the work of \cite{EGJ20} for our situation.

\subsection{Regularizing \texorpdfstring{$s$}{s}-spaces with respect to the template}\label{sub:template-regularize}
The size of the leftover from the greedy random process will depend on a certain degree of irregularity that we start with, induced by the removal of the template. In order to ensure that the leftover can actually be small in an appropriate sense with respect to the template, we have to first regularize the complement. More specifically, we find a subset of $s$-spaces such that the number of $s$-spaces containing a fixed $r$-space is substantially more regular than if we used the entire set of $s$-spaces.

At a high level we need a version of \cref{lem:template-basic} (but for the complement of the template) in which the regularity of the collection of $s$-spaces not involving $r$-spaces of $G_\mr{tem}$ is decoupled from $\tau$. This can be achieved by finding an appropriate weighting of $\mr{Gr}_q(n,s)[G\setminus G_\mr{tem}]$, constructed using that the lattice $\mc{L}$ is locally decodable (see e.g.~\cref{prop:robust-local-decodability}), and then sampling.

\begin{lemma}\label{lem:template-boosted}
Given the setup of \cref{sub:setup,def:template}, we have the following as long as $q^{-c_{\ref{lem:template-boosted}}(q,s)n}\le\tau\le c_{\ref{lem:template-boosted}}(q,s)$, $d\ge d_{\ref{lem:template-boosted}}(r)$, $\ell\ge\ell_{\ref{lem:template-boosted}}(d,s)$, and $n$ is large. Recall an $s$-space is \emph{obstructed} by the template if any of its $r$-dimensional subspaces is contained in $G_\mr{tem}$. Whp over the randomness of the template, there exists a set $\mc{S}_\mr{reg}\subseteq\mr{Gr}_q(n,s)$ of unobstructed $s$-spaces such that for each $r$-space $R\notin G_\mr{tem}$ we have
\[\#\{S\in\mc{S}_\mr{reg}\colon S\geqslant R\}\ge (1-\tau^{1/4})\#\{S\in\mr{Gr}(V,s)\colon S\geqslant R\}\]
and for distinct $r$-spaces $R_1,R_2\notin G_\mr{tem}$ we have
\[\#\{S\in\mc{S}\colon S\geqslant R_1\} = (1\pm q^{-n/3})\#\{S\in\mc{S}\colon S\geqslant R_2\}.\]
\end{lemma}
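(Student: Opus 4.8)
Since the only source of irregularity is the removal of the template, I would produce a $[0,1]$-valued weighting $w$ supported on the set $\mc{U}:=\mr{Gr}_q(n,s)[G\setminus G_\mr{tem}]$ of unobstructed $s$-spaces whose boundary $\partial_{s,r}w$ is \emph{exactly} a constant $\mu^\ast\mbm{1}_{\mc{F}}$ on the set $\mc{F}:=\mr{Gr}_q(n,r)\setminus G_\mr{tem}$ of free $r$-spaces (here $\mu^\ast$ will be just slightly below $D:=\qbinom{n-r}{s-r}_q$), and then obtain $\mc{S}_\mr{reg}$ by sampling each $S\in\mc{U}$ independently with probability $w(S)$. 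The inputs from \cref{lem:template-basic} (third bullet) are that whp over the template every $R\in\mc{F}$ has $d_R:=\#\{S\in\mc{U}\colon S\geqslant R\}\ge(1-\tau^{1/2})D$, and that at least a $(1-\tau^{1/2})$-fraction of the $(r+s)$-spaces through $R$ are unobstructed; condition on such a template, all remaining randomness being in the sampling step. Exact constancy of $\partial_{s,r}w$ is what will let the sampled degrees be regular up to the much finer scale $q^{-n/3}$, while $\mu^\ast$ being close to $D$ gives the $(1-\tau^{1/4})D$ lower bound.

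\textbf{Constructing $w$ by local rebalancing.} I would start from the uniform weighting $\alpha\mbm{1}_{\mc{U}}$ with $\alpha:=1-c_0\tau^{1/2}$, whose boundary is $(\alpha d_R)_{R\in\mc{F}}$, and correct it by a signed $\tilde w$ supported on $\mc{U}$ with $\partial_{s,r}\tilde w=v$, where $v:=(\mu^\ast-\alpha d_R)_{R\in\mc{F}}$, $\mu^\ast:=\alpha\min_{R\in\mc{F}}d_R$, so that $\snorm{v}_\infty\le\alpha\tau^{1/2}D$. To build $\tilde w$ I use robust local decodability: conjugating \cref{eq:local-decodability} by a linear automorphism of $\mb{F}_q^n$ shows that for any $R\in\mc{F}$ and any $(r+s)$-space $T\geqslant R$ one can write $\Delta e_R=\sum_{S\leqslant T,\,\dim S=s}a_S^{(R,T)}\partial_{s,r}e_S$ with integer coefficients of size $O_{q,s}(1)$, where $\Delta=\Delta_{\ref{prop:robust-local-decodability}}(q,s,r)$; and if $T$ is unobstructed then every such $S$ lies in $\mc{U}$. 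Letting $\mc{T}_R$ be the set of unobstructed $(r+s)$-spaces through $R$ (so $|\mc{T}_R|=\Omega_{q,s}(q^{sn})$ by the conditioning), I set
\[\tilde w:=\sum_{R\in\mc{F}}\frac{v_R}{\Delta\,|\mc{T}_R|}\sum_{T\in\mc{T}_R}\sum_{\substack{S\leqslant T\\\dim S=s}}a_S^{(R,T)}e_S,\]
which is supported on $\mc{U}$ and satisfies $\partial_{s,r}\tilde w=\sum_{R\in\mc{F}}v_Re_R=v$. For a fixed $S\in\mc{U}$ the pairs $(R,T)$ with $R,S\leqslant T$, $\dim R=r$, $\dim T=r+s$ number at most $\qbinom{n-s}{r}_q\qbinom{r+s}{r}_q=O_{q,s}(q^{rn})$, each contributing at most $O_{q,s}(1)\cdot\snorm{v}_\infty/(\Delta\min_R|\mc{T}_R|)=O_{q,s}(\tau^{1/2}q^{-rn})$, so $\snorm{\tilde w}_\infty\le c_0\tau^{1/2}$ for a suitable $c_0=c_0(q,s)$ fixed \emph{before} choosing $\alpha$. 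Then $w:=\alpha\mbm{1}_{\mc{U}}+\tilde w$ has $\partial_{s,r}w=\mu^\ast\mbm{1}_{\mc{F}}$ and $w(S)\in[1-2c_0\tau^{1/2},\,1]\subseteq[0,1]$ once $\tau\le c_{\ref{lem:template-boosted}}(q,s)$ is small.

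\textbf{Sampling and verification.} Let $\mc{S}_\mr{reg}\subseteq\mc{U}$ include each $S\in\mc{U}$ independently with probability $w(S)$. For $R\in\mc{F}$, $\#\{S\in\mc{S}_\mr{reg}\colon S\geqslant R\}$ is a sum of independent indicators with mean $(\partial_{s,r}w)_R=\mu^\ast=\Theta_{q,s}(q^{(s-r)n})$, so by \cref{lem:chernoff} and a union bound over $|\mc{F}|\le q^{rn}$ (using $s-r\ge1$) whp every such count is $(1\pm q^{-2n/5})\mu^\ast$; this gives pairwise ratio $(1+q^{-2n/5})/(1-q^{-2n/5})\le1+q^{-n/3}$ for $n$ large (the second displayed inequality), and also $\#\{S\in\mc{S}_\mr{reg}\colon S\geqslant R\}\ge(1-q^{-2n/5})(1-c_0\tau^{1/2})(1-\tau^{1/2})D\ge(1-\tau^{1/4})D$ provided $\tau\le c_{\ref{lem:template-boosted}}(q,s)$ is small and $n$ is large (using $q^{-2n/5}\le\tau^{1/2}$, valid as $\tau\ge q^{-c_{\ref{lem:template-boosted}}n}$), which is the first displayed inequality.

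\textbf{Main obstacle.} The delicate point is reconciling two competing demands on $w$: it must stay pointwise close to $\mbm{1}_{\mc{U}}$ so that the regularized degree remains at least $(1-\tau^{1/4})D$, yet the unavoidable local-decodability correction $\tilde w$ has uncontrolled sign and magnitude $\Theta_{q,s}(\tau^{1/2})$, which naively would push $w$ above $1$. This is resolved by committing to the (template-independent) bound $\snorm{\tilde w}_\infty\le c_0\tau^{1/2}$ first and only then setting $\alpha=1-c_0\tau^{1/2}$, so that $w$ lands exactly in $[0,1]$ while $\mu^\ast=\alpha\min_R d_R$ stays within $O_{q,s}(\tau^{1/2})D$ of $D$. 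The other subtlety — making $\partial_{s,r}\tilde w$ land on $v$ \emph{exactly} rather than approximately, which is what permits the final $q^{-n/3}$-scale regularity — is handled by spreading each unit of correction $v_Re_R$ uniformly over all $\Omega_{q,s}(q^{sn})$ unobstructed $(r+s)$-spaces through $R$ via the exact identity \cref{eq:local-decodability}, keeping every $s$-space involved unobstructed. Everything else (the Chernoff/union-bound step and the Gaussian-binomial counts) is routine.
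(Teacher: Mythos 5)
Your proposal is correct and follows essentially the same route as the paper: both use the local decodability identity on $(r+s)$-spaces, spread the correction for each $r$-space uniformly over its unobstructed $(r+s)$-space extensions to get a weighting on unobstructed $s$-spaces whose boundary is exactly constant, and then sample independently and apply Chernoff. The only cosmetic difference is that the paper pushes all degrees up to $\qbinom{n-r}{s-r}_q$ and normalizes the weights by $1+\tau^{3/8}$ at sampling time, whereas you pre-shrink the base weighting by $\alpha=1-c_0\tau^{1/2}$ and push degrees down to $\alpha\min_R d_R$ so that the weights land in $[0,1]$ directly.
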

\begin{proof}
From the first part of the proof of \cref{prop:robust-local-decodability}, given $r$-space $R$ and $(r+s)$-space $T$ such that $R\leqslant T$, there exists $f\colon\{0,1,\ldots,r\}\to\mb{Q}$ depending only on $q,s,r$ such that 
\[e_R = \sum_{S\in\mr{Gr}(T,s)}f(\dim(S\cap R))\partial_{r,s}e_S.\]
(We derive this by starting with \cref{eq:local-decodability}, dividing by $\Delta$, and then averaging the resulting equation over all automorphisms of $T$ which permute $R$ among itself; one can check that the resulting equation has rational coefficients for each $\partial_{r,s}e_S$ that depend only on $\dim(S\cap R)$.)

Thus given $R\leqslant T$ (of dimensions $r,r+s$ respectively) and $R'\in\mr{Gr}_q(n,r)$ we have 
\[\mbm{1}_{R=R'} = \sum_{S\in\mr{Gr}(T,s)}f(\dim(S\cap R))\mbm{1}_{R'\leqslant S}.\]

Let $\mc{T}_r$ denote the set of $r$-spaces in $\mr{Gr}(V,r)\setminus G_\mr{tem}$ and let $\mc{T}_s = \{S\in\mr{Gr}(V,s)\colon\mr{Gr}(S,r)\subseteq\mc{T}_r\}$ (the unobstructed $s$-spaces). For each $r$-space $R\in\mc{T}_r$, let $\mc{T}_s(R) = \{S\in\mc{T}_s\colon S\geqslant R\}$ and let $\mc{T}_{r+s}(R) = \{T\in\mr{Gr}(V,r+s)\colon T\geqslant R,~\mr{Gr}(T,r)\subseteq\mc{T}_r\}$.

For each $R\in\mc{T}_r$, define 
\[c_R = \frac{\qbinom{n-r}{s-r}_q-|\mc{T}_s(R)|}{|\mc{T}_{r+s}(R)|}.\]
Note that by definition, $|\mc{T}_s(R)|\le\qbinom{n-r}{s-r}_q$ and $|\mc{T}_{r+s}(R)|\le\qbinom{n-r}{s}_q$ (these being the number of total $t$-spaces within $V$ extending $R$ for $t\in\{s,r+s\}$ respectively). Additionally, by the third bullet of \cref{lem:template-basic} we have that $|\mc{T}_s(R)|\ge (1-\tau^{1/2})\qbinom{n-r}{s-r}_q$ and also $|\mc{T}_{r+s}(R)|\ge (1-\tau^{1/2})\qbinom{n-r}{s}_q$. Therefore it follows that for each $r$-space $R\notin G_\mr{tem}$,
\[|c_R|\le\frac{2\tau^{1/2}}{\qbinom{n-r}{s}_q/\qbinom{n-r}{s-r}_q}.\]

For $R$ and $T\in\mc{T}_{r+s}(R)$, define $\psi_{R,T}\colon\mc{T}_s\to\mb{R}$ by
\[\psi_{R,T}(S) = \begin{cases}f(\dim(S\cap R))&\text{if }S\leqslant T,\\0&\text{if }S\not\leqslant T.\end{cases}\]
By definition for any $r$-spaces $R, R'\in\mc{T}_r$ and any $T\in\mc{T}_{r+s}(R)$ we have
\[\sum_{R'\leqslant S\leqslant T}\psi_{R,J}(S) = \sum_{S\in\mr{Gr}(T,s)}f(\dim(S\cap R))\mbm{1}_{R'\leqslant S} = \mbm{1}_{R = R'}.\]
Now, define $\psi\colon\mc{T}_s\to\mb{R}$ by
\[\psi(S) = 1 + \sum_{R\in \mc{T}_r}c_R\sum_{T\in\mc{T}_{r+s}(R)}\psi_{R,T}(S).\]
Notice that 
\begin{align*}
|\psi(S) - 1|&\le\sum_{R\in\mc{T}_r}c_R\sum_{T\in\mc{T}_{r+s}(R)}|\psi_{R,T}(S)|\le\sum_{R\in\mc{T}_r}\frac{2\tau^{1/2}}{\qbinom{n-r}{s}_q/\qbinom{n-r}{s-r}_q}\sum_{\substack{T\in\mc{T}_{r+s}(R)\\T\geqslant S}}|\psi_{R,T}(S)|\\
&\le\frac{2\tau^{1/2}\snorm{f}_\infty}{\qbinom{n-r}{s}_q/\qbinom{n-r}{s-r}_q}\sum_{R\in\mc{T}_r}\sum_{\substack{T\in\mc{T}_{r+s}(R)\\T\geqslant S}}1\le\frac{2\tau^{1/2}\snorm{f}_\infty}{\qbinom{n-r}{s}_q/\qbinom{n-r}{s-r}_q}\qbinom{n-s}{r}_q\qbinom{r+s}{r}_q\le\tau^{3/8},
\end{align*}
where the second-to-last inequality is proven by counting choices of $T$ containing $S$ and then $R$ within $T$ and the last inequality is valid as long as $\tau$ is small with respect to $q,s$.

Furthermore for each $R\in\mc{T}_r$ we have
\begin{align*}
\sum_{S\in\mc{T}_s(R)}\psi(S) &= |\mc{T}_{s}(R)| + \sum_{R'\in \mc{T}_r}c_{R'}\sum_{T\in\mc{T}_{s+r}(R')}\sum_{S\in\mc{T}_s(R)}\psi_{R',T}(S)\\
&= |\mc{T}_{s}(R)| + \sum_{R'\in \mc{T}_r}c_{R'}\sum_{T\in\mc{T}_{s+r}(R')}\sum_{\substack{R\leqslant S\leqslant T\\\dim S = s}}f(\dim(S\cap R'))\\
&=  |\mc{T}_s(R)| + \sum_{R'\in \mc{T}_{r}}c_{R'}\sum_{T\in\mc{T}_{s+r}(R')}\mbm{1}_{R=R'} \;=\; |\mc{T}_{s}(R)| + c_{R}|\mc{T}_{s+r}(R)|\\
&=\qbinom{n-r}{s-r}_q.
\end{align*}

Finally, we define a random $\mc{S}_\mr{reg}\subseteq\mc{T}_s$ by independently including each $S\in\mc{T}_s$ with probability $\psi(S)/(1+\tau^{3/8})$ (this is well defined as $|\psi(T)-1|\le\tau^{3/8}$). For every $R\in\mc{T}_r$, the expected number of $T\in\mc{S}_\mr{reg}$ containing $R$ is exactly $(1+\tau^{3/8})^{-1}\qbinom{n-r}{s-r}_q$ by above. We immediately see $\mc{S}_\mr{reg}$ satisfies the desired conditions with positive probability by the Chernoff bound, and we are done.
\end{proof}

\subsection{Approximate covering given regularization}\label{sub:approximate-cover}
We now cover the majority of subspaces outside the template using results from hypergraph matching. For a hypergraph $\mc{H}$, define
\[\Delta(\mc{H}):=\max_{v\in V(\mc{H})}\deg_\mc{H}(v),\quad\Delta^\mr{co}(\mc{H}):=\max_{v_1,v_2\in V(\mc{H})}\mr{codeg}_{\mc{H}}(v_1,v_2).\]

Call a function $\omega\colon E(\mc{H})\to\mb{R}_{\ge 0}$ a \textit{weight function}, and for $X\subseteq E(\mc{H})$ let $\omega(X)=\sum_{x\in X}\omega(x)$. We will use the following result of Ehard, Glock, and Joos \cite{EGJ20} which guarantees the existence of hypergraph matchings which are pseudorandom with respect to a collection of weight functions.

\begin{theorem}[{\cite[Theorem~1.2]{EGJ20}}]\label{thm:matching}
Suppose $\beta\in(0,1)$ and $r\in\mb{N}$ with $r\ge 2$, and let $\eps:=\beta/(50r^2)$. Then there exists $\Delta_0$ such that for all $\Delta\ge\Delta_0$ the following holds:
Let $\mc{H}$ be an $r$-uniform hypergraph with $\Delta(\mc{H})\le\Delta$ and $\Delta^\mr{co}(\mc{H})\le\Delta^{1-\beta}$ as well as $e(\mc{H})\le\exp(\Delta^{\eps^2})$. 
Suppose that $\mc{W}$ is a set of at most $\exp(\Delta^{\eps^2})$ weight functions on~$E(\mc{H})$.
Then, there exists a matching $\mc{M}$ in~$\mc{H}$ such that $\omega(\mc{M})=(1\pm\Delta^{-\eps})\omega(E(\mc{H}))/\Delta$ for all $\omega\in\mc{W}$ with $\omega(E(\mc{H}))\ge\max_{e\in E(\mc{H})}\omega(e)\Delta^{1+\beta}$.
\end{theorem}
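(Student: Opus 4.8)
The final statement is the Ehard--Glock--Joos theorem on pseudorandom hypergraph matchings, which the present paper uses as a black box (see \cite{EGJ20}); to sketch its proof I would reconstruct the argument via a refined R\"odl nibble (the ``semi-random method'') that runs a random greedy partial-matching process while simultaneously tracking every weight function in $\mc{W}$. The plan is: (i) reduce to the near-regular case; (ii) iterate a ``nibble'' step $O_\eps(1)$ times, each time sprinkling a sparse random set of edges and keeping a sub-matching of it; (iii) show by martingale concentration that all relevant quantities follow their heuristic trajectories, with failure probability small enough to union bound over $\mc{W}$; (iv) clean up the tiny leftover greedily.

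\textbf{Reduction and the nibble.} First I would pass to an auxiliary hypergraph in which every vertex has degree $(1\pm\Delta^{-\Omega(\eps)})\Delta$, by padding with dummy vertices/edges so that degrees become uniform; each $\omega\in\mc{W}$ extends by zero. Then I run $T=T(\eps)$ rounds. In round $i$, with current hypergraph $\mc{H}_i$ of (near-uniform) degree $\Delta_i$, independently activate each remaining edge with probability $p_i=\gamma/\Delta_i$ for a small absolute constant $\gamma$; let $\mc{M}_i$ be the activated edges meeting no other activated edge, add $\mc{M}_i$ to the matching, and delete all covered vertices together with all edges touching them. I would track: (a) $\Delta_i$, which heuristically satisfies $\Delta_{i+1}\approx(1-\gamma)\Delta_i$; (b) for each $\omega\in\mc{W}$, the quantities $\omega(E(\mc{H}_i))$, which should shrink by the same factor $\approx(1-\gamma)$; and (c) the running totals $\omega(\mc{M})$, which grow by $\approx p_i\,\omega(E(\mc{H}_i))\approx\gamma\,\omega(E(\mc{H}_i))/\Delta_i$ per round. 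The codegree hypothesis $\Delta^\mr{co}(\mc{H})\le\Delta^{1-\beta}$ is what makes these heuristics correct in expectation: it bounds the chance that an edge survives activation but one of its vertices is covered by a \emph{conflicting} activated edge, i.e.\ it controls the ``second order'' corrections to the expected shrinkage.

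\textbf{Concentration and finishing.} Within a round the only randomness is the independent edge-activations; changing one activation alters $\omega(E(\mc{H}_{i+1}))$ by at most roughly $\Delta_i$ times the typical edge weight, while almost all activations have a far smaller effect, so I would apply a Freedman-type martingale inequality (or Azuma with a variance proxy computed using the codegree bound) to get that each tracked quantity lies within relative error $\Delta^{-\Omega(\eps)}$ of its one-step conditional expectation except with probability $\exp(-\Delta^{\Omega(1)})$. Since $|\mc{W}|\le\exp(\Delta^{\eps^2})$, $e(\mc{H})\le\exp(\Delta^{\eps^2})$ and $T=O_\eps(1)$, a union bound over all weight functions, vertices, and rounds is affordable; composing the per-round estimates yields $\omega(\mc{M})=(1\pm\Delta^{-\eps})\,\omega(E(\mc{H}))/\Delta$ for every $\omega$ with $\omega(E(\mc{H}))\ge\max_{e}\omega(e)\,\Delta^{1+\beta}$ --- this lower bound being exactly what guarantees the accumulated multiplicative errors and the ``dummy-edge'' corrections do not swamp the main term. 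After round $T$ the residual hypergraph has maximum degree $\Delta^{1-\Omega(1)}$, so extending $\mc{M}$ to a maximal matching greedily changes each such $\omega(\mc{M})$ by a $\Delta^{-\Omega(1)}$ relative amount, which is absorbed into the error.

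\textbf{Main obstacle.} The delicate point is the concentration analysis: one must establish bounded-difference (or bounded-variance) control for the one-step changes that is \emph{uniform} over all of $\mc{W}$ and strong enough that the per-round exceptional probability beats $\exp(-\Delta^{\eps^2})/(|\mc{W}|\,T)$, and one must choose $\gamma$, the nibble length $T$, and the relative-error budget per round so that their composition over $T$ rounds stays below $\Delta^{-\eps}$. This is precisely where the codegree bound $\Delta^\mr{co}(\mc{H})\le\Delta^{1-\beta}$ enters quantitatively, since it caps how far a single activation can cascade through shared vertices; getting these exponents to line up (and handling the fact that one is tracking a whole family of weight functions rather than just a large matching) is the heart of the argument and the reason the statement is strictly stronger than the classical Pippenger--Spencer / R\"odl nibble theorem.
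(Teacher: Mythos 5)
This statement is not proved in the paper at all: it is imported verbatim as a black box from Ehard--Glock--Joos \cite{EGJ20}, so there is no internal proof to compare your sketch against. Judged on its own terms, your semi-random/nibble outline is the right genre of argument for a pseudorandom-matching theorem of this shape, and you correctly identify the two load-bearing hypotheses: the codegree bound $\Delta^\mr{co}(\mc{H})\le\Delta^{1-\beta}$ controls the second-order corrections and the Lipschitz/variance behaviour in each bite, and the spread condition $\omega(E(\mc{H}))\ge\max_e\omega(e)\Delta^{1+\beta}$ is what makes the concentration and the union bound over $|\mc{W}|\le\exp(\Delta^{\eps^2})$ weight functions affordable.

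That said, there are two concrete gaps. First, your round accounting does not close. After $T$ bites with activation probability $\gamma/\Delta_i$, the captured weight is roughly $(1-(1-\gamma)^T)\,\omega(E(\mc{H}))/\Delta$, so $T=O_\eps(1)$ rounds leave a \emph{constant} fraction of the target weight uncollected; likewise the residual degree is only $(1-\gamma)^{\Theta(T)}\Delta$, not $\Delta^{1-\Omega(1)}$. You must run $T=\Theta(\eps\log\Delta/\gamma)$ rounds (and then compound $T$ multiplicative error factors, each of size $o(1/\log\Delta)$), and crucially the final greedy completion cannot be used to ``absorb'' the shortfall: a greedy maximal extension gives no per-$\omega$ control whatsoever, and the leftover edges could carry weight up to $(\#\text{added edges})\cdot\max_e\omega(e)$, which is not bounded by $\Delta^{-\eps}\omega(E(\mc{H}))/\Delta$ in general. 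The correct move is simply to stop once the residual weight of every relevant $\omega$ is below $\Delta^{-\eps}\omega(E(\mc{H}))$. Second, the concentration step --- which you rightly flag as the heart of the proof --- is asserted rather than established: a naive Azuma bound over edge activations fails because a single activation can affect all $\Theta(r\Delta_i)$ edges meeting it, so one needs a Freedman/Talagrand-type inequality with a genuine variance computation in which the codegree bound and the activation probability $\gamma/\Delta_i$ both enter; without carrying this out the exponents in $\eps=\beta/(50r^2)$ cannot be verified. These are fixable within your framework, but as written the sketch would not compile into a proof of the quantitative statement.
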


Given this, we record the following consequence in our setting.
\begin{proposition}\label{prop:approximate-covering}
Given the setup of \cref{sub:setup,def:template}, we have the following as long as $q^{-c_{\ref{prop:approximate-covering}}(q,s)n}\le\tau\le c_{\ref{prop:approximate-covering}}(q,s)$, $d\ge d_{\ref{prop:approximate-covering}}(r)$, $\ell\ge\ell_{\ref{prop:approximate-covering}}(d,s)$, and $n$ is large. There is $\eta = \eta_{\ref{prop:approximate-covering}}(q,s)>0$ so that whp over the template, there exists a set $\mc{S}_\mr{approx}\subseteq\mr{Gr}(V,s)$ such that:
\begin{itemize}
    \item Every $r$-space $R\leqslant V$ appears at most once in an $s$-space of $\mc{S}_\mr{tem}\cup\mc{S}_\mr{approx}$;
    \item For $G_\mr{approx}=\partial_{s,r}\mc{S}_\mr{approx}$ we have that $G\setminus(G_\mr{tem}\cup G_\mr{approx})$ is $q^{-\eta n}$-bounded (\cref{def:bounded}).
\end{itemize}
\end{proposition}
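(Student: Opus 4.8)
The plan is to run the pseudorandom hypergraph matching theorem of Ehard--Glock--Joos (\cref{thm:matching}) on the link hypergraph of the \emph{unobstructed} $s$-spaces, but only after regularizing via \cref{lem:template-boosted}, and then to read off $q^{-\eta n}$-boundedness of the leave from a family of weight functions indexed by $(r-1)$-spaces.

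First I would invoke \cref{lem:template-boosted} to obtain (whp over the template) the set $\mc{S}_\mr{reg}$ of unobstructed $s$-spaces in which every $r$-space $R\notin G_\mr{tem}$ has $\deg(R):=\#\{S\in\mc{S}_\mr{reg}:S\geqslant R\}$ lying between $(1-\tau^{1/4})\qbinom{n-r}{s-r}_q$ and $\qbinom{n-r}{s-r}_q$, with all these degrees equal up to a factor $1\pm q^{-n/3}$, and condition on this outcome together with the conclusions of \cref{lem:template-basic}. Define the $\qbinom{s}{r}_q$-uniform hypergraph $\mc{H}$ with vertex set $\mr{Gr}_q(n,r)\setminus G_\mr{tem}$ and edge set $\{\mr{Gr}(S,r):S\in\mc{S}_\mr{reg}\}$ (each edge is a set of vertices since the $S\in\mc{S}_\mr{reg}$ are unobstructed, and distinct $s$-spaces give distinct edges since an $s$-space is spanned by its $r$-subspaces). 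Put $\Delta:=\Delta(\mc{H})=\max_R\deg(R)$, so $\Delta=\Theta_{q,s}(q^{(s-r)n})$ and $\deg(R)/\Delta=1\pm q^{-n/3}$ for all $R$; since distinct $r$-spaces span $\geq r+1$ dimensions, $\Delta^\mr{co}(\mc{H})\le\qbinom{n-r-1}{s-r-1}_q=O_{q,s}(q^{(s-r-1)n})$. Now fix a sufficiently small positive constant $\beta=\beta_{\ref{prop:approximate-covering}}(q,s)<1/(s-r)$ and set $\eps:=\beta/(50\qbinom{s}{r}_q^2)$; for $n$ large this yields $\Delta^\mr{co}(\mc{H})\le\Delta^{1-\beta}$, $e(\mc{H})\le q^{sn}\le\exp(\Delta^{\eps^2})$, and $\Delta\ge\Delta_0(\beta,\qbinom{s}{r}_q)$, as required by \cref{thm:matching}.

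Next I would introduce, for each $Q\in\mr{Gr}_q(n,r-1)$, the weight function $\omega_Q$ on $E(\mc{H})$ with $\omega_Q(e)=\#\{R\in e:R\geqslant Q\}$; there are at most $q^{(r-1)n}\le\exp(\Delta^{\eps^2})$ of these, $\max_e\omega_Q(e)\le\qbinom{s-r+1}{1}_q=:M_0=O_{q,s}(1)$, and $\omega_Q(E(\mc{H}))=\sum_{R\in V_Q}\deg(R)=(1\pm q^{-n/3})|V_Q|\Delta$, where $V_Q$ denotes the vertices of $\mc{H}$ containing $Q$. Apply \cref{thm:matching} to get a matching $\mc{M}$ with $\omega_Q(\mc{M})=(1\pm\Delta^{-\eps})\omega_Q(E(\mc{H}))/\Delta$ for every $Q$ with $\omega_Q(E(\mc{H}))\ge M_0\Delta^{1+\beta}$, and let $\mc{S}_\mr{approx}$ be the $s$-spaces realizing the edges of $\mc{M}$. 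The first bullet is then immediate: $\mc{S}_\mr{approx}\subseteq\mc{S}_\mr{reg}$ is unobstructed so shares no $r$-space with $\mc{S}_\mr{tem}$, $\mc{S}_\mr{tem}$ has no repeated $r$-space by \cref{lem:template-basic}, and $\mc{S}_\mr{approx}$ has none since $\mc{M}$ is a matching. For the second bullet, the leave $\mc{R}:=G\setminus(G_\mr{tem}\cup G_\mr{approx})$ has $\{0,1\}$ coordinates by the above, so I only need $\snorm{\partial_{r,r-1}\mc{R}}_\infty\le q^{-\eta n}q^n$, i.e.\ that each $Q$ lies in at most $q^{(1-\eta)n}$ uncovered $r$-spaces; these are exactly the vertices of $V_Q$ missed by $\mc{M}$. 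If $|V_Q|$ is below the threshold $M_0\Delta^{1+\beta}/((1-q^{-n/3})\Delta)\le q^{(1-\eta)n}$ (using $(s-r)\beta<1-\eta$) there is nothing to prove; otherwise $\omega_Q(E(\mc{H}))\ge M_0\Delta^{1+\beta}$, so \cref{thm:matching} applies and, since $\omega_Q(\mc{M})$ counts exactly the covered vertices of $V_Q$ (the edges of $\mc{M}$ being disjoint) while $\omega_Q(E(\mc{H}))/\Delta=(1\pm q^{-n/3})|V_Q|$, the number of missed vertices is at most $(\Delta^{-\eps}+O(q^{-n/3}))|V_Q|\le q^{-\eta n}|V_Q|\le q^{(1-\eta)n}$, once $\eta=\eta_{\ref{prop:approximate-covering}}(q,s)>0$ is chosen small enough, e.g.\ $\eta\le(s-r)\eps/2$. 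In either case $\mc{R}$ is $q^{-\eta n}$-bounded in the sense of \cref{def:bounded}.

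The only real content beyond bookkeeping is the role of \cref{lem:template-boosted}: the matching covers a $(1-\Delta^{-\eps})$-fraction of each weight, and this is only useful because after regularization $\omega_Q(E(\mc{H}))/\Delta$ is within $1\pm q^{-n/3}$ of the true count $|V_Q|$ — without regularization the $\tau^{1/4}$-level irregularity would leak into the leave and prevent $q^{-\eta n}$-boundedness. The remaining delicate point is the joint choice of $\beta,\eps,\eta$: one needs $\beta$ small enough that simultaneously $\Delta^\mr{co}(\mc{H})\le\Delta^{1-\beta}$, the threshold $M_0\Delta^{1+\beta}$ stays below $q^{(1-\eta)n}$, and the matching error $\Delta^{-\eps}$ beats $q^{-\eta n}$. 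All of these reduce to inequalities of the form ``$\beta$ (resp.\ $\eta$) is a sufficiently small constant depending on $q,s$'', which are consistent because $s-r\ge1$ forces $s-r+1>(s-r)(1+\beta)$ for small $\beta$.
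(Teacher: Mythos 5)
Your proof is correct and follows essentially the same route as the paper's: regularize via \cref{lem:template-boosted}, apply the Ehard--Glock--Joos matching theorem to the hypergraph of unobstructed $s$-spaces with weight functions indexed by $(r-1)$-spaces, and read off boundedness of the leave (the paper uses the indicator weight $\omega_Q(S)=\mbm{1}_{S\geqslant Q}$ rather than your proportional count of $r$-subspaces, which is an immaterial difference). The only cosmetic deviation is your case split on whether $\omega_Q$ meets the $\Delta^{1+\beta}$ threshold, which the paper instead handles by noting every $\omega_Q(E(\mc{H}))$ is automatically large.
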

\begin{remark}
Note here that $\eta$ is independent of $\tau$, and recall $G=\mr{Gr}_q(n,r)$.
\end{remark}
\begin{proof}
This is essentially an immediate application of \cref{lem:template-boosted} and \cref{thm:matching}.

Consider the collection $\mc{S}_\mr{reg}$ of $s$-spaces guaranteed by \cref{lem:template-boosted} (existing whp). Let the vertices of hypergraph $\mc{H}$ be the collection of $r$-spaces not in $G_\mr{tem}$ and the edges of $\mc{H}$ be the collections of $r$-spaces contained in an $s$-space of $\mc{S}_\mr{reg}$. In particular, $\mc{H}$ is a $\qbinom{s}{r}_q$-uniform hypergraph. Notice that (recall $G=\mr{Gr}_q(n,r)$)
\[\Delta(\mc{H}) = \max_{R\in G\setminus G_\mr{tem}}\#\{S\in\mc{S}_\mr{reg}\colon S\geqslant R\},\quad\delta(\mc{H}) = \min_{R\in G\setminus G_\mr{tem}}\#\{S\in\mc{S}_\mr{reg}\colon S\geqslant R\}\]
hence $\Delta(\mc{H}) = (1\pm q^{-n/3})\delta(\mc{H})\ge \qbinom{n-r}{s-r}_q/2$ and $\Delta(\mc{H})\le\qbinom{n-r}{s-r}_q$. Furthermore note that 
$\Delta^\mr{co}(\mc{H})\le q^{(s-(r+1))n}\le q^{-n/2}\Delta(\mc{H})$ since distinct $r$-spaces $R,R'$ satisfy $\dim(\mr{span}_{\mb{F}_q}(R\cup R'))\ge r+1$.

We now specify the weight functions. For each $(r-1)$-space $Q$ define $\omega_Q\colon\mc{S}_\mr{reg}\to\mb{R}_{\ge 0}$ via $\omega_Q(S) = \mbm{1}_{S\geqslant Q}$. We have that $\omega_Q(E(\mc{H}))\ge 2^{-1}\qbinom{n-r+1}{s-r+1}_q$ given $\tau$ is sufficiently small, and there are at most $q^{(r-1)n}$ weight functions, and therefore (say) $\beta = 1/(4(r+s+1))$ and $\Delta=\Delta(\mc{H})$ are admissible for $n$ sufficiently large in \cref{thm:matching}. Thus there exists a matching such that $\mc{M}$ in $\mc{H}$ such that $\omega_Q(\mc{M})=(1\pm \Delta^{-\eps})\omega_Q(\mc{S}_\mr{reg})/\Delta$ for all $Q\in\mr{Gr}_q(n,r-1)$. This is equivalent to 
\[\Delta\sum_{S\in\mc{M}}\mbm{1}_{Q\leqslant S} = (1\pm\Delta^{-\eps})\sum_{S\in\mc{S}_\mr{reg}}\mbm{1}_{Q\leqslant S}.\] 
This implies
\begin{equation}\label{eq:regular-remainder}
\Delta\sum_{\substack{\dim R=r\\S\in\mc{M}}}\mbm{1}_{Q\leqslant R\leqslant S} = (1\pm\Delta^{-\eps})\sum_{\substack{\dim R=r\\S\in\mc{S}_\mr{reg}}}\mbm{1}_{Q\leqslant R\leqslant S}.
\end{equation}
We write $R\in V(\mc{M})$ to mean $R$ is covered by one of the hyperedges (corresponding to $s$-spaces) in $\mc{M}$. Therefore it follows that for fixed $Q\in\mr{Gr}_q(n,r-1)$,
\begin{align}
\sum_{\substack{R\notin G_\mr{tem}\\R\notin V(\mc{M})}}\mbm{1}_{Q\leqslant R}&=\sum_{R\notin G_\mr{tem}}\mbm{1}_{Q\leqslant R}-\sum_{R\in V(\mc{M})}\mbm{1}_{Q\leqslant R}=\frac{1\pm 2q^{-n/3}}{\Delta}\sum_{\substack{R\notin G_\mr{tem}\\S\in \mc{S}_\mr{reg}}}\mbm{1}_{Q\leqslant R\leqslant S}-\sum_{\substack{\dim R=r\\S\in\mc{M}}}\mbm{1}_{Q\leqslant R\leqslant S}\notag\\
&=\frac{1\pm 2q^{-n/3}}{\Delta}\sum_{\substack{\dim R=r\\S\in \mc{S}_\mr{reg}}}\mbm{1}_{Q\leqslant R\leqslant S}-\sum_{\substack{\dim R=r\\S\in\mc{M}}}\mbm{1}_{Q\leqslant R\leqslant S}\notag\\
&=(1\pm 4(q^{-n/3}+\Delta^{-\eps}))\sum_{\substack{\dim R=r\\S\in\mc{M}}}\mbm{1}_{Q\leqslant R\leqslant S}-\sum_{\substack{\dim R=r\\S\in\mc{M}}}\mbm{1}_{Q\leqslant R\leqslant S}\label{eq:regularity-boost}
\end{align}
by definition, using the relationship between $\Delta(\mc{H}),\delta(\mc{H})$, simplifying, and finally using \cref{eq:regular-remainder}. Now \cref{eq:regularity-boost} is bounded by $q^{-cn}\cdot q^n$ for some appropriate $c$ depending only on $q,s$ since there are at most $q^n$ many $R$ containing $Q$ and each is in exactly one or zero $S\in\mc{M}$. We used that $\eps$ is purely a function of $q$, $s$, and $r$. The desired result follows, noting that $\mc{S}_\mr{tem}$ covers the $r$-spaces in $G_\mr{tem}$ precisely and our matching constructs $\mc{S}_\mr{approx}=\mc{M}$.
\end{proof}

\section{Extension Counts in Template}\label{sec:extension-template}
We will next require that the template has in some sense, up to some constants and factors of $z=n^2$, ``the correct number'' of $q$-extensions of bounded complexity, including rainbow and colored combinations. However, for technical reasons the color classes will be required to not be ``too large''. Furthermore, we will need to be able to maintain some control over the $\Pi_R$ and $b_R$ for $r$-spaces $R$ that end up embedded, so that we can guarantee configuration compatibility as needed in \cref{prop:final-absorber}. Additionally, we will need to know that every ``new vector'' that is embedded can be put into various prescribed spaces $K_i$ as a necessary precondition to turning our $s$-spaces monochromatic in \cref{prop:final-disjoint-monochromatic}.

We first detail the setup for the precise statement, as it is somewhat involved.
\begin{definition}\label{def:extension-setup}
Given \cref{sub:setup,def:template}, consider the following data.
\begin{itemize}
    \item A $q$-extension $E=(\phi,F,H)$ in $G=\mr{Gr}_q(n,r)$ with $v_E > 0$;
    \item A \emph{$r$-space coloring function} $\psi\colon H\setminus H[F]\to[z]$ such that $|\psi^{-1}(i)|\le\qbinom{s}{r}_q$ for all $i\in[z]$;
    \item Injective \emph{$r$-space configuration functions} $\pi_i\colon\psi^{-1}(i)\to\mr{Red}_q^{r\times s}$ for all $i\in[z]$;
    \item Bases $x_R\in R^r$ for $R\in H\setminus H[F]$;
    \item A basis $(v_1^\ast,\ldots,v_{\dim\mr{Vec}(H)}^\ast)$ of $\mr{Vec}(H)$ so that the last $\dim F$ vectors span $F$;
    \item \emph{Extension basis coloring sets} $\mc{C}_1,\ldots,\mc{C}_{v_E}\subseteq[z]$.
\end{itemize}
We define $\mc{X}(E,\psi,(\pi_i)_{i\in[z]},(x_R)_{R\in H\setminus H[F]},(v_t^\ast)_{t\in[\dim\mr{Vec}(H)]},(\mc{C}_t)_{t\in[v_E]})$ to be the set
\[\bigg\{\phi^\ast\in\mc{X}_E(G_\mr{tem})\colon\genfrac{}{}{0pt}{}{\phi^\ast(R)\in G_{\mr{tem},\psi(R)},~\Pi_{\phi^\ast(R)}=\pi_{\psi(R)}(R),~b_{\phi^\ast(R)}=\phi^\ast(x_R)\text{ for all }R\in H\setminus H[F];}{\phi^\ast(H)\subseteq\bigcap_{i\in\psi(H\setminus H[F])}K_i\text{ and }\phi^\ast(v_t^\ast)\in\bigcap_{i\in\mc{C}_t}K_i\text{ for all }t\in[v_E]}\bigg\}.\]
\end{definition}
\begin{proposition}\label{prop:template-extendable}
Given \cref{sub:setup,def:template}, we have the following as long as $h\ge 1$, $C=C_{\ref{prop:template-extendable}}(h,q,s) > 0$, $c=c_{\ref{prop:template-extendable}}(h,q,s) > 0$, $q^{-cn}\le\tau\le c$, $d\ge d_{\ref{prop:template-extendable}}(r)$, $\ell\ge\ell_{\ref{prop:template-extendable}}(d,s)$, and $n$ is large. Whp over the randomness of the template, for any choice of data as in \cref{def:extension-setup} with $\dim\mr{Vec}(H)\le h$ and $|\mc{C}_t|\le h$ for $t\in[v_E]$ such that (a) $\phi(F)\leqslant\bigcap_{i\in\psi(H\setminus H[F])}K_i$ and (b) $\dim_L\mr{span}_L(\iota_{\psi(R)}^{-1}(\phi(R\cap F)))=\dim_{\mb{F}_q}(R\cap F)$ for all $R\in H\setminus H[F]$, we have
\[\#\mc{X}(E,\psi,(\pi_i)_{i\in[z]},(x_R)_{R\in H\setminus H[F]},(v_t^\ast)_{t\in[\dim\mr{Vec}(H)]},(\mc{C}_t)_{t\in[v_E]})\ge(\tau/z)^Cq^{v_En}.\]
\end{proposition}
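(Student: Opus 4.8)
The plan is to establish \cref{prop:template-extendable} by a moment computation over the randomness of the template, treating the embedding count $X := \#\mc{X}(E,\psi,(\pi_i),(x_R),(v_t^\ast),(\mc{C}_t))$ as a sum of indicator random variables. Specifically, first enumerate the candidate embeddings: for each tuple of images of the ``new'' basis vectors $\phi^\ast(v_1^\ast),\ldots,\phi^\ast(v_{v_E}^\ast)$ subject to (i) $\phi^\ast$ restricting to $\phi$ on $F$, (ii) $\phi^\ast(v_t^\ast)\in\bigcap_{i\in\mc{C}_t}K_i$, and (iii) the images landing in $\bigcap_{i\in\psi(H\setminus H[F])}K_i$ where required, count how many such tuples exist \emph{deterministically} given the $\iota_j$. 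Using that the $\iota_j$ are uniformly random injections and $z = n^2$, the number of vectors lying in an intersection $\bigcap_{i\in\mc{C}}K_i$ of $O(1)$ of the $K_i$ is concentrated around $q^{n - |\mc{C}|\ell'} \cdot (1\pm o(1))$ by a short second-moment/Chernoff argument on the random matrices $W_j$; more precisely one wants that the relevant affine subspaces $\phi(F) + \bigcap_{i\in\mc{C}_t}K_i$ have the expected dimension, which holds whp since $\phi(F)\leqslant\bigcap_{i\in\psi(H\setminus H[F])}K_i$ by hypothesis (a) and the $K_i$ are in sufficiently generic position. This produces a set $\mc{Y}$ of candidate maps with $|\mc{Y}| = \Theta_{h,\ell,q,s}(q^{v_E n} / q^{O(n\cdot o(1))})$ — here it is important that $\ell' = n - \ell m < \ell$ is bounded, so the codimension losses from the $K_i$ are bounded, and the $(\tau/z)^C$ slack absorbs these polynomially-and-constant-in-$q$ factors.

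Next, for each $\phi^\ast\in\mc{Y}$, let $\mathbbm{1}_{\phi^\ast}$ be the indicator that $\phi^\ast$ is a genuine template embedding with all the prescribed side conditions ($\phi^\ast(R)\in G_{\mr{tem},\psi(R)}$, $\Pi_{\phi^\ast(R)} = \pi_{\psi(R)}(R)$, $b_{\phi^\ast(R)} = \phi^\ast(x_R)$). Crucially, each new $r$-space $R\in H\setminus H[F]$ must be included in the $\psi(R)$-th template copy with the exact configuration $\pi_{\psi(R)}(R)$ and basis $\phi^\ast(x_R)$; by the definition of the template (\cref{def:template}), conditioning on the $\iota_j$, this is a conjunction over the $r$-spaces of the events ``$i_R = \psi(R)$, $\Pi_R = \pi_{\psi(R)}(R)$, $b_R = \phi^\ast(x_R)$, $y_R = 1$'' together with the existence of a witnessing $x\in K^r$; the probability of each such event is $\Theta((\tau/z)^{O(1)})$ since $y_R$ is $\mr{Ber}(\tau)$, $i_R$ is uniform on $[z]$, and $\Pi_R, b_R$ are uniform on sets of size $O_{q,s}(1)$ (here $\dim_L\mr{span}_L(\iota_{\psi(R)}^{-1}(\phi(R\cap F))) = \dim(R\cap F)$ from hypothesis (b) guarantees the candidate subspace is genuinely $L$-generic enough to actually appear, using \cref{lem:template-basic} and \cref{lem:generic}). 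Because the $r$-spaces of $H$ within a fixed candidate $\phi^\ast$ are distinct, these are distinct independent events, giving $\mb{E}[\mathbbm{1}_{\phi^\ast}] = \Theta((\tau/z)^{O(e_E)})$ and hence $\mb{E}X = \Theta((\tau/z)^{O(1)} q^{v_E n})$ (the relevant $r$-spaces and the exponent are all $O_{h,q,s}(1)$).

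Then compute the second moment $\mb{E}X^2 = \sum_{\phi_1^\ast,\phi_2^\ast\in\mc{Y}}\mb{E}[\mathbbm{1}_{\phi_1^\ast}\mathbbm{1}_{\phi_2^\ast}]$. Split the sum according to whether the images $\phi_1^\ast(\mr{Vec}(H))$ and $\phi_2^\ast(\mr{Vec}(H))$ share additional linear dependencies beyond those forced by $\phi$ on $F$: the ``generic-overlap'' pairs, where the shared $r$-spaces are exactly those forced, contribute $(1 + o(1))(\mb{E}X)^2$ (the constraints on $\phi_1^\ast$ and $\phi_2^\ast$ are on disjoint sets of template $r$-spaces, hence independent, modulo the bounded number of shared spaces which only help), while the ``degenerate'' pairs, of which there are at most $|\mc{Y}|^2 q^{-n}\cdot O_{h,q,s}(1)$, contribute a lower-order term since each is counted with probability at most $1$. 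This yields $\mr{Var}(X) \le o((\mb{E}X)^2) + q^{(2v_E - 1)n + O(n\cdot o(1))}$, and since $\mb{E}X \gg (\tau/z)^{O(1)} q^{v_E n} \gg q^{(v_E - 1/2)n}$ for $\tau \ge q^{-cn}$ with $c$ small, Chebyshev gives $\mb{P}[X < \tfrac12\mb{E}X] = o(1)$ — but this is not strong enough for the union bound over the $\le q^{O(hn)} \cdot z^{O(1)}$ many choices of data. To fix this, one uses Azuma--Hoeffding (\cref{lem:azuma}) for concentration of $X$ around $\mb{E}X$, revealing the template data $(i_R, y_R, b_R, \Pi_R)$ $r$-space by $r$-space: changing one $r$-space's data changes $X$ by at most the number of candidate embeddings through that $r$-space, which is $O_{h,q,s}(q^{(v_E - r + k)n})$ when the $r$-space meets $F$ in dimension $k$; summing the squared Lipschitz constants times the number of $r$-spaces of each intersection type gives a variance proxy $\le q^{(2v_E - 1/2)n}$, yielding $\mb{P}[|X - \mb{E}X|\ge q^{(v_E - 1/8)n}] \le \exp(-q^{\Omega(n)})$, which comfortably survives the union bound.

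The main obstacle is the entanglement between the two kinds of color constraints — the ``$\phi^\ast(v_t^\ast)\in\bigcap_{i\in\mc{C}_t}K_i$'' conditions (which constrain the \emph{geometry} of the candidate maps relative to the random $\iota_j$, and hence feed into the deterministic count $|\mc{Y}|$) and the ``$\phi^\ast(R)\in G_{\mr{tem},\psi(R)}$'' conditions (which constrain the \emph{template randomness} $i_R, y_R, \ldots$). These must be decoupled cleanly: one conditions on the $\iota_j$ first (and shows the needed properties of the $K_i$-intersections hold whp for the $\iota_j$ alone, independent of the template randomness, via Chernoff on the $W_j$), and only then works with the template randomness. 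The reason $z = n^2$ rather than a constant is exactly so that the union bound over $\le z^{O(1)}$ colorings costs only $n^{O(1)}$, negligible against $\exp(-q^{\Omega(n)})$; and one must check throughout that hypothesis (b) is precisely what is needed to ensure the candidate $r$-spaces $\phi^\ast(R)$ are the kind ($L$-generic of full $L$-dimension $r$) that can actually occur in $G_{\mr{tem},\psi(R)}$, so that the probabilities $\mb{E}[\mathbbm{1}_{\phi^\ast}]$ do not secretly vanish. The rest is bookkeeping of bounded exponents in $q$ and the $o(1)$ codimension-loss factors, all absorbed by the flexibility of $(\tau/z)^C$ with $C = C(h,q,s)$ chosen large.
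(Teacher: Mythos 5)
Your overall architecture (condition on the $\iota_j$, compute a mean over the template data $(i_R,y_R,b_R,\Pi_R)$, concentrate with Azuma--Hoeffding, union bound over the $q^{O_{h}(n)}$ choices of data) matches the paper's, and your observations about the roles of hypotheses (a), (b) and of $z=n^2$ are correct. The paper differs in one structural respect — it reduces to the one-new-vector case $v_E=1$ (\cref{lem:iterative-color-embedding}) and iterates, rather than analyzing all $v_E$ new vectors at once — but that is not where the problem lies.

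The genuine gap is in your claim that, for a fixed candidate $\phi^\ast$, the events ``$\phi^\ast(R)\in G_{\mr{tem},\psi(R)}$ with the prescribed $\Pi$, $b$'' for the various $R\in H\setminus H[F]$ are ``distinct independent events'' each of probability $\Theta((\tau/z)^{O(1)})$, so that $\mb{E}[\mbm{1}_{\phi^\ast}]=\Theta((\tau/z)^{O(e_E)})$. By \cref{def:template}, the event $\phi^\ast(R)\in G_{\mr{tem},i}$ is \emph{not} an event about the data of the single $r$-space $\phi^\ast(R)$: it requires the existence of a witnessing $x\in K^r$ such that \emph{all} $\qbinom{s}{r}_q$ $r$-subspaces of $\mr{span}_{\mb{F}_q}(\iota_i(Nx))$ carry compatible data. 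For two different $R_1,R_2\in H\setminus H[F]$ these witnessing $s$-spaces can share $r$-subspaces (or force conflicting values of $i_{R'},\Pi_{R'},b_{R'}$ on a common $R'$), in which case $\mb{E}[\mbm{1}_{\phi^\ast}]$ is not a product and can even be $0$. Distinctness of the $r$-spaces of $\phi^\ast(H)$ does not rule this out. So the mean lower bound requires showing that most candidates are ``nonoverlapping'' in the sense that the $r$-spaces $\mr{span}_{\mb{F}_q}(\iota_{c_i}(\Pi N x_{v,i,\Pi_i^\ast}))$ are pairwise distinct across $i$ and $\Pi$ — and this is a statement about the random injections $\iota_j$, not the template coin flips. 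Your proposed fix (``Chernoff on the $W_j$'' plus a deterministic count of $|\mc{Y}|^2q^{-n}$ degenerate pairs) does not reach it: the dangerous case is a pair $R_i,R_j$ assigned distinct colors $c_i\neq c_j$ whose witnessing $s$-spaces collide via the equation $\iota_{c_i}(\Pi_iN(\Pi_i^\ast N)^{-1}\iota_{c_i}^{-1}(\cdot))=\iota_{c_j}(\cdots)$, and bounding the number of such collisions uniformly over all extensions requires the second and third bullets of \cref{lem:generic} together with a deletion-method-style moment computation over tuples of colliding vectors with imposed linear-independence constraints (this is the $\mc{Z}_k$ argument occupying the second half of the paper's proof of \cref{lem:iterative-color-embedding}). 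A related, smaller inaccuracy: in your Azuma step the Lipschitz constant for an $r$-space $R'$ must account for $R'$ affecting $X$ through membership in a witnessing template $s$-space of some other $r$-space, not only through $R'\in\phi^\ast(H)$; this costs an extra factor $O_{q,s}(z)$ but is harmless. The missing nonoverlap analysis, however, is the core of the proof and cannot be waved through as independence.
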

\begin{remark}
The condition (a) is a bit stronger than what is actually needed, but something of the sort is required to ensure that we can extend within prescribed template parts correctly. The condition (b) is necessary since every completed $r$-space in the template corresponds to something of maximum $L$-dimension when pulled back to $K$. We will only need the choice of functions $\pi_i$ and bases $x_R$ in \cref{prop:final-disjoint-monochromatic}; in the other application, \cref{lem:master-disjointness}, arbitrary choices are implicitly made (such choices satisfying injectivity of $\pi_i$ do exist since $|\psi^{-1}(i)|\le\qbinom{s}{r}_q$). Furthermore, the sets $\mc{C}_t$ are only needed in the application of \cref{lem:master-disjointness} to \cref{lem:final-color-consistent}, in which we prepare our signed $s$-space decomposition to be turned into a monochromatic decomposition.
\end{remark}

Similarly to the proof of \cref{lem:one-step-typical}, we will prove it for $E$ with $v_E = \dim\mr{Vec}(H)-\dim F=1$ and then prove the result inductively. However, to ensure that we can maintain the condition $\phi(F)\leqslant\bigcap_{i\in\psi(H\setminus H[F])}K_i$ when we iteratively embed the extension, we must prove a slightly stronger statement. The role of the additional element in \cref{lem:iterative-color-embedding}, the global set of colors $\mc{C}$, is to ensure that embedded vectors are good with respect to colors that might not yet have been seen in the partially embedded extension so far. This plays a slightly different role than the $\mc{C}_t$, since we must assume that $\phi(F)$, the base of the extension, is contained within $K_j$ for $j\in\mc{C}$ to successfully embed everything.

\begin{lemma}\label{lem:iterative-color-embedding}
Given \cref{sub:setup,def:template}, we have the following as long as $h\ge 1$, $C=C_{\ref{lem:iterative-color-embedding}}(h,q,s) > 0$, $c=c_{\ref{lem:iterative-color-embedding}}(h,q,s) > 0$, $q^{-cn}\le\tau\le c$, $d\ge d_{\ref{lem:iterative-color-embedding}}(r)$, $\ell\ge\ell_{\ref{lem:iterative-color-embedding}}(d,s)$, and $n$ is large. Whp over the randomness of the template, for any choice of data as in \cref{def:extension-setup} with $v_E=\dim\mr{Vec}(H)-\dim F=1$, $\dim\mr{Vec}(H)\le h$, $|\mc{C}_1|\le h$ and choice of $\mc{C}\subseteq[z]$ of size at most $q^{rh}$ such that (a) $\phi(F)\leqslant\bigcap_{i\in\psi(H\setminus H[F])\cup\mc{C}}K_i$ and (b) $\dim_L\mr{span}_L(\iota_{\psi(R)}^{-1}(\phi(R\cap F)))=\dim_{\mb{F}_q}(R\cap F)$ for all $R\in H\setminus H[F]$, we have
\begin{align*}
\#\bigg\{\phi^\ast\in\mc{X}(E,\psi,(\pi_i)_{i\in[z]},(x_R)_{R\in H\setminus H[F]},(v_t^\ast)_{t\in[\dim\mr{Vec}(H)]},(\mc{C}_1))\colon\phi^\ast(v_1^\ast)\in\bigcap_{i\in\mc{C}}K_i\bigg\}\ge(\tau/z)^Cq^n.
\end{align*}
\end{lemma}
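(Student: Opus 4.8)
The plan is to follow the single-vector step of the proof of \cref{lem:one-step-typical}, carrying along the extra combinatorial data, and to obtain the bound whp by concentration over the template randomness, in the spirit of the proofs of \cref{lem:template-basic} and \cref{prop:absorber-count}. Since $v_E=1$, specifying $\phi^\ast$ is the same as specifying the image $v:=\phi^\ast(v_1^\ast)\in V$ with $v\notin\phi(F)$; each $R\in H\setminus H[F]$ has $\dim(R\cap F)=r-1$ and may be written as $\mr{span}_{\mb F_q}(Q_R\cup\{v_1^\ast+w_R\})$ for some $(r-1)$-space $Q_R\leqslant F$ and $w_R\in F$, so $\phi^\ast(R)=\mr{span}_{\mb F_q}(\phi(Q_R)\cup\{v+\phi(w_R)\})$. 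Write $\mc D=\psi(H\setminus H[F])\cup\mc C_1\cup\mc C$, a set of colors of size $O_{h,q}(1)$. Using hypothesis (a), the conditions $\phi^\ast(H)\subseteq\bigcap_{i\in\psi(H\setminus H[F])}K_i$ and $\phi^\ast(v_1^\ast)\in\bigcap_{i\in\mc C_1\cup\mc C}K_i$ together become $v\in\bigcap_{i\in\mc D}K_i$.

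First condition on a ``generic'' realization of $\iota_1,\dots,\iota_z$: by a Chernoff and union bound argument (as for the $\iota$-randomness in the proof of \cref{prop:absorber-count}), whp for every $\mc D'\subseteq[z]$ of size $O_{h,q,s}(1)$ the space $\bigcap_{i\in\mc D'}K_i$ has its expected dimension, which is $\ge n-O_{h,q,\ell}(1)$, and more generally the embedded $L$-structures are transverse enough for the counts below; condition on such an outcome, so that the remaining randomness is $(i_R,y_R,b_R,\Pi_R)_R$. Let $Y$ be the quantity to be bounded, and let $\mc Y$ be the set of $v\in(\bigcap_{i\in\mc D}K_i)\setminus\phi(F)$ that are in ``general position'': for each $R\in H\setminus H[F]$, using hypothesis (b) and the $\mb F_q$-genericity of $N$ (\cref{lem:generic}), the tuple $x_R^{\mr{val}}:=(\pi_{\psi(R)}(R)N)^{-1}\iota_{\psi(R)}^{-1}(\phi^\ast(x_R))\in K^r$ is $L$-linearly independent (equivalently $\iota_{\psi(R)}^{-1}(\phi^\ast(R))$ has $L$-dimension $r$), so that $S_R:=\mr{span}_{\mb F_q}(\iota_{\psi(R)}(Nx_R^{\mr{val}}))$ is a genuine $s$-space with $\phi^\ast(R)\leqslant S_R\leqslant K_{\psi(R)}$; and the $s$-spaces $\{S_R\}_{R\in H\setminus H[F]}$ are pairwise ``$r$-disjoint'' with none of their $r$-subspaces in $\phi(H[F])$. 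Each failing condition forces a linear-algebraic degeneracy which, by $\mb F_q$-genericity of $N$ and the conditioned genericity of the $\iota_i$ — this is where $\ell$ must be large, as in \cref{lem:generic} — holds for at most $O_{h,q,s,\ell}(1)$ vectors $v$ in $\bigcap_{i\in\mc D}K_i$; hence $|\mc Y|\ge q^{n-O_{h,q,\ell}(1)}$ for $n$ large.

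Next compute $\mb EY$. Fix $v\in\mc Y$. If, for every $R\in H\setminus H[F]$ and each of the $\qbinom{s}{r}_q$ distinct $r$-subspaces $R'$ of $S_R$, one has $i_{R'}=\psi(R)$, $y_{R'}=1$, $\Pi_{R'}=\Pi'$ (the unique reduced echelon matrix placing $R'$ inside $S_R$), and $b_{R'}=\iota_{\psi(R)}(\Pi'Nx_R^{\mr{val}})$, then $\phi^\ast\in\mc X(E,\psi,\dots)$ and $\phi^\ast(v_1^\ast)\in\bigcap_{i\in\mc C}K_i$: indeed the prescribed values $\Pi_{\phi^\ast(R)}=\pi_{\psi(R)}(R)$, $b_{\phi^\ast(R)}=\phi^\ast(x_R)$ of \cref{def:extension-setup} are exactly the instance $R'=\phi^\ast(R)$ by the definition of $x_R^{\mr{val}}$. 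By the general-position property these conditions concern pairwise disjoint sets of $r$-spaces of $V$, so are independent; each has probability $\tfrac{\tau}{z}\cdot\tfrac{1}{\qbinom{s}{r}_q\beta_r}$ with $\beta_r=\prod_{k=0}^{r-1}(q^r-q^k)$ the number of ordered $\mb F_q$-bases of an $r$-space. Since $e_E\le q^{rh}$, all of them hold with probability $(\tau/z)^{O_{h,q,s}(1)}$, independent of $v$, so $\mb EY\ge(\tau/z)^{O_{h,q,s}(1)}q^{n-O_{h,q,\ell}(1)}$. Taking $C=C_{\ref{lem:iterative-color-embedding}}(h,q,s)$ slightly larger than the intrinsic constant (and using $\tau/z\le c/n^2$ to absorb the $q^{-O_{h,q,\ell}(1)}$ factor), this exceeds $2(\tau/z)^Cq^n$ for $n$ large.

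Finally apply \cref{lem:azuma} to $Y$, revealing $(i_R,y_R,b_R,\Pi_R)$ one $r$-space $R'$ at a time. Whether a given $v\in\mc Y$ contributes to $Y$ depends on the data at $R'$ only if $R'$ is an $r$-subspace of some $S_R^{(v)}$; but $\iota_{\psi(R)}^{-1}(v)\mapsto x_R^{\mr{val}}$ is affine-linear and injective (some basis vector of $R$ has a nonzero $v_1^\ast$-component, as $R\not\leqslant F$), and $R'\leqslant S_R^{(v)}$ pins $x_R^{\mr{val}}$ to one of $O_{q,s}(1)$ values, so each fixed $R'$ affects at most $O_{h,q,s}(1)$ vectors $v\in\mc Y$; hence the martingale differences are $O_{h,q,s}(1)$, and since at most $O_{h,q,s}(q^n)$ distinct $R'$ arise at all (each $v\in\mc Y$ and each $R$ contributing $\qbinom{s}{r}_q$ of them), the variance proxy is $O_{h,q,s}(q^n)$. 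Thus $\mb P[Y<\mb EY/2]\le 2\exp(-\Omega_{h,q,s}((\mb EY)^2/q^n))\le 2\exp(-q^{\Omega(n)})$ using $\tau\ge q^{-cn}$ with $c$ small. The number of choices of data in \cref{def:extension-setup} with the stated size bounds is at most $q^{hn}\cdot n^{O_{h,q}(1)}\le q^{2hn}$, so a union bound finishes (this single-vector statement being the base case from which \cref{prop:template-extendable} follows by iteration, the global colors $\mc C$ ensuring hypothesis (a) is maintained). The crux is the second paragraph: isolating the ``general position'' conditions so that, on the one hand, only a negligible (in fact bounded) set of $v$ is excluded — which hinges on the $\mb F_q$-genericity of $N$ and genericity of the $\iota_i$ via \cref{lem:generic}-type arguments — and, on the other hand, the validity event genuinely splits into independent per-$r$-space events, which is what makes both the first-moment computation and the bounded-difference estimate go through.
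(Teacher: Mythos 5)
Your overall architecture matches the paper's: condition on the embeddings $\iota_1,\ldots,\iota_z$, isolate a set of ``general position'' vectors $v$, lower-bound the conditional expectation of the count using the independent Bernoulli/configuration randomness $(i_R,y_R,b_R,\Pi_R)$, concentrate via \cref{lem:azuma} with $O(1)$ Lipschitz influence per $r$-space, and union bound over the $q^{O_{h,q,s}(n)}$ choices of data. The first-moment computation and the Azuma step are essentially the paper's.

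However, there is a genuine gap in your second paragraph, and it is precisely where the paper does most of its work. You assert that each failure of general position ``forces a linear-algebraic degeneracy which, by $\mb F_q$-genericity of $N$ and the conditioned genericity of the $\iota_i$, holds for at most $O_{h,q,s,\ell}(1)$ vectors $v$.'' This is true for same-color pairs and for the $L$-dimension degeneracies (the paper's $Y_i$ and the $c_i=c_j$ case of $Y_{ij}$, handled deterministically via \cref{lem:generic}), but it is false in general for the cross-color overlap condition: when $R_i,R_j$ are sent to different template copies $c_i\neq c_j$ and their closing $s$-spaces share an $r$-space, the degeneracy equation has the form $\iota_{c_i}(A\,\iota_{c_i}^{-1}(b_{v,i}))=\iota_{c_j}(B\,\iota_{c_j}^{-1}(b_{v,j}))$, mixing two \emph{independent} random embeddings. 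For a fixed outcome of $\iota_{c_i},\iota_{c_j}$ this solution set is not controlled by any pointwise genericity of $N$, and there is no single union-boundable ``transversality'' event on the $\iota$'s that makes it $O(1)$ uniformly over all exponentially many choices of $\phi,Q_i,v_i,\Pi_i,\Pi_j$. The paper only proves the much weaker bound that this bad set has size at most $q^n/z^2$ whp, and even that requires a dedicated moment computation (the tuples $\mc Z_k$ with joint $\mb F_q$-independence conditions, giving a $q^{-n\log n}$ tail so that the union bound over data survives). Your proposal skips this step entirely; without it, the lower bound $|\mc Y|\ge q^{n-O(1)}$ is unjustified and the proof does not close.
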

\begin{remark}
Since $v_E=1$, $v_1^\ast$ is the unique element of the chosen basis of $\mr{Vec}(H)$ that is outside of $F$. Also, compared to \cref{prop:template-extendable} the condition (a) has the addition of the color set $\mc{C}$.
\end{remark}
\begin{proof}
As in \cref{lem:template-basic}, we may assume the template is well-defined as long as $\ell$ is large with respect to $d,s$, and we may assume that $d$ is large enough to ensure that the statements in \cref{lem:generic} are satisfied. Of key importance is that for all $\Pi\in\mb{F}_q^{r\times s}$ of rank $r$, $\Pi N\in L^{r\times r}$ and all its square submatrices are nonsingular; the other conditions will naturally arise when ruling out certain degeneracies.

As seen in the proof of \cref{lem:one-step-typical}, our extension can be represented by picking $a\le q^{rh}$ and considering $Q_1,\ldots,Q_a\leqslant F$ and $v_1,\ldots,v_a\in F$ such that if $i\neq j$ and $Q_i=Q_j$ then $\mr{span}_{\mb{F}_q}(Q_i\cup\{v_i\})\neq\mr{span}_{\mb{F}_q}(Q_j\cup\{v_j\})$. Then $E=(\phi,F,H)$ where $H$ is on the vector space $F\oplus\mb{F}_q$ and the $r$-spaces of $H\setminus H[F]$ are $R_i=\mr{span}_{\mb{F}_q}(Q_i\cup\{(v_i,1)\})$ where we abusively extend $Q_i$ by zeros in the obvious way here. Note that we can set things up so that furthermore $v_1^\ast=(0,1)$.

Thus $\mc{X}_E(G_\mr{tem})$ contains embeddings $\phi^\ast$ which agree with $\phi$ on $F$ and where, writing $v=\phi^\ast((0,1))=\phi^\ast(v_1^\ast)$, we have $v\notin\phi(F)$ and $\mr{span}_{\mb{F}_q}(\phi(Q_i)\cup\{v+\phi(v_i)\})\in G_\mr{tem}$ for all $i\in[a]$. To prove the desired statement, we are further given colors $c_i:=\psi(R_i)\in[z]$ for $i\in[a]$ (which do not contain too many repetitions by the given conditions from \cref{def:extension-setup}) and $\mc{C}\subseteq[z]$ of size at most $q^{rh}$ and $\mc{C}_1\subseteq[z]$ of size at most $h$, and we wish to understand $X$, the number of such extensions where in fact $\mr{span}_{\mb{F}_q}(\phi(Q_i)\cup\{v+\phi(v_i)\})\in G_{\mr{tem},c_i}$ for all $i\in[a]$ and also $v\in\bigcap_{i\in\mc{C}\cup\mc{C}_1}K_i$, and furthermore $\Pi_{\phi^\ast(R_i)}=\pi_{c_i}(R_i)$ and $b_{\phi^\ast(R_i)}=\phi^\ast(x_{R_i})$ for $i\in[a]$. It suffices to show that whp, for all appropriate data the associated variable $X$ is always sufficiently large: combining the second-to-last condition just listed with the assumed event $\phi(F)\leqslant\bigcap_{j\in\mc{C}}K_j$ gives the necessary property $\phi^\ast(H)\subseteq\bigcap_{i\in\mc{C}}K_i$ for embeddings $\phi^\ast$ that are counted for extensions and colorings that we care about. Fix a basis $\{w_{i,1},\ldots,w_{i,r-1}\}$ for $\phi(Q_i)$ for all $i\in[a]$.

Note that $X=\sum_{v\in\mb{F}_q^n\setminus\phi(F)}X_v$ where $X_v\in\{0,1\}$, dependent on the randomness of the template, is the random variable which is $1$ precisely when setting $\phi^\ast((0,1))=v$ satisfies the above conditions given $\phi$ and $Q_i,v_i,c_i,x_{R_i}$ for $i\in[a]$ and the $\pi_j$ for $j\in[z]$. Given $v\in\mb{F}_q^n\setminus\phi(F)$, let us consider for $i\in[a]$ the basis $(v+\phi(v_i),w_{i,1},\ldots,w_{i,r-1})$ for $\mr{span}_{\mb{F}_q}(\phi(Q_i)\cup\{v+\phi(v_i)\})$ and arrange it as a size $r$ column vector $b_{v,i}\in(\mb{F}_q^n)^r$. (This could be different from the sampled value $b_R$ for this $r$-space $R$.) Write $\phi^\ast(x_{R_i})=\Gamma_ib_{v,i}$, where $\Gamma_i\in\mr{GL}(\mb{F}_q^r)$ is fixed and does not depend on $v$; such a choice is easily seen to exist.

Now for each $\Pi\in\mr{Red}_q^{r\times s}$ consider $x_{v,i,\Pi}\in K^r$, if it exists, such that
\[\iota_{c_i}(\Pi Nx_{v,i,\Pi})=\Gamma_ib_{v,i}=\phi^\ast(x_{R_i}).\]
This is unique when it exists, namely when $\{v+\phi(v_i),w_{i,1},\ldots,w_{i,r-1}\}\subseteq K_{c_i}$, in which case $x_{v,i,\Pi}=(\Pi N)^{-1}\iota_{c_i}^{-1}(\Gamma_ib_{v,i})$. (Recall $\Pi N$ is invertible.) If $x_{v,i,\Pi}\in K^r$ spans $r$ dimensions over $L$ then for any further $\Pi'\in\mr{Red}_q^{r\times s}$ we can then consider $\mr{span}_{\mb{F}_q}(\iota_{c_i}(\Pi'Nx_{v,i,\Pi}))\leqslant K_{c_i}$: this corresponds to an $r$-space contained within the same $s$-space which is potentially contributing to the template via $x_{v,i,\Pi}$ and $\iota_i$ (it contributes when the random event $\mr{span}_{\mb{F}_q}(\iota_{c_i}(Nx_{v,i,\Pi}))\in\mc{S}_{\mr{tem},c_i}$ holds).

Fix $\Pi_i^\ast=\pi_{c_i}(R_i)\in\mr{Red}_q^{r\times s}$ for $i\in[a]$ and note that if $i\neq j$ and $c_i=c_j$ then $\Pi_i^\ast\neq\Pi_j^\ast$ (from injectivity of the $\pi$ functions in \cref{def:extension-setup}). Given $\iota_1,\ldots,\iota_z$, we call $v$ \emph{nonoverlapping} if (a) $v\in\bigcap_{j\in\mc{C}\cup\mc{C}_1}K_j$, (b) for all $i\in[a]$, the vector $x_{v,i,\Pi_i^\ast}=(\Pi_i^\ast N)^{-1}\iota_{c_i}^{-1}(\Gamma_ib_{v,i})\in K^r$ exists and is such that $\dim_L\mr{span}_L(x)=r$, and (c) for all choices of $\Pi_i\in\mr{Red}_q^{r\times s}$ we have that $\iota_{c_i}(\Pi_iNx_{v,i,\Pi_i^\ast})$ span distinct $r$-spaces over $\mb{F}_q$ as we vary $i\in[a]$. The idea is that this provides situations where we can guarantee $X_v=1$ occurs with nonnegligible probability. Here (c) is the key property which will show that nonoverlapping vectors are likely to contribute to $X$, whereas the other conditions are more akin to feasibility conditions.

Consider any nonoverlapping $v$ (which is a function only of $\iota_1,\ldots,\iota_z$) and take $x_{v,j,\Pi_j^\ast}$ as in the definition of nonoverlapping. Consider the event that for each $j\in[a]$ and $\Pi\in\mr{Red}_q^{r\times s}$ the $r$-space $R=\mr{span}_{\mb{F}_q}(\iota_{c_j}(\Pi Nx_{v,j,\Pi_j^\ast}))$ satisfies $i_R=c_j$, $b_R=\iota_{c_j}(\Pi Nx_{v,j,\Pi_j^\ast})$, $y_R=1$, and $\Pi_R = \Pi$. In the case $\Pi=\Pi_j^\ast$ the second equation simplifies to $b_{\phi^\ast(R_j)}=b_R=\Gamma_jb_{v,j}=\phi^\ast(x_{R_j})$ and the fourth gives $\Pi_{\phi^\ast(R_j)}=\Pi_j^\ast=\pi_{\psi(R_j)}(R_j)$. Additionally, such an event would guarantee by definition that $\mr{span}_{\mb{F}_q}(\iota_{c_j}(Nx_{v,j,\Pi_j^\ast}))\in\mc{S}_{\mr{tem},c_j}$ for all $j\in[a]$. Combining these properties, as a consequence we would have $X_v=1$. Furthermore, since the nonoverlapping condition guarantees that all these $r$-spaces $R$ are distinct as $j,\Pi$ vary, this can be simultaneously accomplished and occurs with probability at least $\lambda^{\qbinom{s}{r}_qa}$ where $\lambda=((q^r-1)\cdots(q^r-q^{r-1})\qbinom{s}{r}_q)^{-1}\tau/z$, which implies $\mb{E}X_v\ge(\tau/z)^A$ for some appropriately chosen $A\ge 1$ depending only on $h,q,s$ due to the bound on $a$ (the value $A=2q^{r(h+s)}$ certainly suffices).

Now let $Y$ be the number of $v\in\mb{F}_q^n\setminus\phi(F)$ which are nonoverlapping, which is a random function of $\iota_1,\ldots,\iota_z$ only. We see
\[\mb{E}[X|\iota_1,\ldots,\iota_z]\ge(\tau/z)^AY,\]
and additionally note that the remaining randomness is purely over the independent choices of $i_R,b_R,y_R,\Pi_R$ for all $R\in\mr{Gr}_q(n,r)$. Furthermore, it is easily seen that (if we condition on $\iota_1,\ldots,\iota_z$) each of these random variables can influence the identity of $X_v$ for at most $O_{q,s}(z)$ values $v$ since every $R$-space can only be included in $z\cdot O_{q,s}(1)$ ways into some potential $s$-space which in total accounts for at most $O_{q,s}(z)$ other $r$-spaces that can be affected by the choice of defining variables for $R$. Therefore, \cref{lem:azuma} shows that conditional on $\iota_1,\ldots,\iota_z$ the variable $X$ concentrates in a window of size $q^{2n/3}$ with probability at least $1-\exp(-\Omega(q^{-n/4}))$, which is sufficient to take a union bound over all possible $E$ and colors involved in $\psi$ and $\mc{C},\mc{C}_1$ (of which there are at most $q^{O_{h,q,s}(n)}$ total choices).

It remains to show that whp over the randomness of $\iota_1,\ldots,\iota_z$, for all extensions $E$, functions $\psi$, color sets $\mc{C},\mc{C}_1$ as above, and the remaining choice of auxiliary information, the corresponding variable $Y$ (whose definition depends on $E,\psi,\mc{C},\mc{C}_1$ and the $\pi_i,\Gamma_j$ and whose randomness only uses $\iota_1,\ldots,\iota_z$) satisfies $Y\ge q^n/z$ whenever \ref{lem:iterative-color-embedding}(a,b) hold. To do this, we will show that for fixed choices as above the probability that $Y < q^n/z$ and $\phi(F)\leqslant\bigcap_{i\in[a]}K_{c_i}$ and $\dim_L\mr{span}_L(\iota_{\psi(R)}^{-1}(\phi(R\cap F)))=\dim_{\mb{F}_q}(R\cap F)$ for all $R\in H\setminus H[F]$ simultaneously occur is at most $q^{-n\log n}$ and then take a union bound.

Let $Y_0$ be the number of $v\in\mb{F}_q^n\setminus\phi(F)$ such that $x_{v,i,\Pi_i^\ast}=(\Pi_i^\ast N)^{-1}\iota_{c_i}^{-1}(\Gamma_ib_{v,i})\in K^r$ exists for all $i\in[a]$ and such that $v\in\bigcap_{j\in\mc{C}\cup\mc{C}_1}K_j$. For $i\in[a]$ let $Y_i$ be the number of $v$ for which the nonoverlapping condition is failed in the following way: the second part of condition (b), namely the $r$-dimensionality condition, fails for index $i$ (but the first part is valid). Similarly, for $1\le i<j\le a$ let $Y_{ij}$ be the number of $v$ where condition (c) fails for indices $i,j$ and some $\Pi_i,\Pi_j\in\mr{Red}_q^{r\times s}$. Note that
\[Y\ge Y_0-\sum_{i=1}^aY_i-\sum_{1\le i<j\le a}Y_{ij}.\]
Since $K_i\leqslant V=\mb{F}_q^n$ has codimension at most $\ell$, we see that $Y_0\ge q^{n-(a+q^{rh}+h)\ell}-q^{\dim F}$ is true because $\phi(F)\leqslant\bigcap_{i\in[a]}K_{c_i}$ (since requiring $v\in\iota_{c_i}(K)$ for all $i\in[a]$ and $v\in\iota_j(K)$ for all $j\in\mc{C}\cup\mc{C}_1$ is therefore enough).

Additionally, we claim that $Y_i=O_{\ell,q,r}(1)$ no matter the choice of $\iota_1,\ldots,\iota_z$. Indeed, if $v$ is counted in $Y_i$, then $(\Pi_i^\ast N)^{-1}\iota_{c_i}^{-1}(\Gamma_ib_{v,i})$ exists and spans less than $r$ dimensions over $L$, which means there is nonzero $y\in L^{1\times r}$ so that $y(\Pi_i^\ast N)^{-1}\iota_{c_i}^{-1}(\Gamma_ib_{v,i})=0$. If the first coordinate of $y(\Pi_i^\ast N)^{-1}\Gamma_i$ is nonzero, then given $\iota_{c_i},Q_i,v_i$ there is a fixed value of $\iota_{c_i}^{-1}(v)$ and hence at most one value of $v$. If the first coordinate is zero, then we see that $\dim_L\mr{span}_L(\iota_{\psi(R)}^{-1}(\phi(R\cap F)))=\dim_{\mb{F}_q}(R\cap F)$ fails for the edge $R=R_i\in H\setminus H[F]$ corresponding to this $i\in[a]$, since then we have a nontrivial $L$-linear relation among the embedding of $R_i$ (in $K\simeq K_{c_i}$) not involving $\iota_{c_i}^{-1}(v)$ (which violates \ref{lem:iterative-color-embedding}(b)). We ultimately deduce $Y_i\le q^{\ell r}$ due to the possible choices for $y$.

Combining the lower bound on $Y_0$ and the upper bound on $Y_i$, we see that if $Y < q^n/z$ and \ref{lem:iterative-color-embedding}(a,b) hold then we have $Y_{ij}\ge q^n/z$ for some $1\le i<j\le a$. Let us first consider the possibility that this occurs for some $i,j$ with $c_i=c_j=c^\ast$. We deduce that for some $\Pi_i,\Pi_j\in\mb{F}_q^{r\times s}$ of rank $r$, we have
\[\iota_{c^\ast}(\Pi_iN(\Pi_i^\ast N)^{-1}\iota_{c^\ast}^{-1}(\Gamma_ib_{v,i}))=\iota_{c^\ast}(\Pi_jN(\Pi_j^\ast N)^{-1}\iota_{c^\ast}^{-1}(\Gamma_jb_{v,j}))\]
and both sides are well-defined (since certain corresponding bases over $\mb{F}_q$ span the same $r$-space). This implies $(\Pi_iN)(\Pi_i^\ast N)^{-1}\iota_{c^\ast}^{-1}(\Gamma_ib_{v,i})=(\Pi_jN)(\Pi_j^\ast N)^{-1}\iota_{c^\ast}^{-1}(\Gamma_jb_{v,j})$.

First consider the case that simultaneously there are $M_i,M_j\in\mr{GL}(\mb{F}_q^r)$ with $\Pi_i=M_i\Pi_i^\ast$ and $\Pi_j=M_j\Pi_j^\ast$. Then the above becomes $M_i\iota_{c^\ast}^{-1}(\Gamma_ib_{v,i})=M_j\iota_{c^\ast}^{-1}(\Gamma_jb_{v,j})$ so $M_i\Gamma_ib_{v,i}=M_j\Gamma_jb_{v,j}$. But multiplying by one of these invertible matrices will preserve the span of the $r$ coordinates, so we see that $\mr{span}_{\mb{F}_q}(b_{v,i})=\mr{span}_{\mb{F}_q}(b_{v,j})$. However, the given conditions for the form of the extension imply either $Q_i\neq Q_j$ or $Q_i=Q_j$ and $\mr{span}_{\mb{F}_q}(Q_i\cup\{v_i\})\neq\mr{span}_{\mb{F}_q}(Q_j\cup\{v_j\})$, which ensures that for $v\in\mb{F}_q^n\setminus\phi(F)$ these two spanned spaces are distinct. Thus this cannot happen.

Furthermore, the matrices $\Pi_i^\ast,\Pi_j^\ast\in\mr{Red}_q^{r\times s}$ are distinct since $c_i=c_j$ and $i<j$. By the discussion following \cref{def:rref} they are not related via left-multiplication of an invertible $r\times r$ matrix. At this stage, we use the third bullet point from \cref{lem:generic} to deduce that the vector $(\Pi_iN)(\Pi_i^\ast N)^{-1}\Gamma_ie_1-(\Pi_jN)(\Pi_j^\ast N)^{-1}\Gamma_je_1$ is nonzero. But this means that when we consider the equation 
\[(\Pi_iN)(\Pi_i^\ast N)^{-1}\Gamma_i\iota_{c^\ast}^{-1}(b_{v,i})-(\Pi_jN)(\Pi_j^\ast N)^{-1}\Gamma_j\iota_{c^\ast}^{-1}(b_{v,j})=0\]
and isolate the parts containing $\iota_{c^\ast}^{-1}(v)$, one of the instances on the left has a nonzero coefficient. This means that given any $\iota_1,\ldots,\iota_z$, there is at most $1$ choice for $v$. Therefore we have ultimately shown that $Y_{ij}=O_{q,s}(1)$ if $c_i=c_j$.

Finally, we consider $Y_{ij}$ for some $i,j$ with $c_i\neq c_j$. Here we will use the randomness over $\iota_{c_i},\iota_{c_j}$. As before, the vector $v\in\mb{F}_q^n\setminus\phi(F)$ contributes to $Y_{ij}$ if there are $\Pi_i,\Pi_j\in\mb{F}_q^{r\times s}$ of rank $r$ for which
\begin{equation}\label{eq:rainbow-r-space-overlap}
\iota_{c_i}(\Pi_iN(\Pi_i^\ast N)^{-1}\iota_{c_i}^{-1}(\Gamma_ib_{v,i}))=\iota_{c_j}(\Pi_jN(\Pi_j^\ast N)^{-1}\iota_{c_j}^{-1}(\Gamma_jb_{v,j}))
\end{equation}
and both sides are well-defined. Let $\mc{I}_i$ be the set of indices of rows of $\Pi_i$ that are in $\mr{row}_{\mb{F}_q}(\Pi_i^\ast)$ (the row space), and similar for $\mc{I}_j$. First suppose that \cref{eq:rainbow-r-space-overlap} occurs in a case where either $\mc{I}_i=\mc{I}_j=[r]$ or $\mc{I}_i\neq\mc{I}_j$. If the former holds, then $\Pi_i=M_i\Pi_i^\ast$ and $\Pi_j=M_j\Pi_j^\ast$ for $M_i,M_j\in\mr{GL}(\mb{F}_q^r)$ and similar to earlier we find $M_i\Gamma_ib_{v,i}=M_j\Gamma_jb_{v,j}$ which cannot happen. If the latter holds then we can find an element in one set but not the other. Without loss of generality let $i^\ast\in\mc{I}_j\setminus\mc{I}_i$. Now inspect the $i^\ast$th row element of the above (vector) equality. Let $y_i,y_j$ be the $i^\ast$th rows of $\Pi_i,\Pi_j$ respectively and write $y_j=y'\Pi_j^\ast$ for $y'\in\mb{F}_q^{1\times r}$. Then \cref{eq:rainbow-r-space-overlap} implies
\[\iota_{c_i}(y_iN(\Pi_i^\ast N)^{-1}\iota_{c_i}^{-1}(\Gamma_ib_{v,i}))=y'\Gamma_jb_{v,j},\]
so $y_iN(\Pi_i^\ast N)^{-1}\Gamma_i\iota_{c_i}^{-1}(b_{v,i})=y'\Gamma_j\iota_{c_i}^{-1}(b_{v,j})$. By the second bullet of \cref{lem:generic} and $y_i\notin\mr{row}_{\mb{F}_q}(\Pi_i^\ast)$ (from $i^\ast\notin\mc{I}_i$) we have that $y_iN(\Pi_i^\ast N)^{-1}\Gamma_ie_1\in L\setminus\mb{F}_q$, so $y'\in\mb{F}_q^{1\times r}$ means that this yields a nontrivial relation for $\iota_{c_i}^{-1}(v)$. In particular, given any choice of $\iota_1,\ldots,\iota_z$ we see that there is at most $1$ choice for $v$ in this situation. Therefore in total these cases for $\Pi_i,\Pi_j$ contribute $O_{q,s}(1)$ to $Y_{ij}$.

Now we can fix $i,j,\Pi_i,\Pi_j$ such that the corresponding sets $\mc{I}_i,\mc{I}_j$ satisfy $\mc{I}:=\mc{I}_i=\mc{I}_j\neq[r]$. Consider the random variable $Z$, the number of $v\in\mb{F}_q^n\setminus\phi(F)$ satisfying \cref{eq:rainbow-r-space-overlap} (and let $\mc{Z}$ be the set of such $v$). It suffices to show \begin{equation}\label{eq:large-degeneracy-deviation}
\mb{P}[Z\ge q^n/z^2]\le q^{-2n\log n},
\end{equation}
since we can then take a union bound over $i,j,\Pi_i,\Pi_j$, add in the extra contribution of $O_{q,s}(1)$ from the other choices of $\Pi_i,\Pi_j$, and deduce the necessary bounds on $Y_{ij}$ with very high probability, which then imply the desired lower bound for $Y$ and thus $X$ whp (enough to take a union bound over $E,\psi,\mc{C},\mc{C}_1$ and the $\pi,x$ values at the end, as discussed earlier).

We show \cref{eq:large-degeneracy-deviation} via the method of moments. Technically, we cannot directly bound the moment of $Z$, and instead will count certain special tuples of elements in $\mc{Z}$, which bears similarity to e.g.~the deletion method of R\"odl and Ruci\'nski (see \cite{JR04,RR95}) used for upper tails of subgraph counts. Here, though, we will impose certain linear-algebraic conditions rather than subgraph disjointness. Choose some $i^\ast\in[r]\setminus\mc{I}$, which exists by the given conditions, and let $y_i,y_j$ be the $i^\ast$th rows of $\Pi_i,\Pi_j$, respectively. We have for $\Pi_i'=\Gamma_i^{-1}\Pi_i^\ast$ and $\Pi_j'=\Gamma_j^{-1}\Pi_j^\ast$ (which have the same respective $\mb{F}_q$-row spaces) that
\[\iota_{c_i}(y_iN(\Pi_i'N)^{-1}\iota_{c_i}^{-1}(b_{v,i}))=\iota_{c_j}(y_jN(\Pi_j'N)^{-1}\iota_{c_j}^{-1}(b_{v,j}))\]
for $v\in\mc{Z}$. Let $f_1,\ldots,f_{\dim F}$ be a $\mb{F}_q$-basis for $\phi(F)\leqslant\mb{F}_q^n$.

For $1\le k\le n/8$ let $\mc{Z}_k$ be the (random) set of tuples $(w_1,\ldots,w_k)\in\mc{Z}^k$ such that $(f_{t,i}':=\iota_{c_i}^{-1}(f_t))_{t\in[\dim F]}$, $(w_{t,i}':=\iota_{c_i}^{-1}(w_t))_{t\in[k]}$, and $(y_{t,i}':=y_iN(\Pi_i' N)^{-1}\iota_{c_i}^{-1}(b_{w_t,i}))_{t\in[k]}$ are well-defined and jointly $\mb{F}_q$-linearly independent (note they are all elements of $K$) and such that similar holds with $i$ replaced by $j$. There are at most $(|K|^{\dim F+k})^2$ possible choices of these values $f_{t,i}',f_{t,j}',w_{t,i}',w_{t,j}',y_{t,i}',y_{t,j}'$ (note that $f_{t,i}'$ for $t\in[\dim F]$ and $w_{t',i}$ determine $y_{t',i}'$ for any $t'\in[k]$). Furthermore, given such fixed choices, let us consider the event that (a) $\iota_{c_i},\iota_{c_j}$ actually map the vectors in this way, and (b) $\iota_{c_i},\iota_{c_j}$ make it so that $w_1,\ldots,w_k\in\mc{Z}$ indeed holds. This is equivalent to
\begin{align*}
\iota_{c_i}(f_{t,i}')&=f_t=\iota_{c_j}(f_{t,j}')\text{ for all }t\in[\dim F],\\
\iota_{c_i}(w_{t,i}')&=w_t=\iota_{c_j}(w_{t,j}')\text{ for all }t\in[k],\\
\iota_{c_i}(y_{t,i}')&=\iota_{c_j}(y_{t,j}')\text{ for all }t\in[k].
\end{align*}
The probability of this is at most $((1-1/n^2)q^n)^{-2\dim F-3k}$: here we are using $\mb{F}_q$-linear independence of the $f_{t,i}',w_{t,i}'$ and similar for $j$, and also using that each next equation has probability at most $1/(q^n-q^e)\le((1-1/n^2)q^n)^{-1}$ of holding if $e\le 2\dim F+3k$ equations have been processed so far since $\iota_{c_i},\iota_{c_j}$ are uniformly random injective linear maps $K\hookrightarrow\mb{F}_q^n$. Therefore a union bound over possible choices $(w_1,\ldots,w_k)\in(\mb{F}_q^n)^k$ and of choices $f_{t,i}',f_{t,j}',w_{t,i}',w_{t,j}',y_{t,i}',y_{t,j}'$ yields
\[\mb{E}|\mc{Z}_k|\le(q^n)^k|K|^{2\dim F+2k}((1-1/n^2)q^n)^{-2\dim F-3k}\le 2.\]

Now we provide a lower bound for $|\mc{Z}_k|$ in terms of $|\mc{Z}|$. We choose $w_t\in\mc{Z}$ for $1\le t\le k$ in order, so that each new $w_t$ satisfies the following: $w_{t,i}'$ is not in the $L$-span of $(f_{t',i}')_{t'\in[\dim F]}$ and $(w_{t',i}')_{t'\in[t-1]}$ together, and the same for $j$. There are at least $\max(0,Z-2q^{\ell(\dim F+t-1)})$ such choices since there are at most $(q^\ell)^{\dim F+t-1}$ elements in this $L$-span for $i$, and each of these potential values of $w_{t,i}'=\iota_{c_i}^{-1}(w_t)$ can correspond to at most $1$ value of $w_t=v\in\mc{Z}$ to rule out (and the same for $j$). Call the collection of such tuples $\mc{G}_k$. We see from this analysis that if $Z\ge 2q^{\ell(\dim F+k)}$ then $|\mc{G}_k|\ge(Z/2)^k$.

Finally, we claim that $\mc{G}_k\subseteq\mc{Z}_k$. Given this, we deduce
\[\mb{E}(Z/2)^k\mbm{1}_{Z\ge 2q^{\ell(\dim F+k)}}\le 2\]
and Markov's inequality therefore shows $\mb{P}[Z\ge q^n/z^2]\le 2(2z^2/q^n)^k$ for any integer $1\le k\le n/(2\ell)$. Thus \cref{eq:large-degeneracy-deviation} follows, so as discussed earlier the argument is finished.

Suppose the claim is false, i.e., $\mc{G}_k\not\subseteq\mc{Z}_k$, so that there is $(w_1,\ldots,w_k)\in\mc{Z}^k$ constructed iteratively as above such that $(f_{t,i}')_{t\in[\dim F]}$, $(w_{t,i}')_{t\in[k]}$, and $(y_{t,i}')_{t\in[k]}$ are $\mb{F}_q$-linearly dependent:
\[\sum_{t=1}^{\dim F}\alpha_tf_{t,i}'+\sum_{t=1}^k(\beta_tw_{t,i}'+\gamma_ty_{t,i}')=0\]
for $\alpha_t,\beta_t,\gamma_t\in\mb{F}_q$ (or similar for $j$, but the argument is symmetric in that case). Note that $y_{t,i}'=y_iN(\Pi_i' N)^{-1}\iota_{c_i}^{-1}(b_{w_t,i})$ is an $L$-linear combination of $w_{t,i}'=\iota_{c_i}^{-1}(w_t)$ and $(f_{t,i}'=\iota_{c_i}^{-1}(f_t))_{t\in[\dim F]}$ (recalling that the ``other parts'' of the basis $b_{v,i}$ come from $\phi(F)$). That is, we can write
\[y_{t,i}'=\zeta_tw_{t,i}'+\sum_{t'=1}^{\dim F}\zeta_{t,t'}f_{t',i}'\]
for $\zeta_t,\zeta_{t,t'}\in L$. Plugging in, we find
\[\sum_{t=1}^{\dim F}\bigg(\alpha_t+\sum_{t'=1}^k\gamma_{t'}\zeta_{t',t}\bigg)f_{t,i}'+\sum_{t=1}^k(\beta_t+\gamma_t\zeta_t)w_{t,i}'=0.\]

Furthermore, the condition $i^\ast\in[r]\setminus\mc{I}$ implies that $y_i\notin\mr{span}_{\mb{F}_q}(\Pi_i^\ast)=\mr{span}_{\mb{F}_q}(\Pi_i')$, so that $y_iN(\Pi_i' N)^{-1}\in L^{1\times r}$ has all coordinates in $L\setminus\mb{F}_q$ by the second bullet of \cref{lem:generic}. This implies $\zeta_t\in L\setminus\mb{F}_q$. Now first suppose that not all the $\beta,\gamma$ values are $0$. Let $t^\ast$ be the largest index with $(\beta_{t^\ast},\gamma_{t^\ast})\in\mb{F}_q^2\setminus\{(0,0)\}$. Since $\zeta_{t^\ast}\in L\setminus\mb{F}_q$, this means $\beta_{t^\ast}+\gamma_{t^\ast}\zeta_{t^\ast}\in L^\times$. Dividing out by this value, we easily see that $w_{t^\ast,i}'$ is in the $L$-span of $(f_{t,i}')_{t\in[\dim F]}$ and $(w_{t,i}')_{t\in[t^\ast-1]}$, which contradicts the definition of $\mc{G}_k$. Therefore we must have $\beta_t=\gamma_t=0$ for all $t\in[k]$. We deduce
\[0=\sum_{t=1}^{\dim F}\alpha_tf_{t,i}'=\iota_{c_i}^{-1}\bigg(\sum_{t=1}^{\dim F}\alpha_tf_t\bigg).\]
This is a contradiction since $(f_t)_{t\in[\dim F]}$ is $\mb{F}_q$-linearly independent by definition. We are done.
\end{proof}

We now briefly deduce \cref{prop:template-extendable}.
\begin{proof}[Proof of \cref{prop:template-extendable}]
Let $\{v_1^\ast,\ldots,v_{\dim\mr{Vec}(H)}^\ast\}$ be the given basis for $H$ whose last $\dim F$ values span $F$. Consider $W_t=\mr{span}_{\mb{F}_q}(F\cup\{v_1^\ast,\ldots,v_t^\ast\})$ for $t\in[v_E]$. We will iteratively embed $v_t^\ast$. To this end, let $X_t$ for $0\le t\le v_E$ be the number of injective linear embeddings $\phi^\ast$ of $W_t$ into $\mb{F}_q^n$ which agree with $\phi$ on $F$ such that (a) the image is within $\bigcap_{i\in\psi(H\setminus H[F])}K_i$, (b) for each $R\in(H\setminus H[F])\cap\mr{Gr}(W_t,r)$ we have $\phi^\ast(R)\in G_{\mr{tem},\psi(R)}$, (c) $\dim_L\mr{span}_L(\iota_{\psi(R)}^{-1}(\phi^\ast(R\cap W_t)))=\dim_{\mb{F}_q}(R\cap W_t)$ for all $R\in H\setminus H[F]$, (d) $\Pi_R=\pi_{\psi(R)}(R)$ and $b_{\phi^\ast(R)}=\phi^\ast(x_R)$ for all $R\in(H\setminus H[F])\cap\mr{Gr}(W_t,r)$, and (e) for all $t'\in[t]$, $\phi^\ast(v_{t'}^\ast)\in\bigcap_{i\in\mc{C}_{t'}}K_i$. For $i=0$ we have $X_0=1$ by the given conditions.

Given some $t\ge 1$, we apply \cref{lem:iterative-color-embedding} with $\mc{C} = \psi(H\setminus H[F])$ (clearly $|\mc{C}|\le|H|\le q^{rh}$) and $\mc{C}_1$ replaced by $\mc{C}_t$ to show that
\[X_t\ge(\tau/z)^{C'}q^nX_{t-1}-q^{rh}\cdot(q^\ell)^rX_{t-1}\ge(\tau/z)^{C'+1}q^nX_{t-1}\]
where $C'=C_{\ref{lem:iterative-color-embedding}}(h,q,s)$. The subtracted term comes from guaranteeing (c) holds: for each of at most $q^{rh}$ total $R\in H\setminus H[F]$, there are at most $(q^\ell)^r$ choices of embedding for $v_t^\ast$ that would cause an unexpected $L$-linear dependence, which is the only way to ensure that $\dim_L\mr{span}_L(\iota_{\psi(R)}^{-1}(\phi^\ast(R\cap W_{t'})))$ does not grow by $1$ between $t'=t-1$ and $t'=t$ in cases where $R\cap W_t\neq R\cap W_{t-1}$. The second inequality is true as long as $c=c_{\ref{prop:template-extendable}}(h,q,s)$ is chosen small enough. The result follows by taking $C=h(C'+1)$.
\end{proof}

\section{Covering the remainder and absorbable decomposition}\label{sec:cover}
In this section we provide tools to go from the approximate decomposition provided by \cref{prop:approximate-covering} to something of the form taken in by \cref{prop:final-absorber}. To do this, we first provide a general lemma which will be used multiple times to cover collections of $r$-spaces and convert given signed $s$-space decompositions into ones with better properties, including certain disjointness conditions. The approach here uses a ``disjoint random process'', as opposed to the application of the Lov\'asz Local Lemma in the proof of \cref{prop:final-absorber}, since we need to provide intermediate guarantees such as boundedness and field boundedness.

The precise details are quite technical, but at a high level we have some collection of inputs which is bounded (\cref{def:bounded}) and whose extension to $L$ is bounded (similar to field boundedness) in an appropriate sense. We seek to process them one at a time, flipping them randomly to some extension into $G_\mr{tem}$ (which we additionally enforce is rainbow, for later application) which is in some sense disjoint from what has happened so far. We wish to show this process will run to completion and that it produces outcomes which are not too concentrated anywhere (which allows extraction of various boundedness and field boundedness conditions when applied).

For ease of definition of the process, we make some definitions that will allow us to talk about the extension of our $r$-spaces to $L$ in a convenient way. We also introduce the notion of field disjointness.
\begin{definition}\label{def:field-disjoint}
Given the setup of \cref{sub:setup,def:template}, for $R\in\mr{Gr}_q(n,r)$ let $\mr{ind}(R)\in[z]\cup\{\ast\}$ be either the unique index $i\in[z]$ with $R\in G_{\mr{tem},i}$ or $\ast$ if $R\notin G_\mr{tem}$. Let $\chi(R)=\{R'\in G_{\mr{tem},i}\colon\mr{span}_L(\iota_i^{-1}(R'))\leqslant\mr{span}_L(\iota_i^{-1}(R))\}$ if $\mr{ind}(R)=i$ otherwise let $\chi(R)=\{R\}$ if $\mr{ind}(R)=\ast$. We extend $\chi(\mc{R})=\bigcup_{R\in\mc{R}}\chi(R)$ to sets in the obvious way, without multiplicity. Finally, a collection $\mc{R}\subseteq\mr{Gr}_q(n,r)$ is \emph{field disjoint with respect to the template} if there are not distinct $R_1,R_2\in\mc{R}$ with $R_1\in\chi(R_2)$. Similarly $\Phi\in\mb{Z}^{\mr{Gr}_q(n,r)}$ is \emph{field disjoint with respect to the template} if $\mr{supp}(\Phi)$ is.
\end{definition}
We briefly observe that field boundedness implies boundedness of $\chi$.
\begin{lemma}\label{lem:chi-bounded}
Given the setup of \cref{def:field-disjoint}, if $\mc{R}\subseteq G_\mr{tem}$ is $(\theta,L)$-field bounded, then $\chi(R)$ is $z\theta$-bounded.
\end{lemma}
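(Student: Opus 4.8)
The plan is to run the argument of \cref{lem:strength-bounded} essentially verbatim, the only extra point being to bound how much the operator $\chi$ can inflate the relevant collection of $r$-spaces. Since $\mc{R}\subseteq G_\mr{tem}$ we have $\chi(\mc{R})\subseteq G_\mr{tem}$, and $\chi(\mc{R})$ is an ordinary (unsigned) set, so $\chi(\mc{R})^-=0$ and it suffices to show that for every $Q\in\mr{Gr}_q(n,r-1)$ the number of $R'\in\chi(\mc{R})$ with $Q\leqslant R'$ is at most $z\theta q^n$. I would split this count over the $z$ colors: for each $i\in[z]$, bound $\#\{R'\in\chi(\mc{R})\cap G_{\mr{tem},i}\colon Q\leqslant R'\}$ by (essentially) $\theta q^n$ and sum. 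Exactly as in \cref{lem:strength-bounded}, this count is $0$ unless $Q\leqslant K_i$ (every element of $G_{\mr{tem},i}$ lies in $K_i$) and $\dim_L\mr{span}_L(\iota_i^{-1}(Q))=r-1$, the latter because $Q\leqslant R'\leqslant K_i$ together with $\dim_L\mr{span}_L(\iota_i^{-1}(R'))=r$ from the fourth bullet of \cref{lem:template-basic} would force $\iota_i^{-1}(Q)$ to span $r-1$ dimensions over $L$.

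So fix such an $i$ and set $Q_i^\ast=\mr{span}_L(\iota_i^{-1}(Q))$, an $(r-1)$-dimensional $L$-subspace of $K$. If $R'\in\chi(\mc{R})\cap G_{\mr{tem},i}$, then $R'\in\chi(R)$ for some $R\in\mc{R}$ with $\mr{ind}(R)=i$, and since both $\iota_i^{-1}(R')$ and $\iota_i^{-1}(R)$ span $r$ dimensions over $L$ (fourth bullet of \cref{lem:template-basic}), the defining relation of $\chi$ forces $\mr{span}_L(\iota_i^{-1}(R'))=\mr{span}_L(\iota_i^{-1}(R))=:W$; if moreover $Q\leqslant R'$ then $Q_i^\ast\leqslant_L W$. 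I would group the admissible $R'$ by this $L$-span $W$. For each fixed $W$, the set of $R'\in G_{\mr{tem},i}$ with $\mr{span}_L(\iota_i^{-1}(R'))=W$ consists of $r$-dimensional $\mb{F}_q$-subspaces of $\iota_i(W)$, a space of $\mb{F}_q$-dimension $\ell r=O_{q,s}(1)$, so there are only $O_{q,s}(1)$ of them. On the other hand each such $W$ equals $\mr{span}_L(\iota_i^{-1}(R))$ for some $R\in\mc{R}$ of color $i$ with $Q_i^\ast\leqslant_L\mr{span}_L(\iota_i^{-1}(R))$; by the $(\theta,L)$-field boundedness of $\mc{R}$, applied to the index $i$ and the $(r-1)$-dimensional $L$-space $Q_i^\ast$ (see \cref{def:field-bounded}), there are at most $\theta q^n$ such $R$, hence at most $\theta q^n$ possible values of $W$. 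Multiplying the two bounds gives $\#\{R'\in\chi(\mc{R})\cap G_{\mr{tem},i}\colon Q\leqslant R'\}=O_{q,s}(\theta q^n)$, and summing over the $z$ colors yields the claimed boundedness (the $O_{q,s}(1)$ factor being harmless here, exactly as such factors are absorbed elsewhere in the paper).

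The only real content beyond \cref{lem:strength-bounded} is the observation that $\chi$ spreads each $r$-space only within the $L$-line it spans, whose ambient $\mb{F}_q$-dimension $\ell r$ is bounded in terms of $q,s$; so the fibres of the map $R'\mapsto\mr{span}_L(\iota_i^{-1}(R'))$ have bounded size and the field-boundedness bound for $\mc{R}$ transfers to $\chi(\mc{R})$. I do not anticipate any genuine obstacle here; the bookkeeping of the various ``$0$'' cases is routine once one has the corresponding computation in \cref{lem:strength-bounded} in hand.
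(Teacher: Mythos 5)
Your proof is correct and follows essentially the same route as the paper's: fix a color $i\in[z]$, observe the count is zero unless $Q\leqslant K_i$ and $\iota_i^{-1}(Q)$ spans $r-1$ dimensions over $L$ (via the fourth bullet of \cref{lem:template-basic}), and then invoke $(\theta,L)$-field boundedness of $\mc{R}$ at $Q^\ast=\mr{span}_L(\iota_i^{-1}(Q))$, using that $Q\leqslant R'\in\chi(R)$ forces $Q^\ast\leqslant\mr{span}_L(\iota_i^{-1}(R))$. Your additional step of bounding the fiber of $R'\mapsto\mr{span}_L(\iota_i^{-1}(R'))$ by $O_{\ell,q,s}(1)$ is a point the paper's one-paragraph proof elides; it means the bound you actually obtain is $O_{\ell,q,s}(z\theta)$ rather than exactly $z\theta$, but as you correctly observe this constant is absorbed harmlessly wherever the lemma is applied (e.g.\ in \cref{lem:final-rainbow,lem:final-color-consistent}, where $\theta$ is replaced by a comfortably larger quantity anyway).
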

\begin{proof}
Given $Q\in\mr{Gr}_q(n,r-1)$, we wish to count spaces of $\chi(R)$ containing it. There are $z$ choices for possible template index $i$ to use. If $\mr{span}_L(\iota_i^{-1}(Q))<r-1$ then any $r$-space $R\in\mr{Gr}_q(n,r)$ containing $Q$ will not span $r$ dimensions when pulled back via $\iota_i^{-1}$ and extended to $L$, and hence cannot be in $G_{\mr{tem},i}$. Therefore we may focus on cases where $\mr{span}_L(\iota_i^{-1}(Q))=r-1$, in which case we can then apply the field boundedness condition (\cref{def:field-bounded}) with $Q^\ast=\mr{span}_L(Q)$. We are using that $Q\leqslant R'$ for some $R'\in\chi(R)$ and $R\in G_{\mr{tem},i}$ implies $\mr{span}_L(\iota_i^{-1}(Q))\leqslant\mr{span}_L(\iota_i^{-1}(R))$.
\end{proof}

Now we prove the master disjoint process lemma. We first detail the setup for the precise statement, as it is somewhat involved.

\begin{definition}\label{def:disjointness-setup}
Given \cref{sub:setup,def:template}, consider a parameter $\theta$ and the following data.
\begin{itemize}
    \item An \emph{extension type}: $r$-dimensional $q$-system $H$ and $F\leqslant\mr{Vec}(H)$ so that $H\setminus H[F]$ is nonempty and $v=\dim\mr{Vec}(H)-\dim F$;
    \item An \emph{extension core}: $r$-dimensional $q$-system $H'$ on $F$ such that for all $R\in H\setminus H[F]$, there is $R'\in H'$ so that $R\cap F\leqslant R'$;
    \item A basis $(v_1^\ast,\ldots,v_{\dim\mr{Vec}(H)}^\ast\}$ for $\mr{Vec}(H)$ so that the last $\dim F$ vectors span $F$
    \item An \emph{avoidance set}: $r$-dimensional $q$-system $\mc{R}_\mr{init}\subseteq\mr{Gr}_q(n,r)$ which is $\theta$-bounded (\cref{def:bounded});
    \item A bounded list of \emph{roots}: a sequence of injective linear maps $\phi_1,\ldots,\phi_x\colon F\hookrightarrow\mb{F}_q^n$ such that for every $(r-1)$-space $Q$, $\#\{t\in[x]\colon Q\leqslant R\text{ for some }R\in\phi_t(H')\}\le\theta q^n$.
    \item A list of \emph{avoided color sets}: for all $t\in[x]$, sets $\mc{C}_t\subseteq[z]$ of size at most $z^{1/2}$;
    \item A list of \emph{preparatory color sets}: $\mc{C}_{t,j}\subseteq[z]$ of size at most $h$ for all $t\in[x],j\in[v]$;
    \item \emph{Extensions}: $E_t:=(\phi_t,F,H)$;
    \item \emph{Potential allowed embeddings}: $\mc{H}_t':=\{\phi^\ast\in\mc{X}_{E_t}(G_\mr{tem})\colon\text{for all }R\in H\setminus H[F],~\phi^\ast(H)\subseteq K_{\mr{ind}(\phi^\ast(R))}\text{ and }\mr{ind}(\phi^\ast(R))\notin\mc{C}_t\text{ are distinct};~\phi^\ast(v_j^\ast)\in\bigcap_{i\in\mc{C}_{t,j}}K_i\text{ for all }j\in[v]\}$.
\end{itemize}
\end{definition}

\begin{lemma}\label{lem:master-disjointness}
Given \cref{sub:setup,def:template}, we have the following whp over the randomness of the template, as long as $h\ge 1$, $C=C_{\ref{lem:master-disjointness}}(h,q,s) > 0$, $c=c_{\ref{lem:master-disjointness}}(h,q,s) > 0$, $q^{-cn}\le\tau\le c$, $d\ge d_{\ref{lem:master-disjointness}}(r)$, $\ell\ge\ell_{\ref{lem:master-disjointness}}(d,s)$, and $n$ is large.

For any choice of data as in \cref{def:disjointness-setup} with $\dim\mr{Vec}(H)\le h$ and $q^{-cn}\le\theta\le(\tau/z)^{1/c}$, let $\mc{R}_0:=\emptyset$ and then consider running the following random process for $1\le t\le x$:
\begin{itemize}
    \item Let $\mc{H}_t=\{\phi^\ast\in\mc{H}_t'\colon\emph{for all}~R\in H\setminus H[F],~\phi^\ast(R)\notin\mc{R}_\mr{init}\cup\chi(\mc{R}_{t-1})\}$.
    \item If $|\mc{H}_t|<(\tau/z)^Cq^{vn}$, we call the process \emph{failed}, we set $\phi_t^\ast=\phi_{t+1}^\ast=\cdots=\phi_x^\ast:=\ast$, a special wildcard value, we set $\mc{R}_t=\cdots=\mc{R}_x:=\mc{R}_{t-1}$, and we stop the iteration.
    \item Otherwise, we sample $\phi_t^\ast\sim\mr{Unif}(\mc{H}_t)$, let $\mc{R}_t:=\mc{R}_{t-1}\cup\phi_t^\ast(H\setminus H[F])$ as a set, and continue.
\end{itemize}
Then the random process has the property that it whp never fails, and furthermore for any $\mc{F}\subseteq\mr{Gr}_q(n,r)$ we have $\mb{P}[|\mc{F}\cap\chi(\mc{R}_x)|\ge\theta^{2/3}|\mc{F}|]\le\exp(-\theta|\mc{F}|)$.
\end{lemma}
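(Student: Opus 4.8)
The plan is to run the process and control, by a \emph{single coupled induction} on $t$, two facts that hold whp over the template: (i) $|\mc{H}_t|\ge(\tau/z)^Cq^{vn}$, so step $t$ does not fail, and (ii) $\mc{R}_\mr{init}\cup\chi(\mc{R}_t)$ stays $\theta^{2/3}$-bounded (in the sense of \cref{def:bounded}), which is what keeps (i) true; the $\mc{F}$-concentration statement is essentially the engine behind (ii) and is proved by the same estimate. Throughout I condition on the whp template events of \cref{prop:template-extendable,lem:template-basic}, plus a routine estimate that each $(r-1)$-space lies in $O_{q,s}(\tau q^n)$ $r$-spaces of $G_\mr{tem}$. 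I also record the elementary facts that $|\chi(R)|=O_{\ell,q,r}(1)$ for every $r$-space $R$ (by the fourth bullet of \cref{lem:template-basic}, $\chi(R)$ lies inside one $r\ell$-dimensional $\mb{F}_q$-space), that $|H\setminus H[F]|\le q^{rh}$, that $\chi(\chi(\cdot))=\chi(\cdot)$ and $\chi(\mc{A}\cup\mc{B})=\chi(\mc{A})\cup\chi(\mc{B})$ (so tracking $\chi(\mc{R}_t)$ rather than $\mc{R}_t$ is harmless and automatically supplies boundedness of $\chi$), and that $\rho'\in\chi(\rho)\iff\rho\in\chi(\rho')$ whenever $\rho,\rho'\in G_\mr{tem}$ (both $L$-spans are $r$-dimensional by \cref{lem:template-basic}, so the containment in \cref{def:field-disjoint} is an equality).

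\emph{Feasibility.} Assuming (ii) at step $t-1$, I would deduce $|\mc{H}_t|\ge(\tau/z)^Cq^{vn}$. The lower bound $|\mc{H}_t'|\ge(\tau/z)^{C_1}q^{vn}$ follows from \cref{prop:template-extendable} after choosing an injective $r$-space coloring $\psi\colon H\setminus H[F]\to[z]$ avoiding $\mc{C}_t$ whose colors $i$ satisfy $\phi_t(F)\leqslant K_i$ together with the $L$-dimension hypothesis \cref{prop:template-extendable}(b); such colors are abundant for \emph{every} fixed $\phi_t(F)$ because $\iota_1,\dots,\iota_z$ are uniformly random and $z=n^2$ is large compared with $q^{O(h)}$ (Chernoff and a union bound over the $\le q^{O(hn)}$ relevant bounded-dimensional subspaces, using $\ell=O(1)$ and $|\mc{C}_t|\le z^{1/2}$). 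For the upper bound, a standard counting argument — each bad $\phi^\ast$ has some $\phi^\ast(R)$ in the $\theta^{2/3}$-bounded set $\mc{R}_\mr{init}\cup\chi(\mc{R}_{t-1})$, there are $O(\theta^{2/3}q^{(r-\dim(R\cap F))n})$ such targets above $\phi_t(R\cap F)$ by boundedness, and at most $q^{(v-(r-\dim(R\cap F)))n}$ linear extensions with $\phi^\ast(R)$ fixed — gives $O(q^{rh}\theta^{2/3})q^{vn}$ bad embeddings, which is $\le\tfrac12(\tau/z)^{C_1}q^{vn}$ for $c$ small.

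\emph{Boundedness and the concentration bound.} Both are driven by one estimate. Fix a set $\mc{F}$ (without loss of generality $\mc{F}\subseteq G_\mr{tem}$; to maintain (ii), $\mc{F}$ is the star of a fixed $(r-1)$-space $Q$) and set $Z_t:=|\mc{F}\cap\chi(\phi_t^\ast(H\setminus H[F]))|$, so $Z_t\le O_{\ell,q,r}(q^{rh})$ and $|\mc{F}\cap\chi(\mc{R}_x)|\le\sum_tZ_t$. Using the symmetry of $\chi$, the trivial extension count, and $|\mc{H}_t|\ge(\tau/z)^Cq^{vn}$, the conditional probability $\mb{P}[Z_t>0\mid\text{past}]$ is at most a \emph{deterministic} $p_t=(\tau/z)^{-C}q^{-vn}\sum_R\#\{\phi^\ast\in\mc{H}_t'\colon\phi^\ast(R)\in\chi(\mc{F})\}$; the bounded-roots hypothesis of \cref{def:disjointness-setup} (extended from $(r-1)$-spaces to subspaces of dimension $<r-1$ in the usual way, via the extension core) bounds, for each $R$, the number of $t$ with $\phi_t(R\cap F)$ below a space of $\chi(\mc{F})$, yielding $\sum_tp_t\le O(q^{rh}(z/\tau)^C)\,\theta\,|\mc{F}|\le\tfrac12\theta^{2/3}|\mc{F}|/q^{O(rh)}$ for $c$ small. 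Since $Z_t\le O(q^{rh})\mbm{1}[Z_t>0]$, \cref{lem:bernoulli-domination,lem:chernoff} then give $\mb{P}[|\mc{F}\cap\chi(\mc{R}_x)|\ge\theta^{2/3}|\mc{F}|]\le\exp(-\Omega(\theta^{2/3}|\mc{F}|/q^{O(rh)}))\le\exp(-\theta|\mc{F}|)$, using that $\theta\le(\tau/z)^{1/c}$ forces $\theta^{1/3}\le q^{-O(rh)}$. Applying this with $\mc{F}$ the star of each $(r-1)$-space and union-bounding (legitimate since $\theta\ge q^{-cn}$) maintains (ii) and hence, with the feasibility step, shows the process whp never fails; one extra point is that for $\mc{F}$ the star of $Q$ the $O(\theta q^n)$ ``deterministic'' indices $t$ with $Q=\phi_t(R\cap F)$ and $\dim(R\cap F)=r-1$ must be removed from the sum and bounded directly by $O(\theta q^{n+rh})$ using root-boundedness, since there the sampling probability is not small.

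The main obstacle is exactly this coupling: a single step can raise the $\chi(\mc{R}_t)$-multiplicity of a given $(r-1)$-space by as much as $q^{rh}$, so $\theta^{2/3}$-boundedness of $\mc{R}_t$ cannot be preserved by a naive step-by-step potential; one must separate the deterministic contribution of roots whose core already passes near $Q$ (controlled by the bounded-roots hypothesis) from the rare random contributions (controlled by \cref{lem:chernoff}), and then feed the resulting boundedness back into the feasibility estimate — so non-failure and boundedness genuinely must be proved together by a single induction rather than in sequence.
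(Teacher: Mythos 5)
Your proposal is correct and follows essentially the same route as the paper: lower-bound $|\mc{H}_t'|$ via \cref{prop:template-extendable} plus the Chernoff-guaranteed abundance of good colors for $\phi_t(F)$, upper-bound $|\mc{H}_t'\setminus\mc{H}_t|$ via boundedness of the forbidden set, and control $|\mc{F}\cap\chi(\mc{R}_x)|$ by \cref{lem:bernoulli-domination,lem:chernoff} using the deterministic conditional bound $\mb{P}[\phi_t^\ast=\phi\mid\text{past}]\le(z/\tau)^Cq^{-vn}$ together with root-boundedness fed through the extension core. The only cosmetic difference is that your ``coupled induction'' is not actually needed: precisely because the stopping rule makes that conditional bound hold regardless of history, the concentration estimate is unconditional and can be proved first, with non-failure deduced afterwards from $\theta^{2/3}$-boundedness of $\chi(\mc{R}_x)$ applied to the stars of all $(r-1)$-spaces, which is how the paper organizes the argument.
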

\begin{remark}
Note that the property that the random process whp never fails is only considering the randomness of the random process, not the template. One should think that we condition on a suitable outcome of the template, and almost all outcomes are such that these random processes whp run to completion.
\end{remark}
\begin{proof}
Whp the template satisfies \cref{prop:template-extendable} for $h$. Further, given a $(\dim F)$-space $F'\leqslant\mb{F}_q^n$, Chernoff shows that with probability at least $1-\exp(-\Omega_{q,\ell}(z))$ over the random injections $\iota_1,\ldots,\iota_z$, there are at least $2z^{1/2}$ many $i\in[z]$ so that $F'\leqslant K_i$ and $\dim_L\mr{span}_L(\iota_i^{-1}(F'))=\dim F$. Taking a union bound, whp this property holds for all $(\dim F)$-spaces. We thus condition on such an outcome and treat it as non-random. Additionally, note that the condition on the $\phi_t$ in \cref{def:disjointness-setup} implies that if $\dim Q'=u\in\{0,1,\ldots,r-1\}$, then
\begin{equation}\label{eq:master-disjointness-boundedness}
\#\{t\in[x]\colon Q'\leqslant R\text{ for some }R\in\phi_t(H')\}\le\theta q^{(r-u)n},
\end{equation}
since all such $t$ have the property that there is an $(r-1)$-space $Q$ with $Q\geqslant Q'$ which is contained in $R$ for some $R\in\phi_t(H')$, and there are clearly at most $q^{(r-u-1)n}$ choices for $Q$ containing $Q'$.

Let $C$ be large to be chosen later in terms of $h,q,s$. Note that
\begin{equation}\label{eq:cover-down-conditional}
\mb{P}[\phi_t^\ast=\phi|\phi_1^\ast,\ldots,\phi_{t-1}^\ast]\le(z/\tau)^Cq^{-vn}
\end{equation}
for all $t\in[x]$ and $\phi\in\mc{X}_{E_t}(G)$ (recall $G=\mr{Gr}_q(n,r)$) due to the fact that we stop the process if failure occurs (and put $\ast$ everywhere after failure). Now consider any family $\mc{F}$ of $r$-spaces. Let us consider the random variable $X=X_\mc{F}:=\#\{(R,R')\in\mc{F}\times\mc{R}_x\colon R\in\chi(R')\}\ge|\mc{F}\cap\chi(\mc{R}_x)|$. For every $R\in\mr{Gr}_q(n,r)$ and $t\in[x]$ let $\mc{H}_{R,t}=\{\phi\in\mc{H}_t'\colon R\in\phi(H\setminus H[F])\}$ and note that
\[\qbinom{s}{r}_q^{-1}X\le\sum_{t=1}^x\bigg(\sum_{R\in\chi(\mc{F})}\mbm{1}_{\phi_t^\ast\in\mc{H}_{R,t}}\bigg)=:\sum_{t=1}^xX_t.\]
The inequality is since $R\in\chi(R')$ and $R'\in\chi(R)$ are equivalent, and since every space in $\chi(\mc{F})$ can come from at most $\qbinom{s}{r}_q$ spaces in $\mc{F}$. Now, the right is a sum of nonnegative integer random variables $X_t$, each of which is bounded by $|H\setminus H[F]|\le q^{rh}$ (given $\phi_t^\ast$, the only $R\in\mc{F}$ that can contribute $1$ are those with $R\in\phi_t^\ast(H\setminus H[F])$). Let $Y_t=\mbm{1}_{X_t\neq 0}$ and note $X_t\le q^{rh}Y_t$, so $Y=\sum_{t=1}^xY_t\ge q^{-r(h+s)}X$. Furthermore, \cref{eq:cover-down-conditional} shows that
\[\mb{P}[Y_t=1|\phi_1^\ast,\ldots,\phi_{t-1}^\ast]\le\sum_{R\in\chi(\mc{F})}|\mc{H}_{R,t}|(z/\tau)^Cq^{-vn}=:\mu_t\]
for all $t\in[x]$. Let $\mu=\sum_{t=1}^x\mu_t$. Thus we can apply \cref{lem:bernoulli-domination} with $p_t=\min(\mu_t,1)$ and then \cref{lem:chernoff} to deduce
\begin{align}\label{eq:disjoint-family-bounded}
\mb{P}[X_\mc{F}\ge\theta^{2/3}|\mc{F}|]\le\mb{P}[Y\ge\theta^{3/4}|\mc{F}|]\le\exp(-\Omega(\theta^{3/4}|\mc{F}|))\le\exp(-\theta|\mc{F}|)
\end{align}
as long as $2\mu\le\theta^{3/4}|\mc{F}|$. We now turn to demonstrating this inequality in order to establish \cref{eq:disjoint-family-bounded}.

For fixed $R\in\chi(\mc{F})$ we have
\begin{align}
\sum_{t=1}^x|\mc{H}_{R,t}|&=\sum_{u=0}^{r-1}\sum_{\substack{t\in[x]\\\dim(R\cap\phi_t(F))=u}}|\mc{H}_{R,t}|\le\sum_{u=0}^{r-1}\bigg(\sum_{\substack{R_0\in H\setminus H[F]\\R'\in H'\\\dim(R_0\cap R')=u}}\sum_{\substack{t\in[x]\\\phi_t(R_0\cap R')\leqslant R}}\#\{\phi\in\mc{H}_t'\colon R=\phi(R_0)\}\bigg)\notag\\
&\le\sum_{u=0}^{r-1}O_{h,q,r}(\theta q^{(r-u)n})\cdot O_{q,s}(q^{(\dim\mr{Vec}(H)-(\dim F+r-u))n})\le O_{h,q,s}(\theta q^{vn}).\label{eq:master-disjointness-mean}
\end{align}
The equality follows since for every $\phi\in\mc{H}_{R,t}$, we have $R=\phi(R_0)$ for some $R_0\in H\setminus H[F]$ and thus $R\cap\phi_t(F)=\phi(R_0\cap F)$ has some dimension $u\le r-1$ (recall $\phi\in\mc{H}_{R,t}\subseteq\mc{H}_t'$ implies $\phi,\phi_t$ agree on $F$). The first inequality of \cref{eq:master-disjointness-mean} follows since in such a situation, there furthermore is $R'\in H'$ with $R_0\cap F\leqslant R'$ by the given conditions. Since $R'\leqslant F$ this implies $R_0\cap F=R_0\cap R'$ hence $\dim(R_0\cap R')=u$ and $\phi_t(R_0\cap R')=\phi_t(R_0\cap F)\leqslant R$.

Finally, the inequality in the second line of \cref{eq:master-disjointness-mean} is proven by noting there are at most $O_{h,q,r}(1)\cdot\theta q^{(r-u)n}$ terms in the inner double sum by counting ways to choose $R_0,R'$ and then by \cref{eq:master-disjointness-boundedness} applied to $Q'=\phi_t(R_0\cap R')$. Then note that $\#\{\phi\in\mc{H}_t'\colon R=\phi(R_0)\}=O_{q,r}(q^{(\dim\mr{Vec}(H)-(\dim F+r-u))n})$ since this is bounded by the number of extensions $\phi\in\mc{X}_{E_t}(G)$ which map $R_0$ to $R$ (up to some permutation which has $O_{q,r}(1)$ choices), and then the remaining number of free dimensions to embed is $\dim\mr{Vec}(H)-(\dim F+r-u)$ since $\dim\mr{span}_{\mb{F}_q}(R_0\cup F)=\dim F+r-\dim(R_0\cap F)=\dim F+r-u$.

Now summing \cref{eq:master-disjointness-mean} over $R\in\chi(\mc{F})$ and the definition of $\mu_t$ shows that $\mu\le O_{h,q,s}(1)\cdot\theta(z/\tau)^C|\chi(\mc{F})|$, which demonstrates the desired inequality $2\mu\le\theta^{3/4}|\mc{F}|$ as long as $1/c\ge 4C+1$ is chosen appropriately and $n$ is large. We are using $|\chi(\mc{F})|\le\qbinom{s}{r}_q|\mc{F}|$. Thus \cref{eq:disjoint-family-bounded} indeed holds. This demonstrates the last part of the lemma.

Next we claim that the process does not fail whp, which will finish. Fix some $t\in[x]$, and consider the event $\mc{E}_t$ that $|\mc{H}_t|\le(\tau/z)^Cq^{vn}$. Note that \cref{prop:template-extendable} shows
\[\mc{H}_t'=\bigg\{\phi^\ast\in\mc{X}_{E_t}(G_\mr{tem})\colon\genfrac{}{}{0pt}{}{\forall R\in H\setminus H[F],~\phi^\ast(H)\subseteq K_{\mr{ind}(\phi^\ast(R))}\text{ and }\mr{ind}(\phi^\ast(R))\notin\mc{C}_t\text{ are distinct};}{\phi^\ast(v_j^\ast)\in\bigcap_{i\in\mc{C}_{t,j}}K_i\text{ for all }j\in[v]}\bigg\}\]
satisfies $|\mc{H}_t'|\ge(\tau/z)^{C/2}q^{vn}$ if $C$ was chosen appropriately large in terms of $h,q,r$. Indeed, the $(\dim F)$-space $\phi_t(F)$ is in at least $2z^{1/2}$ many $K_i$ with $\dim_L\mr{span}_L(\iota_i^{-1}(\phi_t(F)))=\dim F$ and thus we assign distinct such $i\notin\mc{C}_t$ to all of $H\setminus H[F]$ to construct the coloring function $\psi$ (recall $|\mc{C}_t|\le z^{1/2}$). The condition $|\psi^{-1}(i)|\le 1\le\qbinom{s}{r}_q$ is trivial, while \ref{prop:template-extendable}(a,b) follow from the condition on the $i\in\mr{range}(\psi)$ (note that if an $r$-space $R$ in $K$ extends to $r$ dimensions over $L$, then every $\mb{F}_q$-subspace of $R$ of dimension $u$ extends to $u$ dimensions over $L$). Also, we can choose the $\pi_i,x_R$ arbitrarily as discussed in the remark following \cref{prop:template-extendable}. Finally, we let the basis for $\mr{Vec}(H)$ in \cref{prop:template-extendable} be the same as the one we are given, and let $\mc{C}_j=\mc{C}_{t,j}$ for $j\in[v_{E_t}]=[v]$.

Thus the event $\mc{E}_t$ implies that $|\mc{H}_t'\setminus\mc{H}_t|\ge((\tau/z)^{C/2}-(\tau/z)^C)q^{vn}$. But
\[\mc{H}_t'\setminus\mc{H}_t\subseteq\bigcup_{R\in H\setminus H[F]}(\{\phi^\ast\in\mc{X}_{E_t}(G)\colon\phi^\ast(R)\in\mc{R}_\mr{init}\}\cup\{\phi^\ast\in\mc{X}_{E_t}(G)\colon\phi^\ast(R)\in\chi(\mc{R}_x)\}).\]
Given $R\in H\setminus H[F]$ with $\dim(R\cap F)=u\in\{0,\ldots,r-1\}$, the size of the first set within the union is bounded by $\theta q^{(r-u)n}\cdot O_{q,r}(q^{(\dim\mr{Vec}(H)-(\dim F+r-u))n})=O_{q,r}(\theta q^{vn})$ due to a similar argument to the proof of the second inequality of \cref{eq:master-disjointness-mean}: choose $\phi^\ast(R)\in\mc{R}_\mr{init}$ which contains the fixed $u$-space $\phi^\ast(R\cap F)=\phi_t(R\cap F)$, leading to $\theta q^{(r-u)n}$ possibilities due to the $\theta$-boundedness (\cref{def:bounded}) of $\mc{R}_\mr{init}$, and then consider the number of ways to extend to all of $\mr{Vec}(H)$ given the map on $\mr{span}_{\mb{F}_q}(F\cup R)$. And if $\chi(\mc{R}_x)$ were $\theta^{2/3}$-bounded then we would obtain a similar bound on the size of the second set, which would ultimately show
\[|\mc{H}_t'\setminus\mc{H}_t|\le O_{h,q,r}(\theta^{2/3}q^{vn})\le(\tau/z)^Cq^{vn}\]
as long as $1/c\ge 2C+1$ is chosen appropriately. This contradicts $\mc{E}_t$!

Therefore $\mc{E}_t$ implies that $\chi(\mc{R}_x)$ is not $\theta^{2/3}$-bounded, i.e., there is some $(r-1)$-space $Q$ so that with $\mc{F}_Q=\{R\in\mr{Gr}_q(n,r)\colon R\geqslant Q\}$ we have $X_{\mc{F}_Q}>\theta^{2/3}q^n\ge\theta^{2/3}|\mc{F}_Q|$. But using \cref{eq:disjoint-family-bounded} on at most $q^{(r-1)n}$ possible families $\mc{F}_Q$ (of size $\Theta_{q,r}(q^n)$ each), recalling $X_\mc{F}\ge|\mc{F}\cap\chi(\mc{R}_x)|$, and taking a union bound, we see that $\mb{P}[\mc{E}_t]\le q^{-3rn}$, say. Finally, note that \cref{eq:master-disjointness-boundedness} implies that $x\le q^{2rn}$ so taking a union bound over all $t$ finishes the proof.
\end{proof}

Given \cref{lem:master-disjointness}, we establish \cref{lem:final-covering} which shows that we can cover all of the leftover coming from \cref{prop:approximate-covering} disjointly by $s$-spaces in a way that only touches template $r$-spaces; this will allow us to focus on a ``spill'' within the template in the proof of \cref{thm:main}. Additionally, we can guarantee that the spillover is field bounded and field disjoint.

\begin{lemma}\label{lem:final-covering}
Given the setup of \cref{sub:setup,def:template}, we have the following as long as $c=c_{\ref{lem:final-covering}}(q,s)>0$, $q^{-cn}\le\tau\le c$, $d\ge d_{\ref{lem:final-covering}}(r)$, $\ell\ge\ell_{\ref{lem:final-covering}}(d,s)$, and $n$ is large. Whp over the randomness of the template, for any $r$-dimensional $q$-system $\mc{R}$ on $V$ with $\mc{R}\cap G_\mr{tem}=\emptyset$ which is $\theta$-bounded (\cref{def:bounded}) for some $q^{-cn}\le\theta\le(\tau/z)^{1/c}$ there is an $s$-dimensional $q$-system $\mc{S}$ such that $\snorm{\partial_{s,r}\mc{S}}_\infty\le 1$ and $\partial_{s,r}\mc{S}=\mc{R}\cup\mc{R}'$ where $\mc{R}'\subseteq G_\mr{tem}$ and $\mc{R}'$ is field disjoint and $(\theta^{1/2},L)$-field bounded with respect to the template.
\end{lemma}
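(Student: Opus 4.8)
The plan is to apply the master disjoint process lemma, \cref{lem:master-disjointness}, with a carefully chosen extension type, and then read off field disjointness and field boundedness of the spillover from its conclusions. Set $h=s$. Fix a single $s$-space $S_0$ (say on $\mb{F}_q^{s}$) and a fixed $r$-subspace $R_0\leqslant S_0$, and take $F=R_0$, $H=\mr{Gr}(S_0,r)$, so that $\mr{Vec}(H)=S_0$, $v=\dim\mr{Vec}(H)-\dim F=s-r\ge 1$, and $H\setminus H[F]$ consists of the $\qbinom{s}{r}_q-1$ $r$-subspaces of $S_0$ other than $R_0$. Take the extension core $H'=\{F\}$ (the required containment $R\cap F\leqslant R'$ holds trivially with $R'=F$), pick any basis $(v_1^\ast,\ldots,v_s^\ast)$ of $S_0$ whose last $r$ vectors span $F$, set the avoidance set $\mc{R}_\mr{init}=\es$, and set all avoided color sets $\mc{C}_t$ and preparatory color sets $\mc{C}_{t,j}$ equal to $\es$. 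Finally let the roots $\phi_1,\ldots,\phi_x\colon F\hookrightarrow\mb{F}_q^n$, with $x=|\mc{R}|$, be linear isomorphisms of $F$ onto the $r$-spaces in the input $q$-system $\mc{R}$, listed in any order. With these choices $\phi_t(H')=\{\phi_t(F)\}$ is the $t$-th $r$-space of $\mc{R}$, so the ``bounded list of roots'' condition in \cref{def:disjointness-setup} is exactly the $\theta$-boundedness of $\mc{R}$ (using $\mc{R}^-=0$), and all other hypotheses of \cref{lem:master-disjointness} are immediate (choosing $c_{\ref{lem:final-covering}}$ small and $d_{\ref{lem:final-covering}},\ell_{\ref{lem:final-covering}}$ large relative to the corresponding parameters of \cref{lem:master-disjointness}).

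Condition on a template outcome in the (whp) good event of \cref{lem:master-disjointness} and run the process; whp over the process it never fails. Let $\mc{S}=\{\phi_t^\ast(S_0)\colon t\in[x]\}$ and $\mc{R}'=\mc{R}_x=\bigcup_{t\in[x]}\phi_t^\ast(H\setminus H[F])$. Since every $\phi_t^\ast\in\mc{H}_t'\subseteq\mc{X}_{E_t}(G_\mr{tem})$ we have $\mc{R}'\subseteq G_\mr{tem}$, whereas the roots $\phi_t^\ast(F)=\phi_t(F)$ exhaust $\mc{R}$, which is disjoint from $G_\mr{tem}$ by hypothesis. The $s$-spaces $\phi_t^\ast(S_0)$ are pairwise distinct: if $\phi_{t_1}^\ast(S_0)=\phi_{t_2}^\ast(S_0)=:S$ with $t_1\ne t_2$, then $\phi_{t_2}(F)\leqslant S$ is an $r$-subspace of $S$ distinct from $\phi_{t_1}(F)$, hence lies in $G_\mr{tem}$ by the defining property of $\mc{H}_{t_1}'$, contradicting $\mc{R}\cap G_\mr{tem}=\es$. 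Now consider field disjointness of $\mc{R}'$. By the fourth bullet of \cref{lem:template-basic} every $R'\in G_{\mr{tem},i}$ satisfies $\dim_L\mr{span}_L(\iota_i^{-1}(R'))=r$, so on $G_\mr{tem}$ the relation ``$R_1\in\chi(R_2)$'' reduces to ``$\mr{ind}(R_1)=\mr{ind}(R_2)$ and $\mr{span}_L(\iota_i^{-1}(R_1))=\mr{span}_L(\iota_i^{-1}(R_2))$'', an equivalence relation; in particular $\chi$ is symmetric on $G_\mr{tem}$. Within a single batch $\phi_t^\ast(H\setminus H[F])$ the spillover $r$-spaces have pairwise distinct template indices (the ``$\mr{ind}(\phi^\ast(R))$ are distinct'' clause built into $\mc{H}_t'$), hence no two are $\chi$-related, and in particular they are pairwise distinct as $\mb{F}_q$-spaces. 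For $t_1<t_2$, the process constraint $\phi_{t_2}^\ast(R)\notin\chi(\mc{R}_{t_2-1})\supseteq\chi(\mc{R}_{t_1})\supseteq\mc{R}_{t_1}$ together with symmetry of $\chi$ shows no element of batch $t_1$ is $\chi$-related to an element of batch $t_2$ (and they are distinct). Hence $\mc{R}'$ is field disjoint with respect to the template and all its members are distinct, so $\partial_{s,r}\mc{S}=\sum_{R\in\mc{R}}e_R+\sum_{R'\in\mc{R}'}e_{R'}$ is the $\{0,1\}$-indicator of $\mc{R}\cup\mc{R}'$, giving $\snorm{\partial_{s,r}\mc{S}}_\infty\le 1$ and $\partial_{s,r}\mc{S}=\mc{R}\cup\mc{R}'$.

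It remains to verify $(\theta^{1/2},L)$-field boundedness of $\mc{R}'$, for which we use the ``disjoint-family'' output of \cref{lem:master-disjointness}. Fix $i\in[z]$ and an $(r-1)$-dimensional $L$-subspace $Q^\ast\leqslant_L K$, and set $\mc{F}_{i,Q^\ast}=\{R'\in\mr{Gr}_q(n,r)\colon R'\leqslant K_i,\ \dim_L\mr{span}_L(\iota_i^{-1}(R'))=r,\ Q^\ast\leqslant_L\mr{span}_L(\iota_i^{-1}(R'))\}$. Grouping such $R'$ by the $r$-dimensional $L$-space $U:=\mr{span}_L(\iota_i^{-1}(R'))$, which ranges over the $L$-subspaces of $K$ of $L$-dimension $r$ containing $Q^\ast$, and counting for each $U$ the $\mb{F}_q$-$r$-subspaces of $\iota_i(U)$ that $L$-span $U$, one obtains $|\mc{F}_{i,Q^\ast}|=M\cdot\qbinom{m-r+1}{1}_{q^\ell}=\Theta_{\ell,r,q}(q^n)$, where $M=M(\ell,r,q)\ge 1$ is the (intrinsic, positive, bounded) number of $\mb{F}_q$-$r$-subspaces that $L$-span a fixed $L$-vector space of $L$-dimension $r$, and $m=\lfloor n/\ell\rfloor$. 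The $Q^\ast$-coordinate of $\partial_{r,r-1}^L\iota_i^\ast(\mc{R}'^+)$ (recall $\mc{R}'^-=0$) is exactly $|\mc{F}_{i,Q^\ast}\cap\mc{R}'|\le|\mc{F}_{i,Q^\ast}\cap\chi(\mc{R}_x)|$, which by \cref{lem:master-disjointness} is at most $\theta^{2/3}|\mc{F}_{i,Q^\ast}|\le\theta^{1/2}q^n$ (the last step uses $\theta^{1/6}\le\Theta_{\ell,r,q}(1)^{-1}$, valid since $\theta\le(\tau/z)^{1/c_{\ref{lem:final-covering}}}$ is small) except with probability at most $\exp(-\theta|\mc{F}_{i,Q^\ast}|)\le\exp(-\Omega_{\ell,r,q}(q^{(1-c_{\ref{lem:final-covering}})n}))$. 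There are at most $z\cdot q^{rn}$ pairs $(i,Q^\ast)$, so a union bound shows that whp over the process $\mc{R}'$ is $(\theta^{1/2},L)$-field bounded with respect to the template (\cref{def:field-bounded}). Combining all of the above, whp over the process the constructed $\mc{S},\mc{R}'$ have all the desired properties, so in particular such a pair exists; this completes the proof.

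The main point requiring care is the conversion of the generic output of \cref{lem:master-disjointness} into the two structural guarantees needed downstream: field disjointness of the spillover follows from the rainbow-within-a-batch clause of $\mc{H}_t'$ together with the symmetry of $\chi$ on $G_\mr{tem}$ (which itself rests on the full-$L$-dimension property from \cref{lem:template-basic}), while field boundedness requires choosing the right test families $\mc{F}_{i,Q^\ast}$, computing $|\mc{F}_{i,Q^\ast}|=\Theta_{\ell,r,q}(q^n)$ exactly, and checking that the union bound over $(i,Q^\ast)$ survives. Everything else is a direct instantiation of \cref{lem:master-disjointness}.
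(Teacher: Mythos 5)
Your proof is correct and follows essentially the same route as the paper's: instantiate \cref{lem:master-disjointness} with $H=\mr{Gr}(S_0,r)$, $F$ an $r$-subspace, $H'=\{F\}$, empty avoidance and color sets, and roots mapping $F$ onto the elements of $\mc{R}$, then read off field disjointness from the rainbow/$\chi$-avoidance built into the process and field boundedness from the $\mb{P}[|\mc{F}\cap\chi(\mc{R}_x)|\ge\theta^{2/3}|\mc{F}|]$ tail bound applied to the families $\mc{F}_{i,Q^\ast}$ with a union bound. The only cosmetic difference is that the paper pads each test family to size $\lfloor\theta^{-1/6}q^n\rfloor$ whereas you compute $|\mc{F}_{i,Q^\ast}|=\Theta_{\ell,r,q}(q^n)$ directly and absorb the constant into $\theta^{1/6}$; both are fine.
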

\begin{proof}
Let $\mc{R}=\{R_1,\ldots,R_{|\mc{R}|}\}$ be an arbitrary ordering of the $r$-spaces. We run the process in \cref{lem:master-disjointness} with the following choices:
\begin{itemize}
    \item $H$ is the set of all $r$-spaces in $\mb{F}_q^s$, $F\leqslant\mb{F}_q^s$ has dimension $r$, and $H'=\{F\}$;
    \item $\mc{R}_\mr{init}=\emptyset$;
    \item $x=|\mc{R}|$ and $\phi_t$ is an arbitrary linear injection such that $\phi_t(F)=R_t$ for $t\in[|\mc{R}|]$;
    \item The basis $v^\ast$ is arbitrary and all color sets $\mc{C}_t,\mc{C}_{t,j}$ are empty.
\end{itemize}
The $\theta$-boundedness of $\mc{R}$ implies the boundedness of the \emph{roots} in \cref{def:disjointness-setup}, so the conclusions apply. Namely, this process does not fail whp and for any $\mc{F}\subseteq\mr{Gr}_q(n,r)$ we have $\mb{P}[|\mc{F}\cap\mc{R}_x|\ge\theta^{2/3}|\mc{F}|]\le\exp(-\theta|\mc{F}|)$.

Let $S_t$ be the random $s$-space $\phi_t^\ast(\mr{Vec}(H))$ that results and let $\mc{S}=\{S_1,\ldots,S_{|\mc{R}|}\}$, ignoring wildcard values. We see that when the process does not fail, $\partial_{s,r}\mc{S}$ is a set of $r$-spaces (using the disjointness inherent to the process) and $\partial_{s,r}\mc{S}=\mc{R}\cup\mc{R}'$ (using $\mc{R}\cap G_\mr{tem}=\emptyset$) where $\mc{R}'\subseteq G_\mr{tem}$, and in fact $\mc{R}'=\mc{R}_x$. Thus $(\theta^{1/2},L)$-field boundedness of $\mc{R}'$ occurs whp due to considering a union bound over at most $zq^{(r-1)n}$ sets $\mc{F}$ of the following form: take every possible $i\in[z]$ and $(r-1)$-space $Q^\ast$ over $L$ and consider $\mc{F}$ which is the collection of $r$-spaces which contribute to $Q^\ast$ in \cref{def:field-bounded}, then augment $\mc{F}$ so it has size $\lfloor\theta^{-1/6}q^n\rfloor$.

Finally, to demonstrate field disjointness, note that the definition of the process in \cref{lem:master-disjointness} makes each new embedding have its new $r$-spaces at each time $t$ avoid $\chi$ of the previous new $r$-spaces. Furthermore, at each time $t$ the new $r$-spaces are mutually field disjoint since they are involved in different template indices. We are done.
\end{proof}

The next result shows that the spill, an $r$-dimensional $q$-system, can be integrally decomposed via $s$-spaces which are supported on $G_\mr{tem}$ and are rainbow (the constituent $r$-spaces are all in different parts $G_{\mr{tem},i}$ of the template) in a way that is field bounded. Furthermore, we ensure every $r$-space occurs at most once in the positive and negative parts of the integral decomposition. We also ensure that the $r$-spaces in the positive part are field disjoint (assuming the spillover is field disjoint).
\begin{lemma}\label{lem:final-rainbow}
Given the setup of \cref{sub:setup,def:template}, we have the following as long as $c=c_{\ref{lem:final-rainbow}}(q,s)>0$, $q^{-cn}\le\tau\le c$, $d\ge d_{\ref{lem:final-rainbow}}(r)$, $\ell\ge\ell_{\ref{lem:final-rainbow}}(d,s)$, and $n$ is large. Whp over the randomness of the template, for any (set) $\mc{R}\subseteq G_\mr{tem}$ which is field disjoint and $(\theta,L)$-field bounded for some $q^{-cn}\le\theta\le(\tau/z)^{1/c}$ such that $\mc{R}\in\mc{L}=\partial_{s,r}\mb{Z}^{\mr{Gr}_q(n,s)}$ (treated as a vector), there is $\Phi\in\mb{Z}^{\mr{Gr}_q(n,s)}$ such that:
\begin{itemize}
    \item $\mc{R}=\partial_{s,r}\Phi$ and $\snorm{\partial_{s,r}\Phi^+}_\infty\le 1$;
    \item $\partial_{s,r}\Phi^\pm$ are $(\theta^{1/4},L)$-field bounded;
    \item For every $S\in\mr{supp}(\Phi)$ there are distinct $i_R\in[z]$ such that $R\in G_{\mr{tem},i_R}$ for all $R\in\mr{Gr}(S,r)$.
    \item $\partial_{s,r}\Phi^\pm$ are field disjoint.
\end{itemize}
\end{lemma}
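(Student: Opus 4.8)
## Proof Proposal for \cref{lem:final-rainbow}

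\textbf{Overall strategy.} The plan is to carry out the two-step \emph{splitting} followed by \emph{elimination} procedure sketched in \cref{sub:outline-exchange}, using the master disjoint process \cref{lem:master-disjointness} as the engine. We start with an arbitrary integral decomposition $\mc{R} = \partial_{s,r}\Phi_0$ with $\Phi_0 \in \mb{Z}^{\mr{Gr}_q(n,s)}$; since $\mc{R}$ is field bounded hence $z\theta$-bounded by \cref{lem:strength-bounded}, and $\mc{R} \in \mc{L}$, we may invoke \cref{thm:bounded-inverse} to assume $\partial_{s,r}\Phi_0^\pm$ are $O_{q,s}(\theta)$-bounded. However $\Phi_0$ will in general (a) use $s$-spaces not supported on $G_\mr{tem}$, (b) be neither rainbow nor with multiplicity-$\le 1$ positive/negative parts, and (c) not satisfy the field-boundedness or field-disjointness conclusions. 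We fix these defects in sequence using ``flips'': we replace each $s$-space $S$ in the current decomposition by a subspace exchange $\Ups - \Ups'$ (\cref{prop:subspace-exchange}, base-changed to $\mb{F}_q^n$) whose new $s$-spaces are $q$-extensions of $S$ landing inside $G_\mr{tem}$, rainbow, and disjoint from everything processed so far; then $\partial_{s,r}(\Ups - \Ups') = 0$ so the overall boundary is unchanged, and $\partial_{s,r} e_S$ is rewritten in terms of the new rainbow template $s$-spaces. Each such flip-round is an instance of \cref{lem:master-disjointness} with $H$ chosen as the appropriate glued copy of the subspace-exchange gadget (as in the proof of \cref{lem:rainbow-decomposition}), $\mc{R}_\mr{init}$ the already-fixed $r$-spaces, the avoided color sets $\mc{C}_t$ encoding the colors already used at the current $s$-space, and $x$ the number of current $s$-spaces (which is polynomially bounded in $q^n$ since each preliminary part is bounded).

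\textbf{Splitting.} In the splitting step we process the positive $s$-spaces $S_1, \ldots, S_x$ of the current decomposition one at a time, and for each we run a flip that replaces $\partial_{s,r} e_{S_t}$ by a signed sum over a rainbow configuration inside $G_\mr{tem}$, with the new $r$-spaces disjoint (via $\chi$) from those produced at steps $1, \ldots, t-1$ and from $\mc{R}_\mr{init} = \mc{R}$. After running this for both the positive and the negative parts, every high-multiplicity coincidence between two $r$-spaces in the decomposition must occur \emph{within} a single flipped configuration, never across two of them — this is exactly the ``split into non-interacting groups'' property. Here the roots are the maps $\phi_t$ with $\phi_t(F) = S_t$, and the boundedness-of-roots hypothesis of \cref{def:disjointness-setup} follows from $O_{q,s}(\theta)$-boundedness of $\partial_{s,r}\Phi_0^\pm$ (after the \cref{thm:bounded-inverse} cleanup). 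The disjointness-of-families conclusion $\mb{P}[|\mc{F} \cap \chi(\mc{R}_x)| \ge \theta^{2/3}|\mc{F}|] \le \exp(-\theta|\mc{F}|)$, applied to $\mc{F}$ ranging over neighborhoods of $(r-1)$-spaces over $L$ (as in \cref{lem:final-covering}), yields the $(\theta^{1/4},L)$-field boundedness of the new positive and negative parts, and gives field disjointness of the positive part for free since within a round the new $r$-spaces are in distinct template indices.

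\textbf{Elimination.} After splitting, the remaining cancellations are groups of $r$-spaces each appearing with equal positive and negative multiplicities, isolated within flipped blocks. We break each such cancellation into pairs $(+1) + (-1)$ and, for each pair that is ``worse'' than a clean $(+1)+(-1)=0$ (i.e.\ multiplicity $\ge 2$), run another flip — again an instance of \cref{lem:master-disjointness} — that replaces the pair by a configuration reducing the multiplicity at the offending $r$-space by one while keeping the boundary fixed and landing in $G_\mr{tem}$ rainbow. Iterating drives all positive-part multiplicities down to $1$ (the negative part may remain with higher multiplicity, which is allowed). The number of such flips is again at most polynomial in $q^n$ by boundedness, so the whp guarantees of \cref{lem:master-disjointness} survive a union bound over the $O_{q,s}(1)$ rounds. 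At the end, $\Phi = \Phi^+ + \Phi^-$ satisfies all four bullets: $\mc{R} = \partial_{s,r}\Phi$ and $\snorm{\partial_{s,r}\Phi^+}_\infty \le 1$ by construction; the field-boundedness from the union-bound argument above; rainbowness since every $s$-space introduced is an image under some $\phi_t^\ast$ of a gadget whose constituent $r$-spaces lie in distinct $G_{\mr{tem},i}$; and field disjointness of $\partial_{s,r}\Phi^\pm$ from the $\chi$-avoidance built into each round together with \cref{lem:chi-bounded}.

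\textbf{Main obstacle.} The delicate point is bookkeeping the \emph{accumulation of boundedness constants} across the two steps: each flip-round degrades the boundedness parameter (from $\theta$-type to $\theta^{2/3}$-type via \cref{lem:master-disjointness}), and we must ensure that after the fixed number $O_{q,s}(1)$ of rounds the final parameter is still at least $\theta^{1/4}$ and still above the $q^{-cn}$ threshold required to apply \cref{lem:master-disjointness,thm:bounded-inverse} at each stage. This forces a careful choice of exponents — each round should lose at most a bounded power, and $c_{\ref{lem:final-rainbow}}$ must be taken small enough that $\theta \le (\tau/z)^{1/c}$ with room to spare. A secondary subtlety is that in the elimination step the flips act on \emph{pairs} of $s$-spaces rather than single ones, so one must set up the extension type $H$ in \cref{def:disjointness-setup} as a gadget glued along the shared $r$-space (mirroring the second bullet of \cref{prop:color-decomposition}); verifying the hypotheses of \cref{lem:master-disjointness} — in particular the bounded-roots condition — for these paired roots requires knowing the output of the splitting step is itself suitably bounded, which is why splitting must be completed in full before elimination begins.
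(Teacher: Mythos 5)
Your proposal follows essentially the same route as the paper's proof: clean up an arbitrary preimage via \cref{lem:strength-bounded} and \cref{thm:bounded-inverse}, then perform a \emph{splitting} pass (one run of \cref{lem:master-disjointness} flipping each signed $s$-space through the \cref{prop:subspace-exchange} gadget into rainbow template configurations, confining all cancellations to the root $r$-spaces) followed by an \emph{elimination} pass on the resulting cancelling pairs using the doubled gadget glued as in the second bullet of \cref{prop:color-decomposition}, extracting field boundedness and field disjointness from the tail bounds and $\chi$-avoidance of the disjoint process. The obstacles you flag (constant degradation across rounds, and that elimination requires the splitting output to be bounded before its paired roots satisfy \cref{def:disjointness-setup}) are exactly the points the paper's proof addresses.
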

\begin{remark}
The proof uses a subspace exchange process quite similar to \cref{prop:sparse-into-gadgets}, but we must guarantee a disjointness condition that requires use of \cref{lem:master-disjointness}.
\end{remark}
\begin{proof}
Let $k=k_{\ref{prop:subspace-exchange}}(s)$ and let $\Ups,\Ups'$ be the $s$-dimensional $q$-systems on $\mb{F}_q^k$ coming from \cref{prop:subspace-exchange}. By \cref{lem:strength-bounded} and then \cref{thm:bounded-inverse} we can write
\[\mc{R}=\partial_{s,r}\Phi_0\]
with $\partial_{s,r}\Phi_0^\pm$ being $C_{\ref{thm:bounded-inverse}}(q,s)z\theta$-bounded. Our goal is to massage $\Phi_0$ into a new signed collection $\Phi$ where every $r$-space appears at most once in the positive and negative parts $\partial_{s,r}\Phi^\pm$ and we have the necessary boundedness. A priori, though, $\partial_{s,r}\Phi_0$ could involve massive cancellation on certain $r$-spaces (although the boundedness of $\Phi_0$ helps limit this somewhat). We will first ``preprocess'' this collection using \cref{lem:master-disjointness} to make these conditions ``hold outside $\partial_{s,r}\Phi_0^+$'' in some sense. Then we will essentially go through ``cancelling pairs'' one by one and fix them via a random embedding of a finite structure coming from \cref{prop:subspace-exchange} (similar to the proof of \cref{prop:sparse-into-gadgets}, but using \cref{lem:master-disjointness}). These can be thought of as ``splitting'' and ``elimination'' in the context of \cite{Kee14}.

Let $(S_t)_{1\le t\le y'}$ be an arbitrary ordering of the positive $s$-spaces in $\Phi_0$ with multiplicity, and $(S_t)_{y'+1\le t\le y}$ be an arbitrary ordering of the negative $s$-spaces. We run the process in \cref{lem:master-disjointness} with the following choices:
\begin{itemize}
    \item $H=H^{(1)}$ is the set of all $r$-spaces contained in $s$-spaces of $\Ups$ (with $\mr{Vec}(H^{(1)})=\mb{F}_q^k$), $F=F^{(1)}$ is some fixed $s$-space in $\Ups$, and $H'=H^{(1)\prime}=\mr{Gr}(F,r)$;
    \item $\mc{R}_\mr{init} = \mc{R}_\mr{init}^{(1)}=\chi(\mc{R})\cup\bigcup_{t=1}^y\mr{Gr}(S_t,r)$;
    \item $x=y$ and $\phi_t=\phi_t^{(1)}$ is an arbitrary linear injection such that $\phi_t^{(1)}(F^{(1)})=S_t$ for $t\in[y]$;
    \item $\mc{C}_t = \{\mr{ind}(R)\colon R\in\mr{Gr}(S_t,r)\cap G_\mr{tem}\}$;
    \item The basis $v^\ast$ is arbitrary and all color sets $\mc{C}_{t,j}$ are empty;
    \item $\theta$ is replaced by $(2C_{\ref{thm:bounded-inverse}}+1)z\theta$.
\end{itemize}
Since $\partial_{s,r}\Phi_0^\pm$ are $C_{\ref{thm:bounded-inverse}}z\theta$-bounded and since $\mc{R}$ is $(\theta,L)$-field bounded hence $\chi(R)$ is $z\theta$-bounded by \cref{lem:chi-bounded}, we obtain the necessary condition on $\mc{R}_\mr{init}$ as well as the boundedness of the \emph{roots} in \cref{def:disjointness-setup}, so the conclusions apply to the random output $\phi_1^\ast,\ldots,\phi_y^\ast$. In particular, whp it runs to completion. Write $\mc{R}_y^\ast$ for the value of $\mc{R}_x$ produced by the process (recall we set $x=y$ in this application of \cref{lem:master-disjointness}).

Given the definitions of $H^{(1)},F^{(1)}$ above, let $F_1,\ldots,F_a$ for $a=\qbinom{s}{r}_q$ be the $s$-spaces in $\Ups'$ with $\dim(F_i\cap F^{(1)})=r$ (there are precisely $a$ of them by \cref{prop:subspace-exchange}). If the process runs to completion we have, writing $\mr{sgn}_t=1$ for $t\in[y']$ and $-1$ for $y'+1\le t\le y$,
\begin{align}
\mc{R}&=\partial_{s,r}\Phi_0=\sum_{t=1}^y\mr{sgn}_t\partial_{s,r}e_{S_t}=\sum_{t=1}^{y'}\mr{sgn}_t\partial_{s,r}\bigg(\sum_{S\in\phi_t^\ast(\Ups')}e_S-\sum_{S\in\phi_t^\ast(\Ups\setminus\{F\})}e_S\bigg)\notag\\
&= \partial_{s,r}\sum_{t=1}^y\sum_{j=1}^a\mr{sgn}_te_{\phi_t^\ast(F_j)}+\partial_{s,r}\bigg(\sum_{t=1}^y\sum_{S\in\phi_t^\ast(\Ups'\setminus\{F_1,\ldots,F_a\})}\mr{sgn}_te_S-\sum_{t=1}^y\sum_{S\in\phi_t^\ast(\Ups\setminus\{F\})}\mr{sgn}_te_S\bigg)\notag\\
&=:\partial_{s,r}\Phi_1+\partial_{s,r}\Phi_2\label{eq:final-rainbow-1}
\end{align}
by the third bullet of \cref{prop:subspace-exchange}, where $\Phi_1,\Phi_2$ are defined in the obvious way. (This corresponds to \emph{near} and \emph{far} cliques, respectively, in \cite{Kee14}.) Due to the inherent disjointness of the random process, all the $s$-spaces appearing in $\Phi_2$ are distinct and $\snorm{\partial_{s,r}\Phi_2^\pm}_\infty\le 1$: an $r$-space can only appear corresponding to at most one index $t\in[y]$ by definition, and then it appears positively and negatively each at most once by the first bullet of \cref{prop:subspace-exchange}. Additionally, we see that $\Phi_1\in\{-1,0,1\}^{\mr{Gr}_q(n,s)}$ and in fact $\partial_{s,r}\Phi_1^\pm\in\{0,\pm1\}^{G\setminus\mc{R}_\mr{init}^{(1)}}\times\mb{Z}^{\mc{R}_\mr{init}^{(1)}}$ by a similar argument (recall $G=\mr{Gr}_q(n,r)$). Finally, note that $\mr{supp}(\partial_{s,r}\Phi_2^\pm)\subseteq G\setminus\mc{R}_\mr{init}^{(1)}$ by definition. This implies $(\partial_{s,r}\Phi_1)_R=\mbm{1}_{R\in\mc{R}}$ for all $R\in\mc{R}_\mr{init}^{(1)}$.

Now consider pairs of $R\in\mc{R}_\mr{init}^{(1)}$ and $S\in\mr{supp}(\Phi_1)$ where $R\leqslant S$. For each $S$, there is exactly one valid value of $R$ by inspection of the definition of the process above (namely, note that $S=\phi_t^\ast(F_j)$ for some $t,j$ and then $R=\phi_t^\ast(F_j\cap F^{(1)})$ is its only $r$-space in $\mc{R}_\mr{init}^{(1)}$). Combined with $(\partial_{s,r}\Phi_1)_R=\mbm{1}_{R\in\mc{R}}$ and $\Phi_1\in\{-1,0,1\}^{\mr{Gr}_q(n,s)}$, we see that we can find a partial matching of the $s$-spaces in $\mr{supp}(\Phi_1)$ so that every space is paired with one of the opposite sign and for each $R\in\mc{R}_\mr{init}^{(1)}$ all but one of the $s$-spaces containing it are paired up. (There will be precisely $1$ unpaired $s$-space of positive sign in $\Phi_1$ exactly for those $R\in\mc{R}$.)

Let $((S_{t,1},S_{t,2}))_{1\le t\le y''}$ be an arbitrary ordering of these pairs, where $S_{t,1}$ has positive sign and $S_{t,2}$ has negative sign in $\Phi_1$. Additionally, note that $\dim(S_{t,1}\cap S_{t,2})=r$ and the intersection is an $r$-space of $\mc{R}_\mr{init}^{(1)}$ due to the following argument: if the intersection has larger dimension then there is an $r$-space shared between $S_{t,1},S_{t,2}$ other than the $R\in\mc{R}_\mr{init}^{(1)}$ that they share. By the definition of the process above, this violates either disjointness from $\mc{R}_\mr{init}^{(1)}$ or disjointness from each other. Let the remaining signed $s$-spaces in $\Phi_1\in\{-1,0,1\}^{\mr{Gr}_q(n,s)}$ be $(S_k^+)_{k\in\mc{P}}$, and note they all have a positive sign in $\Phi_1$ (by the above parenthetical). Note we can now write
\begin{equation}\label{eq:final-rainbow-2}
\Phi_1=\sum_{k\in\mc{P}}e_{S_k^+}+\sum_{t=1}^{y''}(e_{S_{t,1}}-e_{S_{t,2}}).
\end{equation}

We now run the process in \cref{lem:master-disjointness} with the following choices:
\begin{itemize}
    \item $H=H^{(2)}$ is obtained by gluing two copies of the construction in \cref{prop:subspace-exchange} along an $s$-space $S_0$ in the two copies of $\Ups$, and then considering all $r$-spaces contained within (and $\mr{Vec}(H^{(2)})=\mb{F}_q^{2k-s}$). Let $\Ups_1,\Ups_1'$ be the $s$-spaces for one copy and $\Ups_2,\Ups_2'$ for the other (so $\Ups_1\cap\Ups_2=\{S_0\}$). Let $R_0\leqslant S_0$ be an $r$-space and $S_1$ be the unique space of $\Ups_1'$ with $S_1\cap S_0=R_0$ and $S_2$ be the unique space of $\Ups_2'$ with $S_2\cap S_0=R_0$. Let $F=F^{(2)}=\mr{span}_{\mb{F}_q}(S_1\cup S_2)$ and $H'=H^{(2)\prime}=\mr{Gr}(S_1,r)\cup\mr{Gr}(S_2,r)$;
    \item $\mc{R}_\mr{init}=\mc{R}_\mr{init}^{(2)}=\mc{R}_\mr{init}^{(1)}\cup\chi(\mc{R}_y^\ast)$;
    \item $x=y''$ and $\phi_t=\phi_t^{(2)}$ is an arbitrary linear injection such that $\phi_t^{(2)}(S_1)=S_{t,1}$ and $\phi_t^{(2)}(S_2)=S_{t,2}$.
    \item $\mc{C}_t=\{\mr{ind}(R)\colon R\in(\mr{Gr}(S_{t,1},r)\cup\mr{Gr}(S_{t,2},r))\cap G_\mr{tem}\}$;
    \item The basis $v^\ast$ is arbitrary and all color sets $\mc{C}_{t,j}$ are empty.
    \item $\theta$ is replaced by $\theta^{1/2}$.
\end{itemize}
The necessary condition on $\mc{R}_\mr{init}=\mc{R}_\mr{init}^{(2)}$ as well as the boundedness of the \emph{roots} in \cref{def:disjointness-setup} are nontrivial. They both are derived from the following argument: considering various $\mc{F}=\mc{F}_Q=\{R\in\mr{Gr}_q(n,r)\colon R\geqslant Q\}$ for $Q\in\mr{Gr}_q(n,r-1)$ and similarly defined sets over $L$ and taking a union bound (using the last property of \cref{lem:master-disjointness}) shows that whp everything involved with the output of the first process thus ends up being say $O(z(z\theta)^{2/3})$-bounded. For instance, using this argument and applying \cref{lem:chi-bounded}, we see that whp $\chi(\mc{R}_y^\ast)$ is $O(z(z\theta)^{2/3})$-bounded. Furthermore, this boundedness includes not only the ``new $r$-spaces'' (which go by the name $\mc{R}_x$ in the previous application of \cref{lem:master-disjointness}) but also the ``original $r$-spaces'' ($\mc{R}_\mr{init}^{(1)}$), using the boundedness condition that allowed us our first application of \cref{lem:master-disjointness}. The total boundedness parameter is at most $\theta^{1/2}$ if $c$ is chosen appropriately small, as desired.

So whp over the randomness of outcomes of the first process, we are allowed to run the second process and the conclusions of \cref{lem:master-disjointness} apply to the random output $\phi_1^{\ast\prime},\ldots,\phi_x^{\ast\prime}$. In particular, the process runs to completion whp. Now \cref{eq:final-rainbow-1,eq:final-rainbow-2} together give
\begin{equation}\label{eq:final-rainbow-3}
\mc{R}=\sum_{i\in\mc{P}^+}\partial_{s,r}e_{S_t^+}+\sum_{i=1}^{y''}\partial_{s,r}(e_{S_{t,1}}-e_{S_{t,2}})+\partial_{s,r}\Phi_2.
\end{equation}
Note that for $i\in[y'']$,
\begin{equation}\label{eq:final-rainbow-4}
\partial_{s,r}(e_{S_{t,1}}-e_{S_{t,2}}) = \partial_{s,r}\bigg(\sum_{S\in\phi_i^{\ast\prime}(\Ups_1\setminus\{S_0\})}e_S-\sum_{S\in\phi_i^{\ast\prime}(\Ups_2\setminus\{S_0\})}e_S-\sum_{S\in\phi_i^{\ast\prime}(\Ups_1'\setminus\{S_1\})}e_S+\sum_{S\in\phi_i^{\ast\prime}(\Ups_2'\setminus\{S_2\})}e_S\bigg)
\end{equation}
since $\phi_i^{\ast\prime}(S_j)=S_{i,j}$ for $j\in\{1,2\}$ and by \cref{prop:subspace-exchange}. We also cancelled the common $e_{S_0}$ in $\Ups_1,\Ups_2$.

Plugging \cref{eq:final-rainbow-4} into \cref{eq:final-rainbow-3} for all $t\in[y'']$ yields a linear combination for $\mc{R}$ in terms of $\partial_{s,r}e_S$ for various $s$-spaces $S$, call it $\mc{R}=\partial_{s,r}\Phi_3$. Furthermore, inspection of the definition of the second random process shows that $\snorm{\partial_{s,r}\Phi_3^+}_\infty\le 1$: on $\mc{R}_\mr{init}^{(1)}$ (i.e., for the $e_{S_i^+}$ terms) it is true due to the pairing of cancelling $r$-spaces and for the rest we use disjointness. Here we are using in a key way that we cancelled the $s$-space $S_0$, for which having two copies of $\phi_i^{\ast\prime}(S_0)$ would introduce a violation along the $r$-space $\phi_t^{\ast\prime}(R_0)=\phi_t^{(2)}(S_1\cap S_2)=S_{t,1}\cap S_{t,2}\in\mc{R}_\mr{init}^{(1)}$. Also, we are implicitly using that all of the $s$-spaces in $\Ups_j\setminus\{S_0\},\Ups_j'\setminus\{S_j\}$ for $j\in\{1,2\}$ (those appearing on the right side of \cref{eq:final-rainbow-4}) have constituent $r$-spaces which are contained in $F^{(2)}$ only if they are in the set $\mr{Gr}(S_1,r)\cup\mr{Gr}(S_2,r)$. This can be seen since we are gluing two copies of the construction in \cref{prop:subspace-exchange} along an $s$-space and otherwise linearly disjointly.

Additionally, for every $S\in\mr{supp}(\Phi_3)$ we have distinct $i_R\in[z]$ such that $R\in G_{\mr{tem},i_R}$ for all $R\in\mr{Gr}(S,r)$ due to the definition of the two random processes used. In particular, one uses that the values of $H,F$ considered are such that every $s$-space used to define $H$ which is not contained in $F$ intersects $F$ in dimension at most $r$. Thus every $s$-space is assigned ``new distinct colors'' except one fixed color which is explicitly avoided by the new colors. Furthermore, every $s$-space is processed by at least one of the two processes so this applies to all $s$-spaces.

Next, we show $(\theta^{1/4},L)$-field boundedness of $\partial_{s,r}\Phi_3^\pm$. With this in hand we will see that taking $\Phi=\Phi_3$ finishes the proof. For this, note that all $r$-spaces of $\partial_{s,r}\Phi_3^+$ outside of $\mc{R}$ are introduced by one of the two disjoint random processes, and so we can apply the last property of \cref{lem:master-disjointness} to various $\mc{F}$ similar to the end of the proof of \cref{lem:final-covering} and take a union bound (note $\theta$ is replaced by $\theta^{1/2}$). Combining with the given $(\theta,L)$-field boundedness of $\mc{R}$, we obtain the desired field boundedness.

Finally, we show field disjointness. Again, every constituent $r$-space of $\partial_{s,r}\Phi_3^\pm$ outside of $\mc{R}$ is introduced by one of the two disjoint random processes. Since the processes create rainbow embeddings, we know that the $r$-spaces introduced at a specific time $t$ are field disjoint, and the definition of each random process shows they do not interfere across times. Furthermore, since the second random process excludes use of $\chi(\mc{R}_y^\ast)$, we see that the new $r$-spaces of the two processes are field disjoint as well. The fact $\chi(\mc{R})\subseteq\mc{R}_\mr{init}^{(1)}\subseteq\mc{R}_\mr{init}^{(2)}$ means nothing interferes with the $r$-spaces in $\mc{R}$ either. We are done.
\end{proof}

We will want to turn this into a monochromatic decomposition, among other things, but to do so we will need to first slightly massage the output of \cref{lem:final-rainbow}. Specifically, we need every $s$-space $S$ used to be rainbow in the template and additionally have the property that it is contained within $\bigcap_{i\in\mc{I}}K_i$ where $\mc{I}=\{\mr{ind}(R)\colon R\in\mr{Gr}(S,r)\}$.
\begin{lemma}\label{lem:final-color-consistent}

Given the setup of \cref{sub:setup,def:template}, we have the following as long as $c=c_{\ref{lem:final-color-consistent}}(q,s)>0$, $q^{-cn}\le\tau\le c$, $d\ge d_{\ref{lem:final-color-consistent}}(r)$, $\ell\ge\ell_{\ref{lem:final-color-consistent}}(d,s)$, and $n$ is large. Whp over the randomness of the template, for any (set) $\mc{R}\subseteq G_\mr{tem}$ which is field disjoint and $(\theta,L)$-field bounded for some $q^{-cn}\le\theta\le(\tau/z)^{1/c}$ such that $\mc{R}\in\mc{L}=\partial_{s,r}\mb{Z}^{\mr{Gr}_q(n,s)}$ (treated as a vector), there is $\Phi\in\mb{Z}^{\mr{Gr}_q(n,s)}$ such that
\begin{itemize}
    \item $\mc{R}=\partial_{s,r}\Phi$ and $\snorm{\partial_{s,r}\Phi^+}_\infty\le 1$;
    \item $\partial_{s,r}\Phi^\pm$ are $(\theta^{1/8},L)$-field bounded;
    \item For every $S\in\mr{supp}(\Phi)$ there are distinct $i_R\in[z]$ such that $R\in G_{\mr{tem},i_R}$ for all $R\in\mr{Gr}(S,r)$, and furthermore $S\leqslant\bigcap_{R\in\mr{Gr}(S,r)}K_{i_R}$.
    \item $\partial_{s,r}\Phi^\pm$ are field disjoint.
\end{itemize}
\end{lemma}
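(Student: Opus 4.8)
The plan is to start from the rainbow decomposition produced by \cref{lem:final-rainbow} and then run one further round of subspace exchanges via \cref{lem:master-disjointness}, whose \emph{sole} new role is to enforce the coherence condition $S\leqslant\bigcap_{R\in\mr{Gr}(S,r)}K_{i_R}$. This is exactly what the \emph{preparatory colour sets} $\mc{C}_{t,j}$ in \cref{def:disjointness-setup} were introduced for (cf.\ the remark after \cref{prop:template-extendable}). First I would apply \cref{lem:final-rainbow} to $\mc{R}$ to obtain $\Phi_0\in\mb{Z}^{\mr{Gr}_q(n,s)}$ with $\mc{R}=\partial_{s,r}\Phi_0$, $\snorm{\partial_{s,r}\Phi_0^+}_\infty\le 1$, $\partial_{s,r}\Phi_0^\pm$ being $(\theta^{1/4},L)$-field bounded and field disjoint, and every $S\in\mr{supp}(\Phi_0)$ rainbow, so there is a set $\mc{I}_S\subseteq[z]$ of exactly $\qbinom{s}{r}_q$ colours with $\mr{Gr}(S,r)$ meeting $G_{\mr{tem},i}$ once for each $i\in\mc{I}_S$. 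The only property of \cref{lem:final-color-consistent} still missing is $S\leqslant\bigcap_{i\in\mc{I}_S}K_i$, which can fail since the $s-r$ ``extra'' directions of $S$ may stick out of the $K_i$.

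Next I would fix a subspace-exchange gadget adapted to this. Take $\Ups,\Ups'$ on $\mb{F}_q^k$ as in \cref{prop:subspace-exchange} with a distinguished $F_0\in\Ups$; let $\tilde P_1,\dots,\tilde P_a$ ($a=\qbinom{s}{r}_q$) be the members of $\Ups'$ meeting $F_0$ in $r$ dimensions, so $\tilde P_j\cap F_0=\tilde R_j$ and $\mr{Gr}(F_0,r)=\{\tilde R_1,\dots,\tilde R_a\}$. The combinatorial heart of the argument is to choose a basis $v^\ast$ of $\mr{Vec}(H):=\mb{F}_q^k$ whose last $s$ vectors span $F_0$ and such that \emph{each} $\tilde P_j$ is the direct sum of $\tilde R_j$ with the span of $s-r$ of the remaining (``new'') basis vectors; realising this may require first enlarging $k$ or gluing a few auxiliary copies of the construction of \cref{prop:subspace-exchange} (in the style of the proof of \cref{lem:rainbow-decomposition}) so that complements of the $\tilde R_j$ inside the $\tilde P_j$ meeting $F_0$ trivially can be realised by disjoint basis vectors; this is linear-algebraic bookkeeping. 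For each $S\in\mr{supp}(\Phi_0)$ I would then run \cref{lem:master-disjointness} with this $H$, base $F=F_0$, core $H'=\mr{Gr}(F_0,r)$, a root $\phi_S$ with $\phi_S(F_0)=S$, avoided colour set $\mc{C}_t:=\mc{I}_S$, and \emph{every} preparatory colour set $\mc{C}_{t,j}:=\mc{I}_S$ (legitimate for $n$ and $h$ large, since $|\mc{I}_S|=\qbinom{s}{r}_q$). By the definition of $\mc{H}_t'$, any allowed embedding $\phi^\ast$ sends all newly embedded basis vectors into $\bigcap_{i\in\mc{I}_S}K_i$, assigns the new $r$-spaces distinct colours outside $\mc{I}_S$, and has $\phi^\ast(H)$ inside the field structure of each new colour; and $\partial_{s,r}e_S=\partial_{s,r}\phi^\ast(\Ups')-\partial_{s,r}\phi^\ast(\Ups\setminus\{F_0\})$ by \cref{prop:subspace-exchange}. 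Any $s$-space in $\phi^\ast(\Ups\setminus\{F_0\})$ has only new $r$-subspaces, hence lies in the intersection of the $K_i$ over its colours and is coherent; and $\phi^\ast(\tilde P_j)$ inherits from $F_0$ only the piece $R_j:=\phi^\ast(\tilde R_j)\leqslant K_{i_j}$ (with $i_j\in\mc{I}_S$) while all of its other $s-r$ directions are new basis vectors lying in $\bigcap_{i\in\mc{I}_S}K_i\leqslant K_{i_j}$, so $\phi^\ast(\tilde P_j)\leqslant K_{i_j}$ and, together with containment in the $K_i$ of its new colours, it too is coherent. All new $s$-spaces are rainbow. Summing these exchanges over $S$ (with the usual failure/wildcard bookkeeping of \cref{lem:master-disjointness}) produces $\Phi$ with $\mc{R}=\partial_{s,r}\Phi$.

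Finally, the remaining conclusions are read off exactly as at the end of \cref{lem:final-rainbow}. One has $\snorm{\partial_{s,r}\Phi^+}_\infty\le 1$ because $\partial_{s,r}\Phi_0^+$ already satisfies this, the disjoint process keeps all new $r$-spaces mutually disjoint and disjoint from $\mc{R}_\mr{init}$ (which one takes to contain $\chi(\mc{R})$, $\chi$ of $\partial_{s,r}\Phi_0^\pm$, and $\bigcup_{S}\mr{Gr}(S,r)$), and on each positive old $S$ the $\phi^\ast(\Ups')$-part recovers $\mr{Gr}(S,r)$ with multiplicity one; field boundedness and field disjointness of $\partial_{s,r}\Phi^\pm$ follow from the last conclusion of \cref{lem:master-disjointness} applied to the families $\mc{F}_Q$ over $(r-1)$-spaces and their $L$-analogues, a union bound, and the corresponding properties of $\partial_{s,r}\Phi_0^\pm$, using \cref{lem:chi-bounded,lem:strength-bounded} to bound $\mc{R}_\mr{init}$ (one runs \cref{lem:master-disjointness} with its parameter set to a constant times $z\theta^{1/4}$), the resulting exponent being $\le\theta^{1/8}$ since $\theta\le(\tau/z)^{1/c}$ with $c$ small. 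I expect the main obstacle to be precisely the exceptional spaces $\phi^\ast(\tilde P_j)$: they are \emph{forced} to contain the pre-coloured old $r$-spaces $R_j$, and one cannot simply demand $\phi^\ast(\mr{Vec}(H))\leqslant K_{i_j}$ because $\phi^\ast(F_0)=S$ need not lie in $K_{i_j}$. The resolution — choosing $v^\ast$ (after gluing auxiliary copies of the \cref{prop:subspace-exchange} gadget if necessary) so that each $\tilde P_j$ splits off $\tilde R_j$ from designated fresh basis vectors, so that $\phi^\ast(\tilde P_j)$ borrows only the $r$-dimensional piece $R_j$ from the non-coherent $S$ and places every other direction inside $\bigcap_{i\in\mc{I}_S}K_i$ — is the genuinely delicate step, and is what makes the $\mc{C}_{t,j}$ mechanism indispensable here.
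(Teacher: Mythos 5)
Your proposal is correct and follows essentially the same route as the paper: apply \cref{lem:final-rainbow}, then run one more round of \cref{lem:master-disjointness} with the gadget from the proof of \cref{lem:rainbow-decomposition} (one copy of \cref{prop:subspace-exchange} with $\qbinom{s}{r}_q$ further copies glued linearly disjointly along the $F_j$, which is exactly the resolution you flag as the delicate step, since a single copy cannot split the exceptional spaces off disjoint basis vectors), using the preparatory colour sets $\mc{C}_{t,j}$ to force the fresh basis vectors spanning each $F_j'$ into the old colour's $K_i$. The only cosmetic difference is that you set every $\mc{C}_{t,j}$ to all of $\mc{I}_S$ rather than just the single relevant colour, which still satisfies the size constraint and only costs constant factors in the extension counts.
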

\begin{proof}
First apply \cref{lem:final-rainbow} to obtain some $\Phi_0$ with
\[\mc{R}=\partial_{s,r}\Phi_0\]
so that $\snorm{\partial_{s,r}\Phi^+}_\infty\le 1$, $\partial_{s,r}\Phi^\pm$ are $(\theta^{1/4},L)$-field bounded, and for every $S\in\mr{supp}(\Phi)$ there are distinct $i_R\in[z]$ such that $R\in G_{\mr{tem},i_R}$ for all $R\in\mr{Gr}(S,r)$. Also, $\partial_{s,r}\Phi_0^+$ is field disjoint. Our goal is to massage this to further guarantee the one added condition.

To this end, we consider another disjoint process governed by \cref{lem:master-disjointness}. Let $\{S_1,\ldots,S_{y'}\}$ be the positive $s$-spaces in $\Phi_0$ and $\{S_{y'+1},\ldots,S_y\}$ be the negative $s$-spaces.
\begin{itemize}
    \item $H$ is obtained by the following process. Take one copy of the construction in \cref{prop:subspace-exchange}, call its $s$-spaces $\Ups_0,\Ups_0'$. Then let $F$ be an $s$-space in $\Ups_0$, and $F_j$ for $j\le\qbinom{s}{r}_q$ be the $s$-spaces in $\Ups_0'$ intersecting $F$ in $r$ dimensions. Then glue $\qbinom{s}{r}_q$ additional copies of the construction from \cref{prop:subspace-exchange}, call the $s$-spaces $\Ups_j,\Ups_j'$, linearly disjointly along each $F_j$ so that $F_j\in\Ups_j$. Let $F_j'$ be the unique $s$-space in $\Ups_j'$ which contains $F\cap F_j$. (This is similar to the construction used in the proof of \cref{lem:rainbow-decomposition}.)
    \item $\mc{R}_\mr{init}=\chi(\mr{supp}(\partial_{s,r}(\Phi_0^+))\cup\mr{supp}(\partial_{s,r}(\Phi_0^-)))$;
    \item $x=y$ and $\phi_t$ is an arbitrary linear injection such that $\phi_t(F)=S_t$.
    \item $\mc{C}_t=\{\mr{ind}(R)\colon R\in\mr{Gr}(S_t,r)\}$;
    \item The basis $v^\ast$ is constructed in the following manner: start with a basis for $F$, then in order of $1\le j\le\qbinom{s}{r}_q$ extend to a basis for the span of $F_j'$. This is equivalent to extending from $F\cap F_j$ of dimension $r$ to $F_j'$ of dimension $s$, and the linear disjointness shows that these do not interfere with each other. Then extend this arbitrarily to a basis of $\mr{Vec}(H)$.
    \item $\mc{C}_{t,j}$ is defined as follows: if $v_j^\ast$ was constructed in the process of spanning some $F_i'$, then $\mc{C}_{t,j}=\{\mr{ind}(\phi_t(F\cap F_i))\}$. Otherwise, $\mc{C}_{t,j}=\emptyset$.
    \item $\theta$ is replaced by $2z\theta^{1/4}$.
\end{itemize}
By \cref{lem:chi-bounded} we know that $\chi(\partial_{s,r}\Phi_0^\pm)$ are $z\theta^{1/4}$-bounded hence whp the process runs to completion. Similar to prior analyses, we see that the ``new spaces'' produced by the output are whp $(\theta^{1/8}/2,L)$-field bounded, say, so the new decomposition we will obtain will ultimately be $(\theta^{1/8},L)$-field bounded.

We briefly describe, but do not fully write out, what the new decomposition $\Phi$ will be. Every $s$-space showing up in some $\partial_{s,r}e_{S_t}$ will be replaced by the following process: first replace this with the sum of $s$-spaces coming from $\Ups_0'$ minus those coming from $\Ups_0$ other than $S_t$; then for each space of $\Ups_0'$ which intersects $S_t$ in $r$ dimensions, which corresponds to some $\Ups_j,\Ups_j'$, we subsequently similarly replace it using $\Ups_j'$ and $\Ups_j$. This yields a sum and difference of $s$-spaces, and we check that the necessary conditions on the $\infty$-norm, field disjointness, etc., are preserved (using disjointness of the process, similar to the analyses in the proof of \cref{lem:final-rainbow}).

Finally, we consider the additional property that needs to be guaranteed. Similar to prior analyses, each new space will be rainbow by definition, but we need the additional property that if $S\in\mr{supp}(\Phi)$, we have $S\leqslant\bigcap_{R\in\mr{Gr}(S,r)}K_{i_R}$ if we write $i_R=\mr{ind}(R)$ for each $R\leqslant S$.

The key point is that the $\mc{C}_{t,j}$ will guarantee this for us. First note that the definition of $\mc{H}_t$ in \cref{lem:master-disjointness} shows that $\phi^\ast(H)\subseteq K_{\mr{ind}(R)}$ for each $R\in H\setminus H[F]$, which guarantees this property for almost all the $s$-spaces generated. However, for the $s$-spaces $S$ which are introduced and intersect $S_t$ in $r$ dimensions, we are precisely missing the property that $S\leqslant K_{\mr{ind}(S\cap S_t)}$. For such $S$ we can write $S=\phi_t^\ast(F_i')$ for some $1\le i\le\qbinom{s}{r}_q$. This is where the property from the $\mc{C}_{t,j}$ comes in: it provides the missing color which is $\mr{ind}(\phi_t(F\cap F_i))$. Note that $\phi_t(F\cap F_i)=\phi_t^\ast(F\cap F_i)=\phi_t^\ast(F\cap F_i')=S_t\cap S$. Since by definition $F_i'$ is spanned by $F\cap F_i$ and the $v_j^\ast$ which were assigned this color in $\mc{C}_{t,j}$, we see that the image of $F_i'$ under $\phi_t^\ast$, which is $S$, is fully contained in $K_{\mr{ind}(S\cap S_t)}$ as required.
\end{proof}

The final result of this section shows that an output of the previous lemma can be transformed so that every $s$-space is monochromatic (the constituent $r$-spaces are all in the same part of the template $G_{\mr{tem},i}$) and in fact configuration compatible (\cref{def:configuration-compatible}). Furthermore, we ensure that the sets of $s$-spaces are field bounded now (as opposed to just the underlying $r$-spaces). This will put us in position to apply \cref{prop:final-absorber}.
\begin{proposition}\label{prop:final-disjoint-monochromatic}
Given the setup of \cref{sub:setup,def:template}, we have the following as long as $c=c_{\ref{prop:final-disjoint-monochromatic}}(q,s)>0$, $q^{-cn}\le\tau\le c$, $d\ge d_{\ref{prop:final-disjoint-monochromatic}}(r)$, $\ell\ge\ell_{\ref{prop:final-disjoint-monochromatic}}(d,s)$, and $n$ is large. Whp over the randomness of the template, for any $\Phi\in\mb{Z}^{\mr{Gr}_q(n,s)}$ such that
\begin{itemize}
    \item $\partial_{s,r}\Phi,\partial_{s,r}\Phi^+\in\{0,1\}^{\mr{Gr}_q(n,r)}$ (and $\mr{supp}(\partial_{s,r}\Phi)\subseteq G_\mr{tem}$);
    \item $\partial_{s,r}\Phi^\pm$ are $(\theta,L)$-field bounded for some $q^{-cn}\le\theta\le(\tau/z)^{1/c}$;
    \item For every $S\in\mr{supp}(\Phi)$ there are distinct $i_R\in[z]$ such that $R\in G_{\mr{tem},i_R}$ for all $R\in\mr{Gr}(S,r)$, and furthermore $S\leqslant\bigcap_{R\in\mr{Gr}(S,r)}K_{i_R}$;
    \item $\partial_{s,r}\Phi^\pm$ are field disjoint;
\end{itemize}
there are $\mc{S}_1,\mc{S}_2\subseteq\mr{Gr}_q(n,s)$ with $\partial_{s,r}\Phi=\partial_{s,r}(\mc{S}_1-\mc{S}_2)$ such that $\snorm{\partial_{s,r}\mc{S}_1}_\infty\le 1$, $\mc{S}_1$ is $(\theta^{1/2},L)$-field bounded, and for each $S\in\mc{S}_1$ there is $i\in[z]$ such that (a) $\dim_L\mr{span}_L(\iota_i^{-1}(S))=s$ and (b) there is an $\mb{F}_q$-basis $b\in K_i^s$ of $S$ so that for every $\Pi\in\mr{Red}_q^{r\times s}$, we have for $R=\mr{span}_{\mb{F}_q}(\Pi b)$ that $\Pi_R=\Pi$, $b_R=\Pi b$, and $R\in G_{\mr{tem},i}$. Additionally, we guarantee that for $i\in[z]$ and distinct $S_1,S_2\in\mc{S}_1[G_{\mr{tem},i}]$, we have $\dim_L(\mr{span}_L(\iota_i^{-1}(S_1))\cap\mr{span}_L(\iota_i^{-1}(S_2)))<r$.
\end{proposition}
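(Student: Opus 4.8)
The plan is to process the $s$-spaces in $\mr{supp}(\Phi^+)$ one at a time, replacing each by a signed ``flip'' built from a single copy of the subspace exchange of \cref{prop:subspace-exchange} whose $\Ups$-side is forced to be monochromatic and configuration compatible, and to run this as a disjoint random process in the spirit of \cref{lem:master-disjointness} but carrying $s$-space level information. Fix the field inclusion $L\hookrightarrow K$ in \cref{prop:subspace-exchange} and let $\Ups,\Ups'$ be the resulting $s$-dimensional $q$-systems on $\mb{F}_q^k$, let $H$ be the $r$-dimensional $q$-system of all $r$-subspaces of $s$-spaces in $\Ups\cup\Ups'$, and fix an $s$-space $F\in\Ups'$. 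By \cref{prop:subspace-exchange} the members of $\Ups$ pairwise meet in $<r$ dimensions (so no $r$-space of $H$ lies in two of them), $\Ups\cap\Ups'=\emptyset$, each $r$-subspace of $F$ lies in exactly one $P\in\Ups$ while all other members of $\Ups$ meet $F$ in $<r$ dimensions, and $\partial_{s,r}\Ups=\partial_{s,r}\Ups'$; iterating the last identity expresses $\partial_{s,r}e_F$ as a $\{0,\pm1\}$-combination $\sum_{S'\in(\Ups\cup\Ups')\setminus\{F\}}\mr{sgn}(S')\partial_{s,r}e_{S'}$ in which every member of $\Ups$ carries sign $+1$.

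Given $S\in\mr{supp}(\Phi^+)$ with $r$-subspaces $R_1,\dots,R_b$ ($b=\qbinom{s}{r}_q$), $\mr{ind}(R_j)=i_j$ distinct and $S\leqslant\bigcap_jK_{i_j}$, let $P_j\in\Ups$ be the unique member containing the abstract $r$-space mapping to $R_j$. We look for an embedding $\phi$ of $H$ into $G_\mr{tem}$ with $\phi(F)=S$ such that $\phi(P_j)$ is configuration compatible for $i_j$ with its realizing basis restricting to the sampled $b_{R_j}$ along $\Pi_{R_j}$ (a solvable linear constraint since $\Pi_{R_j}\in\mr{Red}_q^{r\times s}$), each remaining member of $\Ups$ is configuration compatible for a colour not among $i_1,\dots,i_b$, the colours already used, or the current avoidance sets, and $\phi(P)\leqslant K_{i_P}$ for each $P\in\Ups$. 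The count of such embeddings is governed by \cref{prop:template-extendable}: take the $r$-space coloring $\psi$ sending each new $r$-space to the colour of the member of $\Ups$ containing it (or a new colour of its $\Ups'$-member if it lies in none), record the target configurations in the $\pi_i$ and target bases in the $x_R$, and use the coloring sets $\mc{C}_t$ to force each newly embedded basis vector of $\mr{Vec}(H)$ into the intersection of the $K_i$ over the boundedly many colours whose $s$-space uses it. Conditions \ref{prop:template-extendable}(a,b) hold: the used colours $i$ all satisfy $S\leqslant K_i$ (for the $i_j$ by hypothesis, and we choose the fresh colours this way, there being $\Omega(z)$ of them), and every $r$-subspace of $F$ has full $L$-dimension by the fourth bullet of \cref{lem:template-basic}. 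After further intersecting with the conditions $\dim_L\mr{span}_L(\iota_{i_P}^{-1}(\phi(P)))=s$ for $P\in\Ups$ — which fail for only an $O(q^{-n})$ fraction of embeddings and are needed for configuration compatibility — we are left with $(\tau/z)^{O_{q,s}(1)}q^{v_En}$ valid embeddings, $v_E=\dim\mr{Vec}(H)-s$.

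Now run the disjoint process over an arbitrary ordering of $\mr{supp}(\Phi^+)$: at step $t$, among the valid embeddings for $S_t$ whose new $r$-spaces avoid $\mc{R}_\mr{init}\cup\chi(\mc{R}_{t-1})$ — where $\mc{R}_\mr{init}=\chi(\mr{supp}(\partial_{s,r}\Phi^+)\cup\mr{supp}(\partial_{s,r}\Phi^-))$ — pick one uniformly at random and set $\mc{R}_t=\mc{R}_{t-1}\cup(\text{new }r\text{-spaces})$. This is essentially \cref{lem:master-disjointness} with the above extension count in place of \cref{prop:template-extendable}, except that we must also control, for each colour $i$ and each $L$-$(r-1)$-space $Q^\ast$, the number of monochromatic-for-$i$ positive flip $s$-spaces whose $L$-span contains $Q^\ast$; we track this statistic through the same Bernoulli-domination and Chernoff estimates. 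Feeding in the field boundedness and field disjointness of $\partial_{s,r}\Phi^\pm$ (and using \cref{lem:chi-bounded}) as the ``boundedness of roots'', the process runs to completion whp and yields an output that is $(\theta^{1/2},L)$-field bounded as a collection of $s$-spaces. Taking $\mc{S}_1$ to be the set of positive flip $s$-spaces (distinct across flips by disjointness, within a flip since $\Ups$ has distinct members) and $\mc{S}_2$ the disjoint union of the negative flip $s$-spaces with $\mr{supp}(\Phi^-)$ (a genuine set: the input forces $\partial_{s,r}\Phi^-\in\{-1,0\}$, hence $\Phi^-$ is $-1$ times the indicator of an $s$-space set with disjoint $r$-subspaces, and each negative flip $s$-space other than the image of $F$ has a brand-new $r$-subspace), we obtain $\partial_{s,r}\Phi=\partial_{s,r}(\mc{S}_1-\mc{S}_2)$ with $\mc{S}_1$ configuration compatible by construction. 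The bound $\snorm{\partial_{s,r}\mc{S}_1}_\infty\le1$ holds because within a flip no $r$-space lies in two positive $s$-spaces (using the linear disjointness and the $<r$ intersections in \cref{prop:subspace-exchange}), across flips by $\chi$-disjointness, and on each $R_j$ — which lies in exactly the positive flip $s$-space $\phi(P_j)$ — by $\partial_{s,r}\Phi^+\in\{0,1\}$ and field disjointness of $\partial_{s,r}\Phi^+$. Finally the required $L$-intersection condition holds within a flip (at most one member of $\Ups$ is monochromatic for any given colour, and distinct members meet in $<r$ $\mb{F}_q$-dimensions) and across flips because the process is $\chi$-disjoint and, crucially, the genericity of $N_\mr{tem}$ (via \cref{lem:generic}, as in the bullets of \cref{lem:absorber-admissible}) rules out two configuration compatible $s$-spaces of a common colour with disjoint template $r$-subspace sets whose $L$-spans share an $L$-$r$-space.

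The main obstacle is the disjoint process at the $s$-space level: since \cref{lem:master-disjointness} only preserves properties visible from the underlying $r$-spaces, one must re-run its argument carrying the extra ``$L$-span of an $s$-space'' statistics and re-verify that the new extension count (burdened with all the configuration-, $K_i$-containment-, and $\dim_L=s$-constraints, and with the realizing basis of $\phi(P_j)$ pinned to the already-sampled $\Pi_{R_j},b_{R_j}$) is still both large enough to keep the process alive and bounded enough to maintain $s$-space field boundedness after conditioning on earlier steps. A secondary but genuinely algebraic point is the last claim above — that configuration compatible $s$-spaces of a common colour coming from different flips cannot have degenerate $L$-overlap — which is where the genericity of $N_\mr{tem}$ must be invoked once more.
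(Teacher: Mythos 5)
Your proposal follows essentially the same route as the paper's proof: the same single-copy subspace-exchange flip (with the roles of $\Ups$ and $\Ups'$ swapped, which is immaterial by symmetry), the same monochromatic colouring in which the $s$-space of the flip sharing the $r$-space $R_j$ with $S_t$ inherits the colour $\mr{ind}(R_j)$ while the remaining flip $s$-spaces get fresh colours, the same use of \cref{prop:template-extendable} with the $\pi_i$, $x_R$ and colouring sets pinning down configuration compatibility and $K_i$-containment, and the same re-run of the \cref{lem:master-disjointness} machinery carrying $s$-space-level statistics. Your accounting for $\mc{S}_2$ and for $\snorm{\partial_{s,r}\mc{S}_1}_\infty\le1$ matches the paper's.

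There is, however, one genuine gap: your justification of the final condition that distinct $S_1,S_2\in\mc{S}_1[G_{\mr{tem},i}]$ satisfy $\dim_L(\mr{span}_L(\iota_i^{-1}(S_1))\cap\mr{span}_L(\iota_i^{-1}(S_2)))<r$ \emph{across different flips}. You attribute this to $\chi$-disjointness together with the genericity of $N_\mr{tem}$, and neither suffices. $\chi$-disjointness only controls those $L$-$(r)$-subspaces of $\mr{span}_L(\iota_i^{-1}(S_j))$ that arise as $L$-spans of actual $\mb{F}_q$-$r$-subspaces of $S_j$ — of which there are $\qbinom{s}{r}_q$, a vanishing minority of the $\qbinom{s}{r}_{q^\ell}$ many $r$-dimensional $L$-subspaces of the $s$-dimensional $L$-space $\mr{span}_L(\iota_i^{-1}(S_j))$ — so two field-disjoint configuration compatible $s$-spaces can perfectly well have $L$-spans meeting in dimension $\ge r$. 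Genericity of $N_\mr{tem}$ cannot rescue this: the members of $\mc{S}_1$ are not template $s$-spaces of the form $\mr{span}_{\mb{F}_q}(\iota_i(Nx))$; they are essentially arbitrary $s$-spaces in $K_i$ whose $r$-subspaces happen to lie in $G_{\mr{tem},i}$ with matching $\Pi_R,b_R$, and no algebraic identity constrains the relative position of two of them. The paper instead enforces this property \emph{inside the disjoint process}: the definition of $\mc{H}_t$ explicitly excludes embeddings $\phi^\ast$ for which some $\phi^\ast(S)$ would have an $L$-span meeting that of a previously placed $S'\in\mc{S}_{t-1}$ of the same colour in dimension $\ge r$ (and a companion condition handles interactions with $\mc{R}_\mr{init}$), and then bounds the number of embeddings so excluded using the $(\theta^{1/2},L)$-field boundedness of the accumulated output together with the dimension count $\dim_L\mr{span}_L(R^\ast\cup W^\ast)=r+d-u$. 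Since you already set up the $s$-space-level concentration needed to prove that field boundedness, adding this avoidance condition and its counting is a local repair rather than a change of strategy — but as written the step fails.
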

\begin{remark}
Note that our additional guarantee is slightly stronger than field boundedness, involving $s$-spaces, which is what is needed to apply \cref{prop:final-absorber}.
\end{remark}
\begin{proof}
We run a disjoint random process similar to \cref{lem:master-disjointness}, but instead of making the new $s$-spaces rainbow, we make them monochromatic. Additionally, we enforce configuration compatibility (\cref{def:configuration-compatible}). Let $k=k_{\ref{prop:subspace-exchange}}(s)$. Whp the template satisfies \cref{prop:template-extendable} for $h=k$. Further, given an $s$-space $F\leqslant\mb{F}_q^n$, Chernoff shows that with probability at least $1-\exp(-\Omega_{q,\ell}(z))$ over the random injections $\iota_1,\ldots,\iota_z$, there are at least $2z^{1/2}$ many $i\in[z]$ so that $F\leqslant K_i$ and $\dim_L\mr{span}_L(\iota_i^{-1}(F))=\dim F$. Taking a union bound, whp this property holds for all $s$-spaces. We thus condition on such an outcome and treat it as non-random.

Let $\mc{R}_\mr{init}=\mr{supp}(\partial_{s,r}\Phi^+)$. Let $\Phi^+\in\{0,1\}^{\mr{Gr}_q(n,s)}$ be supported on the set $\{S_1,\ldots,S_y\}$ with bases $\beta^{(1)},\ldots,\beta^{(y)}\in V^s$. By the condition in the third bullet, for each $t\in[y]$ and $\Pi\in\mr{Red}_q^{r\times s}$ the $r$-space $R_{t,\Pi}:=\mr{span}_{\mb{F}_q}(\Pi\beta^{(t)})$ is in some $G_{\mr{tem},i_{t,\Pi}}$, the $i_{t,\Pi}$ are distinct for fixed $t$, and $S_t\leqslant\bigcap_{\Pi\in\mr{Red}_q^{r\times s}}K_{i_{t,\Pi}}$. Let $\mc{I}_t=\{i_{t,\Pi}\colon\Pi\in\mr{Red}_q^{r\times s}\}$. Additionally, by the condition we assume on the random injections, there is a set $\mc{C}_t\subseteq[z]$ of size at least $z^{1/2}$ which is disjoint from $\mc{I}_t$ such that $S_t\leqslant K_i$ and $\dim_L\mr{span}_L(\iota_i^{-1}(S_t))=s$ for all $i\in\mc{C}_t$.

Let $H$ be the set of all $r$-spaces contained in $s$-spaces of $\Ups$ (with $\mr{Vec}(H)=\mb{F}_q^k$) as in \cref{prop:subspace-exchange}. Let $F$ be an arbitrary $s$-space in $\Ups$ with basis $\beta\in F^s$. Then for $\Pi\in\mr{Red}_q^{r\times s}$ let $F_\Pi$ be the unique $s$-space in $\Ups'$ intersecting $F$ in $R_\Pi:=\mr{span}_{\mb{F}_q}(\Pi\beta)$. Let $\mc{F}$ be the set of $s$-spaces in $\Ups'$ other than the $F_\Pi$. For $S\in\mc{F}$ let $x_S\in\mr{Vec}(H)^s$ be a basis of $S$. Finally let $\{v_1^\ast,\ldots,v_k^\ast\}$ be an $\mb{F}_q$-basis of $\mr{Vec}(H)$ such that $v_j^\ast=\beta_j$ for $j\in[s]$ form our basis for $F$.

For each $t\in[y]$ we choose an arbitrary linear injection $\phi_t\colon F\to V$ satisfying $\phi_t(\beta)=\beta^{(t)}$. Let $E_t:=(\phi_t,F,H)$ be an extension. Additionally, construct $\psi_t\colon H\setminus H[F]\to\mc{C}_t\cup\mc{I}_t\subseteq[z]$ with the following property: for $\Pi\in\mr{Red}_q^{r\times s}$ and $R\in\mr{Gr}(F_\Pi,r)\setminus\{R_\Pi\}$ we have $\psi_t(R)=i_{t,\Pi}$ and for $R\in\mr{Gr}(S,r)$ with $S\in\mc{F}$ we have $\psi_t(R)=i_{t,S}$ where the $i_{t,S}\in\mc{C}_t$ take on distinct values as $S\in\mc{F}$ varies. Thus all values of $\psi_t$ are distinct modulo the information of which $S\in\Ups'$ an $r$-space $R\in H\setminus H[F]$ is contained in. Also, every color class has size at most $\qbinom{s}{r}_q$. Furthermore, this is constructed in a way such that if we hypothetically perform a flip of this type transforming $\partial_{s,r}e_{S_t}$ into an analogous sum, we will obtain monochromatic $s$-spaces for the terms with positive coefficients (coming from $\Ups'$). For convenience write $i_{t,S}=i_{t,\Pi}$ in the case $S=F_\Pi$.

Finally, write $b_{t,\Pi}=b_{R_{t,\Pi}}$ and $\Pi_{t,\Pi}=\Pi_{R_{t,\Pi}}$ and consider a basis $x_{t,\Pi}\in\mr{Vec}(H)^s$ of $F_\Pi$ such that $\Pi_{t,\Pi}x_{t,\Pi}$ spans $R_\Pi$ and furthermore $\phi_t(\Pi_{t,\Pi}x_{t,\Pi})=b_{t,\Pi}$ (which trivially can be seen to exist since $\Pi_{t,\Pi}\in\mb{F}_q^{r\times s}$ has rank $r$).

We now run the following process after setting $\mc{R}_0:=\emptyset$ and $\mc{S}_0:=\emptyset$. The constant $C$ will be chosen suitably later.
\begin{itemize}
    \item Let $E_t:=(\phi_t,F,H)$ be an extension and $\mc{H}_t'$ be the set of $\phi^\ast\in\mc{X}_{E_t}(G_\mr{tem})$ such that (a) $\phi^\ast(R)\in G_{\mr{tem},\psi_t(R)}$ for all $R\in H$ (equivalently, $\mr{ind}(\phi^\ast(R))=\psi_t(R)$), (b) for every $S\in\phi_t(\Ups')$ we have $\dim_L\mr{span}_L(\iota_{i_{t,S}}^{-1}(S))=s$, (c) for every $\Pi\in\mr{Red}_q^{r\times s}$ and $S=\phi^\ast(F_\Pi)$ and any $\Pi'\in\mr{Red}_q^{r\times s}$ we have for $R=\mr{span}_{\mb{F}_q}(\Pi'x_{t,\Pi})$ that $b_{\phi^\ast(R)}=\phi^\ast(\Pi'x_{t,\Pi})$ and $\Pi_{\phi^\ast(R)}=\Pi'$, (d) for every $S\in\phi^\ast(\mc{F})$ and $\Pi\in\mr{Red}_q^{r\times s}$ we have for $R=\mr{span}_{\mb{F}_q}(\Pi x_S)$ that $b_{\phi^\ast(R)}=\phi^\ast(\Pi x_S)$ and $\Pi_{\phi^\ast(R)}=\Pi$, and (e) for all $S\in\Ups'$ and $i=i_{t,S}$, we have $\phi^\ast(\mr{Vec}(H))\leqslant K_i$ and $\dim_L\mr{span}_L(\iota_i^{-1}(\phi^\ast(\mr{Vec}(H))))-\dim_L\mr{span}_L(\iota_i^{-1}(\phi_t(F)))=k-s$.
    \item Let $\mc{H}_t$ be the set of $\phi^\ast\in\mc{H}_t'$ such that (i) for all $R\in H\setminus H[F]$, we have $\phi^\ast(R)\notin\mc{R}_\mr{init}\cup\mc{R}_{t-1}$, (ii) for all $S\in\Ups'$ and $S'\in\mc{S}_{t-1}$, if $\phi^\ast(S),S'\in\mr{Gr}_q(n,s)[G_{\mr{tem},i}]$ for some $i\in[z]$ then $\dim_L(\mr{span}_L(\iota_i^{-1}(\phi^\ast(S)))\cap\mr{span}_L(\iota_i^{-1}(S')))<r$, and (iii) for all $S\in\Ups'$ and $R\in\mc{R}_\mr{init}$, if $\mr{Gr}(\phi^\ast(S),r)\cup\{R\}\subseteq G_{\mr{tem},i}$ for some $i\in[z]$ then $\mr{span}_L(\iota_i^{-1}(R))$ is not contained in $\mr{span}_L(\iota_i^{-1}(\phi^\ast(S)))$ unless $R=\phi_t(F\cap S)$. (We remark that by (a) of the previous bullet, it suffices to consider $i=i_{t,S}$ in (i,iii); additionally, (iii) functions similarly to the role of applying $\chi$ to $\mc{R}_\mr{init}$ in \cref{lem:master-disjointness}.)
    \item If $|\mc{H}_t|<(\tau/z)^Cq^{vn}$, where $v=\dim\mr{Vec}(H)-\dim F=v_{E_t}$, we call the process \emph{failed}, let $\phi_t^\ast=\phi_{t+1}^\ast=\cdots=\phi_y^\ast:=\ast$, a special wildcard value, let $\mc{R}_t=\cdots=\mc{R}_y:=\mc{R}_{t-1}$, let $\mc{S}_t=\cdots=\mc{S}_y:=\mc{S}_{t-1}$, and stop the iteration.
    \item Otherwise, we sample $\phi_t^\ast\sim\mr{Unif}(\mc{H}_t)$, let $\mc{R}_t:=\mc{R}_{t-1}\cup\phi_t^\ast(H\setminus H[F])$ as a set, let $\mc{S}_t:=\mc{S}_{t-1}\cup\phi_t^\ast(\Ups')$, and continue.
\end{itemize}
As an important point, note that in condition (c) above for the definition of $\mc{H}_t'$, the case $\Pi'=\Pi_{t,\Pi}$ implies $R=R_\Pi\leqslant F$ hence $\phi^\ast(R)=\phi_t(R)\leqslant S_t$, so it really is a condition about the base space. But $b_{\phi_t(R)}=b_{R_{t,\Pi}}=\phi_t(\Pi_{t,\Pi}x_{t,\Pi})$ by definition, and similarly $\Pi_{\phi^\ast(R)}=\Pi_{t,\Pi}$, so the condition is actually redundant. Thus, furthermore note that by \cref{prop:template-extendable}, for all $t\in[y]$ we have
\begin{equation}\label{eq:many-embeddings-final}
|\mc{H}_t'|\ge(\tau/z)^{C/2}q^{vn}
\end{equation}
if $C$ is chosen appropriately. To verify \cref{prop:template-extendable} is even applicable, we must make sure that $S_t$ is in the intersection of various fields $K_i$ coming from the defined coloring function $\psi_t$ (condition \ref{prop:template-extendable}(a)). For the range elements $i_{t,S}$ coming from $r$-spaces in some $S\in\mc{F}$ this is by definition of $\mc{C}_t$, but for those of the form $i_{t,\Pi}$ for $\Pi\in\mr{Red}_q^{r\times s}$ we are using the third bullet point of the given conditions, which implies $S_t\leqslant\bigcap_{\Pi\in\mr{Red}_q^{r\times s}}K_{i_{t,\Pi}}$ (this is the reason for the extra \cref{lem:final-color-consistent}). Additionally notice that conditions (a,c,d,e) in the definition of $\mc{H}_t'$ fit within the framework of \cref{prop:template-extendable}, other than the condition on difference of dimensions in (e). Furthermore, that part of (e) as well as condition (b) have $O_{\ell,q,s}(q^{(v-1)n})$ violations: for (e), choose the $L$-dependence between $v_{s+1}^\ast,\ldots,v_k^\ast$ and $\{\beta_1,\ldots,\beta_s\}$ (after appropriate $\iota^{-1}\phi^\ast$; this nontrivially involves the former list) which causes the difference in dimensions to collapse and then count choices of $v-1$ remaining free parameters (also, (b) is implied by (e)). A similar argument where we use that $L$-degeneracies occur infrequently is performed in the deduction of \cref{prop:template-extendable} from \cref{lem:iterative-color-embedding}.

Finally, we verify that \ref{prop:template-extendable}(b) holds. Recall that the sets $\mc{C}_t$ are also such that $\dim_L\mr{span}_L(\iota_i^{-1}(S_t))=s$ for all $i\in\mc{C}_t$, which implies that every $\mb{F}_q$-subspace of $S_t$ of dimension $u$ will extend to $u$ dimensions over $L$ with respect to such $\iota_i$. Thus the condition is easily verified for $R\in H\setminus H[F]$ so that $\psi_t(R)\in\mc{C}_t$. The other possibility is $\psi_t(R)=i_{t,\Pi}$ for some $\Pi\in\mr{Red}_q^{r\times s}$, in which case we must have $R\leqslant F_\Pi$ hence $R\cap F\subseteq R\cap F_\Pi\cap F\leqslant R\cap R_\Pi$ (the last inclusion by the second bullet of \cref{prop:subspace-exchange}). But we have $R_{t,\Pi}\in G_{\mr{tem},i_{t,\Pi}}$, which means $\dim_L\mr{span}_L(\iota_{i_{t,\Pi}}^{-1}(R_{t,\Pi})=r$. Along with $\phi_t(R_\Pi)=R_{t,\Pi}$, and a similar argument as above, this completes verification of \ref{prop:template-extendable}(b) and hence justifies \cref{eq:many-embeddings-final}.

The rest of the argument is now a similar Bernoulli comparison and Chernoff analysis as in the proof of \cref{lem:master-disjointness}, with modifications due to certain conditions and desired outputs of the process that are inherently $s$-dimensional in nature.

Similarly to the proof of \cref{lem:master-disjointness}, for any $\mc{F}\subseteq\mr{Gr}_q(n,r)$ if $X_\mc{F}^{(r)}=\#\{(R,R')\in\mc{F}\times\mc{R}_y\colon R\in\chi(R')\}$ then
\begin{equation}\label{eq:2-disjoint-family-bounded}
\mb{P}[X_\mc{F}^{(r)}\ge\theta^{2/3}|\mc{F}|]\le\exp(-\theta|\mc{F}|).
\end{equation}
(Specifically, we similarly bound the expected probability that each step includes certain $r$-spaces and use \cref{lem:bernoulli-domination,lem:chernoff}.) Now we prove an analogous bound but in an $s$-dimensional sense. We will then apply this to show that the process runs to completion whp, and that the necessary field boundedness holds for the output.

Let $\mc{F}\subseteq\mr{Gr}_q(n,s)$ be a collection of $s$-spaces. We say that $\mc{F}$ is \emph{unforced} if for any $r$-space $R\in\mr{supp}(\partial_{s,r}\Phi^+)$, we have
\begin{equation}\label{eq:unforced}
\mc{F}\cap\{S\in\mr{Gr}_q(n,s)\colon S\geqslant R\}=\emptyset,
\end{equation}
that is, no $s$-space of $\mc{F}$ contains any $R\in\mr{supp}(\partial_{s,r}\Phi^+)$.

Let $X_\mc{F}=\sum_{t=1}^yX_t$ where $X_t$ is the number of choices of  $S\in\phi_t^\ast(\Ups')$ so that $S\in\mc{F}$. If $Y_t=\mbm{1}_{X_t\neq 0}$ we have $X_t\le q^{sk}Y_t$ so $Y=\sum_{t=1}^yY_t\ge q^{-sk}X$. For $S\in\mr{Gr}_q(n,s)$ and $t\in[y]$ let $\mc{H}_{S,t}=\{\phi\in\mc{H}_t'\colon S\in\phi(\Ups')\}$. Defining
\[\mu_t=\sum_{S\in\mc{F}}|\mc{H}_{S,t}|(z/\tau)^Cq^{-vn}\]
and $\mu=\sum_{t=1}^y\mu_t$, we can (similarly to the proof of \cref{lem:master-disjointness}) use \cref{lem:bernoulli-domination} and \cref{lem:chernoff} to show
\begin{equation}\label{eq:disjoint-s-family-bounded}
\mb{P}[X_\mc{F}\ge\theta^{2/3}q^{(r-s)n}|\mc{F}|]\le\exp(-\theta q^{(r-s)n}|\mc{F}|)\text{ for unforced }\mc{F},
\end{equation}
as long as we can prove $2\mu\le\theta^{3/4}q^{(r-s)n}|\mc{F}|$. To establish this, note that for fixed $S\in\mc{F}$ we have
\begin{align}
\sum_{t=1}^y|\mc{H}_{S,t}|&=\sum_{u=0}^{r-1}\sum_{\substack{t\in[y]\\\dim(S\cap\phi_t(F))=u}}|\mc{H}_{S,t}|\le\sum_{u=0}^{r-1}\bigg(\sum_{\substack{S_0\in\Ups'\\\dim(S_0\cap F)=u}}\sum_{\substack{t\in[y]\\\phi_t(S_0\cap F)\leqslant S}}\#\{\phi\in\mc{H}_t'\colon S=\phi(S_0)\}\bigg)\notag\\
&\le\sum_{u=0}^{r-1}O_{k,q,s}(z\theta q^{(r-u)n})\cdot O_{q,s}(q^{(\dim\mr{Vec}(H)-(\dim F+s-u))n})\le O_{q,s}(z\theta q^{(r-s+v)n}).\label{eq:master-disjointness-s-mean}
\end{align}
The equality and first inequality follow since for every $\phi\in\mc{H}_{S,t}$, we have $S=\phi(S_0)$ for some $S_0\in\Ups'$ which intersects $F$ in at most $r$ dimensions by \cref{prop:subspace-exchange}, call it $u=\dim(S_0\cap F)\in\{0,\ldots,r\}$, and $\phi_t(S_0\cap F)\leqslant S$ easily follows. Additionally, the case $u=r$ is ruled out by \cref{eq:unforced}: if we have such a situation then $\phi_t(S_0\cap F)$ has dimension $r$ and is inside $\mr{supp}(\partial_{s,r}\Phi^+)$, but is contained within some $S\in\mc{F}$, violating the unforcedness condition.

The inequality in the second line of \cref{eq:master-disjointness-s-mean} is proven by seeing there are at most $O_{k,q,s}(1)\cdot z\theta q^{(r-u)n}$ terms in the inner double sum by counting ways to choose $S_0$ and then using $(\theta,L)$-field boundedness (hence $z\theta$-boundedness from \cref{lem:strength-bounded}) of $\partial_{s,r}\{S_1,\ldots,S_y\}$ to count how many $t\in[y]$ satisfy $\phi_t(S_0\cap F)\leqslant S$. Then note that $\#\{\phi\in\mc{H}_t'\colon S=\phi(S_0)\}=O_{q,s}(q^{(\dim\mr{Vec}(H)-(\dim F+s-u))n})$ since this is bounded by the number of extensions $\phi\in\mc{X}_{E_t}(G)$ which map $S_0$ to $S$ (up to some permutation which has $O_{q,s}(1)$ choices), and then the remaining number of free dimensions to embed is $\dim\mr{Vec}(H)-(\dim F+s-u)$ since $\dim\mr{span}_{\mb{F}_q}(S_0\cup F)=\dim F+s-\dim(S_0\cap F)=\dim F+s-u$.

Now summing \cref{eq:master-disjointness-s-mean} over $S\in\mc{F}$ shows that $\mu\le O_{k,q,s}(1)\cdot z\theta(z/\tau)^Cq^{(r-s)n}|\mc{F}|$, which demonstrates the desired inequality $2\mu\le\theta^{3/4}|\mc{F}|$ as long as $1/c\ge 4C+2$ is chosen appropriately (using $\theta\le(\tau/z)^{1/c}$). Thus \cref{eq:disjoint-s-family-bounded} indeed holds. Similarly, for arbitrary $\mc{F}\subseteq\mr{Gr}_q(n,s)$ (not necessarily unforced) we can derive
\begin{equation}\label{eq:general-family-bounded}
\mb{P}[X_\mc{F}\ge z^2q^{(r-s)n}|\mc{F}|]\le\exp(-q^{(r-s)n}|\mc{F}|).
\end{equation}
Indeed, note that we sacrifice the ability to save a factor of $\theta$ by ruling out the case $u=r$ in the derivation of \cref{eq:master-disjointness-s-mean}, but otherwise the proof is analogous. (We use just $\snorm{\partial_{s,r}\Phi^+}_\infty\le 1$.)

Now we extract the field boundedness of $\mc{S}_y=\bigcup_{t=1}^y\phi_t^\ast(\Ups')$, taking a union bound over appropriate choices of $\mc{F}$ using \cref{eq:disjoint-s-family-bounded} and \cref{eq:general-family-bounded}. Specifically, given an $L$-space $Q^\ast\in\mr{Gr}_L(K,r-1)$ and an index $i\in[z]$, we consider all $s$-spaces $S\leqslant K_i\leqslant V$ such that (a) $\mr{span}_L(\iota_i^{-1}(S))$ contains $Q^\ast$, and (b) $S$ does not contain any $r$-space in $\mr{supp}(\partial_{s,r}\Phi^+)$. This family has say at most $\theta^{2/3}q^n$ many $s$-spaces from the union of the $\phi_t^\ast(\Ups')$ whp by \cref{eq:disjoint-s-family-bounded} if $c$ is small enough.

Then we additionally consider $s$-spaces $S\leqslant K_i$ satisfying (a) and such that (b) fails. By $(\theta,L)$-field boundedness of $\partial_{s,r}\Phi^+$, we see that there are at most say $\theta q^{(r-u)n}$ choices of $R\in\mr{supp}(\partial_{s,r}\Phi^+)$ so that $\dim_L(\mr{span}_L(\iota_i^{-1}(R))\cap Q^\ast)=u\in\{0,\ldots,r-1\}$. The number of $s$-spaces satisfying (a) and failing (b) with some fixed value $u$ is then at most say $z^2\cdot z\theta q^{(r-u)n}\cdot q^{(r-s)n}q^{(s-(2r-1-u))n}$ whp: we use \cref{eq:general-family-bounded} applied to the collection $\mc{F}$ of $s$-spaces $S\leqslant K_i$ such that $\iota_i^{-1}(S)$ contains $Q^\ast$ as well as $\iota_i^{-1}(R)$ for some such $R$, of which there are at most $\theta q^{(r-u)n}$. Summing over $u$, we obtain a contribution of at most $r\theta z^3q^n\le\theta^{2/3}q^n$ as well if $c$ is small enough.

Summing, we obtain at most $2\theta^{2/3}q^n\le\theta^{1/2}q^n$ total $s$-spaces which contain $Q^\ast$ when extended to $L$ after pulling back with respect to $\iota_i$. We only need to take a union bound of size $q^{O_s(n)}$ to run this argument, so indeed whp we have the desired $(\theta^{1/2},L)$-field boundedness of $\mc{S}_y$, as desired.

Thus, in particular when we apply $\partial_{s,r}e_F=\sum_{S\in\Ups'}\partial_{s,r}e_S-\sum_{S\in\Ups\setminus\{F\}}\partial_{s,r}e_S$ (using the map $\phi_t^\ast$ for all $t\in[y]$) to the right side of $\partial_{s,r}\Phi=\partial_{s,r}\Phi^++\partial_{s,r}\Phi^-$, we see that we obtain a signed decomposition (there are no repetitions due to disjointness of the process and \cref{prop:subspace-exchange}) where the positive spaces $\mc{S}_1\subseteq\mr{Gr}_q(n,s)$, which corresponds to everything coming from each $\phi_t^\ast(\Ups')$, satisfy all the desired properties (as long as the process runs to completion whp, say, which will be shown below). For instance, the conditions corresponding to configuration compatibility (\cref{def:configuration-compatible}) come from conditions (b), (c), and (d) on $\mc{H}_t'$. One nontrivial verification is the final property regarding the dimension of $L$ intersections: the definition of the process, specifically the second bullet point which defines $\mc{H}_t$, ensures that any violating $i\in[z]$ and $S_1,S_2\in\mc{S}_1[G_{\mr{tem},i}]$ must be introduced at the same time step $t$, and the $s$-spaces introduced at the same time $t$ must satisfy this dimension intersection property due to condition (e) in the first bullet point of the definition of the process (and since each $s$-space in $\Ups'$ is a different color).

Finally, to complete the argument we show that the process runs to completion whp. In the event it fails at time $t\in[y]$, call this $\mc{E}_t$, we have $|\mc{H}_t'\setminus\mc{H}_t|\ge((\tau/z)^{C/2}-(\tau/z)^C)q^{vn}\ge(\tau/z)^Cq^{vn}$ by \cref{eq:many-embeddings-final}. Furthermore,
\[\mc{H}_t'\setminus\mc{H}_t\subseteq\mc{B}_2\cup\mc{B}_3\cup\bigcup_{R\in H\setminus H[F]}(\{\phi^\ast\in\mc{X}_{E_t}(G)\colon\phi^\ast(R)\in\mc{R}_\mr{init}\}\cup\{\phi^\ast\in\mc{X}_{E_t}(G)\colon\phi^\ast(R)\in\chi(\mc{R}_y)\})\]
where $\mc{B}_2$ is the set of $\phi^\ast\in\mc{H}_t'$ failing the condition (ii) from the second bullet point in the definition of the process above and $\mc{B}_3$ is the set failing (iii).

Similarly to the proof of \cref{lem:master-disjointness}, using \cref{eq:2-disjoint-family-bounded} we can bound the size of the third set (the part other than $\mc{B}_2\cup\mc{B}_3$) by $O_{q,s}(\theta^{2/3}q^{vn})\le(\tau/z)^Cq^{vn}/3$, say under an event that holds whp. Thus it remains to understand embeddings $\phi^\ast\in\mc{B}_2\cup\mc{B}_3$. For $\phi^\ast\in\mc{B}_2\setminus\mc{B}_3$, there exist $S\in\Ups'$, $S'\in\mc{S}_{t-1}\subseteq\mc{S}_y$, and $i=i_{t,S}\in[z]$, witnessing the failure of (ii). This implies the existence of an $L$-space $R^\ast$ of dimension $r$ with $R^\ast\leqslant\mr{span}_L(\iota_i^{-1}(\phi^\ast(S)))\cap\mr{span}_L(\iota_i^{-1}(S'))$. Let $Q^\ast:=R^\ast\cap\mr{span}_L(\iota_i^{-1}(\phi_t(F)\cap K_i))$ in this situation and $\dim_L(Q^\ast)=u\in\{0,\ldots,r\}$. We have
\[Q^\ast\leqslant\mr{span}_L(\iota_i^{-1}(\phi_t(F)\cap K_i)),\mr{span}_L(\iota_i^{-1}(\phi^\ast(S))),\mr{span}_L(\iota_i^{-1}(S')).\]
By (e) of the definition of $\mc{H}_t'$, we see that the first two of these spaces have intersection $W^\ast:=\mr{span}_L(\iota_i^{-1}(\phi_t(F)\cap\phi^\ast(S)\cap K_i))=\mr{span}_L(\iota_i^{-1}(\phi_t(F\cap S)))$, whose dimension we call $d$. So $u\le d$ as a consequence of $Q^\ast\leqslant W^\ast$.

Now suppose $u<r$. There are at most $r(r+1)z$ ways to choose $u,d,i$, and given $u,d,i$ there are $O_{\ell,q,s}(1)$ ways to choose $Q^\ast$ since it is inside $\iota_i^{-1}(\phi_t(F)\cap K_i)$. Then given $Q^\ast$, the number of choices of $S'\in\mc{S}_y$ satisfying $S'\in\mr{Gr}_q(n,s)[G_{\mr{tem},i}]$ and $Q^\ast\leqslant\mr{span}_L(\iota_i^{-1}(S'))$ is at most $\theta^{1/2}q^{(r-u)n}$ by $(\theta^{1/2},L)$-field boundedness of $\mc{S}_y$ (which holds whp) and $u\le r-1$. Then after choosing $S'\in\mc{S}_y$ there are $O_{\ell,q,s}(1)$ choices for $R^\ast$. There are $O_{q,s}(1)$ choices for $S\in\Ups'$. Now, we know $\mr{span}_L(\iota_i^{-1}(\phi^\ast(S)))$ contains $R^\ast$ of dimension $r$ and contains $W^\ast=\mr{span}_L(\iota_i^{-1}(\phi_t(F\cap S)))$ of dimension $d$. These $L$-spaces $R^\ast,W^\ast$ both contain $Q^\ast$ and in fact intersect precisely in $Q^\ast$. Thus the $L$-span of their union has dimension $r+d-u$. Therefore there are at most $q^{(s-(r+d-u))n}$ choices for $\phi^\ast(S)$ now, and then at most $q^{(k-(2s-\dim(F\cap S)))n}=q^{(v-s+\dim(F\cap S))n}$ choices to complete the embedding. Overall, there are at most $O_{\ell,q,s}(z\theta^{1/2}q^{(r-u)n}q^{(s-(r+d-u))n}q^{(v-s+\dim(F\cap S))n})\le\theta^{1/3}q^{vn}/3$ total choices, using that $\dim(F\cap S)=d$. The fact $\dim(F\cap S)=d$ is nontrivial: note that $\phi_t(F\cap S)$ extends to $\dim(F\cap S)$ dimensions over $L$ with respect to any index $i\in\mc{C}_t$ by definition of $\mc{C}_t$, and for cases where $i=i_{t,\Pi}$ we have $S=F_\Pi$ and $\phi_t(F\cap S)=R$ with $R\in G_{\mr{tem},i_{t,\Pi}}$, so the $L$-extension with respect to $i_{t,\Pi}$ again has full dimension.

Now suppose $u=r$. This means $Q^\ast=R^\ast$ has $L$-dimension $r$. Recalling that $Q^\ast\leqslant W^\ast$, we see that $Q^\ast=W^\ast$ and $\dim(F\cap S)=r$ hence $\mr{span}_L(\iota_i^{-1}(\phi_t(F\cap S)))\leqslant\mr{span}_L(\iota_i^{-1}(S'))$. But then we deduce $S=F_\Pi$, $i=i_{t,\Pi}$, and $\phi_t(F\cap S)=R$ for $R\in G_{\mr{tem},i}\cap\mc{R}_\mr{init}$. This condition then violates (iii) regarding possible interactions between $S'\in\phi^\ast(\Ups')$ for $\phi^\ast\in\mc{H}_u'$ for all $u\in[y]$ and $\mc{R}_\mr{init}$. So there are no choices in this scenario, recalling we were considering $\phi^\ast\in\mc{B}_2\setminus\mc{B}_3$.

Finally, consider $\phi^\ast\in\mc{B}_3$. Let $S\in\Ups'$ and $R\in\mc{R}_\mr{init}\cap G_{\mr{tem},i}$ be the violators, and let $R^\ast=\mr{span}_L(\iota_i^{-1}(R))$ and $Q^\ast=R^\ast\cap\mr{span}_L(\iota_i^{-1}(\phi_t(F)\cap K_i))$ with $\dim_L(Q^\ast)=u$. By violation of (iii), we have $\mr{span}_L(\iota_i^{-1}(R))\leqslant\mr{span}_L(\iota_i^{-1}(\phi^\ast(S)))$. A similar argument to above, but using the deterministic fact that the $r$-dimensional $q$-system $\mc{R}_\mr{init}$ is $(\theta,L)$-field bounded (instead of the random event that $\mc{S}_y$ is $(\theta^{1/2},L)$-field bounded) shows that we have a contribution of at most $\theta^{1/3}q^{vn}/3$ possibilities, unless $u=r$. In the case $u=r$ we have $Q^\ast=R^\ast$ and then we deduce
\begin{align*}
\mr{span}_L(\iota_i^{-1}(R))&=R^\ast=Q^\ast=R^\ast\cap\mr{span}_L(\iota_i^{-1}(\phi_t(F)\cap K_i))\\
&\leqslant\mr{span}_L(\iota_i^{-1}(\phi_t(F)\cap K_i))\cap\mr{span}_L(\iota_i^{-1}(\phi^\ast(S))),
\end{align*}
the last containment from the above violation of (iii). Recall
\[\mr{span}_L(\iota_i^{-1}(\phi_t(F)\cap K_i))\cap\mr{span}_L(\iota_i^{-1}(\phi^\ast(S)))=\mr{span}_L(\iota_i^{-1}(\phi_t(F\cap S)))\]
due to (e) in the definition of the process (see e.g.~the discussion involving $W^\ast$ above). Thus $\mr{span}_L(\iota_i^{-1}(R))\leqslant\mr{span}_L(\iota_i^{-1}(\phi_t(F\cap S)))$. This means $\dim(F\cap S)=r$ so say $S=F_\Pi$ and then we have for $R'=\phi_t(F\cap S)\in\mc{R}_\mr{init}$ that $\mr{span}_L(\iota_i^{-1}(R))\leqslant\mr{span}_L(\iota_i^{-1}(R'))$. Furthermore, the violation of (iii) guarantees that $R'\neq R$ and $R,R'\in\mr{Gr}(\phi^\ast(S),r)\cup\{R\}\subseteq G_{\mr{tem},i}$. That is, $R,R'$ are distinct, in the same part of the template, and satisfy $\mr{span}_L(\iota_i^{-1}(R))\leqslant\mr{span}_L(\iota_i^{-1}(R'))$. Thus, this violates field disjointness of $\partial_{s,r}\Phi^+$.

Overall, this shows $|\mc{H}_t'\setminus\mc{H}_t|\le\theta^{1/3}q^{vn}$ given the event that $\mc{S}_y$ is $(\theta^{1/2},L)$-field bounded. For $c$ chosen small enough so that $1/c\ge 3C+1$, this precludes $\mc{E}_t$ which implies $|\mc{H}_t'\setminus\mc{H}_t|\ge(\tau/z)^Cq^{vn}$ as shown earlier. That is, under $(\theta^{1/2},L)$-field boundedness of $\mc{S}_y$ (which holds whp) we have that the process runs to completion. We are done.
\end{proof}

\section{Final proof and counting}\label{sec:final-counting}
Finally, we put together the pieces to demonstrate \cref{thm:main}.
\begin{proof}[Proof of \cref{thm:main}]
We are given $s>r\ge 1$ and $n$ satisfying various divisibility conditions from \cref{thm:main} (see \cref{eq:divisibility}). We will ultimately choose parameters in the following way:
\[q,s\ll d\ll\ell\ll 1/\eta\ll1/\zeta\ll n.\]
Consider the setup \cref{sub:setup} and let $J=\sum_{R\in\mr{Gr}(V,r)}e_R$ (by abuse we can think of $J$ as just being the $r$-system $G=\mr{Gr}(V,r)$). By \cref{thm:integral-lattice} we have $J\in\mc{L}=\partial_{s,r}\mb{Z}^{\mr{Gr}_q(n,s)}$ due to the given divisibility constraints on $n$. Choose $d$ large in terms of $s$, and then $\ell$ large in terms of $d,s$ so that various lemmas go through and so that there exist some $N=N_\mr{tem},x^\ast=x_\mr{abs}^\ast$ as defined in \cref{sub:setup} which are appropriately jointly generic (this follows similarly to the start of the proof of \cref{lem:generic}). Also, let $\tau = q^{-\zeta n}$ ($\tau$ is taken to be exponentially small in $n$ specifically in order to prove \cref{cor:counting}). In particular, by \cref{lem:template-basic} we obtain $\mc{S}_\mr{tem}$, an $s$-space covering of the template $r$-spaces by \cref{def:template}.

First apply \cref{prop:approximate-covering} to obtain $\eta$ depending only on $q,s$ (not $\zeta$) and to obtain $\mc{S}_\mr{approx}$ where $\Phi_0=\mc{S}_\mr{tem}\cup\mc{S}_\mr{approx}$ covers some collection of $r$-spaces exactly once and leaves a remainder $J_1=J-\partial_{s,r}\Phi_0\in\{0,1\}^{\mr{Gr}_q(n,r)}$ which is $q^{-\eta n}$-bounded with $\mr{supp}(J_1)\cap G_\mr{tem}=\emptyset$.

Next apply \cref{lem:final-covering} on $\mc{R}=J_1$ and $\theta=q^{-\eta n}$. We obtain $\Phi_1\in\{0,1\}^{\mr{Gr}_q(n,s)}$ which covers some collection of $r$-spaces exactly once, including all of $J_1$, and the remainder $J_2=\partial_{s,r}\Phi_1-J_1$ has support within $G_\mr{tem}$ and is $(q^{-\eta n/2},L)$-field bounded. This in particular shows that $J_2=\partial_{s,r}(\Phi_0+\Phi_1)-J$ is a set of $r$-spaces, which we think of as the ``spill'', which we additionally know is field disjoint.

Now apply \cref{lem:final-color-consistent} to $\mc{R}=J_2$ and $\theta=q^{-\eta n/2}$, to obtain some $\Phi_2$ with $J_2=\partial_{s,r}\Phi_2$ and satisfying various additional properties including that $\partial_{s,r}\Phi_2^\pm$ are field disjoint and $(q^{-\eta n/16},L)$-field bounded. Then apply \cref{prop:final-disjoint-monochromatic} to $\Phi=\Phi_2$ and $\theta=q^{-\eta n/16}$ (which is valid due to aforementioned additional properties) to find $\Phi_3,\Phi_4\in\{0,1\}^{\mr{Gr}_q(n,s)}$ (coming from $\mc{S}_1,\mc{S}_2$) with $\partial_{s,r}(\Phi_3-\Phi_4)=\partial_{s,r}\Phi_2=J_2$ and such that:
\begin{itemize}
    \item $\snorm{\partial_{s,r}\Phi_3}_\infty\le 1$;
    \item $\Phi_3$ is $(q^{-\eta n/32},L)$-field bounded;
    \item For each $S\in\mr{supp}(\Phi_3)$ there is $i\in[z]$ such that (a) $\dim_L\mr{span}_L(\iota_i^{-1}(S))=s$ and (b) there is an $\mb{F}_q$-basis $b\in K_i^s$ of $S$ so that for every $\Pi\in\mr{Red}_q^{r\times s}$, we have for $R=\mr{span}_{\mb{F}_q}(\Pi b)$ that $\Pi_R=\Pi$, $b_R=\Pi b$, and $R\in G_{\mr{tem},i}$;
    \item For $i\in[z]$ and distinct $S_1,S_2\in\mc{S}_1[G_{\mr{tem},i}]$, $\dim_L(\mr{span}_L(\iota_i^{-1}(S_1))\cap\mr{span}_L(\iota_i^{-1}(S_2)))<r$.
\end{itemize}
Finally by \cref{prop:final-absorber} applied to $\Phi=\Phi_3$ and $\theta=q^{-\eta n/32}$ (note $\theta$ is much smaller than $\tau$) we can find $\Phi_5,\Phi_6\in\{0,1\}^{\mr{Gr}_q(n,s)}$ with $\mr{supp}(\Phi_6)\subseteq\mc{S}_\mr{tem}$ and $\partial_{s,r}\Phi_3=\partial_{s,r}(\Phi_6-\Phi_5)$. Here condition \ref{prop:final-absorber}(c) comes directly from the fourth bullet point above. Putting everything together, we have
\begin{align*}
J&=\partial_{s,r}\Phi_0+J_1=\partial_{s,r}(\Phi_0+\Phi_1)-J_2\\
&=\partial_{s,r}(\Phi_0+\Phi_1)-\partial_{s,r}(\Phi_3-\Phi_4)=\partial_{s,r}(\Phi_0+\Phi_1+\Phi_4)-\partial_{s,r}\Phi_3\\
&=\partial_{s,r}(\Phi_0+\Phi_1+\Phi_4+\Phi_5-\Phi_6)=\partial_{s,r}((\Phi_0-\Phi_6)+\Phi_1+\Phi_4+\Phi_5).
\end{align*}
Finally, $\Phi_0$ contains $\mc{S}_\mr{tem}$ and $\Phi_6$ is contained in $\mc{S}_\mr{tem}$ so each of $\Phi_0-\Phi_6,\Phi_1,\Phi_4,\Phi_5\in\mb{Z}_{\ge 0}^{\mr{Gr}_q(n,s)}$. This implies that they indeed form a $(n,s,r)_q$-design and we are done.
\end{proof}

We briefly explain how to extract the counting result \cref{cor:counting} from the proof given.

\begin{proof}[Proof sketch of \cref{cor:counting}]
For the lower bound, consider the proof of \cref{thm:main}. Note that after planting the template and applying \cref{lem:template-boosted}, we applied results of Ehard, Glock, and Joos \cite{EGJ20} to extract a large approximate covering $\mc{S}_\mr{approx}$ and then proved that given such a suitably bounded hypergraph matching that the remainder can be completed whp. The completed system, furthermore, contained $\mc{S}_\mr{approx}$ as a subset (the only $s$-spaces that are potentially deleted in our scheme are template $s$-spaces).

In order to extract a suitable counting result, one can (e.g.) use a recent result of Glock, Joos, Kim, K\"uhn, and Lichev \cite[Theorem~3.5]{GJKKL22} which proves there are at least 
\[\bigg((1-q^{-\Omega(n)})\frac{\qbinom{n-r}{s-r}_q}{\exp(\qbinom{s}{r}_q-1)}\bigg)^{\qbinom{n}{r}_q/\qbinom{s}{r}_q}\]
matchings with properties as in \cref{prop:approximate-covering}. Our result then proves that almost all such matchings can be extended in an least $1$ valid manner, and it is not hard to see that any final $(n,s,r)_q$-design thus constructed cannot be overcounted significantly: it can arise from at most $\sum_{j\le y}\binom{x}{j}$ choices of $\mc{S}_\mr{approx}$, where $x = \qbinom{n}{r}_q/\qbinom{s}{r}_q$ and $y\le q^{-\Omega(n)}\qbinom{n}{r}_q$.

For the upper bound, recall that an $(n,s,r)_q$-design can also be thought of as a hypergraph matching with vertex set $\mr{Gr}(V,r)$, where each $S\in\mr{Gr}(V,s)$ is thought of as a $\qbinom{s}{r}_q$-uniform edge consisting of its constituent $r$-subspaces. This is a regular hypergraph of degree $d=\qbinom{n-r}{s-r}_q=\Theta_{q,s}(q^{(s-r)n})$ and has a codegree bound of $q^{(s-r+1)n}$ since any two distinct $r$-spaces have at most $q^{(s-r-1)n}$ extensions to an $s$-space. The desired upper bound then follows directly from \cite[Theorem~3.1]{Lur17}. (The $q^{-\Omega(n)}$ savings in the logarithm of the upper bound is an immediate consequence of making the proof in \cite{Lur17} effective, so we omit the routine modification.)
\end{proof}

Finally, we sketch the necessary modifications in order to prove the case with general $\lambda$.
\begin{proof}[Proof sketch of \cref{thm:lambda}]
Let $J=\lambda\sum_{R\in\mr{Gr}(V,r)}e_R$. We define the template $\mc{S}_\mr{tem}$ exactly as in the proof of \cref{thm:main}. Note that the set of $s$-spaces within the template cover every $r$-space in $\mr{supp}(\partial_{s,r}\mc{S}_\mr{tem})$ exactly once. 

The crucial difference when $\lambda>1$ is that we now cover each $r$-space within $\mr{supp}(\partial_{s,r}\mc{S}_\mr{tem})$ an additional $\lambda-1$ times. In particular, we order the $r$-spaces in $\mr{supp}(\partial_{s,r}\mc{S}_\mr{tem})$ and extend each $r$-space $R$ to an $s$-space $S$ such that the remaining $r$-spaces in $\mr{Gr}(S,r)\setminus\{R\}$ are not within $\mr{supp}(\partial_{s,r}\mc{S}_\mr{tem})$ and are disjoint from the $r$-spaces used so far. This process is easily seen to run to completion via an analysis completely analogous to \cref{lem:master-disjointness} noting that the template itself is appropriately bounded and each $r$-space has at least $\Omega(q^{(s-r)n})$ extensions. Let the set of $s$-spaces chosen at this stage be denoted as $\mc{S}_\mr{tem}'$ and notice that
\[J-\partial_{s,r}(\mc{S}_\mr{tem}\cup\mc{S}_\mr{tem}')\in \{\lambda-1,\lambda\}^{\mr{Gr}(V,r)\setminus G_{\mr{tem}}}.\]
Furthermore one can prove that for each $(r-1)$-space all but an $\eta$-fraction of extensions to an $r$-space have coefficient $\lambda$ in $J-\partial_{s,r}(\mc{S}_\mr{tem}\cup\mc{S}_\mr{tem}')$, where $\eta$ is some polynomial growth function of $\tau$. Let $J_1^\ast$ be the subset of $\mr{Gr}(V,r)\setminus G_{\mr{tem}}$ where $J-\partial_{s,r}(\mc{S}_\mr{tem}\cup \mc{S}_\mr{tem}')$ has coefficient $\lambda$ and $J_2^\ast=\mr{Gr}(V,r)\setminus G_\mr{tem}$. The idea is to approximately cover $J_2^\ast$ a total of $\lambda-1$ times and approximately cover $J_1^\ast$ one time. However, these covers must also be disjoint.

We now perform this approximate covering. In order to do so, one can regularize the set of $s$-spaces supported on $J_1^\ast$ and $J_2^\ast$, respectively, by an easy alteration of the proof of \cref{lem:template-boosted}. We then apply the proof of \cref{prop:approximate-covering} to $J_1^\ast$ such that the remainder is appropriately bounded. We then consider $\lambda-1$ copies of $J_2^\ast$. Given what we have done to approximately cover the first $j$ copies of $J_2^\ast$ for some $j\in\{0,\ldots,\lambda-2\}$, we remove all $s$-spaces which have been used in approximately covering $J_1^\ast$ as well as the previous copies of $J_2^\ast$. Notice that the set of remaining $s$-spaces are still appropriately regular. Again applying the proof of \cref{prop:approximate-covering}, we can ultimately find a set of $s$-spaces $\mc{S}_\mr{approx}$ such that $\Phi_0 = \mc{S}_\mr{tem}\cup \mc{S}_\mr{tem}' \cup\mc{S}_\mr{approx}$ has the property that $J_1=J-\partial_{s,r}(\Phi_0)$ is appropriately bounded and such that it is supported outside $G_\mr{tem}$.

At this stage we cover down the remaining $r$-spaces (with appropriately multiplicity) in $J_1$ into the template in a disjoint manner and such that the spillover in the template is field bounded and field disjoint. This follows exactly as in \cref{lem:final-covering}; this gives a set of $s$-spaces $\Phi_2$ such that $J_2 = \partial_{s,r}\Phi_2-J_1 \in \{0,1\}^{\mr{G}_\mr{tem}}$ and is appropriately field-bounded and field-disjoint. At this point, the proof is identical to that of \cref{thm:main} and in particular we can find a decomposition of $J_2$ into signed $s$-cliques $\Phi_5$ and $\Phi_6$ such that $J_2 = \partial_{s,r}(\Phi_6-\Phi_5)$ and $\Phi_5,\Phi_6\in\{0,1\}^{\mr{Gr}_q(n,s)}$ with $\mr{supp}(\Phi_6)\subseteq\mc{S}_\mr{tem}$. The desired decomposition is then $(\Phi_0\setminus\Phi_6)\cup \Phi_2 \cup \Phi_5$.

To see that this decomposition does not use a given $s$-space more than once, notice that $\Phi_5,\mc{S}_\mr{tem}$ have all constituent $r$-spaces in $\mr{G}_\mr{tem}$ and these $r$-spaces are disjoint (so they are distinct), each $s$-space in $\mc{S}_\mr{tem}'$ has exactly $1$ $r$-space in $\mr{G}_{\mr{tem}}$ (and they are distinct by construction), each $s$-space in $\mc{S}_\mr{approx}$ has all $r$-spaces outside $\mr{G}_\mr{tem}$ (and they are distinct by construction), and all $s$-spaces in $\Phi_2$ have all but $1$ $r$-space in $\mr{G}_\mr{tem}$. Thus we have disjointness within these groups, and counting how many $r$-spaces lie in $G_\mr{tem}$ implies the different pieces use disjoint groups of $s$-spaces since $\qbinom{s}{r}_q\ge \qbinom{2}{1}_2 = 3$ so that $\qbinom{s}{r}_q-1>1$.
\end{proof}

\bibliographystyle{amsplain0.bst}
\bibliography{main.bib}

\providecommand{\bysame}{\leavevmode\hbox to3em{\hrulefill}\thinspace}
\providecommand{\MR}{\relax\ifhmode\unskip\space\fi MR }
\providecommand{\MRhref}[2]{%
  \href{http://www.ams.org/mathscinet-getitem?mr=#1}{#2}
}
\providecommand{\href}[2]{#2}
\begin{thebibliography}{10}

\bibitem{AKS97}
Noga Alon, Jeong-Han Kim, and Joel Spencer, \emph{Nearly perfect matchings in
  regular simple hypergraphs}, Israel J. Math. \textbf{100} (1997), 171--187.

\bibitem{AS16}
Noga Alon and Joel~H. Spencer, \emph{The probabilistic method}, fourth ed.,
  Wiley Series in Discrete Mathematics and Optimization, John Wiley \& Sons,
  Inc., Hoboken, NJ, 2016.

\bibitem{BGKLMO20}
Ben Barber, Stefan Glock, Daniela K\"{u}hn, Allan Lo, Richard Montgomery, and
  Deryk Osthus, \emph{Minimalist designs}, Random Structures Algorithms
  \textbf{57} (2020), 47--63.

\bibitem{BR06}
Claude Berge and Dijen Ray-Chaudhuri, \emph{{H}ypergraph {S}eminar: {O}hio
  {S}tate {U}niversity, 1972}, vol. 411, Springer, 2006.

\bibitem{BEOPV16}
Michael Braun, Tuvi Etzion, Patric R.~J. \"{O}sterg\aa rd, Alexander Vardy, and
  Alfred Wassermann, \emph{Existence of {$q$}-analogs of {S}teiner systems},
  Forum Math. Pi \textbf{4} (2016), e7, 14.

\bibitem{BKW18}
Michael Braun, Michael Kiermaier, and Alfred Wassermann, \emph{{$q$}-analogs of
  designs: subspace designs}, Network coding and subspace designs, Signals
  Commun. Technol., Springer, Cham, 2018, pp.~171--211.

\bibitem{Cam74}
Peter~J. Cameron, \emph{Generalisation of {F}isher's inequality to fields with
  more than one element}, Combinatorics ({P}roc. {B}ritish {C}ombinatorial
  {C}onf., {U}niv. {C}oll. {W}ales, {A}berystwyth, 1973), London Math. Soc.
  Lecture Note Ser., No. 13, Cambridge Univ. Press, London, 1974, pp.~9--13.

\bibitem{Del76}
Philippe Delsarte, \emph{Association schemes and {$t$}-designs in regular
  semilattices}, J. Combinatorial Theory Ser. A \textbf{20} (1976), 230--243.

\bibitem{EGJ20}
Stefan Ehard, Stefan Glock, and Felix Joos, \emph{Pseudorandom hypergraph
  matchings}, Combin. Probab. Comput. \textbf{29} (2020), 868--885.

\bibitem{EGP91}
P.~Erd\H{o}s, A.~Gy\'{a}rf\'{a}s, and L.~Pyber, \emph{Vertex coverings by
  monochromatic cycles and trees}, J. Combin. Theory Ser. B \textbf{51} (1991),
  90--95.

\bibitem{EL75}
P.~Erd\H{o}s and L.~Lov\'{a}sz, \emph{Problems and results on {$3$}-chromatic
  hypergraphs and some related questions}, Infinite and finite sets ({C}olloq.,
  {K}eszthely, 1973; dedicated to {P}. {E}rd\H{o}s on his 60th birthday),
  {V}ol. {II}, Colloq. Math. Soc. J\'{a}nos Bolyai, Vol. 10, North-Holland,
  Amsterdam, 1975, pp.~609--627.

\bibitem{Etz13}
Tuvi Etzion, \emph{Problems on q-analogs in coding theory}, arXiv:1305.6126.

\bibitem{FLV14}
Arman Fazeli, Shachar Lovett, and Alexander Vardy, \emph{Nontrivial
  {$t$}-designs over finite fields exist for all {$t$}}, J. Combin. Theory Ser.
  A \textbf{127} (2014), 149--160.

\bibitem{GJKKL22}
Stefan Glock, Felix Joos, Jaehoon Kim, Marcus K{\"u}hn, and Lyuben Lichev,
  \emph{Conflict-free hypergraph matchings}, arXiv:2205.05564.

\bibitem{GKLO16}
Stefan Glock, Daniela K{\"u}hn, Allan Lo, and Deryk Osthus, \emph{The existence
  of designs via iterative absorption: hypergraph {$F$}-designs for arbitrary
  {$F$}}, Mem. Amer. Math. Soc. (to appear).

\bibitem{GJ73}
J.~E. Graver and W.~B. Jurkat, \emph{The module structure of integral designs},
  J. Combinatorial Theory Ser. A \textbf{15} (1973), 75--90.

\bibitem{GPSV18}
Marcus Greferath, Mario~Osvin Pav\v{c}evi\'{c}, Natalia Silberstein, and
  Mar\'{\i}a~\'{A}ngeles V\'{a}zquez-Castro (eds.), \emph{Network coding and
  subspace designs}, Signals and Communication Technology, Springer, Cham,
  2018.

\bibitem{GX13}
Venkatesan Guruswami and Chaoping Xing, \emph{List decoding {R}eed-{S}olomon,
  algebraic-geometric, and {G}abidulin subcodes up to the {S}ingleton bound},
  S{TOC}'13---{P}roceedings of the 2013 {ACM} {S}ymposium on {T}heory of
  {C}omputing, ACM, New York, 2013, pp.~843--852.

\bibitem{JLR00}
Svante Janson, Tomasz {\L}uczak, and Andrzej Rucinski, \emph{Random graphs},
  Wiley-Interscience Series in Discrete Mathematics and Optimization,
  Wiley-Interscience, New York, 2000.

\bibitem{JR04}
Svante Janson and Andrzej Ruci\'{n}ski, \emph{The deletion method for upper
  tail estimates}, Combinatorica \textbf{24} (2004), 615--640.

\bibitem{Kal16}
Gil Kalai, \emph{Designs exist! [after {P}eter {K}eevash]}, Ast\'{e}risque
  (2016), Exp. No. 1100, 399--422.

\bibitem{Kan72}
William~M. Kantor, \emph{On incidence matrices of finite projective and affine
  spaces}, Math. Z. \textbf{124} (1972), 315--318.

\bibitem{Kee14}
Peter Keevash, \emph{The existence of designs}, arXiv:1401.3665.

\bibitem{Kee18c}
Peter Keevash, \emph{The existence of designs {II}}, arXiv:1802.05900.

\bibitem{KKO15}
Fiachra Knox, Daniela K\"{u}hn, and Deryk Osthus, \emph{Edge-disjoint
  {H}amilton cycles in random graphs}, Random Structures Algorithms \textbf{46}
  (2015), 397--445.

\bibitem{KO13}
Daniela K\"{u}hn and Deryk Osthus, \emph{Hamilton decompositions of regular
  expanders: a proof of {K}elly's conjecture for large tournaments}, Adv. Math.
  \textbf{237} (2013), 62--146.

\bibitem{KLP17}
Greg Kuperberg, Shachar Lovett, and Ron Peled, \emph{Probabilistic existence of
  regular combinatorial structures}, Geom. Funct. Anal. \textbf{27} (2017),
  919--972.

\bibitem{Lur17}
Zur Luria, \emph{New bounds on the number of n-queens configurations},
  arXiv:1705.05225.

\bibitem{Met99}
Klaus Metsch, \emph{Bose-{B}urton type theorems for finite projective, affine
  and polar spaces}, Surveys in combinatorics, 1999 ({C}anterbury), London
  Math. Soc. Lecture Note Ser., vol. 267, Cambridge Univ. Press, Cambridge,
  1999, pp.~137--166.

\bibitem{RCS89}
D.~K. Ray-Chaudhuri and N.~M. Singhi, \emph{{$q$}-analogues of {$t$}-designs
  and their existence}, Linear Algebra Appl. \textbf{114/115} (1989), 57--68.

\bibitem{RR95}
Vojt\v{e}ch R\"{o}dl and Andrzej Ruci\'{n}ski, \emph{Threshold functions for
  {R}amsey properties}, J. Amer. Math. Soc. \textbf{8} (1995), 917--942.

\bibitem{RRS06}
Vojt\v{e}ch R\"{o}dl, Andrzej Ruci\'{n}ski, and Endre Szemer\'{e}di, \emph{A
  {D}irac-type theorem for 3-uniform hypergraphs}, Combin. Probab. Comput.
  \textbf{15} (2006), 229--251.

\bibitem{Tei87}
Luc Teirlinck, \emph{Nontrivial {$t$}-designs without repeated blocks exist for
  all {$t$}}, Discrete Math. \textbf{65} (1987), 301--311.

\bibitem{Wil73}
Richard~M. Wilson, \emph{The necessary conditions for {$t$}-designs are
  sufficient for something}, Utilitas Math. \textbf{4} (1973), 207--215.

\bibitem{Wil75}
Richard~M. Wilson, \emph{An existence theory for pairwise balanced designs.
  {III}. {P}roof of the existence conjectures}, J. Combinatorial Theory Ser. A
  \textbf{18} (1975), 71--79.

\end{thebibliography}

\end{document}